\newtheorem{theorem}{Theorem}[section]
\newtheorem{proposition}[theorem]{Proposition}
\newtheorem{lemma}[theorem]{Lemma}
\newtheorem{corollary}[theorem]{Corollary}
\newtheorem{definition}[theorem]{Definition}
\theoremstyle{remark}
\newtheorem{remark}[theorem]{Remark}
\DeclareRobustCommand{\SkipTocEntry}[5]{}
\newcommand{\1}{\mathbbm{1}}
\let\eps\epsilon
\renewcommand{\epsilon}{\varepsilon}
\newcommand{\I}{\mathbb{I}}
\newcommand{\Haus}{\mathcal{H}}
\newcommand{\R}{\mathbb{R}}
\renewcommand{\S}{\mathbb{S}}
\DeclareMathOperator{\dist}{dist}
\DeclareMathOperator{\Div}{div}
\DeclareMathOperator{\dom}{dom}
\DeclareMathOperator{\supp}{supp}
\DeclareMathOperator{\Tr}{Tr}
\newcommand{\limplus}{{\mathchoice{\vcenter{\hbox{$\scriptstyle +$}}}
		{\vcenter{\hbox{$\scriptstyle +$}}}
		{\vcenter{\hbox{$\scriptscriptstyle +$}}}
		{\vcenter{\hbox{$\scriptscriptstyle +$}}}
}}
\newcommand{\limminus}{{\mathchoice{\vcenter{\hbox{$\scriptstyle -$}}}
		{\vcenter{\hbox{$\scriptstyle -$}}}
		{\vcenter{\hbox{$\scriptscriptstyle -$}}}
		{\vcenter{\hbox{$\scriptscriptstyle -$}}}
}}
\begin{document}

\newpage

\title[Uniform bounds for Neumann heat kernels]{Uniform bounds for Neumann heat kernels and their traces in convex sets}

\author{Rupert L. Frank}
\address[Rupert L. Frank]{Mathe\-matisches Institut, Ludwig-Maximilians Universit\"at M\"unchen, The\-resienstr.~39, 80333 M\"unchen, Germany, and Munich Center for Quantum Science and Technology, Schel\-ling\-str.~4, 80799 M\"unchen, Germany, and Mathematics 253-37, Caltech, Pasa\-de\-na, CA 91125, USA}
\email{r.frank@lmu.de}

\author{Simon Larson}
\address{\textnormal{(Simon Larson)} Department of Mathematical Sciences, Chalmers University of Technology and the University of Gothenburg, SE-41296 Gothenburg, Sweden}
\email{larsons@chalmers.se}

\thanks{\copyright\, 2026 by the authors. This paper may be reproduced, in its entirety, for non-commercial purposes.\\
	Partial support through US National Science Foundation grant DMS-1954995 (R.L.F.), the German Research Foundation grants EXC-2111-390814868 and TRR 352-Project-ID 470903074 (R.L.F.), as well as the Swedish Research Council grant no.~2023-03985 (S.L.) is acknowledged.}

\begin{abstract}
    We prove a bound on the heat trace of the Neumann Laplacian on a convex domain that captures the first two terms in its small-time expansion, but is valid for all times and depends on the underlying domain only through very simple geometric characteristics. This is proved via a precise and uniform expansion of the on-diagonal heat kernel close to the boundary. Most of our results are valid without the convexity assumption and we also consider two-term asymptotics for the heat trace for Lipschitz domains.
\end{abstract}

\maketitle

\tableofcontents

\section{Introduction and main results}

\subsection{Results on the heat trace}

Let $\Omega \subset \R^d$, $d\geq 2$, be an open set. The Neumann Laplacian $-\Delta_\Omega$ in $L^2(\Omega)$ is defined through the quadratic form
\begin{equation*}
	u \mapsto \int_\Omega |\nabla u |^2\,dx \quad \mbox{for }u \in H^1(\Omega)\,.
\end{equation*}
If $\Omega$ is bounded and has Lipschitz boundary, then the spectrum of $-\Delta_\Omega$ is discrete, nonnegative, and accumulates only at $\infty$; see, e.g., \cite{LTbook}. Let $\{\lambda_k(\Omega)\}_{k\geq 1}$ denote the eigenvalues ordered in a nondecreasing manner and repeated according to their (finite) multiplicity, so that
\begin{equation*}
 	0=\lambda_1(\Omega)\leq \lambda_2(\Omega) \leq \lambda_3(\Omega)\leq \ldots \to \infty\,.
\end{equation*} 
Let $\{u_k\}_{k\geq 0}\subset L^2(\Omega)$ be an associated family of real-valued orthonormal eigenfunctions.

Our main interest in this paper is on the \emph{heat trace},
$$
\Tr(e^{t\Delta_\Omega}) = \sum_{k=1}^\infty e^{-t\lambda_k(\Omega)}
\qquad\text{for}\ t>0
$$
and, in particular, in its behavior of small $t$. This, as well as its generalizations to other boundary conditions or to manifolds, is a classical topic in spectral theory with many important contributions including \cite{MinakshisundaramPleijel,Kac,McKeanSinger,vdBerg_84,BrossardCarmona,vdBerg_87,BransonGilkey}.
 In particular, when $\partial\Omega$ is smooth, then $t^{d/2} \Tr(e^{t\Delta_\Omega})$ has a complete asymptotic expansion in integer powers of $\sqrt t$.

In this paper we are interested in the low regularity setting, where one cannot expect a complete asymptotic expansion. In the above-described setting of the Neumann Laplacian on a Lipschitz set there is the following celebrated theorem of Brown~\cite{Brown93}.

\begin{theorem}[{\cite[Main Theorem]{Brown93}}]\label{thm: Brown trace asymptotics}
    Let $d\geq 2$ and $\Omega \subset \R^d$ be an open and bounded set with Lipschitz boundary. Then
    \begin{equation}\label{eq: two-term heat asymptotics}
    \Tr(e^{t\Delta_\Omega})= \frac{1}{(4\pi t)^{d/2}}\biggl(|\Omega| + \frac{\sqrt{\pi t}}{2}\Haus^{d-1}(\partial\Omega) + o(\sqrt{t})\biggr) \quad \mbox{as }t\to 0^\limplus\,.
    \end{equation}
\end{theorem}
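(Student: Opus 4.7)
The plan is to write
\[
\Tr(e^{t\Delta_\Omega}) = \int_\Omega p_t^\Omega(x,x)\,dx,
\]
where $p_t^\Omega$ is the Neumann heat kernel, and to analyze the on-diagonal kernel by local comparison with a half-space model. On $H = \{x_d > 0\}$ the method of images gives $p_t^H(x,x) = (4\pi t)^{-d/2}(1 + e^{-x_d^2/t})$, so integrating the image term in $x_d$ produces $(4\pi t)^{-d/2}\cdot\tfrac{\sqrt{\pi t}}{2}$ per unit of boundary area. This identifies the mechanism behind \eqref{eq: two-term heat asymptotics}: the $\sqrt t\,\Haus^{d-1}(\partial\Omega)$ correction should arise from a boundary layer of width $\sim\sqrt t$ in which $p_t^\Omega(x,x)$ picks up an extra contribution $(4\pi t)^{-d/2}e^{-\dist(x,\partial\Omega)^2/t}$ from an image charge across $\partial\Omega$.

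First I would fix an intermediate scale $\delta = \delta(t)$ with $\sqrt t \ll \delta \ll 1$ and split $\Omega = \Omega^{\mathrm{int}}_\delta \sqcup \Sigma_\delta$, where $\Omega^{\mathrm{int}}_\delta = \{x\in\Omega : \dist(x,\partial\Omega) > \delta\}$. On $\Omega^{\mathrm{int}}_\delta$ the \emph{principle of not feeling the boundary}—combining a Gaussian upper bound for $p_t^\Omega$ with a parabolic maximum-principle argument inside the ball $B(x,\delta)\subset\Omega$—yields
\[
\bigl|p_t^\Omega(x,x) - (4\pi t)^{-d/2}\bigr| \leq C\,t^{-d/2}\,e^{-c\delta^2/t},
\]
so the interior contributes $(4\pi t)^{-d/2}|\Omega^{\mathrm{int}}_\delta|$ up to an exponentially small error. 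For the boundary layer I would cover $\partial\Omega$ by finitely many cylinders in which it is the graph of a Lipschitz function, use a partition of unity, and parametrize $x\in\Sigma_\delta$ in each piece by $(y,r)\in\partial\Omega\times(0,\delta)$ through $x = y - r\nu(y)$; the outer normal $\nu(y)$ exists at $\Haus^{d-1}$-a.e.\ $y$ by Rademacher's theorem. Rescaling $r = \sqrt t\,s$, the domain $\Omega_{y,t} := \frac{1}{\sqrt t}(\Omega - y)$ converges at a.e.\ $y$ locally to the half-space $\{z\cdot\nu(y) < 0\}$, and the parabolic scaling identity yields the pointwise limit
\[
t^{d/2}\,p_t^\Omega\bigl(y - \sqrt t\,s\,\nu(y),\,y - \sqrt t\,s\,\nu(y)\bigr) \to (4\pi)^{-d/2}\bigl(1 + e^{-s^2}\bigr) \quad \text{as } t \to 0^+.
\]
With a Gaussian upper bound on $p_t^\Omega$ playing the role of the dominating function, dominated convergence on $\partial\Omega\times(0,\delta/\sqrt t)$ together with the coarea-type identity $|\Sigma_\delta| = \delta\,\Haus^{d-1}(\partial\Omega)(1 + o(1))$ on Lipschitz domains gives
\[
\int_{\Sigma_\delta} p_t^\Omega(x,x)\,dx = (4\pi t)^{-d/2}\bigl(|\Sigma_\delta| + \tfrac{\sqrt{\pi t}}{2}\Haus^{d-1}(\partial\Omega) + o(\sqrt t)\bigr),
\]
and adding the two pieces yields \eqref{eq: two-term heat asymptotics} after the cancellation $|\Omega^{\mathrm{int}}_\delta| + |\Sigma_\delta| = |\Omega|$.

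The hard part is the Gaussian upper bound on $p_t^\Omega$ that is uniform enough in the Lipschitz setting to serve as the dominating function in the blow-up step. Without smoothness of $\partial\Omega$ no classical parametrix is available, so one must argue either via intrinsic Davies--Nash methods (using interior volume doubling and the Poincar\'e inequality on Lipschitz domains) or by reflecting across a Lipschitz graph to reduce to a heat equation with bounded but rough coefficients, with constants depending only on the Lipschitz character of $\partial\Omega$. A secondary subtlety is that tangent planes to $\partial\Omega$ only exist $\Haus^{d-1}$-almost everywhere, so the half-space comparison for the rescaled kernel is available only a.e.\ and must be upgraded in an $L^1$-sense via Rademacher plus domination rather than being established uniformly in $y$.
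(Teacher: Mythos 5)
Your decomposition into interior plus a boundary layer of width $\sim\sqrt t$, with the interior handled by not-feeling-the-boundary and the layer compared against the half-space kernel, is the right overall architecture and matches the paper's strategy in outline. The gap is in the single sentence ``the parabolic scaling identity yields the pointwise limit.'' For the Dirichlet heat kernel such a blow-up limit does follow softly from geometric convergence of the rescaled domains via monotonicity and stability of the Dirichlet form; for the Neumann heat kernel there is no such comparison or Mosco-stability principle available off the shelf, and the convergence of $t^{d/2}k_\Omega(t,\cdot,\cdot)$ to the half-space kernel at scale $\sqrt t$ near a point where $\partial\Omega$ is merely differentiable is precisely the hard claim one must prove, not a consequence of Rademacher plus scaling. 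You have identified the Gaussian upper bound as the bottleneck, but that is in fact available for free from Davies' theorem for any set with the extension property (the paper's Lemma~\ref{heatnapriori}); what is genuinely hard, and what Brown's (and this paper's) machinery exists to handle, is to show that $k_\Omega - (k_{\R^d} + \text{reflected copy})$ is small near a flat-enough boundary point.

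Concretely, the paper attacks this by writing $w:=k_\Omega - (k_{\R^d}+k_{\R^d}(\cdot,\cdot,x^*))\chi$ as the solution of a Neumann problem whose boundary datum $f$ measures the failure of the reflection to satisfy the boundary condition, and then estimates $w(t,x)$ via Duhamel's formula together with quantitative a priori bounds: the $L^p$--$L^q$ estimate for the Neumann problem (Theorem~\ref{thm: neumann bvp Lp estimates}), a cancellation estimate for zero-average data built on interior H\"older regularity up to the Lipschitz boundary via Nash--Moser after a flattening reflection (Lemma~\ref{lem: a priori bound zero average}, Corollary~\ref{cor: Holder regularity up to boundary}), and a dyadic decomposition of $f$ exploiting both the flatness of $\partial\Omega$ and the smallness of the averaged normal oscillation $\bar\nu_p$ (Section~6). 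None of this is present in your sketch. A secondary but real issue is that ``Rademacher a.e.\ plus dominated convergence'' does not suffice on its own: one needs a set of nearly full measure on which the flatness is uniform at scale $r$ and on which $\bar\nu_p(\cdot,r)$ is uniformly small, which the paper obtains via Lusin/Egorov (Lemma~\ref{lem: existence of big good set}); pointwise a.e.\ differentiability gives you the limit in the wrong order of quantifiers.
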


Our main interest in this paper is towards the case where $\Omega$ is \emph{convex}. Of course, as any convex set has Lipschitz boundary, Brown's result is applicable, but we aim to make the error term $o(\sqrt t)$ more explicit and, in particular, to clarify its dependence on the geometry of $\Omega$. This question is motivated by applications to certain spectral shape optimization problems, which we studied in \cite{LarsonJST,FrankLarson_CPAM26}. The theorem that we prove in this paper is a crucial ingredient for one of the results in \cite{FrankLarson_Inventiones25}, which in turn is fundamental in \cite{FrankLarson_CPAM26}.

Our heat trace bound in this case will depend on $\Omega$ through its \emph{inradius},
\begin{equation}
    \label{eq:inrad}
    r_{\rm in}(\Omega) := \sup_{x\in\Omega} d_\Omega(x) \,,
\end{equation}
which is defined in terms of the distance to the complement
\begin{equation*}
    d_\Omega(x) := \dist(x,\Omega^c) \,.
\end{equation*} 
The following is the main result of this paper.

\begin{theorem}\label{thm: main thm convex}
    Fix $d\geq 2$ and $\epsilon>0$. There exists a constant $C_{d,\epsilon}$ so that if $\Omega \subset \R^d$ is open, convex, and bounded, then, for all $t>0$,
	\begin{align*}
		\biggl|(4\pi t)^{d/2}\Tr(e^{t\Delta_\Omega}) - &|\Omega| - \frac{\sqrt{\pi t}}{2}\Haus^{d-1}(\partial\Omega)\biggr|\\
        &\leq
        C_{d, \epsilon} \Haus^{d-1}(\partial\Omega)\sqrt{t}\biggl[\Bigl(\frac{\sqrt{t}}{r_{\rm in}(\Omega)}\Bigr)^{1/2-\epsilon} + \Bigl(\frac{\sqrt{t}}{r_{\rm in}(\Omega)}\Bigr)^{d-1}\biggr].
	\end{align*}
\end{theorem}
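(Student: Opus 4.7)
The plan is to write the heat trace as
\[
    \Tr(e^{t\Delta_\Omega}) = \int_\Omega k_\Omega(t,x,x)\,dx
\]
and to feed in the uniform on-diagonal expansion of $k_\Omega$ near the boundary promised in the abstract and established earlier in the paper. Modeled on the explicit half-space kernel (whose diagonal is $(4\pi t)^{-d/2}(1+e^{-x_d^2/t})$), this expansion should take the form
\[
    k_\Omega(t,x,x) = \frac{1}{(4\pi t)^{d/2}}\bigl(1+e^{-d_\Omega(x)^2/t}\bigr) + R_\Omega(t,x),
\]
with $R_\Omega$ controlled uniformly across the convex class in scale-invariant ratios of $d_\Omega(x)$, $\sqrt{t}$, and $r_{\rm in}(\Omega)$. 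For convex $\Omega$ the natural derivation of the two leading terms uses reflection in a supporting hyperplane at the boundary point closest to $x$, which gives both the upper and lower comparisons.

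For the main regime $\sqrt{t}\leq r_{\rm in}(\Omega)$ the reduction to a one-dimensional integral is via the coarea formula
\[
    \int_\Omega f(d_\Omega(x))\,dx = \int_0^{r_{\rm in}(\Omega)} f(s)\,\Haus^{d-1}(\{d_\Omega=s\})\,ds.
\]
The crucial structural input here is the convex-geometric monotonicity $\Haus^{d-1}(\{d_\Omega=s\})\leq\Haus^{d-1}(\partial\Omega)$ for all $s\geq 0$, with near-equality for $s\ll r_{\rm in}(\Omega)$. Integrating the constant $(4\pi t)^{-d/2}$ over $\Omega$ produces the volume term; using $\int_0^\infty e^{-s^2/t}\,ds=\sqrt{\pi t}/2$ together with the near-equality above gives the boundary term $(4\pi t)^{-d/2}\frac{\sqrt{\pi t}}{2}\Haus^{d-1}(\partial\Omega)$, plus super-polynomially small corrections from truncation at $s=r_{\rm in}(\Omega)$. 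Applying the same coarea identity to the pointwise bound on $R_\Omega$ and optimizing a splitting of the $s$-integral at a threshold $\delta\sim \sqrt{t}\,(r_{\rm in}/\sqrt{t})^\alpha$ then converts the remainder into the $(\sqrt{t}/r_{\rm in}(\Omega))^{1/2-\epsilon}$ contribution for $\alpha$ close to $1/2$; the $\epsilon$-loss reflects the trade-off in $\alpha$.

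In the complementary regime $\sqrt{t}\geq r_{\rm in}(\Omega)$ I would argue crudely. A uniform convex upper bound of the form $k_\Omega(t,x,x)\lesssim (4\pi t)^{-d/2}+r_{\rm in}(\Omega)^{-d}$, combined with the convex inequalities $r_{\rm in}(\Omega)^{d-1}\lesssim\Haus^{d-1}(\partial\Omega)$ and $|\Omega|\leq r_{\rm in}(\Omega)\Haus^{d-1}(\partial\Omega)$, majorizes each of $(4\pi t)^{d/2}\Tr(e^{t\Delta_\Omega})$, $|\Omega|$, and $\sqrt{\pi t}\Haus^{d-1}(\partial\Omega)/2$ by a constant multiple of $\Haus^{d-1}(\partial\Omega)\,t^{d/2}/r_{\rm in}(\Omega)^{d-1}$, which is precisely the $(\sqrt{t}/r_{\rm in})^{d-1}$ contribution on the right. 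The main obstacle is the uniform remainder estimate on $R_\Omega$ near non-smooth parts of $\partial\Omega$: flat points fall under the half-space comparison, but at corners no classical regularity is available, and extracting an essentially optimal exponent such as $1/2-\epsilon$ requires a careful scale-invariant comparison and the trade-off encoded by $\alpha$. Once this pointwise estimate is in hand, the convex monotonicity of the inner parallel surfaces is what makes the integrated bound match the sharp two-term form cleanly.
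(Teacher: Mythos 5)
Your proposal has a genuine gap at its central step, and it is precisely the difficulty that the paper's structure is designed to handle. You posit a uniform pointwise remainder estimate of the form
$$
k_\Omega(t,x,x) = \frac{1}{(4\pi t)^{d/2}}\bigl(1+e^{-d_\Omega(x)^2/t}\bigr) + R_\Omega(t,x),
$$
with $R_\Omega(t,x)$ controlled uniformly in scale-invariant ratios of $d_\Omega(x)$, $\sqrt t$, $r_{\rm in}(\Omega)$. No such bound exists: near a sharp corner of a convex set, the Li--Yau comparison gives $k_\Omega(t,x,x)\asymp V_\Omega(x,\sqrt t)^{-1}$, and $V_\Omega(x,\sqrt t)$ can be smaller than $t^{d/2}$ by a factor depending only on the local cone angle, which is not controlled by any of $d_\Omega(x)$, $\sqrt t$, $r_{\rm in}(\Omega)$. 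In particular, your coarea strategy would need to apply the remainder bound pointwise on each level set $\{d_\Omega=s\}$, where it is simply false at corner points. You acknowledge this obstacle (``at corners no classical regularity is available, and extracting \dots requires a careful scale-invariant comparison'') but offer no mechanism to overcome it; that is exactly where the argument breaks.

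The paper resolves this not by a uniform pointwise bound but by a measure-theoretic good/bad decomposition of the boundary region. It fixes a scale $r\gtrsim\sqrt t$ and a sawtooth region $\mathcal G_{\epsilon,r}$ over $(\epsilon,r)$-good boundary points (points near which $\partial\Omega$ lies within a narrow cone of a hyperplane); on $\mathcal G_{\epsilon,r}$ the half-space approximation holds with an explicit $\epsilon$-controlled error (Theorem~\ref{thm: diagonal kernel bounds convex}, proved via the Neumann boundary-value-problem machinery of Sections 5 and 6, not by direct reflection). Off $\mathcal G_{\epsilon,r}$ the kernel is only controlled by Li--Yau, and the key compensating input is that the bad set has small measure, $|\{d_\Omega<s\}\setminus\mathcal G_{\epsilon,r}|\lesssim_d s r/(\epsilon\, r_{\rm in})$, so that integrating the Li--Yau bound over it via the covering argument of Proposition~\ref{prop: bounds integral of local volume} contributes a controllable error. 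The optimization you gesture at is then over the pair $(\epsilon,r)$ (and ultimately a logarithmic choice of $r/\sqrt t$), which is how the $(\sqrt t/r_{\rm in})^{1/2}\log^{1/2}$ exponent, hence $1/2-\epsilon$, emerges; it is not a threshold splitting within a single coarea integral with a uniform pointwise remainder.

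A secondary but similar problem appears in your large-$t$ regime. The claimed uniform bound $k_\Omega(t,x,x)\lesssim (4\pi t)^{-d/2}+r_{\rm in}(\Omega)^{-d}$ fails at sharp corners for the same reason. The paper instead integrates the Li--Yau bound and uses $\int_\Omega V_\Omega(x,\sqrt t)^{-1}\,dx\lesssim t^{-d/2}|\Omega+B_{\sqrt t}|$, a statement about averages, not pointwise values, combined with the Minkowski-sum estimate in Proposition~\ref{prop: Minkowski sum bounds}. Your auxiliary convex inequalities ($r_{\rm in}^{d-1}\lesssim\Haus^{d-1}(\partial\Omega)$ and $|\Omega|\le r_{\rm in}\Haus^{d-1}(\partial\Omega)$) are correct and used in the paper, but the pointwise kernel bound underpinning your estimate is not.
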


In the setting of convex sets, our main theorem improves Theorem~\ref{thm: Brown trace asymptotics} in two ways: (i) the bound in Theorem \ref{thm: main thm convex} is valid for all $t>0$ and not only asymptotically, and (ii) Theorem~\ref{thm: main thm convex} provides explicit control of the size of the $o$-term in \eqref{eq: two-term heat asymptotics}. 

Theorem~\ref{thm: main thm convex} tells us that for convex sets the little $o(\sqrt{t})$-term can be improved on the algebraic scale (uniformly). In contrast, such improvements are not possible in the Lipschitz setting, as pointed out by Brown in \cite{Brown93}. Indeed, Brown argues that for any $\epsilon>0$ there is a Lipschitz set $\Omega \subset\R^d$ such that
\begin{equation*}
    \limsup_{t\to 0^\limplus} \frac{|(4\pi t)^{d/2}\Tr(e^{t\Delta_\Omega}) - |\Omega| - \frac{\sqrt{\pi t}}{2}\Haus^{d-1}(\partial\Omega)|}{t^{1/2+\epsilon}}= \infty\,.
\end{equation*}
For the corresponding two-term asymptotic expansion for the Dirichlet (rather than Neumann) Laplacian it was shown in \cite[Theorem 1.1]{FrankLarson_JMP20} that the $o(\sqrt{t})$-remainder cannot be improved in the class of all Lipschitz sets, not only on the algebraic scale but in any quantitative sense whatsoever.

Although our main aim is to prove Theorem \ref{thm: main thm convex}, many of the main steps of the argument work in the setting of Lipschitz sets and, as a bi-product of our version of the argument, we shall prove a version of Theorem~\ref{thm: Brown trace asymptotics} with rather explicit estimates for the $o$-remainder. We will follow the overall strategy of Brown, but even in the Lipschitz case we felt the need at several places to add more details or to modify the argument. To deal with the convex case, we need to make several of the argument much more precise.

We believe that the machinery that we develop to prove Theorem \ref{thm: main thm convex} is also applicable to other subclasses of Lipschitz domains and leads to an improvement of Theorem~\ref{thm: main thm convex}, provided for this class of sets one can improve a few `little-o results' valid in the Lipschitz case; see, in particular, how in Subsection \ref{sec:goodconvex} the results of Subsection \ref{sec:goodlipschitz} are improved, or how Lemma \ref{lem: off-diagonal kernel bound convex} improves upon Lemma \ref{heatnapriori}. This paper being already rather long, we have decided not to investigate this further, but rather to focus on the theorem that we need in \cite{FrankLarson_Inventiones25}.

Theorems \ref{thm: Brown trace asymptotics} and \ref{thm: main thm convex} are obtained by writing the heat trace as an integral of the on-diagonal heat kernel over $\Omega$ and by studying this heat kernel in various regions of space. Let us explain this procedure in some more detail and explain some of the difficulties, before turning to the required heat kernel bounds in the following subsection.

It is known that, for any $t>0$, the operator $e^{t\Delta_\Omega}$ is an integral operator; see \cite[Theorems 2.3.6 and 2.4.4]{Davies_heatkernels}. Its integral kernel will be denoted by $k_\Omega(t,\cdot,\cdot)$, that is,
$$
(e^{t\Delta_\Omega}f)(x) = \int_\Omega k_\Omega(t,x,y) f(y)\,dy
\qquad\text{for all}\ f\in L^2(\Omega) \,.
$$ 
If the spectrum of $-\Delta_\Omega$ is discrete, then the kernel $k_\Omega$ can be represented, in the sense of a Schmidt decomposition of a compact operator, as the series
\begin{equation*}
	k_\Omega(t, x, y) = \sum_{k\geq 1} e^{-t\lambda_k(\Omega)}u_k(x)u_k(y)\,.
\end{equation*}
For the heat trace, we have the representation
\begin{equation}\label{eq:heattraceheatkernel}
    \Tr(e^{t\Delta_\Omega})= \int_\Omega k_\Omega(t, x, x)\,dx \,.
\end{equation}
For general background on heat kernels we recommend Davies's book \cite{Davies_heatkernels}.

The small $t$-asymptotics are proved, roughly speaking, by splitting the integral in~\eqref{eq:heattraceheatkernel} into a bulk part, corresponding to points $x$ with $d_\Omega(x)\gg \sqrt t$ and a boundary region corresponding to points $x$ with $d_\Omega(x)\lesssim \sqrt t$. Roughly speaking, the integral of the heat kernel over the bulk contributes the term $(4\pi t)^{-d/2}|\Omega|$ and the integral over the boundary region contributes the term $(4\pi t)^{-d/2} (\sqrt{\pi t}/2) \mathcal H^{d-1}(\partial\Omega)$. 

As such, our main work consists in obtaining quantitative approximations for $k_\Omega$. The main philosophy is that in the bulk the heat kernel should be well-approximated by the heat kernel of the Laplacian in $\R^d$, while in the boundary region $k_\Omega$ should be well-approximated by the Neumann heat kernel in a suitably chosen half-space. This intuition can be made precise when the boundary is $C^1$, but in the Lipschitz (or even convex case) it is in the approximation in the boundary region where the technical work starts and where tools from geometric measure theory and from parabolic PDEs become necessary. Indeed, the approximation of the heat kernel by a half-space heat kernel only works at points near which the boundary is well-approximated by a hyperplane at the scale $\sqrt{t}$. One needs to quantify to which extent these points constitute `most' of the boundary region and one needs to deal with the remaining points.

In order to prove the desired approximation results for $k_\Omega$ we follow the ideas of Brown who, in turn, is inspired by the study of the Neumann problem for Laplace's equation in Lipschitz domains due to Dahlberg and Kenig \cite{DaKe}. An a priori estimate that Brown does not have in the Lipschitz case, but that we have in the convex case, is the heat kernel bound by Li and Yau \cite{LiYau86}.

\subsection{Results for the heat kernel}
Our first result provides an accurate approximation of $k_\Omega$ in the bulk of $\Omega$, i.e.\ for $x$ so that $d_\Omega(x)\gg \sqrt{t}$. This result is essentially due to Brown \cite[Eq.~(2.4+)]{Brown93}. However, as will be explained below, we prove a more general statement and track more explicitly how the implicit constant can be controlled.

Before stating our first result we recall that an open set $\Omega\subset\R^d$ is said to have the \emph{extension property} if there is a bounded, linear operator $\mathcal{E}_\Omega \colon H^1(\Omega) \to H^1(\R^d)$ such that, if $u\in H^1(\Omega)$, then $\mathcal{E}_\Omega u(x) = u(x)$ for almost every $x \in \Omega$. It is well-known that open sets with uniformly Lipschitz boundary have the extension property (see, e.g., \cite[Theorem 2.92]{LTbook}). In particular, any bounded open Lipschitz set has the extension property.
\begin{theorem}\label{thm: diagonal bulk Lipschitz}
    Let $d\geq 1$, $\eta,\delta >0$ and let $\Omega\subset\R^d$ be an open set with the extension property. There is a constant $C_{\Omega,\eta,\delta}$ such that for all  $t>0$ and $x\in\Omega$ with $d_\Omega(x)\geq \eta \sqrt{t}$ we have
    $$
    \Bigl|k_\Omega(t, x, x)- (4\pi t)^{-d/2}\Bigr| \leq C_{\Omega,\eta,\delta}  \, t^{-d/2} e^{-\frac{d_\Omega(x)^2}{(1+\delta)t}}\,.
    $$
\end{theorem}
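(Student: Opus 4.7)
My plan is to compare $k_\Omega(t,x_0,x_0)$ with $(4\pi t)^{-d/2}=p_t(x_0,x_0)$, where $p_s(x,y):=(4\pi s)^{-d/2}e^{-|x-y|^2/(4s)}$ denotes the free heat kernel on $\R^d$. Setting $r:=d_\Omega(x_0)\geq \eta\sqrt{t}$, the function $w(s,y):=k_\Omega(s,y,x_0)-p_s(y,x_0)$ (for $y\in\Omega$, $s\in(0,t]$) solves the heat equation in $\Omega$ with vanishing initial data and inhomogeneous Neumann condition $\partial_\nu w=-\partial_\nu p_s(\cdot,x_0)$ on $\partial\Omega$, since $k_\Omega$ itself has zero normal flux. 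Duhamel's principle---equivalently, differentiating $s\mapsto\int_\Omega k_\Omega(t-s,x_0,z)p_s(z,x_0)\,dz$ and applying Green's identity---then yields
\begin{equation*}
w(t,x_0) \;=\; -\int_0^t\!\int_{\partial\Omega} k_\Omega(t-s,x_0,z)\,\partial_\nu p_s(z,x_0)\,d\sigma(z)\,ds,
\end{equation*}
and the theorem reduces to an estimate of this integral.

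The key analytic input I would need is a Davies-type Gaussian upper bound: for every $\delta'>0$ there exists $C_{\Omega,\delta'}>0$ such that
\begin{equation*}
k_\Omega(s,y,z) \;\leq\; C_{\Omega,\delta'}(1+s^{-d/2})\,e^{-|y-z|^2/(4(1+\delta')s)},
\end{equation*}
which is standard from the extension property (Nash's inequality on $\R^d$, transferred through $\mathcal E_\Omega$, yields the ultracontractivity bound $\|e^{s\Delta_\Omega}\|_{L^1\to L^\infty}\lesssim_\Omega s^{-d/2}$ for small $s$, and Davies's exponential-perturbation trick upgrades this to the Gaussian form). Combined with the elementary bound $|\partial_\nu p_s(z,x_0)|\leq \tfrac{|z-x_0|}{2s}p_s(z,x_0)$, the integrand above is controlled, up to constants depending only on $\Omega$ and $\delta'$, by
\begin{equation*}
\frac{|z-x_0|}{s^{d/2+1}(t-s)^{d/2}}\,\exp\!\Bigl(-\tfrac{|z-x_0|^2}{4(1+\delta')}\Bigl(\tfrac{1}{s}+\tfrac{1}{t-s}\Bigr)\Bigr).
\end{equation*}
The decisive algebraic observation is $\tfrac{1}{s}+\tfrac{1}{t-s}=\tfrac{t}{s(t-s)}\geq \tfrac{4}{t}$, with equality exactly at $s=t/2$; this is precisely what produces the rate $1/((1+\delta)t)$ claimed in the theorem. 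Using $|z-x_0|\geq r$ on $\partial\Omega$, I would split the exponent (for any $\delta>\delta'$) as $-r^2/((1+\delta)t)-c_{\delta,\delta'}|z-x_0|^2/t$, extracting the advertised Gaussian factor while preserving enough $|z-x_0|$-decay for the surface integral to converge (with any local growth of $\mathcal H^{d-1}(\partial\Omega\cap B_R(x_0))$ absorbed into $C_\Omega$). The $s$-integration is then elementary, and the polynomial prefactors in $t$ are dominated by $t^{-d/2}$ after trading powers of $r$ via $r\geq\eta\sqrt{t}$.

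The principal obstacle is justifying the Duhamel identity when $\Omega$ merely has the extension property, so that $\partial\Omega$ need not admit classical normals and the trace of $k_\Omega(t-s,x_0,\cdot)$ on $\partial\Omega$ must be interpreted weakly. I would handle this variationally throughout: mollify $\delta_{x_0}$, derive the identity by Green's formula on a smoother exhausting sequence $\Omega_j\nearrow\Omega$, and pass to the limit using the a priori Gaussian bound to dominate. In the generality stated, the surface integral is best read as the distributional pairing of the $H^1$-trace of $k_\Omega(t-s,x_0,\cdot)$ against the measure $-\Div(\1_\Omega\nabla p_s(\cdot,x_0))\in H^{-1}(\R^d)$. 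Apart from this boundary-regularity bookkeeping, the argument is essentially algebraic once the two Gaussian upper bounds are in hand.
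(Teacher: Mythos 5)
The structural idea---compare $k_\Omega$ with the free kernel via Duhamel, then invoke Davies's Gaussian upper bound---matches the paper's, but you have missed the device that makes the argument run under the bare extension property. The paper does not compare $k_\Omega(\cdot,\cdot,x)$ with $k_{\R^d}(\cdot,\cdot,x)$ directly; it compares it with $\chi\,k_{\R^d}(\cdot,\cdot,x)$ for a cutoff $\chi\in C_0^\infty(\Omega)$ supported in $B_{d_\Omega(x)}(x)$. With the cutoff in place, $w:=k_\Omega-\chi k_{\R^d}$ solves the heat equation with a \emph{bulk} source supported in an interior annulus $A\subset\Omega$ and with \emph{zero} Neumann data, so Duhamel gives a \emph{volume} integral $\int_0^t\int_\Omega k_\Omega(t-s,\cdot,z)\bigl(2\nabla k_{\R^d}\cdot\nabla\chi+k_{\R^d}\Delta\chi\bigr)(s,z)\,dz\,ds$, which is estimated purely in terms of $\|k_\Omega\|_{G^\infty_{\delta,x}}$. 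No trace of $k_\Omega$ on $\partial\Omega$ and no surface measure ever appear.

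Without the cutoff you are driven to the surface integral $\int_0^t\int_{\partial\Omega}k_\Omega(t-s,x_0,z)\,\partial_\nu p_s(z,x_0)\,d\sigma(z)\,ds$, and this is where your argument has a genuine gap. The extension property gives no control over $\partial\Omega$: there are extension domains (uniform, or $(\epsilon,\delta)$, domains) whose boundary has Hausdorff dimension strictly larger than $d-1$, so $\Haus^{d-1}(\partial\Omega\cap B_R(x_0))$ may be infinite and the parenthetical ``any local growth of $\Haus^{d-1}(\partial\Omega\cap B_R(x_0))$ absorbed into $C_\Omega$'' is simply unavailable. Your proposed weak reading of the boundary pairing as $\langle -\Div(\1_\Omega\nabla p_s),\,\cdot\,\rangle_{H^{-1},H^1}$ does not rescue this: unwound, that pairing \emph{is} the volume integral $\int_\Omega\nabla p_s\cdot\nabla k_\Omega\,dz$, so the explicit surface bound you wrote down (in terms of $|\partial_\nu p_s|$ and $d\sigma$) no longer applies, and you would instead have to control $\nabla k_\Omega$ rather than $k_\Omega$---a different task you have not addressed. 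The exhausting-sequence idea suffers the same defect, since $\Haus^{d-1}(\partial\Omega_j\cap B_R(x_0))$ need not stay bounded as $\Omega_j\nearrow\Omega$. The cutoff, which pushes the inhomogeneity into the bulk of $\Omega$, is precisely what sidesteps all of this, and it is the step your proof is missing.
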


For the Neumann Laplacian in a convex set we shall prove the following result, which improves upon the inequality obtained in Theorem~\ref{thm: diagonal bulk Lipschitz} in that the implicit constant can be chosen independent of the underlying geometry.

\begin{theorem}\label{thm: diagonal bulk convex}
Let $d\geq 2$ and $\eta, \delta>0$. There is a constant $C_{d,\eta,\delta}$ such that for any open convex set $\Omega \subset \R^d$, all $t>0$ and $x\in\Omega$ with $d_\Omega(x)\geq \eta \sqrt{t}$ we have
	\begin{equation*}
		\Bigl|k_\Omega(t, x, x)- (4\pi t)^{-d/2}\Bigr| \leq C_{d,\eta,\delta} \, t^{-d/2}e^{-\frac{d_\Omega(x)^2}{(1+\delta)t}}\,.
	\end{equation*}
\end{theorem}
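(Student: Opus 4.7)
The plan is to represent $k_\Omega(t,x,x)-(4\pi t)^{-d/2}$ via a boundary integral derived from Green's identity and then control this integral uniformly in $\Omega$ using the Li--Yau Gaussian upper bound for the Neumann heat kernel on convex sets (which supplies a purely dimension-dependent constant), combined with a convex-geometric identity that transfers the boundary integral into a spherical one. The main obstacle will be sharpening the resulting Gaussian in the final bound to exponent $r^2/((1+\delta)t)$ with $\delta$ arbitrarily small, rather than the weaker $r^2/(2t)$ that one-sided Gaussian bookkeeping produces.

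Concretely, with $k_0(s,x,y):=(4\pi s)^{-d/2}e^{-|x-y|^2/(4s)}$ the free heat kernel, Green's second identity on $[0,t]\times\Omega$ applied to $u(s,y):=k_\Omega(t-s,y,x)$ and $v(s,y):=k_0(s,x,y)$, together with $\partial_{n_y}k_\Omega=0$, gives the representation
\begin{equation*}
    k_\Omega(t,x,x)-(4\pi t)^{-d/2}=\int_0^t\!\!\int_{\partial\Omega} k_\Omega(t-s,y,x)\,\frac{(y-x)\cdot n(y)}{2s}\,k_0(s,x,y)\,d\Haus^{d-1}(y)\,ds,
\end{equation*}
with non-negative integrand since $(y-x)\cdot n(y)\geq 0$ $\Haus^{d-1}$-a.e.\ for convex $\Omega$ and $x\in\Omega$. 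Applying the Li--Yau bound $k_\Omega(\tau,y,z)\leq C_{d,\epsilon'}\tau^{-d/2}e^{-|y-z|^2/((4+\epsilon')\tau)}$ to $k_\Omega(t-s,y,x)$ produces a joint Gaussian weight $e^{-\sigma(s)|y-x|^2}$ with $\sigma(s)=\tfrac{1}{(4+\epsilon')(t-s)}+\tfrac{1}{4s}$. The convex-geometric identity $(y-x)\cdot n(y)\,d\Haus^{d-1}(y)=\rho(\omega)^d\,d\omega$, valid under the polar parametrization $y=x+\rho(\omega)\omega$ of $\partial\Omega$ from $x$ (with $\rho(\omega)\geq r:=d_\Omega(x)$), combined with the splitting $\rho^d e^{-\sigma\rho^2}\leq C_d(\nu\sigma)^{-d/2}e^{-(1-\nu)\sigma r^2}$ for an auxiliary $\nu\in(0,1)$, reduces the whole estimate to
\begin{equation*}
    |k_\Omega(t,x,x)-(4\pi t)^{-d/2}|\leq C_{d,\nu,\epsilon'}\,t^{-d/2}\int_0^1\frac{d\tau}{\tau}\,e^{-(1-\nu)\beta\tilde\sigma(\tau)},
\end{equation*}
with $\beta:=r^2/t\geq\eta^2$ and $\tilde\sigma(\tau):=\tfrac{1}{(4+\epsilon')(1-\tau)}+\tfrac{1}{4\tau}$.

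The heart of the argument is the sharp estimation of this one-dimensional integral. The function $\tilde\sigma$ attains its minimum $\tilde\sigma_{\min}=(2+\sqrt{4+\epsilon'})^2/(4(4+\epsilon'))\to 1$ as $\epsilon'\to 0$ at an interior point close to $\tau=1/2$, where $\tilde\sigma(\tau)-\tilde\sigma_{\min}$ admits a quadratic lower bound. A Laplace-type splitting of $[0,1]$ into a shrinking Gaussian-width neighbourhood of the minimizer (where one integrates an exact Gaussian, producing a factor $\beta^{-1/2}$) and the complementary tails (where $\tilde\sigma$ exceeds $\tilde\sigma_{\min}$ by a definite amount so that crude decay estimates suffice) yields
\begin{equation*}
    \int_0^1\tau^{-1}\,e^{-(1-\nu)\beta\tilde\sigma(\tau)}\,d\tau\leq C_{d,\nu,\epsilon'}\,\beta^{-1/2}\,e^{-(1-\nu)\tilde\sigma_{\min}\beta}\,.
\end{equation*}
Using $\beta^{-1/2}\leq 1/\eta$ and choosing $\nu,\epsilon'>0$ small enough (depending on $\delta$) so that $(1-\nu)\tilde\sigma_{\min}\geq 1/(1+\delta)$ delivers the stated estimate. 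This final step is the core subtlety: any bound that drops one of the two summands in $\tilde\sigma$ only yields $e^{-r^2/(2t)}$, which suffices for $\delta\geq 1$ but not for the sharp regime $\delta\to 0^+$; the joint dependence of $\tilde\sigma$ must be preserved through the Laplace analysis to recover the full exponent $r^2/t$ up to an arbitrarily small loss.
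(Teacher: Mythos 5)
Your argument is essentially correct, but it takes a genuinely different route from the paper. The paper deduces the bound from a general interior result (Proposition~5.1): one writes $w(t',x') = k_\Omega(t',x',x) - k_{\R^d}(t',x',x)\chi(x')$ with a smooth cut-off $\chi\in C_0^\infty(\Omega)$ supported in $B_R(x)$ and applies Duhamel's principle to the inhomogeneous heat equation satisfied by $w$; this produces a \emph{volume} integral over an annulus, avoiding any boundary integral entirely, and the Li--Yau bound (via Corollary~3.4) with $V_\Omega(x,\sqrt t)\gtrsim_\eta t^{d/2}$ then finishes the job. You instead invoke Green's identity directly to obtain a \emph{boundary} integral and then use the convex-specific cone formula $(y-x)\cdot n(y)\,d\Haus^{d-1}(y)=\rho(\omega)^d\,d\omega$ to reduce to a one-dimensional Laplace-type integral whose minimum, $\tilde\sigma_{\min}=(2+\sqrt{4+\epsilon'})^2/(4(4+\epsilon'))\to 1$, delivers the sharp exponent. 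This is a clean and more geometric route; the polar identity is the device that replaces the cut-off argument and gives the correct Gaussian rate without any $d=2$ logarithmic loss (which in the paper is absorbed only because $R/\sqrt t\geq\eta$, so $\log_\limplus(\sqrt t/R)$ is bounded).

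Two points you should address to make the argument airtight. First, the passage from the Li--Yau estimate in terms of $V_\Omega(x,\sqrt{t-s})^{-1}$ to $(t-s)^{-d/2}$ uses $d_\Omega(x)\geq\eta\sqrt t\geq\eta\sqrt{t-s}$, so the constant you write as $C_{d,\epsilon'}$ should carry an $\eta$-dependence; this is harmless but worth stating, since it is the only place $\eta$ actually enters your proof beyond the trivial bound $\beta\geq\eta^2$. Second, the Green's identity (Pleijel-type) representation requires justification on a merely Lipschitz (indeed convex) boundary: $\nabla_y k_\Omega(t-s,\cdot,x)$ need not have a classical normal trace, and the Neumann condition holds only in the variational sense. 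One way to patch this is to run the identity on a smooth, bounded convex exhaustion and pass to the limit, using the Li--Yau bound to dominate; alternatively invoke the weak Green's formula for $H^1$ functions with $L^2$ Laplacian, pairing the $H^{-1/2}(\partial\Omega)$ normal trace against the bounded trace of $k_\Omega(t-s,\cdot,x)$ (bounded by Li--Yau). The paper sidesteps this regularity issue altogether with the cut-off Duhamel device, which is one reason it prefers that route even though it is less tailored to convexity.
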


Since the heat kernel of the Laplace operator in $L^2(\R^d)$ is given by 
\begin{equation*}
    k_{\R^d}(t, x, y) =(4\pi t)^{-d/2}e^{-\frac{|x-y|^2}{4t}} \,,
\end{equation*}
Theorems~\ref{thm: diagonal bulk Lipschitz} and~\ref{thm: diagonal bulk convex} both show that the difference $k_\Omega -k_{\R^d}$ is small (relative to $k_{\R^d}$) on the diagonal $x=y$ as long as $d_\Omega(x)\gg \sqrt{t}$.

For $x \in \Omega$ with $d_\Omega(x)\lesssim \sqrt{t}$ we shall instead prove that $k_\Omega$ is well-approximated by the half-space heat kernel. For the half-space $\R^d_\limplus:=\{x= (x_1, \ldots, x_d) \in \R^d: x_d>0\}$ the heat kernel is explicitly given by
\begin{equation*}
    k_{\R^d_\limplus}(t, x, y) = k_{\R^d}(t, x, y) + k_{\R^d}(t, x, \tau(y)) \,,
\end{equation*}
where $\tau\colon \R^d_\limplus \to \R^d$ is the natural reflection map through $\partial \R^d_\limplus$, $$\tau((x_1, \ldots, x_d))= (x_1, \ldots, x_{d-1}, -x_d)\,.$$
In particular, we have that
\begin{equation*}
    k_{\R^d_\limplus}(t, x, x) = (4\pi t)^{-d/2}\bigl(1+ e^{-x_d^2/t}\bigr)\,.
\end{equation*}

As the precise geometric assumptions needed for our proof that $k_\Omega$ can be approximated with a half-space kernel get rather technical, here we only state a qualitative result. The result captures that, as $t \to 0^\limplus$, the heat kernel is well-approximated by that in a half-space in an arbitrarily large portion of the boundary region $d_\Omega(x) \lesssim \sqrt{t}$.

\begin{theorem}\label{thm: diagonal bounds good set Lipschitz}
	Let $\Omega \subset \R^d, d\geq2,$ be a bounded open set with Lipschitz boundary. There exist a constant $C_\Omega$, a function $\mathcal{E}\colon (0, \infty)\to (0, 1]$, and a family of measurable sets $\{\mathcal{G}_r\}_{r>0}$, $\mathcal{G}_r\subset \Omega$, with the properties that
    \begin{equation*}   
        \lim_{r\to 0^\limplus}\mathcal{E}(r)=0\,, \qquad  \lim_{r \to 0^\limplus} \sup_{0<s\leq r} \frac{|\{x\in \Omega: d_\Omega(x)<s\}\setminus\mathcal{G}_r|}{|\{x\in \Omega: d_\Omega(x)<s\}|} =0 \,,
    \end{equation*}
    and
	\begin{equation*}
			\Bigl|k_\Omega(t, x, x) - (4\pi t)^{-d/2}\bigl(1 + e^{-d_\Omega(x)^2/t}\bigr)\Bigr|\leq C_\Omega t^{-d/2}e^{-cd_\Omega(x)^2/t} \Bigl(\mathcal{E}(r) + e^{- cr^2/t}\Bigr)\,,
	\end{equation*}
    for all $(t, x)\in(0, r^2]\times \mathcal{G}_r$ where $c>0$ is a universal constant.
\end{theorem}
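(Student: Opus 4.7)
The plan is to follow Brown's strategy~\cite{Brown93}, which in turn builds on the boundary estimates of Dahlberg and Kenig~\cite{DaKe}: construct $\mathcal{G}_r$ from those points $x \in \Omega$ whose nearest boundary point is uniformly ``flat'' at all scales $\leq r$, and then compare $k_\Omega$ with the half-space Neumann kernel via a Duhamel/layer-potential argument localized near that nearest boundary point.

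Since $\partial\Omega$ is Lipschitz, Rademacher's theorem gives $\Haus^{d-1}$-a.e.\ existence of a tangent hyperplane on $\partial\Omega$. I would quantify this by introducing the flatness coefficient
\[
\beta(y, s) := \frac{1}{s}\inf_{H \ni y}\sup\bigl\{\dist(z, H) : z \in \partial\Omega \cap B(y, Ms)\bigr\}
\]
for a fixed large $M$, the infimum running over affine hyperplanes through $y$. Then $\beta(y, s) \to 0$ as $s \to 0^\limplus$ for $\Haus^{d-1}$-a.e.\ $y$, and Egorov's theorem supplies a function $\mathcal{E}_0(r) \to 0$ and a family $\Gamma_r \subset \partial\Omega$ with $\Haus^{d-1}(\partial\Omega\setminus\Gamma_r)\to 0$ such that $\sup_{0<s\leq r}\beta(y, s)\leq \mathcal{E}_0(r)$ for all $y\in\Gamma_r$. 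Define $\mathcal{G}_r$ as the set of $x\in\Omega$ with $d_\Omega(x) < r$ that admit at least one nearest boundary point in $\Gamma_r$. A tubular-neighborhood/coarea argument on $\{d_\Omega < s\}$, parametrizing it up to controllable Jacobian distortion by $\partial\Omega\times(0,s)$, yields
\[
\bigl|\{d_\Omega<s\}\setminus\mathcal{G}_r\bigr| \leq C_\Omega\, s\,\Haus^{d-1}(\partial\Omega\setminus\Gamma_r),
\]
while $|\{d_\Omega<s\}| \geq c_\Omega\, s\, \Haus^{d-1}(\partial\Omega)$ for small $s$; this gives the required uniform density statement.

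For the heat kernel comparison, fix $x \in \mathcal{G}_r$ and $t \leq r^2$, and let $H_x^+$ denote the half-space bounded by the tangent hyperplane to $\partial\Omega$ at $y_x\in\Gamma_r$ containing $x$. Within $B(y_x, Mr)$, the boundaries of $\Omega$ and of $H_x^+$ differ only inside a tube of thickness $\mathcal{E}_0(r)\cdot r$ around the tangent hyperplane, whereas outside $B(y_x, Mr)$ their disagreement occurs at distance $\gtrsim r$ from $x$. A cutoff supported in $B(y_x, Mr/2)$ combined with Duhamel's principle for $k_\Omega - k_{H_x^+}$ expresses the on-diagonal difference at $x$ as a boundary integral over the small-measure region $\partial\Omega\cap(\Omega\triangle H_x^+)$ plus a ``far'' remainder of order $t^{-d/2}e^{-cr^2/t}$. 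Standard Gaussian upper bounds for the Neumann heat kernel in Lipschitz sets, together with the factor $e^{-cd_\Omega(x)^2/t}$ inherited from both $k_\Omega$ and $k_{H_x^+}$, bound the boundary integral by $C\,\mathcal{E}_0(r)\, t^{-d/2}e^{-cd_\Omega(x)^2/t}$, producing the claimed estimate with $\mathcal{E}(r) := C\mathcal{E}_0(r)$.

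The main obstacle is this last quantitative comparison. Heat flow has no finite speed of propagation, and a Lipschitz boundary does not admit any normal-coordinate reduction to the half-space; one must instead rely on boundary-layer-potential and Gaussian-kernel estimates of Dahlberg-Kenig type, whose constants depend on the Lipschitz character of $\partial\Omega$. The technical heart of the proof is to coordinate the cutoff scale, the Gaussian tail, and the multi-scale flatness defining $\mathcal{G}_r$ so that the three contributions combine into a single modulus $\mathcal{E}(r)$ governing all $t\leq r^2$ simultaneously.
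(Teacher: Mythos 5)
Your broad plan is in the spirit of what the paper does — choose a ``good'' set of boundary points via Egorov, reflect $x$ through a nearby boundary point, and approximate $k_\Omega$ by a half-space kernel via Duhamel — but two of the key quantitative steps, as you have set them up, would not go through.

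\smallskip

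\textbf{1. Reifenberg flatness does not control the normal vector.} You measure goodness of a boundary point $y$ by the $\beta$-number $\beta(y,s)$ (Hausdorff distance to a hyperplane). This is not enough. When you run Duhamel for $w = k_\Omega(\cdot,\cdot,x) - \chi\,(k_{\R^d}(\cdot,x,\cdot)+k_{\R^d}(\cdot,x^*,\cdot))$, the resulting Neumann boundary datum at $y\in\partial\Omega$ involves $\nu(y)\cdot\bigl[(y-x)e^{-|x-y|^2/4s}+(y-x^*)e^{-|x^*-y|^2/4s}\bigr]$, and \emph{the entire gain at dyadic scale $\rho$ comes from the cancellation in $\nu(y)\cdot(2y-x-x^*) = 2\nu(y)\cdot(y-x_0)$, which is small only if $\nu(y)\approx\nu(x_0)$}. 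A Lipschitz graph can have a vanishingly small $\beta$-number at a scale $r$ while $\nu$ oscillates with amplitude bounded away from $0$ at scales $\ll r$ (take $\varphi(x')=(\epsilon/N)\cos(Nx')$ with $N$ large: $\beta\lesssim \epsilon/(Nr)$ is small for $r\gg 1/N$, but $|\nabla\varphi|$ oscillates with amplitude $\epsilon$). The paper therefore carries a \emph{second} flatness quantity, the $L^p$ oscillation of the normal $\bar\nu_p(x_0,r)$, and Egorov is applied to \emph{both} the cone condition and the Lebesgue points of $\nu$ simultaneously (Lemma~\ref{lem: existence of big good set}). Without this, the boundary integral estimate in your last step does not close.

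\smallskip

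\textbf{2. The Duhamel boundary integral is not supported on a thin tube.} You write that the on-diagonal difference is ``a boundary integral over the small-measure region $\partial\Omega\cap(\Omega\triangle H_x^+)$.'' That is not what Duhamel gives. The boundary datum $f=-\partial_{\nu}\bigl[(k_{\R^d}(\cdot,x,\cdot)+k_{\R^d}(\cdot,x^*,\cdot))\chi\bigr]$ is supported on all of $\partial\Omega\cap\supp\chi$, and a naive size estimate gives $|f(s,y)|\sim s^{-(d+1)/2}|y-x|/\sqrt{s}\,e^{-c|x-y|^2/s}$ there; integrating this against a Gaussian bound on $k_\Omega$ yields $t^{-d/2}\cdot O(1)$, not $t^{-d/2}\cdot o(1)$. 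The smallness must be extracted: from the $\epsilon$-cone condition and $\nu(y)\approx\nu(x_0)$ at scales $\rho\gtrsim d_\Omega(x)$ (Lemmas~\ref{lem: f bound 2},~\ref{lem: f bound 3} in the paper), and, crucially, at scales $\rho\lesssim\sqrt t$ comparable to $d_\Omega(x)$, from a \emph{zero-mean decomposition of $f$ combined with Nash--Moser H\"older continuity of $k_\Omega$ up to $\partial\Omega$} (Lemmas~\ref{lem: f bound 1},~\ref{lem: a priori bound zero average}, Corollary~\ref{cor: Holder regularity up to boundary}). Your sketch contains no analogue of the mean-zero/H\"older mechanism, and without it the pieces of $f$ living at scales $\sim d_\Omega(x)$ give a contribution that does not tend to zero as $\epsilon,\bar\nu\to 0$. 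This is the ``technical heart'' you acknowledge but do not resolve; it is not a matter of tuning constants, it is a missing structural ingredient.

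\smallskip

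As a minor point, the coarea/tubular-neighborhood estimate $|\{d_\Omega<s\}\setminus\mathcal G_r|\lesssim s\,\Haus^{d-1}(\partial\Omega\setminus\Gamma_r)$ is not immediate for Lipschitz boundaries; the paper instead covers $\Gamma_r$ by finitely many pieces with nearly constant normal and uses the area formula to produce injective ``sawtooth'' maps (Step~1 of the proof of Lemma~\ref{lem: existence of big good set}). This can likely be patched, but the two issues above are genuine gaps that would need the mechanisms the paper develops.
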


The sets $\mathcal{G}_r$ will be described in detail in Section \ref{sec:constructiongood} below. Essentially, they consist of those points $x\in \Omega$ for which there exists $y \in \partial\Omega$ with $|x-y|<r$ and such that $\partial\Omega \cap B_{2r}(y)$ can be closely approximated by a hyperplane. 

Just as Theorem \ref{thm: diagonal bulk convex} strengthened the conclusion of Theorem \ref{thm: diagonal bulk Lipschitz} under the additional assumption that $\Omega$ was convex, we shall prove a stronger version of Theorem~\ref{thm: diagonal bounds good set Lipschitz} for convex sets. 

\begin{theorem}\label{thm: diagonal kernel bounds convex}
	Let $\Omega \subset \R^d$ be a bounded, open, and convex set. For any $\epsilon \in (0, \frac{1}{4}]$ and $r\in (0, \epsilon r_{\rm in}(\Omega)]$ there is a measurable set $\mathcal{G}_{\epsilon, r}\subset\Omega$ such that
	\begin{equation}\label{eq: Good set is big0}
    	\frac{|\{x\in \Omega: d_\Omega(x)<s\}\setminus \mathcal{G}_{\epsilon, r}|}{|\{x\in \Omega: d_\Omega(x)<s\}|} \leq C_d \frac{r}{\epsilon \, r_{\rm in}(\Omega)}
    \end{equation}
    for all $s \in (0, r/2]$, and
    \begin{equation*}
			\Bigl|k_\Omega(t, x, x) - (4\pi t)^{-d/2}\bigl(1 + e^{-d_\Omega(x)^2/t}\bigr)\Bigr|\leq C_d  t^{-d/2}e^{-cd_\Omega(x)^2/t}\bigl(\epsilon + e^{- cr^2/t}\bigr)
	\end{equation*}
    for all $(t, x)\in (0, r^2/2]\times \mathcal{G}_{\epsilon, r}$ where $c>0$ is a universal constant and $C_d$ depends only on $d$.
\end{theorem}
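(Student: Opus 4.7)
The plan is to follow the strategy of Theorem~\ref{thm: diagonal bounds good set Lipschitz} while upgrading each qualitative ingredient to a quantitative one by exploiting convexity. The two main geometric facts I would rely on are the uniform Li--Yau upper bound $k_\Omega(t, x, y)\leq C_d t^{-d/2}$ valid for all convex $\Omega$, and the comparability $|\{x\in\Omega: d_\Omega(x)<s\}|\asymp s\,\Haus^{d-1}(\partial\Omega)$ for $s\lesssim r_{\rm in}(\Omega)$, a simple consequence of convex geometry.

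To construct $\mathcal{G}_{\epsilon, r}$, I would declare $x\in \mathcal{G}_{\epsilon, r}$ iff $d_\Omega(x)< r/2$ and there exists a boundary point $y$ with $|x-y|<r$ admitting a supporting hyperplane $\pi_y$ at $y$ such that $\partial\Omega\cap B_{2r}(y)$ lies in the slab of half-thickness $\epsilon r$ around $\pi_y$; points with $d_\Omega(x)\geq r/2$ are included by default. By convexity this is equivalent to $\partial\Omega\cap B_{2r}(y)$ being the graph of a convex function over $\pi_y$ with Lipschitz constant $O(\epsilon)$, which is exactly the regime in which a half-space heat-kernel comparison is effective. For the density bound \eqref{eq: Good set is big0}, the key observation is that $y\in\partial\Omega$ fails the flatness condition at scale $r$ only if the outer normal varies by more than $\sim\epsilon$ across $\partial\Omega\cap B_r(y)$; integrating the normal variation and using that any inscribed ball of radius $r_{\rm in}(\Omega)$ produces supporting hyperplanes filling a definite portion of $\S^{d-1}$ gives that the bad portion of $\partial\Omega$ has $\Haus^{d-1}$-measure at most $C_d\,\Haus^{d-1}(\partial\Omega)\cdot r/(\epsilon\, r_{\rm in}(\Omega))$. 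Combining this with the convex boundary-strip comparability above yields \eqref{eq: Good set is big0} uniformly for $s\in(0, r/2]$.

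For the heat-kernel comparison on $\mathcal{G}_{\epsilon, r}$, fix $x\in \mathcal{G}_{\epsilon, r}$ with nearest boundary point $y$ and let $H$ be the half-space with boundary $\pi_y$ containing $x$. Setting $w(t, z):= k_\Omega(t, z, x) - k_H(t, z, x)$, the function $w$ solves the heat equation, vanishes as $t\to 0^\limplus$ away from $x$ (which lies in the interior of both $\Omega$ and $H$), and the Neumann defect on $\partial\Omega$ splits into two parts. The contribution from $\partial\Omega\cap B_{2r}(y)$ is controlled through a Duhamel/layer-potential representation against $k_H$ by the Lipschitz slope $O(\epsilon)$ of the boundary graph; the contribution from $\partial\Omega\setminus B_{2r}(y)$ is suppressed to size $e^{-cr^2/t}$ by Gaussian off-diagonal estimates, using $t\leq r^2/2$. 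The uniformity of the constants comes from Lemma~\ref{lem: off-diagonal kernel bound convex}, which is the convex upgrade of its Lipschitz counterpart Lemma~\ref{heatnapriori}.

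The main obstacle I anticipate is extracting a \emph{linear} dependence on $\epsilon$ from the boundary-layer defect rather than a soft $o(1)$: doing so forces one to combine the uniform Li--Yau bound with the explicit graph representation of $\partial\Omega$ and to argue quantitatively through the parabolic layer potentials, rather than through a compactness argument as in the Lipschitz setting. A secondary subtlety is that \eqref{eq: Good set is big0} must hold uniformly for all $s\in(0, r/2]$, which forces the geometric bad-set estimate to take the form of a total normal-variation inequality rather than a measure-theoretic limit.
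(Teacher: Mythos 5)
Your high-level plan matches the paper's: a quantitative version of the Lipschitz-case argument, with the good set and heat-kernel comparison upgraded to get uniform constants by exploiting convexity. However, there is a substantive error in the a~priori input you rely on. You claim a uniform Li--Yau bound $k_\Omega(t,x,y)\leq C_d t^{-d/2}$ valid for all convex $\Omega$. This is false: Lemma~\ref{lem: off-diagonal kernel bound convex} gives $k_\Omega(t,x,y)\lesssim_d e^{-|x-y|^2/(4(1+\delta)t)}/V_\Omega(x,\sqrt t)$, and for a point $x$ near a thin corner $V_\Omega(x,\sqrt t)$ can be arbitrarily small compared to $t^{d/2}$, so $k_\Omega(t,x,x)$ can be arbitrarily large relative to $t^{-d/2}$, uniformly over convex domains. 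The reason the Li--Yau bound nonetheless yields a bound of order $t^{-d/2}$ in the argument is precisely the content you are trying to prove: at points $x\in\mathcal G_{\epsilon,r}$ the boundary is flat at scale $r$, so that by Bishop--Gromov (Lemma~\ref{lem: Bishop-Gromov monotonicity}) together with the flatness one gets $V_\Omega(x,\sqrt t)\gtrsim_d t^{d/2}$. This volume lower bound at good points is exactly where uniformity in $\Omega$ is gained, and the paper spends several lines in the proof establishing it; your sketch elides it by stating a false a priori bound instead.

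Two further points. First, the comparison $w(t,z)=k_\Omega(t,z,x)-k_H(t,z,x)$ is not defined on $\Omega$ since $k_H$ lives on the half-space $H$, not on $\Omega$; you would need to write $k_H$ via the method of images as $k_{\R^d}(t,z,x)+k_{\R^d}(t,z,x^*)$, localize with a cut-off, and treat the commutator source terms exactly as in Section~6 of the paper. This is a technical but real step, not a cosmetic one. Second, your density-bound argument --- integrating the normal variation and invoking a Gauss-map-type inequality --- is a genuinely different route from the paper's, which instead uses the inscribed-ball good set $G^0_{r/\epsilon}$ and the inner parallel body estimates (Lemma~\ref{lem: inner parallel perimeter bounds} and Proposition~\ref{prop: G is a good set}). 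The inner parallel body route gives the factor $r/(\epsilon r_{\rm in}(\Omega))$ directly and cleanly; your proposed inequality (total normal variation at scale $r$ controlled by $\Haus^{d-1}(\partial\Omega)\cdot r/(\epsilon\,r_{\rm in}(\Omega))$) is plausible but would need careful justification, because for general convex bodies the Gauss map has Dirac-type singular contributions at corners and the connection between scale-$r$ normal variation and the surface area measure is not immediate.
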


This theorem improves upon Theorem~\ref{thm: diagonal bounds good set Lipschitz} in that (i) the quantities $\mathcal{E}$ and $|\{x\in \Omega: d_\Omega(x)<s\}\setminus \mathcal{G}_r|$ satisfy more explicit bounds in terms of simple geometric characteristics of $\Omega$, and (ii) the constants appearing in the bounds depend only on the dimension and not the specific convex set.



\section{The trace of the Neumann heat kernel}

Our main objective in this section is to prove Theorems~\ref{thm: Brown trace asymptotics} and~\ref{thm: main thm convex} under the assumption that we have accurate estimates for the heat kernel at our disposal. Given this result, all remaining sections of this paper will be concerned with heat kernel bounds and asymptotics.


\subsection{A semi-abstract result on Neumann heat traces}

We begin by showing, in a rather general setting, how approximation results for $k_\Omega$ lead to bounds and asymptotics for the trace. We assume three types of bounds. First, a universal order sharp bound valid everywhere; second, a sufficiently good leading order approximation in the bulk; and third, a sufficiently good leading plus subleading approximation in part of the boundary region. These bounds correspond to item (1), (2) and (3) in the following theorem.

In addition to the accuracy of the various approximations of $k_\Omega$, the estimate we obtain will depend on the quantity
\begin{equation*}
    \vartheta_\Omega(s) := \frac{|\{x\in \Omega: d_\Omega(x)<s\}|}{s \, \Haus^{d-1}(\partial\Omega)}-1
    \qquad\text{for}\ s>0 \,.
\end{equation*}

\begin{proposition}\label{thm: trace of N heatkernel general}
	Let $\Omega \subset \R^d, d\geq2,$ be an open set and assume that:
    \begin{enumerate}
        \item There exists a function $\mathcal{E}_0\colon (0, \infty)\times \Omega \to \R$ such that
        \begin{equation*}
        k_\Omega(t, x, x) \leq t^{-d/2}\mathcal{E}_0(t, x)\quad \mbox{for all }x\in \Omega, t>0\,.
        \end{equation*}
    
        \item There exist constants $c_1, \eta >0$ and a function $\mathcal{E}_1\colon (0, \infty)\times \Omega \to \R$ such that
        \begin{equation*}
        |k_\Omega(t, x, x)-(4\pi t)^{-d/2}| \leq t^{-d/2} \mathcal{E}_1(t, x)e^{-c_1 d_\Omega(x)^2/t}
        \end{equation*}
        for all $t>0$ and $x\in \Omega$ with $d_\Omega(x)\geq \eta \sqrt{t}$.

        \item There exist a constant $c_2>0$, a function $\mathcal{E}_2\colon (0, \infty)^2\times \Omega \to \R$, and measurable sets $\{\mathcal{G}_r\}_{r>0}$, $\mathcal{G}_r\subset\Omega$ such that
	    \begin{equation*}
			\Bigl|k_\Omega(t, x, x) - (4\pi t)^{-d/2}\bigl(1 + e^{-d_\Omega(x)^2/t}\bigr)\Bigr|\leq t^{-d/2} \mathcal{E}_2(r, t, x)e^{-c_2d_\Omega(x)^2/t} \,,
	    \end{equation*}
        for all $(t, x)\in(0, r^2]\times \mathcal{G}_r$.
    \end{enumerate}

    Then, for any $r\geq \max\{1, \eta\}\sqrt{t}>0$,\footnote{Here and in what follows we use the notation $\lesssim / \gtrsim$ to mean that the left side is bounded from above/below by a positive constant times the right side, where the implied constant is independent of the relevant parameters. Sometimes, as here, we put a subscript under the symbols, emphasizing that the implied constant only depends on the parameters appearing in this subscript. }
    \begin{equation*}
    \begin{aligned}
	   \biggl|&(4\pi t)^{d/2}\Tr(e^{t\Delta_\Omega}) - |\Omega| - \frac{\sqrt{\pi t}}{2}\Haus^{d-1}(\partial\Omega)\biggr|\\
        & \lesssim_{\eta,c_1,c_2}
        \sqrt{t}\Haus^{d-1}(\partial\Omega)\Biggl(\int_0^{r/\sqrt{t}} s^2 e^{-c s^2} |\vartheta_\Omega(s\sqrt{t})|\,ds+ e^{-c r^2/t}\\
        &\quad
        +
        \|\mathcal{E}_2(r, t, \cdot)\|_{L^\infty(\mathcal{G}_r)} \biggl[\int_0^{2r/\sqrt{t}} s^2 e^{-cs^2} |\vartheta_\Omega(s\sqrt{t})|\,ds+1\biggr]\\
        &\quad
        +
        \|\mathcal{E}_1(t, \cdot)\|_{L^\infty(\{x\in \Omega: d_\Omega(x)\geq r\})}\biggl[\int_{r/\sqrt{t}}^\infty s^2 e^{-c s^2}|\vartheta_\Omega(s\sqrt{t})|\,ds 
        +
        e^{-cr^2/t}\biggr]\\
        &\quad
        +\frac{1}{\sqrt{t}\Haus^{d-1}(\partial\Omega)}\int_{\{x\in \Omega: d_\Omega(x)<r\}\setminus \mathcal{G}_r} (1+\mathcal{E}_0(t, x))\,dx\Biggr)\,,
    \end{aligned}
    \end{equation*}
    where $c = \min\{1/8, c_2/8, c_1/2\}$.
\end{proposition}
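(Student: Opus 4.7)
The plan is to compute the heat trace as $\int_\Omega k_\Omega(t,x,x)\,dx$, to rewrite the surface term as a Gaussian moment of the distance function, and to apply the three approximations of $k_\Omega$ on the three natural regions. The key preparatory identity is
\begin{equation*}
\int_\Omega e^{-d_\Omega(x)^2/t}\,dx = \frac{\sqrt{\pi t}}{2}\Haus^{d-1}(\partial\Omega) + 2\sqrt{t}\,\Haus^{d-1}(\partial\Omega)\int_0^\infty s^2 e^{-s^2}\vartheta_\Omega(s\sqrt{t})\,ds\,,
\end{equation*}
which I would obtain by writing $\int_\Omega e^{-d_\Omega^2/t}\,dx = \int_0^\infty e^{-s^2/t}\,dV(s)$ with $V(s) = |\{d_\Omega<s\}|$, integrating by parts, substituting $V(s) = s\Haus^{d-1}(\partial\Omega)(1+\vartheta_\Omega(s))$, and rescaling $u=s/\sqrt{t}$. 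Using this, the quantity to be bounded equals
\begin{equation*}
\int_\Omega\bigl[(4\pi t)^{d/2}k_\Omega(t,x,x)-1-e^{-d_\Omega(x)^2/t}\bigr]\,dx + 2\sqrt{t}\,\Haus^{d-1}(\partial\Omega)\int_0^\infty s^2 e^{-s^2}\vartheta_\Omega(s\sqrt{t})\,ds\,.
\end{equation*}

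I would then split $\Omega = I_1\cup I_2\cup I_3$ with $I_1 = \{d_\Omega\geq r\}$, $I_2 = \mathcal{G}_r\cap\{d_\Omega<r\}$, and $I_3 = \{d_\Omega<r\}\setminus\mathcal{G}_r$, and invoke hypothesis (2) on $I_1$, hypothesis (3) on $I_2$, and hypothesis (1) together with the bound $|1+e^{-d_\Omega^2/t}|\leq 2$ on $I_3$. The $I_3$-contribution is estimated directly by $\int_{I_3}[(4\pi)^{d/2}\mathcal{E}_0(t,x)+2]\,dx$ and yields the fourth term of the claim. On $I_2$ one gets a clean error $\|\mathcal{E}_2\|_{L^\infty(\mathcal{G}_r)}\int_{I_2}e^{-c_2 d_\Omega^2/t}\,dx$; on $I_1$ one gets $\|\mathcal{E}_1\|_{L^\infty(I_1)}\int_{I_1}e^{-c_1 d_\Omega^2/t}\,dx$ plus the unweighted piece $\int_{I_1}e^{-d_\Omega^2/t}\,dx$ from splitting off the $-e^{-d_\Omega^2/t}$ in the integrand.

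The main computational step is then to estimate each Gaussian moment $\int_E e^{-\alpha d_\Omega^2/t}\,dx$ by the same Stieltjes-IBP-rescaling procedure used in the preparatory identity. This produces, for each such moment, a $\vartheta_\Omega$-moment over the rescaled interval matching $E$ (namely $[0,r/\sqrt{t}]$ on $I_2$ and $[r/\sqrt{t},\infty)$ on $I_1$), an $e^{-cr^2/t}$-tail from $\int_{r/\sqrt{t}}^\infty u^2 e^{-\alpha u^2}\,du$, and a boundary contribution proportional to $e^{-\alpha r^2/t}V(r)$. The decisive cancellation is that the $\vartheta$-tail generated by IBP on $I_1$, namely $2\sqrt{t}\,\Haus^{d-1}(\partial\Omega)\int_{r/\sqrt{t}}^\infty u^2 e^{-u^2}\vartheta_\Omega(u\sqrt{t})\,du$, enters the target difference with the opposite sign to the corresponding tail of the identity's $\vartheta$-integral, so that the high-frequency pieces cancel exactly and one is left only with the low-frequency moment $\int_0^{r/\sqrt{t}}s^2 e^{-s^2}|\vartheta_\Omega(s\sqrt{t})|\,ds$ of the first line of the claim, an $e^{-cr^2/t}$ error, and residual boundary terms in $V(r)$.

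The principal obstacle I anticipate is handling these residual $e^{-\alpha r^2/t}V(r)$ terms without introducing a pointwise evaluation $|\vartheta_\Omega(r)|$. For this I would use the monotonicity of $V$ in the elementary form $V(r)\leq \frac{1}{r}\int_r^{2r}V(s)\,ds$, substitute the definition of $\vartheta_\Omega$, and rescale, thereby trading the point value for a $\vartheta$-integral on $[r/\sqrt{t},2r/\sqrt{t}]$; this is what produces the interval $[0,2r/\sqrt{t}]$ (rather than $[0,r/\sqrt{t}]$) in the $\|\mathcal{E}_2\|_{L^\infty(\mathcal{G}_r)}$-term, while the decomposition $V(r) = |I_2|+|I_3|$ allows the corresponding unweighted contribution on $I_1$ to be absorbed either into the existing $\mathcal{E}_2$-weighted moment or into the $I_3$-term of the bound. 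Once these pieces are collected and the assumption $r\geq\max\{1,\eta\}\sqrt{t}$ is used to convert polynomial factors $(r/\sqrt{t})^k$ into exponential gain, the stated inequality follows. Aside from this bookkeeping, no additional analytic input beyond Gaussian tail estimates and layer-cake integration is required.
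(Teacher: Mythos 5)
Your proposal is correct and follows essentially the same route as the paper: split $\Omega$ into the bulk $\{d_\Omega\geq r\}$, the good boundary region $\{d_\Omega<r\}\cap\mathcal G_r$, and the bad boundary region $\{d_\Omega<r\}\setminus\mathcal G_r$; apply hypotheses (2), (3), (1) respectively; convert each Gaussian moment $\int e^{-\alpha d_\Omega^2/t}$ to a $\vartheta_\Omega$-weighted integral via the distribution function of $d_\Omega$; and finally trade the residual point value $V(r)$ (equivalently $|\vartheta_\Omega(r)|$) for a local $\vartheta_\Omega$-average on $[r/\sqrt t,2r/\sqrt t]$ using monotonicity of $V$. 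The one genuinely nice organizational difference in your write-up is that you extract $\int_\Omega e^{-d_\Omega^2/t}\,dx$ over \emph{all} of $\Omega$ first, so that when the $-\int_{\{d_\Omega\geq r\}}e^{-d_\Omega^2/t}\,dx$ term from the bulk region is expanded in the same way, its $\vartheta_\Omega$-tail on $[r/\sqrt t,\infty)$ cancels exactly against the tail of the full-$\Omega$ identity; the paper instead cuts the surface integral at $r$ from the start and so never sees this cancellation explicitly, picking up the equivalent boundary contributions via Fubini. Both roads arrive at the same residual terms.

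One sentence in your last paragraph is imprecise and would cause trouble if written out literally. You propose to split the unweighted residual $V(r)e^{-r^2/t}$ arising from the bulk by $V(r)=|I_2|+|I_3|$, sending the $|I_3|$-part into the $\Omega_{bad}$-term (that is fine, since $|I_3|e^{-r^2/t}\le\int_{I_3}(1+\mathcal E_0)\,dx$) and the $|I_2|$-part ``into the existing $\mathcal E_2$-weighted moment.'' That absorption requires $\|\mathcal E_2\|_{L^\infty(\mathcal G_r)}\gtrsim 1$, which is not part of the hypotheses; if $\mathcal E_2$ is very small you cannot pay for an unweighted quantity with an $\mathcal E_2$-weighted one. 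The correct move for this piece is the same monotonicity trick you describe in the previous sentence (apply $V(r)\le r^{-1}\int_r^{2r}V(s)\,ds$, or equivalently the paper's observation that $\vartheta_\Omega(s\sqrt t)\ge \tfrac12\vartheta_\Omega(r)-\tfrac12$ for $s\in[r/\sqrt t,2r/\sqrt t]$), which converts it to a $\vartheta_\Omega$-integral on $[r/\sqrt t,2r/\sqrt t]$ plus a decaying exponential; this is what belongs in the first line of the claimed bound. You had already identified the right tool, so this is a bookkeeping slip rather than a missing idea.
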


\begin{proof}
	\emph{Step 1. Decomposition of the trace.}
	
    Define
	\begin{align*}
		\Omega_{bulk} & := \{x \in \Omega: d_\Omega(x)\geq r\}\,,\\
		\Omega_{bad} & := \{x \in \Omega: d_\Omega(x)< r\} \setminus \mathcal{G}_{r}\,,\\
		\Omega_{good} & := \{x \in \Omega: d_\Omega(x)< r\} \cap \mathcal{G}_{r}\,.
	\end{align*}
	Splitting the integral of $k_\Omega$ into these disjoint sets, we find
	\begin{equation}\label{eq: integral trace decomposition}
	\begin{aligned}
		(4\pi t)^{d/2}\Tr(e^{t\Delta_\Omega}) &=(4\pi t)^{d/2} \int_\Omega k_\Omega(t, x, x)\,dx\\
		&=
		(4\pi t)^{d/2}\Biggl[\int_{\Omega_{bulk}} k_\Omega(t, x, x)\,dx+ \int_{\Omega_{bad}} k_\Omega(t, x, x)\,dx\\
		&\qquad + \int_{\Omega_{good}} k_\Omega(t, x, x)\,dx\Biggr]\\
		&=
		|\Omega| + \int_{\{x\in \Omega:d_\Omega(x) <r\}} e^{-d_\Omega(x)^2/t}\,dx\\
		&\quad + \int_{\Omega_{bulk}} \Bigl((4\pi t)^{d/2}k_\Omega(t, x, x)-1\Bigr)\,dx\\
		&\quad + \int_{\Omega_{bad}} \Bigl((4\pi t)^{d/2}k_\Omega(t, x, x)-1- e^{-d_\Omega(x)^2/t}\Bigr)\,dx\\
		&\quad + \int_{\Omega_{good}} \Bigl((4\pi t)^{d/2}k_\Omega(t, x, x)-1 - e^{-d_\Omega(x)^2/t}\Bigr)\,dx\,.
	\end{aligned}
	\end{equation}
	
\medskip

\noindent \emph{Step 2. Extracting the main terms.}

We first show that the first two terms on the right side of \eqref{eq: integral trace decomposition} correspond to the terms in the asymptotic expansion, provided $r \gg \sqrt{t}$. To see this, we note that by Fubini's theorem and writing
\begin{equation*}
    e^{-y^2} = 2\int_y^\infty se^{-s^2}\,ds = 2\int_0^\infty \1_{\{s>y\}}se^{-s^2}\,ds \,,
\end{equation*}
we have
\begin{align*}
	\int_{\{x\in \Omega:d_\Omega(x) <r\}} e^{-\frac{d_\Omega(x)^2}{t}}\,dx
	&=
	2\int_0^\infty\int_\Omega  \1_{\{y\in \Omega:d_\Omega(y) <r\}}(x)\1_{\{y\in \Omega: d_\Omega(y)<s \sqrt{t}\}}(x) se^{-s^2}\,ds\,dx\\
    &=
	2\int_0^\infty|\{x\in \Omega:d_\Omega(x) <\min\{r, \sqrt{t} s\}\}| se^{-s^2}\,ds\\
    &=
	2\int_0^{r/\sqrt{t}}|\{x\in \Omega:d_\Omega(x) <\sqrt{t} s\}| se^{-s^2}\,ds\\
    &\quad 
    +2|\{x\in \Omega:d_\Omega(x) <r\}|\int_{r/\sqrt{t}}^\infty se^{-s^2}\,ds\,.
\end{align*}
Since for any $s>0$
\begin{equation*}
    |\{x\in \Omega: d_\Omega(x)<s\}| = s \Haus^{d-1}(\partial\Omega)(1+\vartheta_\Omega(s)) \,,
\end{equation*}
we can write the above equality as
\begin{align*}
	\int_{\{x\in \Omega:d_\Omega(x) <r\}} e^{-\frac{d_\Omega(x)^2}{t}}\,dx
    &=
	2\sqrt{t}\Haus^{d-1}(\partial\Omega)\biggl[\int_0^{\infty}s^2e^{-s^2}\,ds\\
    &\qquad - \int_{r/\sqrt{t}}^\infty s^2e^{-s^2}\,ds\\
    &\qquad+\int_0^{r/\sqrt{t}}\vartheta_\Omega(s\sqrt{t}) s^2e^{-s^2}\,ds\\
    &\qquad 
    +\frac{r}{\sqrt{t}}(1+\vartheta_\Omega(r))\int_{r/\sqrt{t}}^\infty se^{-s^2}\,ds\biggr]\,.
\end{align*}

The first integral now gives us the correct boundary term as
\begin{equation*}
	\int_0^{\infty} s^2 e^{-s^2}\,ds = \frac{\sqrt{\pi}}{4}\,.
\end{equation*}

The third integral we leave as it is. For the second and fourth we observe that
\begin{equation*}
    \int_{r/\sqrt{t}}^\infty s e^{-s^2}\,ds = \frac{e^{-r^2/t}}{2}\,,
\end{equation*}
and, since $se^{-s^2/2}\lesssim 1$,
\begin{equation*}
    \int_{r/\sqrt{t}}^\infty s^2 e^{-s^2}\,ds \lesssim \int_{r/\sqrt{t}}^\infty s e^{-s^2/2}\,ds = e^{-r^2/(2t)}\,.
\end{equation*}

Using $(r/\sqrt{t}) e^{-r^2/(2t)}\lesssim 1$ we have thus arrived at
\begin{equation}\label{eq: estimate boundary integral Lip}
\begin{aligned}
	\biggl|&\int_{\{x\in \Omega:d_\Omega(x) <r\}} e^{-d_\Omega(x)^2/t}\,dx- \frac{\sqrt{\pi t}}{2}\Haus^{d-1}(\partial\Omega)\biggr|\\
    &\quad \lesssim \sqrt{t}\Haus^{d-1}(\partial\Omega)\biggl[\int_0^{r/\sqrt{t}} s^2 e^{-s^2} |\vartheta_\Omega(s\sqrt{t})|\,ds+ (1+|\vartheta_\Omega(r)|)e^{-r^2/(2t)}\biggr]\,.
\end{aligned}
\end{equation}

\medskip

\noindent\emph{Step 3. Bounding the remainder terms.}

To bound the integrals over $\Omega_{bulk}, \Omega_{bad}, \Omega_{good}$ in~\eqref{eq: integral trace decomposition} we use the assumed bounds, obtaining that, if $r\geq \max\{1, \eta\}\sqrt{t}$, then
\begin{align*}
		\Biggl|\int_{\Omega_{bulk}} \Bigl((4\pi t)^{d/2}k_\Omega(t, x, x)-1\Bigr)\,dx\Biggr|&\leq \int_{\Omega_{bulk}} \mathcal{E}_1(t, x) e^{-c_1 d_\Omega(x)^2/t}\,dx\,,\\
		\Biggl|\int_{\Omega_{bad}} \Bigl((4\pi t)^{d/2}k_\Omega(t, x, x)-1- e^{-d_\Omega(x)^2/t}\Bigr)\,dx\Biggr|&\lesssim \int_{\Omega_{bad}} (1+\mathcal{E}_0(t, x))\,dx \,,\\
		\Biggl|\int_{\Omega_{good}} \Bigl((4\pi t)^{d/2}k_\Omega(t, x, x)-1 - e^{-d_\Omega(x)^2/t}\Bigr)\,dx\Biggr|&\leq \int_{\Omega_{good}} \mathcal{E}_2(r, t, x)e^{-c_2d_\Omega(x)^2/t}\,dx\,.
\end{align*}
Here, for the set $\Omega_{bad}$ we used the fact that
$$
1 + e^{-d_\Omega(x)^2/t} \leq 2 \,.
$$

From the inclusion $\Omega_{good}\subset \{x\in \Omega: d_\Omega(x)<r\}$ and \eqref{eq: estimate boundary integral Lip} applied with $t/c_2$ in place of $t$, it follows that
\begin{align*}
	\int_{\Omega_{good}} &e^{- c_2d_\Omega(x)^2/t}\,dx\\
    &\leq \int_{\{x\in \Omega: d_\Omega(x)<r\}} e^{- c_2 d_\Omega(x)^2/t}\,dx\\
    &\lesssim \frac{1}{\sqrt{c_2}}\sqrt{t}\Haus^{d-1}(\partial\Omega) \Biggl[1+\int_0^{\sqrt{c_2}r/\sqrt{t}} s^2 e^{-s^2} |\vartheta_\Omega(s\sqrt{t/c_2})|\,ds + |\vartheta_\Omega(r)|e^{-c_2r^2/(2t)}\Biggr]\,.
\end{align*}

Thus,
\begin{align*}
		\Biggl|\int_{\Omega_{good}} &\Bigl((4\pi t)^{d/2}k_\Omega(t, x, x)-1 - e^{-d_\Omega(x)^2/t}\Bigr)\,dx\Biggr|\\
        &\lesssim
        \|\mathcal{E}_2(r, t, \cdot)\|_{L^\infty(\Omega_{good})} \sqrt{t}\Haus^{d-1}(\partial\Omega)\\
        &\qquad \times \Biggl[\frac{1}{\sqrt{c_2}}+c_2\int_0^{r/\sqrt{t}} s^2 e^{-c_2s^2} |\vartheta_\Omega(s\sqrt{t})|\,ds+ \frac{1}{\sqrt{c_2}}|\vartheta_\Omega(r)|e^{-c_2r^2/(2t)}\Biggr]
\end{align*}

To bound the integral over $\Omega_{bulk}$ we first use the trivial bound 
\begin{align*}
    \int_{\Omega_{bulk}} \mathcal{E}_1(t, x) e^{- c_1d_\Omega(x)^2/t}\,dx
    &\leq\|\mathcal{E}_1(t, \cdot)\|_{L^\infty(\Omega_{bulk})}\int_{\Omega_{bulk}} e^{- c_1d_\Omega(x)^2/t}\,dx
\end{align*}
and then apply the layer cake formula in a similar manner as before to find
\begin{align*}
    \int_{\Omega_{bulk}}  e^{- c_1d_\Omega(x)^2/t}\,dx
    &=
    2c_1\int_{\Omega_{bulk}} \int_0^\infty \1_{\{y\in \Omega:d_\Omega(y)<s\sqrt{t}\}}(x) se^{-c_1s^2}\,dsdx\\
    &= 2c_1\int_{r/\sqrt{t}}^\infty |\{x \in \Omega: r\leq d_\Omega(x) \leq s\sqrt{t}\}|s e^{-c_1s^2}\,ds\,.
\end{align*}
The integral we bound as follows
\begin{align*}
    0&\leq \int_{r/\sqrt{t}}^\infty |\{x \in \Omega: r\leq d_\Omega(x) \leq s\sqrt{t}\}|s e^{-c_1s^2}\,ds\\
    &\leq
    \int_{r/\sqrt{t}}^\infty |\{x \in \Omega: d_\Omega(x) \leq s\sqrt{t}\}|s e^{-c_1s^2}\,ds\\
    &=
    \sqrt{t}\Haus^{d-1}(\partial\Omega)\int_{r/\sqrt{t}}^\infty \vartheta_\Omega(s\sqrt{t})s^2 e^{-c_1s^2}\,ds\\
    &\quad
    +
    \sqrt{t}\Haus^{d-1}(\partial\Omega)\int_{r/\sqrt{t}}^\infty s^2 e^{-c_1s^2}\,ds\,.
\end{align*}

Using 
\begin{equation*}
    s e^{-c s^2}\lesssim 1/\sqrt{c} \quad \forall s\geq 0\,,
\end{equation*}
we have
\begin{align*}
    \int_{r/\sqrt{t}}^\infty s^2 e^{-c_1s^2}\,ds &\lesssim \frac{1}{\sqrt{c_1}} \int_{r/\sqrt{t}}^\infty s e^{-c_1s^2/2}\,ds = \frac{1}{4c_1^{3/2}} e^{-c_1r^2/(2t)}\,.
\end{align*}
Therefore, we have arrived at
\begin{align*}
    \int_{\Omega_{bulk}}  e^{-c_1d_\Omega(x)^2/t}\,dx
    &\lesssim
    \sqrt{t}\Haus^{d-1}(\partial\Omega)\Biggl[c_1\int_{r/\sqrt{t}}^\infty |\vartheta_\Omega(s\sqrt{t})|s^2 e^{-c_1 s^2}\,ds
    +
     \frac{1}{\sqrt{c_1}}e^{-c_1r^2/(2t)}\Biggr]\,.
\end{align*}

\medskip

\noindent\emph{Step 4. Completing the proof.}

Gathering the estimates, we have proved that, if $0<\max\{1, \eta\}\sqrt t \leq r$, then
\begin{equation*}
\begin{aligned}
	\biggl|&(4\pi t)^{d/2}\Tr(e^{t\Delta_\Omega}) - |\Omega| - \frac{\sqrt{\pi t}}{2}\Haus^{d-1}(\partial\Omega)\biggr|\\
 &  \lesssim 
    \sqrt{t}\Haus^{d-1}(\partial\Omega)\Biggl(\int_0^{r/\sqrt{t}} s^2 e^{-s^2} |\vartheta_\Omega(s\sqrt{t})|\,ds+ (1+|\vartheta_\Omega(r)|)e^{-r^2/(2t)}\\
    &\quad
    +
    \|\mathcal{E}_2(r, t, \cdot)\|_{L^\infty(\Omega_{good})} \biggl[\frac{1}{\sqrt{c_2}}+c_2\int_0^{r/\sqrt{t}} s^2 e^{-c_2s^2} |\vartheta_\Omega(s\sqrt{t})|\,ds+ \frac{1}{\sqrt{c_2}}|\vartheta_\Omega(r)|e^{-c_2r^2/(2t)}\biggr]\\
    &\quad
    +
    \|\mathcal{E}_1(t, \cdot)\|_{L^\infty(\Omega_{bulk})}\biggl[c_1\int_{r/\sqrt{t}}^\infty |\vartheta_\Omega(s\sqrt{t})|s^2 e^{-c_1 s^2}\,ds 
    +
    \frac{1}{\sqrt{c_1}}e^{-c_1r^2/(2t)}\biggr]\\
    &\quad
    +\frac{1}{\sqrt{t}\Haus^{d-1}(\partial\Omega)}\int_{\Omega_{bad}} (1+\mathcal{E}_0(t, x))\,dx\Biggr)
\end{aligned}
\end{equation*}
For any $s \in (r/\sqrt{t}, 2r/\sqrt{t})$
\begin{equation*}
    \vartheta_\Omega(s\sqrt{t}) = \frac{|\{d_\Omega<s\sqrt{t}\}|}{s\sqrt{t}\Haus^{d-1}(\partial\Omega)}-1
    \geq \frac{|\{d_\Omega<r\}|}{2r\Haus^{d-1}(\partial\Omega)}-1
    = \vartheta_\Omega(r)/2-1/2\,.
\end{equation*}
Therefore, since $r\geq \max\{1, \eta\}\sqrt{t}$, we have
\begin{align*}
    \int_{r/\sqrt{t}}^{2r/\sqrt{t}}s^2 e^{-cs^2}|\vartheta_\Omega(s\sqrt{t})|\,ds  
    &\geq 
    \frac{1}{2}(|\vartheta_\Omega(r)|-1)_\limplus\int_{r/\sqrt{t}}^{2r/\sqrt{t}}s^2 e^{-cs^2}\,ds\\
    &\geq
    \frac{1}{2}(|\vartheta_\Omega(r)|-1)_\limplus \Bigl(\frac{r}{\sqrt{t}}\Bigr)^3 e^{-4cr^2/t}\\
    &\geq
    \frac{1}{2}(|\vartheta_\Omega(r)|-1)_\limplus \max\{1,\eta\}^{-3} e^{-4cr^2/t}\,.
\end{align*}
Consequently, for any $c>0$
\begin{align*}
\frac{1}{\sqrt{c}} |\vartheta_\Omega(r)|e^{-cr^2/(2t)} \lesssim \frac{1}{\sqrt{c}}\Bigl(\max\{1, \eta\}^3\int_{r/\sqrt{t}}^{2r/\sqrt{t}}s^2 e^{-cs^2/8}|\vartheta_\Omega(s\sqrt{t})|\,ds  +e^{-cr^2/(2t)}\Bigr)\,,
\end{align*}
and thus
\begin{equation*}
\begin{aligned}
	\biggl|&(4\pi t)^{d/2}\Tr(e^{t\Delta_\Omega}) - |\Omega| - \frac{\sqrt{\pi t}}{2}\Haus^{d-1}(\partial\Omega)\biggr|\\
 &  \lesssim 
    \sqrt{t}\Haus^{d-1}(\partial\Omega)\Biggl(\max\{1,\eta\}^3\int_0^{r/\sqrt{t}} s^2 e^{-s^2/8} |\vartheta_\Omega(s\sqrt{t})|\,ds+ e^{-r^2/(2t)}\\
    &\quad
    +
    \|\mathcal{E}_2(r, t, \cdot)\|_{L^\infty(\Omega_{good})} \biggl[\frac{1}{\sqrt{c_2}}+\Bigl(c_2+\frac{\max\{1,\eta\}^3}{\sqrt{c_2}}\Bigr)\int_0^{2r/\sqrt{t}} s^2 e^{-c_2s^2/8} |\vartheta_\Omega(s\sqrt{t})|\,ds\biggr]\\
    &\quad
    +
    \|\mathcal{E}_1(t, \cdot)\|_{L^\infty(\Omega_{bulk})}\biggl[c_1\int_{r/\sqrt{t}}^\infty s^2 e^{-c_1 s^2}|\vartheta_\Omega(s\sqrt{t})|\,ds 
    +
    \frac{1}{\sqrt{c_1}}e^{-c_1r^2/(2t)}\biggr]\\
    &\quad
    +\frac{1}{\sqrt{t}\Haus^{d-1}(\partial\Omega)}\int_{\Omega_{bad}} (1+\mathcal{E}_0(t, x))\,dx\Biggr)\,.
\end{aligned}
\end{equation*}
This completes the proof of the desired bound.
\end{proof}


\subsection{Proof of Theorem~\ref{thm: Brown trace asymptotics}}

In this subsection, we show how Brown's theorem (Theorem~\ref{thm: Brown trace asymptotics}) can be deduced from the heat kernel bounds in Theorems \ref{thm: diagonal bulk Lipschitz} and \ref{thm: diagonal bounds good set Lipschitz} by means of Proposition \ref{thm: trace of N heatkernel general}.

By the bounds in Theorems \ref{thm: diagonal bulk Lipschitz} and \ref{thm: diagonal bounds good set Lipschitz}, together with the bound 
\begin{equation}\label{eq: Lipschitz diagonal bound0}
    k_\Omega(t, x, x) \leq C_{\Omega,t_0} t^{-d/2}\max\bigl\{1,(t/t_0)^{d/2}\bigr\}\,,
\end{equation}
valid for any bounded, open, Lipschitz set $\Omega \subset \R^d$ and $t_0>0$ (see Lemma~\ref{heatnapriori} below), we see that the assumptions (1), (2) and (3) in Proposition \ref{thm: trace of N heatkernel general} are satisfied with $\eta=1$,
\begin{align*}
    & \mathcal E_0(t,x) := C_{\Omega,t_0} \max\bigl\{1,(t/t_0)^{d/2}\bigr\} \,,\\
    & \mathcal E_1(t,x) := C_{\Omega} \,,\\
    & \mathcal E_2(r,t,x) := C_\Omega \bigl( \mathcal E(r) + e^{-cr^2/r} \bigr)
\end{align*}
and universal constants $c_1,c_2>0$. Here $t_0>0$ is a fixed positive number and $\mathcal E$ is as in Theorem \ref{thm: diagonal bounds good set Lipschitz}. We deduce that, for $0<t\leq\min\{t_0, r^2/2\}$,
\begin{equation*}
    \begin{aligned}
	   \biggl|(4\pi t)^{d/2}&\Tr(e^{t\Delta_\Omega}) - |\Omega| - \frac{\sqrt{\pi t}}{2}\Haus^{d-1}(\partial\Omega)\biggr|\\
        & \lesssim_{\Omega,d,t_0}
        \sqrt{t}\Haus^{d-1}(\partial\Omega)\Biggl(\int_0^{r/\sqrt{t}} s^2 e^{-c s^2} |\vartheta_\Omega(s\sqrt{t})|\,ds\\
        &\quad
        +
        \Bigl(\mathcal{E}(r)+e^{-cr^2/t}\Bigr) \biggl[\int_0^{2r/\sqrt{t}} s^2 e^{-cs^2} |\vartheta_\Omega(s\sqrt{t})|\,ds+1\biggr]\\
        &\quad
        +
        \int_{r/\sqrt{t}}^\infty s^2 e^{-c s^2}|\vartheta_\Omega(s\sqrt{t})|\,ds 
        +\frac{|\{x\in \Omega: d_\Omega(x)<r\}\setminus \mathcal{G}_r|}{\sqrt{t} \, \Haus^{d-1}(\partial\Omega)}\Biggr)\,.
    \end{aligned}
\end{equation*}
The implicit dependence on $\Omega$ is only through the constants in the bounds on $k_\Omega$ in~\eqref{eq: Lipschitz diagonal bound0}, Theorem \ref{thm: diagonal bulk Lipschitz}, and Theorem \ref{thm: diagonal bounds good set Lipschitz}. 

We bound
\begin{align*}
    \int_0^{2r/\sqrt{t}} s^2 e^{-c s^2} |\vartheta_\Omega(s\sqrt{t})|\,ds &\leq \sup_{s<2r}|\vartheta_\Omega(s)|\int_0^{\infty} s^2 e^{-c s^2} \,ds \lesssim \sup_{s<2r}|\vartheta_\Omega(s)|
\end{align*}
and
\begin{align*}
    \int_{r/\sqrt{t}}^\infty s^2 e^{-c s^2}|\vartheta_\Omega(s\sqrt{t})|\,ds&\leq \sup_{s>0}|\vartheta_\Omega(s)|\int_{r/\sqrt{t}}^\infty s^2 e^{-c s^2}\,ds \lesssim\sup_{s>0}|\vartheta_\Omega(s)| e^{-c r^2/(2t)} \,.
\end{align*}
We note that
\begin{equation}
    \label{eq:thetadiscussion}
    \lim_{r\to 0^\limplus} \vartheta_\Omega(r)=0
    \qquad\text{and}\qquad
    \sup_{r>0}|\vartheta_\Omega(r)|<\infty \,.
\end{equation}
Indeed, the first relation follows from the fact that the $\Haus^{d-1}$-measure of $\partial\Omega$ coincides with its one-sided $(d-1)$-dimensional Minkowski content (see, for instance, \cite{Ambrosio_etal_08}), and the second relation is a consequence of the first combined with $|\Omega|<\infty$.

Choosing $r = M\sqrt{t}$ with $M^2\geq 2$, we have proved that
\begin{equation*}
    \begin{aligned}
	  &\frac{\bigl|(4\pi t)^{d/2}\Tr(e^{t\Delta_\Omega}) - |\Omega| - \frac{\sqrt{\pi t}}{2}\Haus^{d-1}(\partial\Omega)\bigr|}{\Haus^{d-1}(\partial\Omega) \, \sqrt{t}}\\
        &\qquad \lesssim_{\Omega,d,t_0}
        \sup_{s<M\sqrt{t}}|\vartheta_\Omega(s)|
        +
        \Bigl(\mathcal{E}(M\sqrt{t})+e^{-cM^2}\Bigr) \biggl[\sup_{s<M\sqrt{t}}|\vartheta_\Omega(s)|+1\biggr]\\
        &\qquad\quad
        +
        e^{-c M^2/2}\sup_{s>0}|\vartheta_\Omega(s)|
        +M(1+\vartheta_\Omega(M\sqrt{t}))\frac{|\{x\in \Omega: d_\Omega(x)<M\sqrt{t}\}\setminus \mathcal{G}_{M\sqrt{t}}|}{|\{x\in \Omega: d_\Omega(x)<M\sqrt{t}\}|}\,.
    \end{aligned}
\end{equation*}
We claim that the right side tends to zero, when sending first $t\to 0^\limplus$ and then $M \to \infty$. This is a consequence of \eqref{eq:thetadiscussion} and the properties of $\mathcal{E}$ and $\mathcal{G}_r$ listed in Theorem \ref{thm: diagonal bounds good set Lipschitz}, namely, that
\begin{align*}
    \lim_{r\to 0^\limplus}\mathcal{E}(r) =0 \quad \mbox{and} \quad \lim_{r\to 0^\limplus}\frac{|\{x\in \Omega: d_\Omega(x)<r\}\setminus \mathcal{G}_{r}|}{|\{x\in \Omega: d_\Omega(x)<r\}|}=0 \,.
\end{align*}
This completes the proof of Theorem \ref{thm: Brown trace asymptotics}.


\subsection{Proof of Theorem \ref{thm: main thm convex}}
\label{sec: Neumann heat trace on convex sets}

In this subsection, we prove the heat trace bound for convex sets (Theorem \ref{thm: main thm convex}). It is convenient to split the result into two assertions, depending on whether $t$ is smaller or larger than $r_{\rm in}(\Omega)^2$; see Propositions \ref{thm:heatkernelboundneumannconvex} and \ref{thm: bound large times}.

We begin with the case where $t$ is bounded by $r_{\rm in}(\Omega)^2$. The proof in this case has similarities with that of Theorem \ref{thm: Brown trace asymptotics}, namely, we will combine heat kernel bounds with the general result from Proposition~\ref{thm: trace of N heatkernel general}. The relevant heat kernel bounds appear in Theorems \ref{thm: diagonal bulk convex} and \ref{thm: diagonal kernel bounds convex}, and also in Lemma~\ref{lem: off-diagonal kernel bound convex}. In order to achieve the claimed uniformity, we will need, however, some additional geometric considerations.

\begin{proposition}\label{thm:heatkernelboundneumannconvex}
	For any $d\geq 2$ there exist constants $c_1, c_2>0$ with the following property. If $\Omega \subset \R^d$ is open, convex, and bounded and if $0<\sqrt{t}\leq c_1 r_{\rm in}(\Omega)$, then 
	\begin{equation*}
		\biggl|(4\pi t)^{d/2}\Tr(e^{t\Delta_\Omega}) - |\Omega| - \frac{\sqrt{\pi t}}{2}\Haus^{d-1}(\partial\Omega)\biggr|
   \leq c_2 \Haus^{d-1}(\partial\Omega)\sqrt{t}\Bigl(\frac{\sqrt{t}}{r_{\rm in}(\Omega)}\Bigr)^{1/2}\ln\Bigl(\frac{r_{\rm in}(\Omega)^2}{t}\Bigr)^{1/2}\,.
	\end{equation*}
\end{proposition}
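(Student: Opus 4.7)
My plan is to apply Proposition~\ref{thm: trace of N heatkernel general} with $r=M\sqrt{t}$ and the good set $\mathcal{G}_{\epsilon,2r}$ coming from Theorem~\ref{thm: diagonal kernel bounds convex}, and then to optimize over $M$ and $\epsilon$. To verify the three hypotheses of the proposition I would take $\mathcal{E}_0\equiv C_d$ (from the Li--Yau type bound $k_\Omega(t,x,x)\leq C_d t^{-d/2}$ valid uniformly for convex $\Omega$, available through Lemma~\ref{lem: off-diagonal kernel bound convex}), $\mathcal{E}_1\equiv C_d$ (from Theorem~\ref{thm: diagonal bulk convex} with $\eta=1$ and a fixed $\delta$), and $\mathcal{E}_2(r,t,x)=C_d(\epsilon+e^{-4cr^2/t})$ on $\mathcal{G}_{\epsilon,2r}$ (from Theorem~\ref{thm: diagonal kernel bounds convex} applied with parameter $2r$ in place of $r$). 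The reason for passing from $r$ to $2r$ is that~\eqref{eq: Good set is big0} only controls $|\{d_\Omega<s\}\setminus\mathcal{G}_{\epsilon,2r}|$ for $s\leq r$, whereas the proposition's bad set lives at scale $s=r$.

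A key auxiliary input is quantitative control of $\vartheta_\Omega$, since the general result from Proposition~\ref{thm: trace of N heatkernel general} leaves it unbounded. Writing $|\{d_\Omega<s\}|=\int_0^s\Haus^{d-1}(\partial\Omega_\sigma)\,d\sigma$ for the inner parallel bodies $\Omega_\sigma:=\{d_\Omega>\sigma\}$, and combining the elementary convex-geometric bound $\Haus^{d-1}(\partial\Omega_\sigma)\leq\Haus^{d-1}(\partial\Omega)$ with an Alexandrov--Fenchel-type lower bound $\Haus^{d-1}(\partial\Omega_\sigma)\geq(1-\sigma/r_{\rm in}(\Omega))^{d-1}\Haus^{d-1}(\partial\Omega)$ for $\sigma\leq r_{\rm in}(\Omega)$, I obtain $-C_d\, s/r_{\rm in}(\Omega)\leq\vartheta_\Omega(s)\leq 0$ for $s\leq r_{\rm in}(\Omega)$ and $|\vartheta_\Omega(s)|\leq 1$ for all $s>0$. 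Consequently, the $\vartheta$-integrals over $[0,2M]$ contribute at most $C_d\sqrt{t}/r_{\rm in}(\Omega)$, and the tail $\int_M^\infty s^2 e^{-cs^2}|\vartheta_\Omega(s\sqrt{t})|\,ds$ is at most $Ce^{-cM^2/2}$.

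Plugging these into Proposition~\ref{thm: trace of N heatkernel general} and using $|\Omega_{\rm bad}|\lesssim_d(r^2/(\epsilon\,r_{\rm in}(\Omega)))\,\Haus^{d-1}(\partial\Omega)$ together with $\mathcal{E}_0\lesssim_d 1$ to bound the bad-set contribution by $M^2\sqrt{t}/(\epsilon\, r_{\rm in}(\Omega))$, I would arrive at
\begin{equation*}
    \bigl|(4\pi t)^{d/2}\Tr(e^{t\Delta_\Omega})-|\Omega|-\tfrac{\sqrt{\pi t}}{2}\Haus^{d-1}(\partial\Omega)\bigr|
    \lesssim_d \sqrt{t}\,\Haus^{d-1}(\partial\Omega)\Bigl(\epsilon+e^{-cM^2}+\frac{M^2\sqrt{t}}{\epsilon\,r_{\rm in}(\Omega)}\Bigr).
\end{equation*}
Balancing the first and third terms yields the choice $\epsilon=M(\sqrt{t}/r_{\rm in}(\Omega))^{1/2}$, after which the right-hand side is (up to constants) $M(\sqrt{t}/r_{\rm in}(\Omega))^{1/2}+e^{-cM^2}$. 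Taking $M^2=c^{-1}\ln(r_{\rm in}(\Omega)^2/t)$ makes $e^{-cM^2}=t/r_{\rm in}(\Omega)^2$, which is dominated by the first term, producing the desired bound. The constraints $\epsilon\leq 1/4$ and $2M\sqrt{t}\leq\epsilon\, r_{\rm in}(\Omega)$ both reduce to $\sqrt{t}\leq c_1 r_{\rm in}(\Omega)$ for a suitable absolute constant $c_1$.

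The main technical obstacle is not conceptual but bookkeeping: the scale mismatch in~\eqref{eq: Good set is big0} (valid only for $s\leq r/2$) versus the proposition's $s\leq r$ is handled via the doubling trick $r\leadsto 2r$, and one must track the $M^2$ factor entering through the bad-set measure — it is precisely this factor that, after optimization in $\epsilon$ and $M$, produces the logarithmic factor $\ln(r_{\rm in}(\Omega)^2/t)^{1/2}$ in the final estimate.
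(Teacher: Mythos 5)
Your overall scheme matches the paper's: apply Proposition~\ref{thm: trace of N heatkernel general} with $r=M\sqrt t$ and $\mathcal G_{\epsilon,2r}$, bound $\vartheta_\Omega$ via Lemma~\ref{lem: volume bdry neighborhood bound}, set $\epsilon\sim M(\sqrt t/r_{\rm in}(\Omega))^{1/2}$, and then take $M^2\sim\log(r_{\rm in}(\Omega)^2/t)$. The $\vartheta_\Omega$ estimates and the parameter optimization are all fine.

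However, there is a genuine gap in your handling of the bad set. You justify $\mathcal E_0\equiv C_d$ by appealing to a bound ``$k_\Omega(t,x,x)\leq C_d t^{-d/2}$ valid uniformly for convex $\Omega$'' and cite Lemma~\ref{lem: off-diagonal kernel bound convex} for it. That is not what Li--Yau gives: the on-diagonal bound in that lemma is $k_\Omega(t,x,x)\lesssim_d V_\Omega(x,\sqrt t)^{-1}$, and for a convex $\Omega$ the ratio $t^{d/2}/V_\Omega(x,\sqrt t)$ is \emph{not} bounded by a dimensional constant --- near a sharp vertex it becomes arbitrarily large (a two-dimensional wedge of aperture $\alpha$ already gives $k_\Omega(t,0,0)\sim\alpha^{-1}t^{-1}$). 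So the intake $\mathcal E_0\lesssim_d 1$ is false, and the step ``bad-set contribution $\lesssim|\Omega_{\rm bad}|/(\sqrt t\,\Haus^{d-1}(\partial\Omega))$'' does not follow. The paper's proof takes $\mathcal E_0(t,x)=C_d\,t^{d/2}/V_\Omega(x,\sqrt t)$ (which is the honest Li--Yau bound) and then invokes an additional estimate, namely the second inequality of Proposition~\ref{prop: bounds integral of local volume}, to control
\begin{equation*}
\frac{1}{\sqrt t\,\Haus^{d-1}(\partial\Omega)}\int_{\Omega_{\rm bad}}\frac{t^{d/2}}{V_\Omega(x,\sqrt t)}\,dx
\lesssim_d\frac{|\Omega_{\rm bad}|}{\sqrt t\,\Haus^{d-1}(\partial\Omega)}
+\frac{\sqrt t}{r_{\rm in}(\Omega)}\Bigl(1+\Bigl(\frac{\sqrt t}{r_{\rm in}(\Omega)}\Bigr)^{d-2}\Bigr),
\end{equation*}
so that the extra contribution beyond $|\Omega_{\rm bad}|$ is $O(\sqrt t/r_{\rm in}(\Omega))$, harmless compared to the other errors. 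Your argument reaches the correct final inequality because the paper's extra term happens to be dominated, but as written your step simply assumes away the difficulty that the volume lower bound $V_\Omega(x,\sqrt t)\gtrsim t^{d/2}$ fails near corners. To close the gap, insert the convex-geometry estimate of Proposition~\ref{prop: bounds integral of local volume} (or an equivalent) before concluding the bad-set estimate.
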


In the proof, and in the rest of this paper, we use the notation
$$
V_\Omega(x,r):=|\Omega\cap B_r(x)| \,.
$$

\begin{proof}
    For any $\epsilon \in (0, 1/4]$ and $r<\epsilon r_{\rm in}(\Omega)/2$, Theorems \ref{thm: diagonal bulk convex} and \ref{thm: diagonal kernel bounds convex}, as well as Lemma~\ref{lem: off-diagonal kernel bound convex}, allow us to apply Proposition \ref{thm: trace of N heatkernel general} with $\eta=1$, $\mathcal{G}_r = \mathcal{G}_{\epsilon, 2r}$, $c_1=1/2$, $c_2$ being a universal positive constant,
\begin{equation*}
    \mathcal{E}_0(t, x) \lesssim_d\frac{t^{d/2}}{V_\Omega(x, \sqrt{t})}\,, \quad \mathcal{E}_1(t, x) \lesssim_d 1\,, \quad \mbox{and}\quad \mathcal{E}_2(r, t, x) \lesssim_d \epsilon + e^{-cr^2/t}\,.
\end{equation*}
The conclusion is that there is a universal constant $c>0$ so that, for any $0<\sqrt{t} \leq r$,
\begin{equation}\label{eq: initial error bound convex}
\begin{aligned}
	   \biggl|(4\pi t)^{d/2}&\Tr(e^{t\Delta_\Omega}) - |\Omega| - \frac{\sqrt{\pi t}}{2}\Haus^{d-1}(\partial\Omega)\biggr|\\
        & \lesssim_{d}
        \sqrt{t}\Haus^{d-1}(\partial\Omega)\Biggl(\int_0^{r/\sqrt{t}} s^2 e^{-c s^2} |\vartheta_\Omega(s\sqrt{t})|\,ds \\
        &\quad
        +
        \bigl(\epsilon +e^{-cr^2/t}\bigr)\biggl[\int_0^{2r/\sqrt{t}} s^2 e^{-cs^2} |\vartheta_\Omega(s\sqrt{t})|\,ds+1\biggr]\\
        &\quad
        +
        \int_{r/\sqrt{t}}^\infty s^2 e^{-c s^2}|\vartheta_\Omega(s\sqrt{t})|\,ds \\
        &\quad
        +\frac{1}{\sqrt{t}\Haus^{d-1}(\partial\Omega)}\int_{\{x\in \Omega: d_\Omega(x)<r\}\setminus \mathcal{G}_{\epsilon,2r}} \Bigl(1+\frac{t^{d/2}}{V_\Omega(x, \sqrt{t})}\Bigr)\,dx\Biggr)\\
        & \lesssim_{d}
        \sqrt{t}\Haus^{d-1}(\partial\Omega)\Biggl(\epsilon + e^{-cr^2/t}+\int_0^{\infty} s^2 e^{-c s^2} |\vartheta_\Omega(s\sqrt{t})|\,ds  \\
        &\quad
        +\frac{1}{\sqrt{t}\Haus^{d-1}(\partial\Omega)}\int_{\{x\in \Omega: d_\Omega(x)<r\}\setminus \mathcal{G}_{\epsilon,2r}} \frac{t^{d/2}}{V_\Omega(x, \sqrt{t})}\,dx\Biggr)\,,
\end{aligned}
\end{equation}
where we used the fact that $V(x, \sqrt{t})\leq |B_1|t^{d/2}$ for all $x\in \Omega$. 

To complete the proof, it remains to first bound the two remaining integrals and then choose the parameters $\epsilon$ and $r$ appropriately.

\medskip
\noindent\emph{Step 1. Bounding the integral of $\vartheta_\Omega$.}

Let us begin with the integral involving $\vartheta_\Omega$. By Lemma \ref{lem: volume bdry neighborhood bound} it holds that
\begin{equation*}
    \Bigl(1- \frac{s}{r_{\rm in}(\Omega)}\Bigr)^{d-1}_\limplus-1 \leq \vartheta_\Omega(s)\leq 0\,.    
\end{equation*}
Since $d\geq 2$, the function $[0, 1]\ni x \mapsto (1-x)^{d-1}-1$ is convex. It follows that
\begin{equation*}
    - \frac{(d-1)s}{r_{\rm in}(\Omega)} \leq \vartheta_\Omega(s)\leq 0\,.    
\end{equation*}
Consequently,
\begin{equation}\label{eq: vartheta integral bound}
    \int_0^{\infty} s^2 e^{-c s^2} |\vartheta_\Omega(s\sqrt{t})|\,ds \lesssim_d \frac{\sqrt{t}}{r_{\rm in}(\Omega)}\int_0^{\infty} s^3 e^{-c s^2}\,ds \lesssim_d \frac{\sqrt{t}}{r_{\rm in}(\Omega)}\,.
\end{equation}

\medskip

\noindent\emph{Step 2. Bounding the integral of $1/V_\Omega$.}

Let us consider the remaining integral in \eqref{eq: initial error bound convex}. For notational simplicity, we again write
\begin{equation*}
    \Omega_{bad}:= \{x\in \Omega: d_\Omega(x) <r\}\setminus \mathcal{G}_{\epsilon, 2r} \,.
\end{equation*}
By \eqref{eq: Good set is big0} and Lemma \ref{lem: volume bdry neighborhood bound}, for any $\epsilon \in (0, 1/4], r \in (0, \epsilon r_{\rm in}(\Omega)/2]$ we have
\begin{equation}\label{eq: bad set volume bound}
 	|\Omega_{bad}|\lesssim_d \Haus^{d-1}(\partial\Omega) \frac{r^2}{\epsilon r_{\rm in}(\Omega)}\,.
\end{equation} 
The second bound in Proposition \ref{prop: bounds integral of local volume} gives
\begin{equation*}
    \frac{1}{\sqrt{t}\Haus^{d-1}(\partial\Omega)}\int_{\Omega_{bad}}\frac{t^{d/2}}{V_\Omega(x, \sqrt{t})}\,dx
    \lesssim_d \frac{|\Omega_{bad}|}{\sqrt{t}\Haus^{d-1}(\partial\Omega)}+ \frac{\sqrt{t}}{r_{\rm in}(\Omega)}\Bigl(1+\Bigl(\frac{\sqrt{t}}{r_{\rm in}(\Omega)}\Bigr)^{d-2}\Bigr)\,.
\end{equation*}
By \eqref{eq: bad set volume bound} and the assumption $\sqrt{t}\leq r \leq \epsilon r_{\rm in}(\Omega)/2$, we arrive at the estimate
\begin{equation}\label{eq: final bound remaining integral error term}
	\frac{1}{\sqrt{t}\Haus^{d-1}(\partial\Omega)} \int_{\Omega_{bad}}\frac{t^{d/2}}{V_\Omega(x, \sqrt{t})}\,dx \lesssim_d \frac{r^2}{\epsilon \sqrt{t}r_{\rm in}(\Omega)}  + \frac{\sqrt{t}}{r_{\rm in}(\Omega)} \lesssim_d \frac{r^2}{\epsilon \sqrt{t} \,r_{\rm in}(\Omega)}\,.
\end{equation}

\medskip

\noindent\emph{Step 3. Completing the proof.}

Inserting~\eqref{eq: vartheta integral bound} and \eqref{eq: final bound remaining integral error term} into \eqref{eq: initial error bound convex} yields, for $\epsilon \in (0, 1/4]$, $r\in (0, \epsilon r_{\rm in}(\Omega)/2]$ and $0<\sqrt{t}\leq  r$,
\begin{align*}
	\biggl|(4\pi t)^{d/2}\Tr(e^{t\Delta_\Omega})& - |\Omega| - \frac{\sqrt{\pi t}}{2}\Haus^{d-1}(\partial\Omega)\biggr|\\
 &  \lesssim_{d} \Haus^{d-1}(\partial\Omega)\sqrt{t}\biggl(\epsilon+ e^{-cr^2/t} + \frac{\sqrt{t}}{r_{\rm in}(\Omega)}+ \frac{r^2}{\epsilon \sqrt{t} \,r_{\rm in}(\Omega)}\biggr)\\
 &\lesssim_d
    \Haus^{d-1}(\partial\Omega)\sqrt{t}\biggl(\epsilon+ e^{-cr^2/t} + \frac{r^2}{\epsilon \sqrt{t} \,r_{\rm in}(\Omega)}\biggr)\,,
\end{align*}
where in the second step we used $\sqrt t/r_{\rm in}(\Omega) \leq r^2/(\sqrt t r_{\rm in}(\Omega)) \leq r^2/(4\epsilon \sqrt t \, r_{\rm in}(\Omega))$.

We now want to choose our parameters to minimize the right-hand side. 

Setting $r= M\sqrt{t}$ for some $1\leq M\leq \epsilon r_{\rm in}(\Omega)/(2\sqrt{t})$, we find that
\begin{align*}
	\biggl|(4\pi t)^{d/2}\Tr(e^{t\Delta_\Omega})& - |\Omega| - \frac{\sqrt{\pi t}}{2}\Haus^{d-1}(\partial\Omega)\biggr|\\
 &  \lesssim_{d} \Haus^{d-1}(\partial\Omega)\sqrt{t}\biggl(\epsilon+ e^{-cM^2} + \frac{\sqrt{t}}{r_{\rm in}(\Omega)}\frac{M^2}{\epsilon}\biggr)\,.
\end{align*}
Choosing $\epsilon = M t^{1/4}r_{\rm in}(\Omega)^{-1/2}$ results in the bound
\begin{align*}
	\biggl|(4\pi t)^{d/2}\Tr(e^{t\Delta_\Omega})& - |\Omega| - \frac{\sqrt{\pi t}}{2}\Haus^{d-1}(\partial\Omega)\biggr|\\
 &  \lesssim_{d} \Haus^{d-1}(\partial\Omega)\sqrt{t}\biggl(e^{-cM^2} + M\Bigl(\frac{\sqrt{t}}{r_{\rm in}(\Omega)}\Bigr)^{1/2}\biggr)\,,
\end{align*}
valid for $t> 0$ and $M\geq 1$ such that
\begin{align*}
    M\leq \frac{1}{2} M t^{-1/4}r_{\rm in}(\Omega)^{1/2} \quad \mbox{and} \quad  M t^{1/4}r_{\rm in}(\Omega)^{-1/2}\leq 1/4
\end{align*}
or, equivalently, if
\begin{equation*}
    \frac{\sqrt{t}}{r_{\rm in}(\Omega)} \leq \min\Bigl\{ \frac{1}{4}, \frac{1}{16M^2}\Bigr\}\,.
\end{equation*}
To optimize the choice of $M$, we need to choose it so that
\begin{equation*}
    e^{-cM^2} \sim M\Bigl(\frac{\sqrt{t}}{r_{\rm in}(\Omega)}\Bigr)^{1/2}\,.
\end{equation*}
By taking logarithms of both sides, this suggests
\begin{equation*}
    M^2 \sim \log\Bigl(\frac{r_{\rm in}(\Omega)^2}{t}\Bigr)- \log(M)\,,
\end{equation*}
which as $t \to 0$ yields the choice $M = C \sqrt{\log(r_{\rm in}(\Omega)^2/t)}$ for some $C>0$. With this choice of $M$ we have proved that
\begin{align*}
	\biggl|(4\pi t)^{d/2}\Tr(e^{t\Delta_\Omega})& - |\Omega| - \frac{\sqrt{\pi t}}{2}\Haus^{d-1}(\partial\Omega)\biggr|\\
 &  \lesssim_{d} \Haus^{d-1}(\partial\Omega)\sqrt{t}\biggl(\Bigl(\frac{t}{r_{\rm in}(\Omega)^2}\Bigr)^{cC^2} + C \Bigl(\frac{\sqrt{t}}{r_{\rm in}(\Omega)}\Bigr)^{1/2} \Bigl(\log\Bigl(\frac{r_{\rm in}(\Omega)^2}{t}\Bigr)\Bigr)^{1/2}\biggr)\,,
\end{align*}
valid for $t> 0$ satisfying
\begin{equation*}
    \frac{\sqrt{t}}{r_{\rm in}(\Omega)} \leq \frac{1}{4} \quad \mbox{and}\quad  \frac{\sqrt{t}}{r_{\rm in}(\Omega)}\log\Bigl(\frac{r_{\rm in}(\Omega)^2}{t}\Bigr) \leq\frac{1}{16C^2 }\,.
\end{equation*}
Choosing $C= 1/\sqrt{2c}$ and using the fact that $s\log(1/s^2)$ tends to zero as $s\to 0^\limplus$, completes the proof of the proposition.
\end{proof}

Next, we prove a bound on the heat trace for $t$ that are bounded from below by some multiple of $r_{\rm in}(\Omega)^2$.

\begin{proposition}\label{thm: bound large times}
	Let $d\geq 2$. For every constant $c_1>0$ there is a constant $c_2>0$ with the following property. If $\Omega \subset \R^d$ is open, convex, and bounded and $\sqrt{t}\geq c_1 r_{\rm in}(\Omega)$, then 
	\begin{equation*}
		\biggl|(4\pi t)^{d/2}\Tr(e^{t\Delta_\Omega}) - |\Omega| - \frac{\sqrt{\pi t}}{2}\Haus^{d-1}(\partial\Omega)\biggr|
   \leq c_2 \Haus^{d-1}(\partial\Omega)\sqrt{t} \Bigl(\frac{\sqrt{t}}{r_{\rm in}(\Omega)}\Bigr)^{d-1}\,.
	\end{equation*}
\end{proposition}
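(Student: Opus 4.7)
The plan is to bound each of the three terms $(4\pi t)^{d/2}\Tr(e^{t\Delta_\Omega})$, $|\Omega|$, and $\tfrac{\sqrt{\pi t}}{2}\Haus^{d-1}(\partial\Omega)$ separately by a constant (depending on $d$ and $c_1$) times the claimed right-hand side $\Haus^{d-1}(\partial\Omega)\sqrt{t}(\sqrt{t}/r_{\rm in}(\Omega))^{d-1}$, and then invoke the triangle inequality. The two latter terms are straightforward; the main point is controlling the heat trace.

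For the trace, the idea is to combine Proposition~\ref{thm:heatkernelboundneumannconvex} with the monotonicity of $t\mapsto\Tr(e^{t\Delta_\Omega})$. Let $\alpha_d$ denote the constant called $c_1$ in Proposition~\ref{thm:heatkernelboundneumannconvex}, and set $s_\star := \min(c_1,\alpha_d)^2 r_{\rm in}(\Omega)^2$. This choice ensures both that $\sqrt{s_\star} \leq \alpha_d r_{\rm in}(\Omega)$, so that Proposition~\ref{thm:heatkernelboundneumannconvex} applies at time $s_\star$, and that $s_\star \leq c_1^2 r_{\rm in}(\Omega)^2 \leq t$, so that monotonicity applies between $s_\star$ and $t$. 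Combining Proposition~\ref{thm:heatkernelboundneumannconvex} at $s_\star$ with the convex-geometric estimate $|\Omega| \leq r_{\rm in}(\Omega)\Haus^{d-1}(\partial\Omega)$ (obtained by letting $s\downarrow r_{\rm in}(\Omega)$ in the bound $|\{d_\Omega<s\}|\leq s\Haus^{d-1}(\partial\Omega)$ from Lemma~\ref{lem: volume bdry neighborhood bound}) and noting that the factor $(\sqrt{s_\star}/r_{\rm in}(\Omega))^{1/2}\log(r_{\rm in}(\Omega)^2/s_\star)^{1/2}$ is bounded in terms of $d$ and $c_1$ only (since $\sqrt{s_\star}/r_{\rm in}(\Omega) = \min(c_1,\alpha_d)$), one obtains
\begin{equation*}
    (4\pi s_\star)^{d/2}\Tr(e^{s_\star\Delta_\Omega}) \lesssim_{d,c_1} r_{\rm in}(\Omega)\Haus^{d-1}(\partial\Omega),
\end{equation*}
equivalently $\Tr(e^{s_\star\Delta_\Omega}) \lesssim_{d,c_1} \Haus^{d-1}(\partial\Omega)/r_{\rm in}(\Omega)^{d-1}$. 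The monotonicity of the heat trace, evident from its spectral expansion $\sum_k e^{-t\lambda_k(\Omega)}$, then gives $\Tr(e^{t\Delta_\Omega}) \leq \Tr(e^{s_\star\Delta_\Omega})$, whence
\begin{equation*}
    (4\pi t)^{d/2}\Tr(e^{t\Delta_\Omega}) \lesssim_{d,c_1} t^{d/2}\frac{\Haus^{d-1}(\partial\Omega)}{r_{\rm in}(\Omega)^{d-1}} = \sqrt{t}\,\Haus^{d-1}(\partial\Omega)\Big(\frac{\sqrt{t}}{r_{\rm in}(\Omega)}\Big)^{d-1}.
\end{equation*}

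For the remaining two terms, the hypothesis $\sqrt{t}\geq c_1 r_{\rm in}(\Omega)$ yields $(\sqrt{t}/r_{\rm in}(\Omega))^{d-1} \geq c_1^{d-1}$, and consequently $\sqrt{t} \leq c_1^{1-d}\sqrt{t}(\sqrt{t}/r_{\rm in}(\Omega))^{d-1}$. This, combined with $|\Omega|\leq r_{\rm in}(\Omega)\Haus^{d-1}(\partial\Omega) \leq c_1^{-1}\sqrt{t}\,\Haus^{d-1}(\partial\Omega)$, gives $|\Omega| \leq c_1^{-d}\sqrt{t}\,\Haus^{d-1}(\partial\Omega)(\sqrt{t}/r_{\rm in}(\Omega))^{d-1}$, and the identical reasoning bounds $\tfrac{\sqrt{\pi t}}{2}\Haus^{d-1}(\partial\Omega)$ by the same quantity up to a constant depending on $c_1$ and $d$. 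The triangle inequality now combines the three bounds and finishes the proof.

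I do not anticipate any real obstacle: the argument is a short assembly of Proposition~\ref{thm:heatkernelboundneumannconvex}, monotonicity of the heat trace, and the elementary convex-geometric inequality $|\Omega| \leq r_{\rm in}(\Omega)\Haus^{d-1}(\partial\Omega)$. The one technical point to watch is that $s_\star$ must simultaneously lie in the validity range of Proposition~\ref{thm:heatkernelboundneumannconvex} and not exceed $t$, which is precisely why we take $s_\star = \min(c_1,\alpha_d)^2 r_{\rm in}(\Omega)^2$ rather than a larger multiple of $r_{\rm in}(\Omega)^2$.
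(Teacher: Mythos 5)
Your proof is correct, and it takes a genuinely different route from the paper. The paper establishes a lower and an upper bound separately: the lower bound invokes Kr\"oger's inequality to get $(4\pi t)^{d/2}\Tr(e^{t\Delta_\Omega})\geq|\Omega|$, so that only $-\tfrac{\sqrt{\pi t}}{2}\Haus^{d-1}(\partial\Omega)$ needs to be controlled from below; the upper bound drops the negative terms and integrates the Li--Yau pointwise estimate $(4\pi t)^{d/2}k_\Omega(t,x,x)-1\lesssim_d t^{d/2}/V_\Omega(x,\sqrt t)$ over $\Omega$, using Propositions~\ref{prop: Minkowski sum bounds} and~\ref{prop: bounds integral of local volume}. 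You instead bound each of the three terms $(4\pi t)^{d/2}\Tr(e^{t\Delta_\Omega})$, $|\Omega|$, $\tfrac{\sqrt{\pi t}}{2}\Haus^{d-1}(\partial\Omega)$ by the claimed right-hand side and apply the triangle inequality. The nontrivial term is the heat trace, which you handle by bootstrapping from the already-proven small-time estimate of Proposition~\ref{thm:heatkernelboundneumannconvex} at the threshold time $s_\star=\min(c_1,\alpha_d)^2 r_{\rm in}(\Omega)^2$, where it applies and where the error factor $(\sqrt{s_\star}/r_{\rm in}(\Omega))^{1/2}\ln(r_{\rm in}(\Omega)^2/s_\star)^{1/2}$ is a dimension-and-$c_1$ dependent constant, combined with monotonicity of $t\mapsto\Tr(e^{t\Delta_\Omega})$ and the elementary bound $|\Omega|\leq r_{\rm in}(\Omega)\Haus^{d-1}(\partial\Omega)$. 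This is shorter and reuses more of what is already established in the paper (you do not need Kr\"oger's inequality nor the Minkowski-sum estimates directly), at the cost of a slightly worse constant from the triangle inequality. One small slip: Lemma~\ref{lem: volume bdry neighborhood bound} is stated for $s\in[0,r_{\rm in}(\Omega)]$, so the inequality $|\Omega|\leq r_{\rm in}(\Omega)\Haus^{d-1}(\partial\Omega)$ is obtained by taking $s=r_{\rm in}(\Omega)$ (or $s\uparrow r_{\rm in}(\Omega)$, not $s\downarrow r_{\rm in}(\Omega)$) together with the observation that the set $\{x\in\Omega:d_\Omega(x)\geq r_{\rm in}(\Omega)\}$ is Lebesgue-null, being the set of maximizers of the concave function $d_\Omega$ on a convex domain.
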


The power of $t$ in the right-hand side of the estimate is sharp since $\Tr(e^{t\Delta_\Omega})\geq 1$ for every $t>0$, so when $t$ becomes large the first term on the left side dominates the entire left-hand side.

\begin{proof}
	Let us begin by bounding the quantity in the absolute values from below. We use the bound $(4\pi t)^{d/2}\Tr(e^{t\Delta_\Omega})\geq |\Omega|$, which can be deduced from Kr\"oger's inequality~\cite{Kroger92} in the form of \cite[Theorem 3.37]{LTbook} via the integral representation formula
    \begin{equation*}
    \Tr(e^{-tH}) = t^2 \int_{\R} e^{-\lambda t} \Tr(H-\lambda)_\limminus\,d\lambda \,.
    \end{equation*}
    Because of this lower bound on the heat trace and the assumption $\sqrt{t}\geq c_1 r_{\rm in}(\Omega)$, we find
	\begin{align*}
		(4\pi t)^{d/2}\Tr(e^{t\Delta_\Omega}) - |\Omega| - \frac{\sqrt{\pi t}}{2}\Haus^{d-1}(\partial\Omega)
		&>
		 - \frac{\sqrt{\pi t}}{2}\Haus^{d-1}(\partial\Omega)\\
		 &\geq
		  - \frac{\sqrt{\pi}}{c_1^{d-1}2}\sqrt{t}\Haus^{d-1}(\partial\Omega) \Bigl(\frac{\sqrt{t}}{r_{\rm in}(\Omega)}\Bigr)^{d-1}\,.
	\end{align*}

	For the upper bound, we drop the negative terms and use the bound in Lemma \ref{lem: off-diagonal kernel bound convex} to estimate
	\begin{equation*}
		(4\pi t)^{d/2}\Tr(e^{t\Delta_\Omega}) - |\Omega| - \frac{\sqrt{\pi t}}{2}\Haus^{d-1}(\partial\Omega) \lesssim_d \int_\Omega \frac{t^{d/2}}{V_\Omega(x, \sqrt{t})}\,dx\,.
	\end{equation*}

	By Propositions~\ref{prop: Minkowski sum bounds} and \ref{prop: bounds integral of local volume}, there exists $c_2$ depending only on $d, c_1$ so that 
\begin{align*}
	\int_\Omega \frac{t^{d/2}}{V_\Omega(x, \sqrt{t})}\,dx \leq \frac{4^d}{|B_1(0)|} |\Omega + B_{\sqrt{t}}| 
	&\leq
	c_2 \sqrt{t} \, \Haus^{d-1}(\partial\Omega) \Bigl(\frac{\sqrt{t}}{r_{\rm in}(\Omega)}\Bigr)^{d-1}
\end{align*}
for all $\sqrt{t}\geq c_1 r_{\rm in}(\Omega)$.
Therefore, 
\begin{equation*}
	(4\pi t)^{d/2}\Tr(e^{t\Delta_\Omega}) - |\Omega| - \frac{\sqrt{\pi t}}{2}\Haus^{d-1}(\partial\Omega)\leq c_2\sqrt{t}\Haus^{d-1}(\partial\Omega) \Bigl(\frac{\sqrt{t}}{r_{\rm in}(\Omega)}\Bigr)^{d-1}\,.
\end{equation*}
 Combining the upper and lower bounds completes the proof.
\end{proof}

\begin{proof}[Proof of Theorem~\ref{thm: main thm convex}]
	The claimed bound follows upon combining Propositions~\ref{thm:heatkernelboundneumannconvex} and~\ref{thm: bound large times} and the estimate
	\begin{equation*}
		\ln\Bigl(\frac{r_{\rm in}(\Omega)^2}{t}\Bigr) \leq \frac{2}{\epsilon e} \Bigl(\frac{\sqrt{t}}{r_{\rm in}(\Omega)}\Bigr)^{-\epsilon}\quad \mbox{for all }t>0\,.\qedhere
	\end{equation*}
\end{proof}

\section{Initial bounds on the heat kernel}

The proofs of our approximation results for the heat kernel will be based on using integral representations for solutions of the heat equation in terms of the heat kernel combined with an initial bound on this kernel. Depending on the assumptions on $\Omega$ these initial bounds will differ, but they will in each case quantify how large the kernel $k_\Omega$ can be in comparison to the heat kernel in the full space 
\begin{equation*}
    k_{\R^d}(t, x, y) = \frac{1}{(4\pi t)^{d/2}} \, e^{- \frac{|x-y|^2}{4t}}\,.
\end{equation*}

To this end, we find it convenient to introduce the following family of weighted norms. For $f\colon (0, T]\times \Omega \times \Omega\to \R$ define for $x\in \Omega$ and $P\subseteq (0, T]\times \R^d$
\begin{equation}\label{eq:gnorm}
    \|f\|_{G^\infty_{\delta,x}(P)} := \sup_{(s, y) \in P \cap ((0, T]\times \overline{\Omega})}\limsup_{\Omega\ni y' \to y} |f(s, x, y')|s^{d/2}e^{\frac{|x-y|^2}{4(1+\delta)s}}\,.
\end{equation}
Note that this is a function of $x$ and, if $f$ extends continuously to $\partial\Omega$, then the limsup in the definition can be replaced by the limit.

In other words, these norms are defined so that they correspond to the best possible Gaussian upper bound on $k_\Omega$ that is uniform for $(s, y) \in P$:
\begin{equation*}
    k_\Omega(t, x, y) \leq \|k_\Omega\|_{G^\infty_{\delta, x}(P)}t^{-d/2} e^{-\frac{|x-y|^2}{4(1+\delta)t}}\,.
\end{equation*}

A well-known result of Davies' states that $k_\Omega$ is essentially bounded by a multiple of $k_{\R^d}$, provided that $\partial\Omega$ is regular enough. In fact, Davies' result is more general, covering heat kernels associated to uniformly elliptic operators. 
\begin{lemma}[{\cite[Theorem 2.4.4]{Davies_heatkernels}}]\label{heatnapriori}
    Let $d\geq 1$ and let $\Omega\subset\R^d$ be an open set with the extension property. For any $t_0,\delta>0$ there is a constant $C_{\Omega,t_0,\delta}$ such that, for all $x, y \in\Omega$ and $t>0$,
    $$
    0\leq k_\Omega(t, x, y) \leq C_{\Omega,t_0,\delta} t^{-d/2}\max\bigl\{1,(t/t_0)^{d/2}\bigr\} \, e^{- \frac{|x-y|^2}{4(1+\delta)t}} \,.
    $$
\end{lemma}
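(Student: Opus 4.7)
The plan is the classical Davies argument: derive an on-diagonal upper bound for $k_\Omega(t,x,x)$ via Nash's method using the extension property, and then graft on Gaussian off-diagonal decay by a weighted semigroup perturbation. First I would reduce matters to the on-diagonal estimate $k_\Omega(t,x,x)\leq C_{\Omega,t_0}\,t^{-d/2}\max\{1,(t/t_0)^{d/2}\}$, which is equivalent to the ultracontractivity bound $\|e^{t\Delta_\Omega}\|_{L^1\to L^\infty}\leq C_{\Omega,t_0}\,t^{-d/2}\max\{1,(t/t_0)^{d/2}\}$ by the pointwise Cauchy--Schwarz estimate $k_\Omega(t,x,y)\leq\sqrt{k_\Omega(t,x,x)k_\Omega(t,y,y)}$. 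Positivity $k_\Omega\geq 0$ follows from the Beurling--Deny criteria (preservation of positivity by the Neumann semigroup), or by approximation with nonnegative initial data.

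The on-diagonal bound is derived from a Nash-type inequality. Composing $\mathcal{E}_\Omega\colon H^1(\Omega)\to H^1(\R^d)$ with the standard Sobolev embedding gives $H^1(\Omega)\hookrightarrow L^q(\Omega)$ for some $q>2$ depending on $d$; Hölder interpolation against $L^1(\Omega)$ then yields
$$
\|u\|_{L^2(\Omega)}^{2+4/d}\leq K_{\Omega}\bigl(\|\nabla u\|_{L^2(\Omega)}^2+ t_0^{-1}\|u\|_{L^2(\Omega)}^2\bigr)\|u\|_{L^1(\Omega)}^{4/d},
$$
where the zeroth-order term is forced by the nontrivial kernel of the Neumann form. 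Applying this to $v(t):=e^{t\Delta_\Omega}f$ with $f\geq 0$ and $\|f\|_{L^1}=1$, and combining $\tfrac{d}{dt}\|v(t)\|_{L^2}^2=-2\|\nabla v(t)\|_{L^2}^2$ with the $L^1$-contractivity $\|v(t)\|_{L^1}=1$, leads to the differential inequality $\phi'(t)\leq -\tfrac{2}{K_\Omega}\phi(t)^{1+2/d}+\tfrac{2}{t_0}\phi(t)$ for $\phi(t):=\|v(t)\|_{L^2}^2$. Integrating gives $\phi(t)\leq C\,t^{-d/2}\max\{1,(t/t_0)^{d/2}\}$, from which the on-diagonal bound follows by the $L^1\to L^2\to L^\infty$ composition.

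For the Gaussian factor, I would use the Davies perturbation. For any bounded Lipschitz $\psi\colon\R^d\to\R$ with $\|\nabla\psi\|_\infty\leq 1$ and $\alpha\in\R$, the algebraic identity
$$
\int_\Omega\nabla(e^{-\alpha\psi}u)\cdot\nabla(e^{\alpha\psi}u)\,dx=\int_\Omega|\nabla u|^2\,dx-\alpha^2\int_\Omega|\nabla\psi|^2|u|^2\,dx
$$
shows that the conjugated semigroup $T_\alpha(t):=e^{-\alpha\psi}e^{t\Delta_\Omega}e^{\alpha\psi}$ is generated by a form differing from the Dirichlet form by the bounded perturbation $-\alpha^2\int|\nabla\psi|^2|u|^2\,dx\geq-\alpha^2\|u\|_{L^2}^2$. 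Rerunning the Nash argument for this perturbed form, with the $\alpha^2$ contribution absorbed into the $t_0^{-1}\|u\|_{L^2}^2$ term, yields
$$
\|T_\alpha(t)\|_{L^1\to L^\infty}\leq C_{\Omega,t_0,\delta}\,t^{-d/2}\max\{1,(t/t_0)^{d/2}\}\,e^{(1+\delta)\alpha^2 t};
$$
since the kernel of $T_\alpha(t)$ is $e^{\alpha(\psi(y)-\psi(x))}k_\Omega(t,x,y)$, picking $\psi$ to be a bounded Lipschitz-$1$ truncation of $|\cdot-x|$ and optimizing $\alpha=|x-y|/(2(1+\delta)t)$ delivers the Gaussian $e^{-|x-y|^2/(4(1+\delta)t)}$. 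The principal bookkeeping difficulty lies in keeping the exponent at exactly $4(1+\delta)$ rather than allowing it to deteriorate to $4(1+\delta)^2$: this forces a careful use of $2ab\leq\delta a^2+\delta^{-1}b^2$ when bounding the cross-terms in $\nabla(e^{\alpha\psi}u)$ and a rescaling of $t_0$ when absorbing the resulting $\|u\|_{L^2}^2$ contribution into the Nash inequality. All remaining steps are routine.
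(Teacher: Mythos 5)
The paper does not actually prove this lemma: it cites Davies' Theorem~2.4.4 and reduces the statement to it by applying Davies' result to the rescaled operator $-t_0\Delta_\Omega$ (to pass from Davies' range $0<t<1$ to all $t>0$ via the monotonicity argument in his Theorem~3.2.9) and by treating connected components separately. You instead sketch a proof of Davies' theorem itself, via a Nash inequality derived from the extension property, the ODE $\phi'(t)\leq -c_1\phi^{1+2/d}+c_2\phi$ for the on-diagonal bound, and the Davies exponential weight $e^{\pm\alpha\psi}$ together with the conjugated-form identity and optimization in $\alpha$ for the Gaussian factor. The high-level scheme and most of the individual identities you record (the form identity, the optimization giving $\alpha=|x-y|/(2(1+\delta)t)$, the resulting exponent $-|x-y|^2/(4(1+\delta)t)$) are correct, and this is indeed the standard route.

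There is, however, a genuine gap in the step where you assert that ``rerunning the Nash argument for this perturbed form'' yields the ultracontractivity bound $\|T_\alpha(t)\|_{L^1\to L^\infty}\lesssim t^{-d/2}\max\{1,(t/t_0)^{d/2}\}e^{(1+\delta)\alpha^2 t}$. The Nash-to-ultracontractivity machinery you use for $e^{t\Delta_\Omega}$ relies on the semigroup being an $L^1$-contraction, so that $\|v(t)\|_{L^1}$ can be frozen while integrating the differential inequality for $\|v(t)\|_{L^2}^2$; for $e^{t\Delta_\Omega}$ this is the Beurling--Deny submarkovian property. The conjugated semigroup $T_\alpha(t)=e^{-\alpha\psi}e^{t\Delta_\Omega}e^{\alpha\psi}$ is \emph{not} submarkovian: for $\psi$ a bounded Lipschitz truncation of $|\cdot-x|$ its $L^1\to L^1$ norm grows like $e^{\alpha\|\psi\|_\infty}$, and with the eventual choice $\alpha\approx|x-y|/t$ and $\|\psi\|_\infty\approx|x-y|$ this would feed an $e^{+|x-y|^2/t}$-type factor back in and destroy the Gaussian. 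This is precisely the obstruction Davies' proof is built to circumvent: he propagates a one-parameter \emph{logarithmic Sobolev} inequality (which \emph{is} stable under the $e^{\pm\alpha\psi}$ conjugation) along an interpolating path of $L^{p(s)}$-norms from $p=1$ to $p=\infty$, rather than going through Nash and a single $L^1\to L^2$ step. A Moser-iteration variant is also possible, but then one has to control the fact that $-\int(L_\alpha v)\,v^{2p-1}\,dx$ does \emph{not} equal $Q_\alpha(v^p)$ (there are extra drift cross-terms). Either repair is standard, but without one the step as you outline it would fail.
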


In Theorem 2.4.4 of~\cite{Davies_heatkernels} the result is stated for heat kernels associated to general uniformly elliptic operators on a connected set $\Omega$ with the extension property and for $0<t<1$. For a connected set $\Omega$, the bound claimed above follows by applying Davies' result with the operator $-t_0\Delta_\Omega$ and then extend the obtained bound to all $t$ by using the monotonicity argument in the proof of~\cite[Theorem 3.2.9]{Davies_heatkernels}. The extension to non-connected sets follows by applying Davies' bound in each of the connected components and using the fact that the implied constant appearing in Davies' proof only depends on the norm of the extension operator.

An immediate consequence of Lemma \ref{heatnapriori} that will be useful here is the following corollary.
\begin{corollary}\label{cor: unif kernel bound Lip}
    Let $d\geq 1$ and let $\Omega\subset\R^d$ be an open set with the extension property. For any $t_0,\delta>0$ there is a constant $C_{\Omega,t_0,\delta}$ such that, for all $x \in\Omega$, $t>0$ and $P \subseteq (0, t]\times \overline{\Omega}$,
    $$
    \|k_\Omega\|_{G^\infty_{\delta, x}(P)} \leq C_{\Omega,t_0,\delta} \max\bigl\{1,(t/t_0)^{d/2}\bigr\} \,.
    $$
\end{corollary}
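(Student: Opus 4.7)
The plan is to derive the corollary essentially as a book-keeping consequence of Lemma~\ref{heatnapriori}. Fix $t_0,\delta>0$, $x\in\Omega$, $t>0$, and $P\subseteq (0,t]\times\overline\Omega$. For any $(s,y)\in P\cap((0,t]\times\overline\Omega)$ and any $y'\in\Omega$, Lemma~\ref{heatnapriori} (applied with the same $\delta$) yields
\begin{equation*}
    k_\Omega(s,x,y')\leq C_{\Omega,t_0,\delta}\,s^{-d/2}\max\bigl\{1,(s/t_0)^{d/2}\bigr\}\,e^{-\frac{|x-y'|^2}{4(1+\delta)s}}.
\end{equation*}
Multiplying both sides by $s^{d/2}e^{\frac{|x-y|^2}{4(1+\delta)s}}$ gives
\begin{equation*}
    k_\Omega(s,x,y')\,s^{d/2}e^{\frac{|x-y|^2}{4(1+\delta)s}}\leq C_{\Omega,t_0,\delta}\max\bigl\{1,(s/t_0)^{d/2}\bigr\}\,\exp\!\Bigl(\frac{|x-y|^2-|x-y'|^2}{4(1+\delta)s}\Bigr).
\end{equation*}

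The first key step is to pass to the limit $y'\to y$ with $y'\in\Omega$. Since $y'\mapsto|x-y'|^2$ is continuous on $\R^d$, we have $|x-y'|\to|x-y|$, so the exponential on the right-hand side tends to $1$. Taking the limsup on the left then produces the pointwise bound
\begin{equation*}
    \limsup_{\Omega\ni y'\to y} k_\Omega(s,x,y')\,s^{d/2}e^{\frac{|x-y|^2}{4(1+\delta)s}}\leq C_{\Omega,t_0,\delta}\max\bigl\{1,(s/t_0)^{d/2}\bigr\}.
\end{equation*}

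The second and final step is to take the supremum over $(s,y)\in P\cap((0,t]\times\overline\Omega)$. Since $s\mapsto\max\{1,(s/t_0)^{d/2}\}$ is non-decreasing, the supremum over $s\in(0,t]$ equals $\max\{1,(t/t_0)^{d/2}\}$; the bound is uniform in $y\in\overline\Omega$. Recalling the definition~\eqref{eq:gnorm} of the $G^\infty_{\delta,x}$-norm, this is precisely the claimed inequality
\begin{equation*}
    \|k_\Omega\|_{G^\infty_{\delta,x}(P)}\leq C_{\Omega,t_0,\delta}\max\bigl\{1,(t/t_0)^{d/2}\bigr\}.
\end{equation*}
There is no genuine obstacle here; the only subtlety worth flagging is the matching of the Gaussian exponents, which is why one applies Lemma~\ref{heatnapriori} with exactly the same $\delta$ that appears in the definition of the weighted norm, so that the exponential prefactor on the right-hand side collapses to $1$ in the limit $y'\to y$.
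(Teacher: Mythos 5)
Your proof is correct and is exactly the direct argument the paper has in mind: the paper itself gives no proof, calling the corollary an \emph{immediate consequence} of Lemma~\ref{heatnapriori}, and your write-out (apply the Gaussian bound, take the limsup $y'\to y$ so the exponential factor collapses to $1$, then take the supremum over $(s,y)\in P$ using monotonicity of $s\mapsto\max\{1,(s/t_0)^{d/2}\}$) is precisely that unpacking.
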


Since bounded convex sets are uniformly Lipschitz, Corollary \ref{cor: unif kernel bound Lip} is applicable in this setting. However, to obtain the estimate in Theorem~\ref{thm: main thm convex}, this uniform bound in $x$ is not good enough (at least not with our methods). Indeed, as we shall see below, the relative error,
$$
    \frac{k_\Omega(t, x, y)}{k_{\R^d}(t, x, y)}\,, 
$$
is large when $x, y$ are close to `sharp corners' but small close to more regular parts of the boundary. To prove our main result, we need to capture this better behavior close to regular parts of the boundary and combine this with the fact that `sharp corners' cannot cover a too large portion of $\partial\Omega$. The precise behavior of $k_\Omega$ close to the boundary of a convex set was studied by Li and Yau in \cite{LiYau86} where they obtained matching upper and lower bounds in terms of the quantity $$V_\Omega(x, r) = |\Omega \cap B_r(x)|\,,$$ see also \cite[Theorem 5.5.6]{Davies_heatkernels}. For our purposes it suffices to recall the upper bound of Li and Yau.

\begin{lemma}[{\cite{LiYau86}}]\label{lem: off-diagonal kernel bound convex}
	Let $d\geq2$ and $\delta >0$. There exists positive constants $c_{d,\delta},C_{d,\delta}$ so that, for any open and convex set $\Omega\subset \R^d$ and any $x,y\in\Omega$ and $t>0$,
 \begin{equation*}
	0
    \leq 
	k_\Omega(t, x, y)
	\leq C_{d,\delta} \,
	\frac{e^{-\frac{|x-y|^2}{4(1+\delta)t}}}{V_\Omega(x, \sqrt{t})}\,.
\end{equation*}
\end{lemma}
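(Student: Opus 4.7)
The plan is to deduce the stated bound directly from the classical Li--Yau Gaussian upper bound for Neumann heat kernels on manifolds with non-negative Ricci curvature and convex boundary, combined with a convex-set volume comparison to convert the natural symmetric normalisation $\sqrt{V_\Omega(x,\sqrt t)V_\Omega(y,\sqrt t)}$ into the asymmetric $V_\Omega(x,\sqrt t)$ that appears in the statement.

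First, I would invoke \cite{LiYau86}. Viewing an open convex $\Omega\subset\R^d$ as a Riemannian manifold with boundary, the Ricci curvature vanishes identically and the boundary has non-negative second fundamental form with respect to the inward unit normal, precisely because $\Omega$ is convex; moreover the intrinsic distance between any two points of $\Omega$ equals the Euclidean distance $|x-y|$. The Li--Yau theorem therefore yields, for every $\delta'>0$, a constant $C_{d,\delta'}$ such that
\[
k_\Omega(t,x,y) \leq \frac{C_{d,\delta'}}{\sqrt{V_\Omega(x,\sqrt t)\,V_\Omega(y,\sqrt t)}}\, e^{-\frac{|x-y|^2}{4(1+\delta')t}}
\]
for all $x,y\in\Omega$ and $t>0$.

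Next, I would establish the convex volume comparison
\[
V_\Omega(x,r) \leq \Bigl(1+\tfrac{|x-y|}{r}\Bigr)^{d} V_\Omega(y,r)
\qquad\text{for all }x,y\in\Omega,\ r>0.
\]
This rests on two elementary observations: the inclusion $\Omega\cap B_r(x)\subset \Omega\cap B_{r+|x-y|}(y)$, and the monotonicity of $s\mapsto s^{-d}V_\Omega(y,s)$. The latter follows because the contraction $z\mapsto y+(r/s)(z-y)$ maps $\Omega\cap B_s(y)$ into $\Omega\cap B_r(y)$ for $0<r\leq s$ by convexity and $y\in\Omega$, and has Jacobian determinant $(r/s)^d$. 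Taking square roots and using Cauchy--Schwarz gives
\[
\frac{1}{\sqrt{V_\Omega(x,\sqrt t)\,V_\Omega(y,\sqrt t)}} \leq \frac{1}{V_\Omega(x,\sqrt t)} \Bigl(1+\tfrac{|x-y|}{\sqrt t}\Bigr)^{d/2}.
\]

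Finally, I would absorb the polynomial factor $(1+|x-y|/\sqrt t)^{d/2}$ into the Gaussian via the standard estimate $(1+s)^{d/2}e^{-\alpha s^2}\leq C_{d,\alpha,\beta}\, e^{-\beta s^2}$, valid for any $\alpha>\beta>0$. Applying this with $s=|x-y|/\sqrt t$, $\alpha=1/(4(1+\delta'))$, and $\beta=1/(4(1+\delta))$, where $\delta'\in(0,\delta)$ is any fixed choice, yields the asserted bound with a constant $C_{d,\delta}$ depending only on $d$ and $\delta$. The non-negativity $k_\Omega\geq 0$ is, of course, standard for the Neumann heat kernel.

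The main obstacle is the Li--Yau estimate itself, whose proof involves a parabolic mean value inequality and the Li--Yau differential Harnack inequality on domains with convex boundary; since the lemma is attributed to \cite{LiYau86}, I would simply cite that reference. The remaining ingredients --- the Euclidean volume comparison from convexity and the absorption of polynomial prefactors into the Gaussian --- are elementary.
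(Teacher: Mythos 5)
Your high-level route is the same as the paper's: cite the Li--Yau Gaussian upper bound \cite{LiYau86} and conclude. The symmetric-to-asymmetric conversion via the convex volume comparison $V_\Omega(x,r)\leq(1+|x-y|/r)^dV_\Omega(y,r)$ and absorption of the polynomial prefactor into the Gaussian is correct and elementary (though depending on which form of the Li--Yau bound one starts from, it may not be needed --- several standard formulations, including the one the paper points to via \cite[Theorem 5.5.6]{Davies_heatkernels}, already feature $V_\Omega(x,\sqrt t)^{-1}$).

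There is, however, a genuine gap that the paper explicitly flags and your proposal skips. The Li--Yau theorem in \cite{LiYau86} is stated and proved for \emph{bounded, smooth} convex domains (equivalently compact flat manifolds with smooth boundary of non-negative second fundamental form). The lemma as stated concerns \emph{arbitrary} open convex sets, with no smoothness and no boundedness assumption. Your invocation of ``the boundary has non-negative second fundamental form'' presupposes a $C^2$ boundary, which a generic convex set does not have; and ``compact'' fails for unbounded $\Omega$. The paper handles this by an approximation argument (exhausting/approximating $\Omega$ by bounded smooth convex domains and using uniqueness of weak solutions of the heat equation to pass the kernel bound to the limit). Without this step --- or some substitute --- the argument does not establish the stated lemma. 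You should either explicitly carry out such an approximation, observing that the constants $V_\Omega(x,\sqrt t)$ and $|x-y|$ behave well under interior approximation by smooth convex sets, or cite a reference that states the bound in the required generality.
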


In \cite{LiYau86} the results are stated and proved for bounded and smooth convex sets. Both of these additional assumptions can be removed by an approximation argument using that the heat kernel is the unique weak solution of a heat equation.

In terms of the $G^\infty_{\delta,x}$-norms, the bound of Li and Yau has the following consequence.
\begin{corollary}\label{cor: unif kernel bound convex}
    Let $d\geq 1$ and let $\Omega\subset\R^d$ be open and convex. For any $\delta>0$ there is a constant $C_{d,\delta}$ such that, for all $x \in\Omega$, $t>0$ and $P \subseteq (0, t]\times \overline{\Omega}$,
    $$
    \|k_\Omega\|_{G^\infty_{\delta, x}(P)} \leq C_{d,\delta} \, \frac{t^{d/2}}{V_\Omega(x, \sqrt{t})}\,.
    $$
\end{corollary}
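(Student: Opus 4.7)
The plan is to derive Corollary~\ref{cor: unif kernel bound convex} directly from the Li--Yau bound of Lemma~\ref{lem: off-diagonal kernel bound convex}, using the \emph{same} value of $\delta$ in both, and then to reduce the resulting $s$-dependence on the right-hand side to a $t$-dependence by a standard monotonicity property of the local volume on convex sets.

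First, I would apply Lemma~\ref{lem: off-diagonal kernel bound convex} with the same $\delta$ that appears in the weighted norm \eqref{eq:gnorm}: for every $s \in (0,t]$ and every $y'\in \Omega$,
\[
k_\Omega(s, x, y') \leq C_{d,\delta}\, \frac{e^{-\frac{|x-y'|^2}{4(1+\delta)s}}}{V_\Omega(x, \sqrt{s})}.
\]
Multiplying by the weight $s^{d/2} e^{|x-y|^2/(4(1+\delta)s)}$ from the definition of $\|\cdot\|_{G^\infty_{\delta, x}}$ and passing to the limit $y' \to y$ causes the two Gaussian factors to cancel exactly, so that
\[
\limsup_{\Omega \ni y' \to y} k_\Omega(s, x, y') \, s^{d/2}\, e^{\frac{|x-y|^2}{4(1+\delta)s}} \leq C_{d,\delta}\, \frac{s^{d/2}}{V_\Omega(x, \sqrt{s})}
\]
for every $(s,y) \in P \cap ((0,t] \times \overline{\Omega})$.

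Next, I would upgrade $s$ to $t$ on the right-hand side using the fact that for a convex open set $\Omega$ and a point $x \in \Omega$, the map $r \mapsto V_\Omega(x,r)/r^d$ is nonincreasing. Since $\Omega$ is star-shaped with respect to $x$, writing $\rho(\theta) := \sup\{\tau \geq 0 : x + \tau \theta \in \Omega\}$ and integrating in polar coordinates centered at $x$ gives
\[
\frac{V_\Omega(x,r)}{r^d} = \frac{1}{d} \int_{\Sph^{d-1}} \min\!\Bigl(1, \frac{\rho(\theta)}{r}\Bigr)^{\!d} d\sigma(\theta),
\]
and each integrand is visibly nonincreasing in $r$. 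Specialising to $r = \sqrt{s}$ and $r = \sqrt{t}$ with $0 < s \leq t$ yields
\[
\frac{s^{d/2}}{V_\Omega(x, \sqrt{s})} \leq \frac{t^{d/2}}{V_\Omega(x, \sqrt{t})}.
\]

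Combining the two displays and taking the supremum over $(s,y) \in P \cap ((0,t] \times \overline{\Omega})$ gives the claimed bound. There is no real obstacle: the argument is essentially a bookkeeping exercise matching the Gaussian weight from Li--Yau against the one built into $\|\cdot\|_{G^\infty_{\delta, x}}$. The only subtlety worth flagging is that Li--Yau must be invoked with the same $\delta$ as appears in the norm; applying it with a smaller parameter would leave a residual Gaussian factor that is not immediately bounded by $1$ and would have to be absorbed separately.
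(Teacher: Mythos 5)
Your proof is correct and is essentially the argument the paper has in mind: the corollary is stated without proof as an ``immediate consequence'' of the Li--Yau bound, and the step you identify as the only nontrivial ingredient---that $r\mapsto V_\Omega(x,r)/r^d$ is nonincreasing for convex $\Omega$---is exactly the content of Lemma~\ref{lem: Bishop-Gromov monotonicity} in the appendix, which you re-derive via the polar-coordinate formula. Your closing remark about matching the $\delta$ in Li--Yau to the $\delta$ in the norm is the right thing to flag; one could in fact also invoke Li--Yau with any $\delta'<\delta$ and the residual Gaussian factor would be bounded by $1$, but using the same $\delta$ is the cleanest route.
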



\section{An improved heat kernel bound in the bulk}

In this section, we show that an a priori Gaussian-like upper bound on the heat kernel $k_\Omega$ implies precise estimates for $k_\Omega(t, x, x)$ as $t\to 0$ as long as the distance from $x$ to the boundary is $\gg \sqrt{t}$. The proof provided does not depend on us considering the heat kernel of the Neumann realization of the Laplace operator but works for the heat kernel associated to any nonnegative self-adjoint extension $H$ of $-\Delta$ on $L^2(\Omega)$ with $C_0^\infty(\Omega)\subset \dom(H)$. We denote the corresponding kernel by $k_H$ whose existence follows from, for instance, \cite[Proposition 2.12]{FrankLarson_Inventiones25}.

We recall the definition of the $G^\infty_{\delta,x}$-norm from \eqref{eq:gnorm}.

\begin{proposition}\label{prop: improved bound bulk}
    Let $\kappa_d=0$ for $d\geq 3$ and $\kappa_2 =1$. For all $x\in \Omega, t>0$, $\delta'>\delta>0$ and $R \in (0, d_\Omega(x)]$, it holds that
    \begin{equation*}
		\bigl|(4\pi t)^{d/2}k_H(t, x, x)- 1\bigr| \lesssim_{d,\delta,\delta'} \Bigl(1+\log_\limplus\Bigl(\frac{\sqrt{t}}{R}\Bigr) \Bigr)^{\kappa_d} \|k_H\|_{G^\infty_{\delta,x}((0, t]\times B_{R}(x))} e^{- \frac{R^2}{(1+\delta') t}}\,.
	\end{equation*}
\end{proposition}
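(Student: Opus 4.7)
The plan is to compare $k_H(t, x, x)$ with the full-space heat kernel $p(s, x, z) := (4\pi s)^{-d/2} e^{-|x-z|^2/(4s)}$ via a cutoff argument combined with Duhamel's formula. I will choose a smooth cutoff $\phi \in C_0^\infty(B_R(x))$ with $\phi \equiv 1$ on $B_{\rho R}(x)$, where $\rho \in (0, 1)$ depends only on $\delta, \delta'$ (to be fixed below), with $|\nabla\phi| \lesssim ((1-\rho)R)^{-1}$ and $|\Delta\phi| \lesssim ((1-\rho)R)^{-2}$. Since $R \leq d_\Omega(x)$, $\phi$ is compactly supported in $\Omega$, and the function $\psi(s, \cdot) := \phi\, p(s, x, \cdot)$ lies in $\dom(H)$ for each $s > 0$, has initial data $\delta_x$ because $\phi(x) = 1$, and satisfies $(\partial_s + H)\psi = -\mathcal{L}\phi$ on $\Omega$ with
\[
    \mathcal{L}\phi(s, z) := 2\nabla\phi(z) \cdot \nabla_z p(s, x, z) + \Delta\phi(z)\, p(s, x, z),
\]
supported on the annulus $\{\rho R \leq |z - x| \leq R\}$. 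Applying Duhamel's formula for $e^{-sH}$ and evaluating at $s = t$, $y = x$ will yield the identity
\[
    k_H(t, x, x) - p(t, x, x) = \int_0^t\!\!\int_\Omega k_H(t-s, x, z)\, \mathcal{L}\phi(s, z)\, dz\, ds.
\]

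Next, I will insert into this identity the defining bound of the $G^\infty_{\delta, x}$-norm, namely $|k_H(t-s, x, z)| \leq M (t-s)^{-d/2} e^{-|x-z|^2/(4(1+\delta)(t-s))}$ with $M := \|k_H\|_{G^\infty_{\delta, x}((0, t]\times B_R(x))}$, together with $|\nabla_z p| \leq (|x-z|/(2s))\,p$. The integrand will then carry a combined Gaussian factor $e^{-\mu(s)|x-z|^2}$ with $\mu(s) := (4s)^{-1} + (4(1+\delta)(t-s))^{-1}$, whose minimum over $s\in(0,t)$ is, by AM--GM, $c_0(\delta)/t$, where $c_0(\delta) := (1+\sqrt{1+\delta})^2/(4(1+\delta))$. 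To upgrade the Gaussian exponent to $1/(1+\delta')$ while preserving integrability in $z$, I will split
\[
    e^{-\mu|x-z|^2} = e^{-(1-\alpha)\mu|x-z|^2}\, e^{-\alpha\mu|x-z|^2}
\]
for a small $\alpha > 0$, extract $e^{-(1-\alpha) c_0(\delta)\rho^2 R^2/t}$ from the first factor (using $|x-z| \geq \rho R$), and reserve the second factor for integration in $z$. A short algebraic verification shows that $\delta' > \delta$ implies $c_0(\delta)(1+\delta') > 1$ (equivalent to the strict inequality $4(1+\delta) < (1+\sqrt{1+\delta})^2 (1+\delta')$), which permits the choice of $\rho \in (0, 1)$ and $\alpha > 0$ with $(1-\alpha) c_0(\delta)\rho^2 (1+\delta') \geq 1$; consequently the extracted factor is bounded by $e^{-R^2/((1+\delta')t)}$.

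The remaining integrals are then routine. The spatial integrals $\int |x-z|^k e^{-\alpha\mu|x-z|^2}\,dz \lesssim_{\alpha,\delta} \mu^{-(d+k)/2}$, combined with the elementary inequality $\mu(s)^{-1} \leq 4(1+\delta) s(t-s)/t$, cancel the $(s(t-s))^{-d/2}$ prefactors from the heat-kernel bounds into a pure $t^{-d/2}$ multiplied by a bounded function of $s/t$. The resulting $s$-integrations, $\int_0^t \sqrt{(t-s)/s}\,ds = \tfrac{\pi t}{2}$ for the $\nabla\phi$-term and $\int_0^t ds = t$ for the $\Delta\phi$-term, yield after multiplication by $(4\pi t)^{d/2}$ a bound of order $M(\sqrt t/R + t/R^2)\,e^{-R^2/((1+\delta')t)}$. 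In the main regime $R \geq \sqrt t$ this is $\lesssim_{d,\delta,\delta'} M\, e^{-R^2/((1+\delta')t)}$, which matches the claim with $\kappa_d = 0$ (and, \emph{a fortiori}, with $\kappa_2 = 1$). In the complementary regime $R < \sqrt t$, where the polynomial factors blow up, I will fall back on the trivial bound $|k_H(t, x, x)| \leq M\, t^{-d/2}$ (obtained by evaluating the $G^\infty_{\delta, x}$-norm at $y = x$), giving $|(4\pi t)^{d/2}k_H(t, x, x) - 1| \lesssim_d M$; since the right-hand side of the claim is bounded below by a positive multiple of $M$ in this regime, the inequality follows. The logarithmic factor in $d=2$ is thus covered \emph{a fortiori}, but it can alternatively be recovered as the sharp output of the small-$s$ piece of the $\nabla\phi$-integral, since $\int_0^{t/2} s^{-1} e^{-\alpha R^2/(4s)}\,ds = E_1(\alpha R^2/(2t)) \leq \log(1+2t/(\alpha R^2))$.

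The main difficulty is the careful coordination of the parameters $\rho$ and $\alpha$: they must simultaneously ensure the Gaussian upgrade from exponent $1/(1+\delta)$ to $1/(1+\delta')$ and keep the resulting $z$- and $s$-integrations uniformly convergent, the key underlying algebraic fact being $c_0(\delta)(1+\delta') > 1$ for all $\delta' > \delta \geq 0$.
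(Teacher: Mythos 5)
Your overall strategy — multiplying the free heat kernel by a smooth cutoff scaled to $B_R(x)$, applying Duhamel, and inserting the $G^\infty_{\delta,x}$ a priori bound — is the same as the paper's, and your algebraic lemma (that $\delta'>\delta$ implies $c_0(\delta)(1+\delta')>1$, with $c_0(\delta)=(1+\sqrt{1+\delta})^2/(4(1+\delta))$ the Cauchy--Schwarz minimum of $\mu(s)$) is correct. Your computations for the $\nabla\phi$- and $\Delta\phi$-terms, giving $M(\sqrt t/R)e^{-R^2/((1+\delta')t)}$ and $M(t/R^2)e^{-R^2/((1+\delta')t)}$ respectively after multiplication by $(4\pi t)^{d/2}$, also check out.

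The substantive difference from the paper is in how the annulus integral is estimated. After pulling out a uniform exponential $e^{-(1-\alpha)c_0\rho^2R^2/t}$, you extend the spatial integral of $e^{-\alpha\mu|x-z|^2}$ from the annulus to all of $\R^d$. This throws away the $r$-dependence in the Gaussian, and produces the polynomial penalty $\sqrt t/R+t/R^2$ that blows up when $R<\sqrt t$. The paper avoids this by keeping $r$ inside the exponential through the one-dimensional $\tau$-integral, whose divergence as $M:=\mathrm{const}\cdot R^2/t\to 0$ cancels the $R^{d-2}/t^{d-1}$ prefactor exactly for $d\geq 3$ and produces the logarithm for $d=2$; this yields the statement for \emph{all} $R\in(0,d_\Omega(x)]$ with no case split. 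Your approach gives a genuinely cruder intermediate bound and therefore needs the fallback in the regime $R<\sqrt t$.

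That fallback is where the gap lies. You write that the trivial bound $(4\pi t)^{d/2}k_H(t,x,x)\leq(4\pi)^{d/2}M$ gives $|(4\pi t)^{d/2}k_H(t,x,x)-1|\lesssim_d M$. This last step requires $1\lesssim_d M$, which is neither established nor immediate from the definition of the $G^\infty_{\delta,x}$-norm: the norm is a supremum of $k_H(s,x,y)s^{d/2}e^{\cdots}$, and there is no a priori pointwise lower bound on $k_H$ supplied by the hypothesis. The natural way to show $M\gtrsim_d 1$ is by domain monotonicity (comparing $k_H$ with the Dirichlet heat kernel of a ball $B_{R/2}(x)\subset\Omega$) together with the small-time on-diagonal asymptotics of the Dirichlet heat kernel in a ball (``not feeling the boundary''). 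That argument can be made rigorous, but it invokes a classical result that is itself a special case of the conclusion you are trying to prove, so it must be sourced carefully to avoid circularity — and in any case you have not done it. As written, the step ``which gives $|(4\pi t)^{d/2}k_H(t,x,x)-1|\lesssim_d M$'' is unjustified, and the proof does not cover the regime $R<\sqrt t$.
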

We emphasize that in Proposition~\ref{prop: improved bound bulk} all the dependence on the details of the operator $H$ and the geometry of $\Omega$ is captured by the weighted norm in the right-hand side of the bound. We also note that the bound is only interesting in the regime when $\sqrt{t}\ll R\leq  d_\Omega(x)$, as otherwise the bound is a direct consequence of the pointwise Gaussian bound that holds by the definition of the weighted norm.

For the case of the Neumann Laplace operator in a bounded Lipschitz set $\Omega \subset \R^d$ a bound corresponding to that in Proposition~\ref{prop: improved bound bulk} with $R \propto d_\Omega(x)$ appears in \cite[Proposition 2.1 (2.4+)]{Brown93}. Apart from giving a bound for heat kernels associated to a much larger class of Laplace operators, the estimate in Proposition~\ref{prop: improved bound bulk} improves that of Brown in several regards; (i) all the dependence on the operator $H$ (and thus on the geometry $\Omega$) is in the right-hand side contained in the a priori estimate for $k_H$ given by the weighted norm, (ii) the bound obtained by Brown degenerates close to the boundary as an arbitrary positive power of $\sqrt{t}/d_\Omega(x)$ whilst our bound degenerates logarithmically when $d=2$ and does not degenerate when $d\geq 3$. 

In \cite[Proposition 2.1 (2.4+)]{Brown93} the factor $\|k_\Omega\|_{G_{\delta,x}^\infty((0,t]\times B_{d_\Omega(x)}(x))}$ appearing in Proposition~\ref{prop: improved bound bulk} is replaced by 1. When $|\Omega|<\infty$ such a bound cannot be true for all $t>0$, since it can be deduced from the spectral theorem that $k_\Omega(t, x, x)\geq 1/|\Omega|$. However, since the main result in \cite{Brown93} concerns
small $t > 0$, this omission does not influence the validity of the main result of \cite{Brown93}.

\begin{proof}
    Fix $x\in \Omega$ and $t>0$ as in the statement of the proposition. Let $\chi \in C_0^\infty(\Omega)$ with $\chi \equiv 1$ in a neighborhood of $x$. Define $w\colon (0, \infty)\times \Omega \to \R$ by 
	\begin{equation*}
		w(t', x') := k_H(t', x', x) - k_{\R^d}(t', x', x)\chi(x')\,.
	\end{equation*}
	Note that $w$ solves
	\begin{equation*}
		\begin{cases}
			(\partial_{t'}-\Delta)w(t', x') = -2\nabla k_{\R^d}(t', x', x) \cdot \nabla \chi(x')\\
            \hphantom{(\partial_{t'}-\Delta)w(t', x') =}-k_{\R^d}(t', x', x)\Delta \chi(x'),  & \mbox{for }(t', x')\in (0, \infty)\times \Omega\,,\\
			\lim_{t'\to 0^\limplus }w(t', x') =0 &\mbox{for }x'\in \Omega\,,\\
		\end{cases}
	\end{equation*}
    and, since $w$ agrees with $k_H(\cdot,\cdot,x)$ in a neighborhood of $\partial\Omega$, it satisfies the same boundary conditions as $k_H(\cdot,\cdot,x)$.

	By Duhamel's principle, the function $w$ can be expressed as an integral, namely
	\begin{equation*}
		w(t', x') = -\int_0^{t'}\int_{\Omega}k_H(t'-s, x', z)\bigr(2\nabla k_{\R^d}(s, z, x) \cdot \nabla \chi(z)+k_{\R^d}(s, z, x)\Delta \chi(z)\bigr)\,dzds\,.
	\end{equation*}
	Therefore, by H\"older's inequality,
	\begin{equation*}
		|w(t', x')| \leq \|k_H\|_{G^\infty_{\delta,x'}(P)} \int_0^{t'}\int_{\Omega}\frac{e^{-\frac{|x'-z|^2}{4(1+\delta)(t'-s)}}}{(t'-s)^{d/2}}\frac{e^{-\frac{|z-x|^2}{4s}}}{s^{d/2}}\Bigr(2 \frac{|z-x|}{s}|\nabla \chi(z)|+|\Delta \chi(z)|\Bigr)\,dzds\,,
	\end{equation*}
    with $P := (0, t']\times \supp(|\nabla \chi|+|\Delta \chi|)$.
    Since $w(x)=1$ it holds that
    $$
        k_H(t,x,x)-k_{\R^d}(t, x, x)= w(t, x) \,.
    $$ 
    Therefore, we are done if we show that the right-hand side is appropriately bounded for a suitable choice of $\chi$. 

    Fix $\epsilon \in (0, 1/2)$ and $\chi_0 \in C_0^\infty(\R^d)$ with $\supp\chi_0\subset B_{1}(0)$, $\chi_0\equiv 1$ in $B_{1-\epsilon}(0)$, and $0\leq \chi_0\leq 1$. Set $\chi(z):=\chi_0((x-z)/R)$ and note that
    \begin{align*}
        |\nabla \chi(z)| &\lesssim_{\epsilon} R^{-1}\1_{A}(z)\,,\\
        |\Delta \chi(z)| &\lesssim_{\epsilon} R^{-2}\1_{A}(z)\,,
    \end{align*}
    where $A:= \{y\in \R^d: (1-\epsilon)R< |x-y|< R\}\subset \Omega$.

    To bound $w(t, x)$, we estimate the integral
    \begin{align*}
		 \int_0^t\int_{\Omega}&\frac{e^{-\frac{|x-z|^2}{4(1+\delta)(t-s)}}}{(t-s)^{d/2}}\frac{e^{-\frac{|z-x|^2}{4s}}}{s^{d/2}}\Bigr(2 \frac{|z-x|}{s}|\nabla \chi(z)|+|\Delta \chi(z)|\Bigr)\,dzds\\
         &\lesssim_{\epsilon}\int_0^t\int_{A}\frac{e^{-\frac{|x-z|^2}{4(1+\delta)(t-s)}}}{(t-s)^{d/2}}\frac{e^{-\frac{|z-x|^2}{4s}}}{s^{d/2}}\Bigr( \frac{|z-x|}{sR}+\frac{1}{R^2}\Bigr)\,dzds\,.
	\end{align*}

    From the elementary inequality
	\begin{equation*}
		h^\beta e^{- \gamma h^2} \leq \Bigl(\frac{\beta}{\gamma \sqrt{2e}}\Bigr)^{\beta} \quad \mbox{for all }h\geq 0 \mbox{ and } \beta, \gamma>0\,,
	\end{equation*}
	it follows that for any $s>0, z \in A$,
	\begin{equation*}
		\frac{|z-x|e^{-\frac{|z-x|^2}{4s}}}{sR} \leq \frac{e^{-\frac{|z-x|^2}{4(1+\delta)s}}}{(1-\epsilon)R^2}\frac{|z-x|^2e^{-\frac{\delta|z-x|^2}{4(1+\delta) s}}}{s} \lesssim_\delta \frac{e^{-\frac{|z-x|^2}{4(1+\delta)s}}}{(1-\epsilon)R^2}\,,
	\end{equation*}
    and thus, for any $\epsilon_0\in (0, 1)$,
    \begin{equation}\label{eq: bulk relative error}
    \begin{aligned}
		 \int_0^t&\int_{\Omega}\frac{e^{-\frac{|x-z|^2}{4(1+\delta)(t-s)}}}{(t-s)^{d/2}}\frac{e^{-\frac{|z-x|^2}{4s}}}{s^{d/2}}\Bigr(2 \frac{|z-x|}{s}|\nabla \chi(z)|+|\Delta \chi(z)|\Bigr)\,dzds\\
         &\lesssim_{\epsilon,\delta}\frac{1}{R^{2}}\int_0^t\int_{A}\frac{e^{-\frac{|x-z|^2}{4(1+\delta)(t-s)}-\frac{|z-x|^2}{4(1+\delta)s}}}{(t-s)^{d/2}s^{d/2}}\,dzds\\
         &=
         \frac{1}{R^{2}t^{d-1}}\int_0^1\int_{\S^{d-1}}\int_{(1-\epsilon)R}^{R}\frac{e^{-\frac{r^2}{4(1+\delta)t}(\frac{1}{1-\tau}+\frac{1}{\tau})}}{(1-\tau)^{d/2}\tau^{d/2}}r^{d-1}\,drd\Haus^{d-1}(\theta)d\tau\\
         &\lesssim_{\epsilon,\delta}
         \frac{R^{d-2}}{t^{d-1}}e^{-\frac{(1-\epsilon)^2(1-\epsilon_0)R^2}{(1+\delta)t}}\int_0^1\frac{e^{-\frac{(1-\epsilon)^2\epsilon_0 R^2}{4(1+\delta)t}(\frac{1}{1-\tau}+\frac{1}{\tau})}}{(1-\tau)^{d/2}\tau^{d/2}}\,d\tau\,.
    \end{aligned}
    \end{equation}

    By symmetry and the change of variables $\eta = \frac{1}{1-\tau}+\frac{1}{\tau}$ it holds that, for any $M>0$,
    \begin{align*}
        \int_0^1 \frac{e^{-M(\frac{1}{1-\tau}+\frac{1}{\tau})}}{(1-\tau)^{d/2}\tau^{d/2}}\,d\tau
        &=
        2\int_0^{1/2} \frac{e^{-M(\frac{1}{1-\tau}+\frac{1}{\tau})}}{(1-\tau)^{d/2}\tau^{d/2}}\,d\tau\\
        &\leq
        2^{1+d/2}e^{-2M}\int_0^{1/2} \frac{e^{-M/\tau}}{\tau^{d/2}}\,d\tau\,.
    \end{align*}
    By a change of variables, we have
    \begin{align*}
        \int_0^{1/2} \frac{e^{-M/\tau}}{\tau^{d/2}}\,d\tau 
        &= M^{1-d/2}\int_0^{1/(2M)} \frac{e^{-1/s}}{s^{d/2}}\,ds\,,
    \end{align*}
    which leads to the bound
    \begin{equation*}
        \int_0^{1/2} \frac{e^{- M/\tau}}{\tau^{d/2}}\,d\tau 
        \lesssim_d 
        \begin{cases}
            M^{1-d/2} & \mbox{for }d\geq 3\,,\\
            1+\log_\limplus(1/M) & \mbox{for }d=2\,.
        \end{cases}
    \end{equation*}%

    Putting these bounds back in \eqref{eq: bulk relative error} yields
    \begin{equation*}
    \begin{aligned}
        \int_0^t&\int_{\Omega}\frac{e^{-\frac{|x-z|^2}{4(1+\delta)(t-s)}}}{(t-s)^{d/2}}\frac{e^{-\frac{|z-x|^2}{4s}}}{s^{d/2}}\Bigr(2 \frac{|z-x|}{s}|\nabla \chi(z)|+|\Delta \chi(z)|\Bigr)\,dzds\\
        &\lesssim_{\epsilon,\delta,d}
         t^{-d/2}
         \Bigl(1+\log_\limplus\Bigl(\frac{\sqrt{t}}{R}\Bigr)\Bigr)^{\kappa_d}
         e^{-\frac{(1-\epsilon)^2(1-\epsilon_0)R^2}{(1+\delta)t}}
    \end{aligned}
    \end{equation*}
    where for $\kappa_d = 0$ when $d\geq 3$ and $\kappa_2=1$.

    We have thus arrived at
    \begin{align*}
        |k_H(t&, x, x)-k_{\R^d}(t, x, x)| \\
        &\lesssim_{d, \epsilon, \delta, \epsilon_0} t^{-d/2}\|k_H\|_{G^{\infty}_{\delta,x}((0, t]\times A)} \Bigl(1+\log_\limplus\Bigl(\frac{\sqrt{t}}{R}\Bigr)\Bigr)^{\kappa_d} e^{-\frac{(1-\epsilon)^2(1-\epsilon_0)R^2}{(1+\delta)t}}\\
        &\lesssim_{d, \epsilon, \delta, \epsilon_0} t^{-d/2}\|k_H\|_{G^{\infty}_{\delta,x}((0, t]\times B_R(x))} \Bigl(1+\log_\limplus\Bigl(\frac{\sqrt{t}}{R}\Bigr)\Bigr)^{\kappa_d} e^{-\frac{(1-\epsilon)^2(1-\epsilon_0)R^2}{(1+\delta)t}}\,.
    \end{align*}
    Choosing $\epsilon, \epsilon_0>0$ so small that $(1-\epsilon)^2(1-\epsilon_0)(1+\delta')=(1+\delta)$, leads to the desired estimate and completes the proof of the proposition.
\end{proof}


\section{Estimates for the Neumann boundary value problem}\label{sec: pde theorems}

Our next step towards proving our desired estimates for the heat kernel $k_\Omega$ are some a priori estimates for solutions of the heat equation. They will be useful in the next section when analyzing the behavior of $k_\Omega$ close to $\partial\Omega$. More specifically, we prove two estimates for the solutions $u$ of the Neumann problem 
\begin{equation}\label{eq: Neumann bvp}
		\begin{cases}
			\partial_t u(t, x)-\Delta u(t, x) =0\quad &\mbox{for }(t, x) \in (0, \infty)\times \Omega\,,\\
			u(0, x) = 0\quad &\mbox{for } x \in \Omega \,,\\
			\frac{\partial}{\partial \nu}u(t, x) = a(t, x)\quad &\mbox{for } (t, x)\in (0, \infty)\times \partial\Omega\,,
		\end{cases}
\end{equation}
in terms of the integrability of the Neumann data $a\colon (0, \infty)\times \partial\Omega \to \R$. 


\subsection{Preliminary remarks}
\label{sec: existence uniqueness}

Let us give a precise meaning to the boundary value/ initial value problem \eqref{eq: Neumann bvp}. 

We shall assume that $\Omega\subset\R^d$, $d\geq 2$, is open and that there is an $\Haus^{d-1}$-measurable set $\omega\subset\partial\Omega$ such that $\partial\Omega$ is Lipschitz in a neighborhood of $\omega$. We also assume that there is a measurable function $a\colon[0,\infty)\times\partial\Omega\to\R$ such that either $a\in L^2((0,T);L^p(\partial\Omega))$ or $a\in L^p(\partial\Omega;L^2(0,T))$, where $p>1$ if $d=2$ and $p\geq 2(d-1)/d$ if $d\geq 3$.

We say that $u$ is a weak solution of \eqref{eq: Neumann bvp} if, for any $T>0$, we have
$$
u\in L^2((0,T);H^1(\Omega)),
\qquad \dot u\in L^2((0,T);(H^1(\Omega))^*), \qquad u(0)=0
$$ 
and
$$
\langle \dot u(t),v\rangle + \int_\Omega \nabla u(t)\cdot\nabla v\,dx = \int_{\partial\Omega} a(t) v\,d\Haus^{d-1}
$$
for all $v\in H^1(\Omega)$ and a.e.~$t\in(0,T)$, where $\dot u$ denotes the weak derivative of $u$ with respect to the time variable. Here $(H^1(\Omega))^*$ denotes the dual of $H^1(\Omega)$ and $\langle\cdot,\cdot\rangle$ denotes the corresponding duality pairing. Moreover, as is well known, $u\in L^2((0,T);H^1(\Omega))$, $\dot u\in L^2((0,T);(H^1(\Omega))^*)$ implies $u\in C([0,T];L^2(\Omega))$, so the initial condition $u(0)=0$ is well defined. We shall prove momentarily that the boundary integral $\int_{\partial\Omega} a(t) v\,d\Haus^{d-1}$ is well defined under our assumptions on $\omega$ and $a$.

We claim that there is a unique weak solution of \eqref{eq: Neumann bvp}. Indeed, define $\langle f(t),v(t)\rangle:= \int_{\partial\Omega} a(t) v(t)\,d\Haus^{d-1}$ for $v\in L^2((0,T);H^1(\Omega))$. We claim that $f\in L^2((0,T),(H^1(\Omega))^*)$. Once this is shown, the existence and uniqueness of a weak solution follows from the arguments in \cite[Section~7.1]{Evans_PDE}.

To show that $f\in L^2((0,T),(H^1(\Omega))^*)$, we let $v\in L^2((0,T),H^1(\Omega))$. We recall that by the Sobolev trace embedding theorem, we have a bounded trace operator $H^1(\Omega) \to L^{p'}(\omega)$ for $p$ as in our assumption on $a$ and with the notation $p'=\frac{p}{p-1}$ for the dual H\"older exponent of $p$. First, assume that $a\in L^2((0,T);L^p(\partial\Omega))$. Then, by the Sobolev trace theorem,
\begin{align*}
    \left| \int_0^T \langle f(t),v(t)\rangle \,dt \right| 
    & \leq \int_0^T \|a(t)\|_{L^p(\partial\Omega)} \|v(t)\|_{L^{p'}(\partial\Omega)} \,dt \\
    & \lesssim \int_0^T \|a(t)\|_{L^p(\partial\Omega)} \|v(t)\|_{H^1(\Omega)} \,dt \\
    & \leq \| a \|_{L^2((0,T);L^p(\partial\Omega))} \|v\|_{L^2((0,T);H^1(\Omega))} \,,
\end{align*}
proving the claimed inclusion. Meanwhile, if $a\in L^p(\partial\Omega;L^2(0,T))$, we set $w(x):= ( \int_0^T |v(t,x)|^2,dt)^{1/2}$ and bound
\begin{align*}
    \left| \int_0^T \langle f(t),v(t)\rangle \,dt \right| 
    & \leq \int_{\partial\Omega} \left( \int_0^T |a(t,x)|^2\,dt\right)^{1/2} w(x)\,d\Haus^{d-1} \\
    & \leq \| a \|_{L^p(\partial\Omega;L^2(0,T))} \|w\|_{L^{p'}(\partial\Omega)} \\
    & \lesssim \| a \|_{L^p(\partial\Omega;L^2(0,T))} \|w\|_{H^1(\Omega)}  \\
    & \leq \| a \|_{L^p(\partial\Omega;L^2(0,T))} \|v\|_{L^2((0,T);H^1(\Omega))} \,,
\end{align*}
where the last inequality follows by an argument as in \cite[Theorem 7.8]{LiLo}. This proves again the claimed inclusion.

We claim that the weak solution of \eqref{eq: Neumann bvp} satisfies
$$
u\in C^\infty((0,\infty)\times\Omega) \,.
$$
This follows by standard parabolic regularity; see, for instance, \cite[Corollary 7.6]{Shubin}. (This corollary is a consequence of \cite[Theorem 5.11 and Theorem 7.5]{Shubin}.)

We now prove a Duhamel representation of the solution of \eqref{eq: Neumann bvp}.

\begin{lemma}\label{lem: duhamel}
    The solution of \eqref{eq: Neumann bvp} is given by
    $$
    u(t,x)=\int_0^{t}\int_{\partial\Omega}a(t)k_\Omega(t-s, x, y)\,d\Haus^{d-1}(y)\,ds\,.
    $$
\end{lemma}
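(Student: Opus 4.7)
The plan is to show that the right-hand side of the asserted identity, call it $\tilde u(t,x)$, is itself a weak solution of the Neumann problem \eqref{eq: Neumann bvp} (with the understanding that $a(t)$ in the integrand means $a(s,y)$), and then to invoke the uniqueness of weak solutions already established in this subsection. By the symmetry $k_\Omega(\tau,x,y)=k_\Omega(\tau,y,x)$ and Fubini, one has for every $\phi\in L^2(\Omega)$
\begin{equation*}
    \int_\Omega \tilde u(t,x)\phi(x)\,dx = \int_0^t \int_{\partial\Omega} a(s,y)\,(e^{(t-s)\Delta_\Omega}\phi)(y)\,d\Haus^{d-1}(y)\,ds\,,
\end{equation*}
and combining the smoothing bound $\|e^{\tau\Delta_\Omega}\phi\|_{H^1(\Omega)}\lesssim \tau^{-1/2}\|\phi\|_{L^2(\Omega)}$ with the same trace inequality used to show $a\,\delta_{\partial\Omega}\in L^2((0,T);(H^1(\Omega))^*)$ yields $\tilde u\in L^2((0,T);H^1(\Omega))$, $\dot{\tilde u}\in L^2((0,T);(H^1(\Omega))^*)$, and $\tilde u(0)=0$.

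To verify the weak formulation I first assume smooth data: $a\in C_c^\infty((0,\infty)\times\partial\Omega)$ and $\phi\in C^\infty(\overline\Omega)$. Differentiating the displayed identity in $t$ produces $\int_{\partial\Omega} a(t,y)\phi(y)\,d\Haus^{d-1}(y)$ from the upper limit (using $e^{0\cdot\Delta_\Omega}\phi=\phi$) together with $\int_0^t\int_{\partial\Omega} a(s,y)\,\partial_t(e^{(t-s)\Delta_\Omega}\phi)(y)\,d\Haus^{d-1}(y)\,ds$ from differentiation under the integral. For the gradient term, Fubini gives
\begin{equation*}
    \int_\Omega\nabla \tilde u(t,x)\cdot\nabla\phi(x)\,dx = \int_0^t\int_{\partial\Omega} a(s,y)\Bigl[\int_\Omega \nabla_x k_\Omega(t-s,x,y)\cdot\nabla\phi(x)\,dx\Bigr] d\Haus^{d-1}(y)\,ds\,,
\end{equation*}
and for each $y\in\Omega$, since $k_\Omega(\tau,\cdot,y)\in\dom(\Delta_\Omega)$, integration by parts using the Neumann boundary condition combined with the heat equation $\partial_\tau k_\Omega=\Delta_x k_\Omega$ yields that the bracket equals $-\partial_\tau(e^{\tau\Delta_\Omega}\phi)(y)|_{\tau=t-s}$. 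By parabolic regularity up to the boundary this identity extends to $y\in\partial\Omega$, so the two interior contributions cancel, leaving exactly $\int_{\partial\Omega}a(t,y)\phi(y)\,d\Haus^{d-1}(y)$; that is, $\tilde u$ satisfies the weak formulation for smooth data.

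The general case follows by approximation: choosing $a_n\in C_c^\infty((0,\infty)\times\partial\Omega)$ converging to $a$ in the ambient Bochner space and invoking density of $C^\infty(\overline\Omega)$ in $H^1(\Omega)$, the corresponding $\tilde u_n$ converge to $\tilde u$ in $L^2((0,T);H^1(\Omega))\cap H^1((0,T);(H^1(\Omega))^*)$ by the continuity established at the start. The weak formulation is stable under this convergence, so $\tilde u$ is a weak solution, and uniqueness gives $u=\tilde u$.

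The main obstacle I anticipate is legitimizing the integration-by-parts identity in the second paragraph once the variable $y$ lies on $\partial\Omega$: for interior $y$ this identity is routine, but on $\partial\Omega$ it requires joint regularity of $y\mapsto\Delta(e^{(t-s)\Delta_\Omega}\phi)(y)$ and of $y\mapsto\int_\Omega\nabla_xk_\Omega(t-s,x,y)\cdot\nabla\phi(x)\,dx$ up to $\partial\Omega$. This is available in the smooth-data setting via the parabolic regularity results already invoked in this subsection, after which the density argument transfers the conclusion to the limited-regularity Neumann data allowed by the assumptions.
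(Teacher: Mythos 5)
Your proposal is correct in strategy but takes a genuinely different route from the paper's proof. The paper works ``forward'' from the weak formulation: it first shows (Step 1) that for a.e.\ $t$ the function $k_\Omega(t,x,\cdot)$ lies in $H^1(\Omega)$, has time-derivative in $(H^1(\Omega))^*$, and satisfies the weak heat equation \eqref{eq: weak equation k}. Then (Step 2) it tests the weak formulation for $u$ against $k_\Omega(t-s,x,\cdot)$ and the weak heat equation for $k_\Omega$ against $u(s)$, combines the two identities into a product rule $\partial_s(u(s),k_\Omega(t-s,x,\cdot))$, integrates $s$ over $(\epsilon,t-\epsilon)$, and lets $\epsilon\to 0^\limplus$. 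This derives the Duhamel formula directly from the hypothesis that $u$ is the weak solution, without ever needing to verify that the Duhamel integral itself solves the equation. You instead go ``backward'': you define $\tilde u$ as the Duhamel integral, show it lies in the right function spaces and satisfies the weak formulation first for smooth data by differentiating the identity $\langle\tilde u,\phi\rangle = \int_0^t\int_{\partial\Omega}a(e^{(t-s)\Delta}\phi)$ in $t$ and cancelling the interior contributions, and then invoke uniqueness of weak solutions -- which the paper does establish earlier in the subsection. Both are legitimate, but the paper's route avoids the main technical burden of your argument: extending the integration-by-parts identity and the cancellation from $y\in\Omega$ up to $y\in\partial\Omega$, justifying the Leibniz rule at the upper limit $s=t$ (where one needs the trace of $e^{\tau\Delta_\Omega}\phi$ to converge to the trace of $\phi$ as $\tau\to 0^\limplus$), and carrying out the approximation $a_n\to a$ in the mixed Bochner norms. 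You correctly anticipate the first of these as the main obstacle; note that the second is in the same spirit, since the low boundary regularity of $\Omega$ is what makes the trace-continuity of $e^{\tau\Delta_\Omega}\phi$ nontrivial. The paper sidesteps all of this by never having to verify that $\tilde u$ is a solution -- the product-rule calculation happens entirely in the interior (with $x\in\Omega$ fixed), and the only limiting argument is the elementary passage $\epsilon\to 0^\limplus$ using $u\in C([0,T];L^2(\Omega))$. Your approach is workable, but in the Lipschitz setting it requires noticeably more care than the sketch acknowledges to make fully rigorous.
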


Our argument uses the fact that $k_\Omega$ is locally bounded and jointly continuous in $x, y$, uniformly for $t$ in compact intervals of $(0,\infty)$. This can be deduced from properties of the spectral function discussed in \cite[Lemma 2.14]{FrankLarson_Inventiones25}.

\begin{proof}
    \emph{Step 1.} We shall show that for each $x\in \Omega$ and almost every $t>0$, we have
    $$
    k_\Omega(t,x,\cdot) \in H^1(\Omega) \,,
    \qquad
    \dot k_\Omega(t,x,\cdot) \in (H^{1}(\Omega))^*\,,
    $$
    with norms that are uniformly bounded for $t$ from compact subsets of $(0,\infty)$ and $x$ from compact subsets of $\Omega$, 
    and that, for $v\in H^1(\Omega)$ and almost every $t>0$, we have 
    \begin{equation}\label{eq: weak equation k}
        \langle \dot k_\Omega(t,x,\cdot),v \rangle + \int_\Omega \nabla_y k_\Omega(t,x,y)\cdot\nabla v\,dy = 0 \,.
    \end{equation}
    
    Indeed, by the spectral theorem, we have
    \begin{equation}\label{eq: H1 bound}
    \begin{aligned}
    \|\nabla k_\Omega(t,x,\cdot) \|^2_{L^2(\Omega)} &= \int_{[0, \infty)} \lambda e^{-2t\lambda} d\1(-\Delta_\Omega<\lambda)(x,x) \\
    &\leq e^{-1} t^{-1} \int_{[0, \infty)} e^{-t\lambda} d\1(-\Delta_\Omega<\lambda)(x,x) \\
    &= e^{-1} t^{-1} k_\Omega(t,x,x) \,,
    \end{aligned}
    \end{equation}
    so the first assertion follows from the boundedness of the heat kernel.

    Fix a compact set $K\subset\Omega$ and let $f\in L^2(K)$ (extended by zero to $\Omega$) and $v\in H^1(\Omega)$. Then, by the spectral theorem, $t\mapsto (f,e^{t\Delta}v)$ is differentiable with
    \begin{equation}
        \label{eq:heatkernelequation}
            \partial_t (f,e^{t\Delta} v) = ((-\Delta)^{1/2} e^{t\Delta} f,(-\Delta)^{1/2} v) \,.
    \end{equation}
    It follows that
    \begin{equation}\label{eq: time derivative bound}
    \begin{aligned}
        \left| \partial_t (f,e^{t\Delta} v) \right| & \leq \| (-\Delta)^{1/2} \, e^{(t/2)\Delta}\|_{L^2(\Omega)\to L^2(\Omega)} \| e^{(t/2)\Delta} f\|_{L^2(\Omega)} \|v\|_{H^1(\Omega)} \\
        & \leq e^{-1/2} t^{-1/2} \| e^{(t/2)\Delta} f\|_{L^2(\Omega)} \|v\|_{H^1(\Omega)} \\
        & \leq e^{-1/2} t^{-1/2} \| e^{(t/2)\Delta} \1_K \|_{L^1(\Omega)\to L^2(\Omega)} \|f\|_{L^1(K)} \|v\|_{H^1(\Omega)} \,.    
    \end{aligned}
    \end{equation}
    Note that
    $$
    \| e^{(t/2)\Delta} \1_K \|_{L^1(\Omega)\to L^2(\Omega)} = \| \1_K e^{t\Delta} \1_K \|_{L^1(K)\to L^\infty(K)}^{1/2} = \sup_{x\in K} k_\Omega(t,x,x)^{1/2}
    $$
    and that the latter is finite, uniformly for $t$ from compact subsets of $(0, \infty)$. Thus $\partial_t (f, e^{t\Delta}v)$ is uniformly bounded for $f$ in bounded subsets of $L^2(K)$ and $t$ in compact subsets of $(0, \infty)$. We now let $f$ approach a delta function supported in $K$. The continuity of $k_\Omega$ with respect to $x$ implies that $t\mapsto (f, e^{t\Delta}v)$ converges to $t\mapsto (k_\Omega(t, x, \cdot), v)$. Since $t\mapsto(f, e^{t\Delta}v)$ are uniformly Lipschitz on compact subsets of $(0, \infty)$, the limit is again Lipschitz with Lipschitz norm bounded by the right-hand side of \eqref{eq: time derivative bound}. It follows that $\dot k_\Omega(t,x, \cdot)\in (H^{1}(\Omega))^*$ for almost every $t\in (0, \infty)$ with norm bounded by the right-hand side of \eqref{eq: time derivative bound}. 

    In view of \eqref{eq:heatkernelequation} and since
    $$
    ((-\Delta)^{1/2} e^{t\Delta} f,(-\Delta)^{1/2} v)
    =\int_\Omega  \nabla(e^{t\Delta} \overline f)\cdot \nabla v\,dy \,,
    $$
    we have
    \begin{equation*}
        -\int_0^\infty \psi'(t)(f,e^{t\Delta} v)\,dt = \int_0^\infty \psi(t)\int_\Omega \nabla (e^{t\Delta}\bar f)\cdot \nabla v\,dy\,dt
    \end{equation*}
    for any $\psi \in C_c^\infty((0, \infty))$. Letting again $f$ approach a delta function and noting that by \eqref{eq: H1 bound} we can extract a subsequence along which $e^{t\Delta}\bar f \rightharpoonup \nabla_y k_\Omega(t, x, \cdot)$ in $H^1(\Omega)$, we deduce that
    \begin{equation*}
        -\int_0^\infty \psi'(t)(k_\Omega(t, x, \cdot), v)\,dt = \int_0^\infty \psi(t)\int_\Omega \nabla_y k_\Omega(t, x, y)\cdot \nabla v\,dy\,dt\,,
    \end{equation*}
    this proves \eqref{eq: weak equation k}.
    
    \medskip

    \noindent\emph{Step 2.} Assume that $u$ is a weak solution of \eqref{eq: Neumann bvp} in the sense above. By Step 1, $k_\Omega(s', x, \cdot) \in H^1(\Omega)$ for almost every $s'>0$ and all $x\in \Omega$ so we can take $v= k_\Omega(s', x, \cdot)$ in the weak formulation of the equation. This yields that
\begin{align*}
    \langle \dot u(s), k_\Omega(s', x, \cdot)\rangle+\int_\Omega \nabla k_\Omega(s', x, y)\cdot \nabla u(s) \,dy = \int_{\partial\Omega}a(s)k_\Omega(s', x, y)\,d\Haus^{d-1}(y)
\end{align*}
for almost every $s>0, s'>0$ and every $x\in \Omega$. Correspondingly, setting $v= u(s)$ in \eqref{eq: weak equation k} yields that
\begin{align*}
    \int_\Omega \nabla k_\Omega(s', x, y)\cdot \nabla u(s) \,dy = -\langle \dot k_\Omega(s', x, \cdot), u(s)\rangle
\end{align*}
for almost every $s>0, s'>0$ and all $x\in \Omega$. 
Thus
\begin{equation*}
    \int_{\partial\Omega}a(s)k_\Omega(s', x, y)\,d\Haus^{d-1}(y) = \langle \dot u(s), k_\Omega(s', x, \cdot)\rangle-\langle \dot k_\Omega(s', x, \cdot), u(s)\rangle\,,
\end{equation*}
for almost every $s>0, s'>0$ and all $x\in \Omega$. In particular, it follows from Fubini's theorem that for almost every $t>0$,
\begin{equation*}
    \int_{\partial\Omega}a(s)k_\Omega(t-s, x, y)\,d\Haus^{d-1}(y) = \langle \dot u(s), k_\Omega(t-s, x, \cdot)\rangle-\langle  \dot k_\Omega(t-s, x, \cdot), u(s)\rangle\,,
\end{equation*}
for almost every $s\in (0, t)$.

By integrating the above equality with respect to $s$ over the interval $(\epsilon, t-\epsilon)$, it follows that 
\begin{align*}
    \int_\epsilon^{t-\epsilon}&\int_{\partial\Omega}a(s)k_\Omega(t-s, x, y)\,d\Haus^{d-1}(y)\,ds\\
    &=
    \int_\epsilon^{t-\epsilon}\bigl(\langle \dot u(s), k_\Omega(t-s, x, \cdot)\rangle-\langle \dot k_\Omega(t-s, x, \cdot), u(s)\rangle\bigr)\,ds\\
    &=
    \int_\epsilon^{t-\epsilon}\partial_s( u(s), k_\Omega(t-s, x, \cdot))\,ds\\
    &=
    ( u(t-\epsilon), k_\Omega(\epsilon, x, \cdot))-(u(\epsilon), k_\Omega(t-\epsilon, x, \cdot))\\
    &= (e^{\epsilon\Delta}u(t-\epsilon))(x)-(e^{(t-\epsilon)\Delta}u(\epsilon))(x)\,,
\end{align*}
where the use of the product rule is justified by \cite[Theorem 5.9.3 (ii)]{Evans_PDE} applied to $\mathbf{u}(s)=u(s)\pm k_\Omega(t-s, x, \cdot)$. Sending $\epsilon \to 0^\limplus$ and using the small time asymptotic behavior of $u$ and and $e^{t\Delta}$ shows the claimed formula.
\end{proof}


\subsection{A first a priori bound}

Our aim in this subsection is to prove an a priori estimate for solutions of \eqref{eq: Neumann bvp} when the boundary data $a$ belongs to suitable Lebesgue spaces. The bound proved here refines and generalizes that of \cite[Lemma 3.4]{Brown93}. In particular, we explicitly track how the geometry of $\Omega$ enters in the estimate. Making the dependence on the geometry explicit is crucial in proving Theorem~\ref{thm: main thm convex}.

In order to state our estimate we for an $\Haus^{d-1}$-measurable set $\omega\subset\R^d$ define
\begin{equation*}
        \Theta[\omega] := \sup_{r>0, z\in \R^d} \frac{\Haus^{d-1}(\omega \cap B_r(z))}{r^{d-1}}\,.
\end{equation*}

\begin{theorem}\label{thm: neumann bvp Lp estimates}
    Let $\Omega \subset \R^d, d\geq2,$ be an open set. Assume that $\omega \subseteq \partial\Omega$ is $\Haus^{d-1}$-measurable and that $\partial\Omega$ is Lipschitz regular in a neighborhood of $\omega$.
    
    Fix $p\in [1, \infty], q \in (2, \infty]$ satisfying $\frac{q'd}2>1$ and $p> \frac{(d-1)q}{q-2}$. Suppose $a\colon [0, \infty) \times \partial\Omega\to \R$ is supported in $[0, \infty) \times \omega$ and let $u \colon (0, \infty)\times \Omega \to \R$ be the corresponding solution of the Neumann boundary value problem \eqref{eq: Neumann bvp}.

    For any $t>0$,
	\begin{align*}
		|u(t, x)| 
		&\lesssim_{d,p,q,\delta} \|k_\Omega\|_{G^\infty_{\delta,x}((0, t]\times \omega)}e^{-\frac{\dist(x, \omega)^2}{8(1+\delta)t}} \min\bigl\{\|a\|_{L^p(\omega; L^q((0, t)))}, \|a\|_{L^q((0, t); L^p(\omega))}\bigr\}\\
        &\quad \times
         \min\biggl\{\Theta[\omega]^{\frac{2}{d-1}(\frac{d}2-\frac{1}{q'})}\Haus^{d-1}(\omega)^{\frac{1}{p'}-\frac{2}{d-1}(\frac{d}2-\frac{1}{q'})}, \Theta[\omega]^{\frac{1}{p'}}t^{\frac{1}{q'}+\frac{d-1}{2p'}-\frac{d}{2}}\biggr\}\,.
	\end{align*}
\end{theorem}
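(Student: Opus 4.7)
The starting point is the Duhamel representation of Lemma~\ref{lem: duhamel},
\[
u(t,x) = \int_0^t \int_\omega a(s,y)\, k_\Omega(t-s, x, y)\, d\Haus^{d-1}(y)\, ds\,.
\]
Inserting the pointwise Gaussian bound furnished by the $G^\infty_{\delta,x}((0,t]\times\omega)$-norm and using $|x-y|\geq\dist(x,\omega)$ for $y\in\omega$ together with $t-s\leq t$, one verifies
\[
\frac{|x-y|^2}{4(1+\delta)(t-s)} \geq \frac{\dist(x,\omega)^2}{8(1+\delta)t} + \frac{|x-y|^2}{8(1+\delta)(t-s)}\,,
\]
so the factor $e^{-\dist(x,\omega)^2/(8(1+\delta)t)}$ can be pulled out of the double integral. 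The task then reduces to bounding
\[
I := \int_0^t\int_\omega |a(s,y)|\, K(s,y)\, d\Haus^{d-1}(y)\, ds,\quad K(s,y) := (t-s)^{-d/2}\, e^{-\frac{|x-y|^2}{8(1+\delta)(t-s)}}\,,
\]
by a constant times the product of the two minima in the statement.

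The main estimate is H\"older's inequality, applied in both possible orderings. ``Time first, then space'' gives $I\leq \|a\|_{L^p(\omega;L^q(0,t))}\cdot \|K\|_{L^{p'}(\omega;L^{q'}(0,t))}$. After $\tau=t-s$ and the substitution $v=q'|x-y|^2/(8(1+\delta)\tau)$, the inner time norm evaluates to a multiple of $|x-y|^{2-dq'}\int_{v_0}^\infty v^{dq'/2-2}e^{-v}\,dv$, uniformly bounded thanks to $dq'/2>1$. The resulting spatial integral $\int_\omega|x-y|^{-(d-2/q')p'}\,d\Haus^{d-1}$ is controlled by layer cake and the uniform density bound $\Haus^{d-1}(\omega\cap B_r(x))\leq \Theta[\omega]\,r^{d-1}$, producing the first geometric factor $\Theta[\omega]^{(d-2/q')/(d-1)}\Haus^{d-1}(\omega)^{1/p'-(d-2/q')/(d-1)}$; convergence of this integral at the origin requires $(d-2/q')p'<d-1$, which is precisely the assumption $p>(d-1)q/(q-2)$. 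The reverse ordering gives $I\leq \|a\|_{L^q(0,t;L^p(\omega))}\cdot\|K\|_{L^{q'}(0,t;L^{p'}(\omega))}$; the inner spatial integral $\int_\omega e^{-A|x-y|^2}\,d\Haus^{d-1}\lesssim \Theta[\omega]\,A^{-(d-1)/2}$ leaves a pure power of $(t-s)$, and the subsequent time integral---again convergent under the same hypothesis on $p$---produces the second geometric factor $\Theta[\omega]^{1/p'}\,t^{1/q'+(d-1)/(2p')-d/2}$.

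To upgrade the minimum of the two diagonal products to the claimed product of minima, I would refine the layer-cake estimates using the sharper combined density bound
\[
\Haus^{d-1}(\omega\cap B_r(x))\leq \min\bigl\{\Theta[\omega]\, r^{d-1},\,\Haus^{d-1}(\omega)\bigr\}
\]
and split the resulting integral at the critical scale $r_\ast := (\Haus^{d-1}(\omega)/\Theta[\omega])^{1/(d-1)}$ (equivalently, $\tau_\ast := r_\ast^2$ in the space-first ordering). A direct computation then shows that \emph{each} of the mixed norms $\|K\|_{L^{p'}(\omega;L^{q'}(0,t))}$ and $\|K\|_{L^{q'}(0,t;L^{p'}(\omega))}$ is individually bounded by a constant times the minimum of the two geometric factors. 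Combined with the two H\"older orderings, this yields the desired estimate. The main technical obstacle is precisely this last step: carried out with either density bound alone, each H\"older ordering only delivers one of the two geometric factors, and it is the combined density bound together with the piecewise splitting at $r_\ast$ that captures the minimum of both.
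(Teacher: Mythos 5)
Your overall strategy is the same as the paper's: Duhamel plus the a priori $G^\infty_{\delta,x}$-bound, pull out the distance prefactor by splitting the exponent, apply H\"older in both orderings, and convert the kernel mixed norms into geometric quantities via a layer-cake estimate against the combined density bound $\Haus^{d-1}(\omega\cap B_r(x))\leq\min\{\Theta[\omega]r^{d-1},\Haus^{d-1}(\omega)\}$. The paper packages this last step as a single lemma (its Step 2) that bounds $\int_\omega h_{\beta,\gamma}(|x-y|^2/\tau)\,d\Haus^{d-1}(y)$ directly by a \emph{minimum} of two expressions, and then applies it twice, once per H\"older ordering; your plan to split the layer-cake integral at $r_\ast=(\Haus^{d-1}(\omega)/\Theta[\omega])^{1/(d-1)}$ is precisely this computation.

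There is, however, a real gap in the route as you have laid it out, and it sits exactly where you flag the ``main technical obstacle.'' In your time-first ordering you evaluate the inner $L^{q'}(0,t)$-norm of $K$ by substitution and then absorb the incomplete Gamma factor $\int_{v_0}^\infty v^{dq'/2-2}e^{-v}\,dv$ into a constant. That discards the lower limit $v_0\sim|x-y|^2/t$, and with it all remaining $t$-dependence. The spatial integral you are left with, $\int_\omega|x-y|^{-(d-2/q')p'}\,d\Haus^{d-1}$, then has no $t$ in it at all: the layer-cake split at $r_\ast$ yields only the $t$-independent factor $\Theta[\omega]^{\frac{2}{d-1}(\frac d2-\frac1{q'})}\Haus^{d-1}(\omega)^{\frac1{p'}-\frac{2}{d-1}(\frac d2-\frac1{q'})}$, and it is impossible to also extract $\Theta[\omega]^{1/p'}t^{1/q'+(d-1)/(2p')-d/2}$ from a $t$-free expression. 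In other words, the time-first ordering as described delivers \emph{one} geometric factor, not the minimum of both, contrary to the claim in your final paragraph. The fix is to retain half the Gaussian after the time integral, writing
$$\int_{v_0}^\infty v^{dq'/2-2}e^{-v}\,dv\leq e^{-v_0/2}\int_0^\infty v^{dq'/2-2}e^{-v/2}\,dv\,,$$
so that the spatial integral becomes $\int_\omega|x-y|^{-(d-2/q')p'}e^{-cp'|x-y|^2/t}\,d\Haus^{d-1}(y)$. It is precisely this surviving Gaussian, combined with the two-sided density bound, that gives the layer-cake computation access to the scale $\sqrt t$ and hence to the second, $t$-dependent branch of the minimum. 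This is exactly what the paper does in its Step 3, and it is the missing ingredient in your write-up of the H\"older--time-first route.
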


We note that, by Minkowski's inequality, $\|a\|_{L^p(\omega; L^q((0, t)))} \leq \|a\|_{L^q((0, t); L^p(\omega))}$ if $p\geq q$ and $\|a\|_{L^p(\omega; L^q((0, t)))} \geq \|a\|_{L^q((0, t); L^p(\omega))}$ if instead $p\leq q$. Therefore, which one of the terms in the first minimum that is the smallest only depends on the relation between $p, q$ and not on the function $a$.

\begin{proof}
    \emph{Step 1.}
    We may assume that $\min\bigl\{\|a\|_{L^p(\omega; L^q((0, t)))}, \|a\|_{L^q((0, t); L^p(\omega))}\bigr\}<\infty$. This assumption implies the assumption in the previous subsection and, in particular, implies that the solution $u$ exists and the Duhamel representation in Lemma \ref{lem: duhamel} is valid. It follows from this formula that
	\begin{align*}
		|u(t, x)| 
		&\leq \|k_\Omega\|_{G^\infty_{\delta,x}((0, t]\times \omega)}e^{-\frac{\dist(x, \omega)^2}{8(1+\delta)t}}\int_0^t \int_{\omega} |a(s, y)|\frac{e^{-\frac{|x-y|^2}{8(1+\delta)(t-s)}}}{(t-s)^{d/2}}\,d\Haus^{d-1}(y)ds\,.
	\end{align*}
    By H\"older's inequality, we have
    \begin{equation}\label{eq: pq Holder}
    \begin{aligned}
		|u(t, x)| 
		&\leq \|k_\Omega\|_{G^\infty_{\delta,x}((0, t]\times \omega)}e^{-\frac{\dist(x, \omega)^2}{8(1+\delta)t}}\|a\|_{L^p(\omega; L^q((0, t)))}\\
        &\qquad \times\biggl(\int_\omega\biggl(\int_0^t\frac{e^{-q'\frac{|x-y|^2}{8(1+\delta)(t-s)}}}{(t-s)^{q'd/2}}\,ds\biggr)^{p'/q'}d\Haus^{d-1}(y)\biggr)^{1/p'}
	\end{aligned}
    \end{equation}
    and
    \begin{equation}\label{eq: qp Holder}
    \begin{aligned}
		|u(t, x)| 
		&\leq \|k_\Omega\|_{G^\infty_{\delta,x}((0, t]\times \omega)}e^{-\frac{\dist(x, \omega)^2}{8(1+\delta)t}}\|a\|_{L^q((0, t); L^p(\omega))}\\
        &\qquad \times\biggl(\int_0^t(t-s)^{-q'd/2}\biggl(\int_\omega e^{-p'\frac{|x-y|^2}{8(1+\delta)(t-s)}}\,d\Haus^{d-1}(y)\biggr)^{q'/p'}ds\biggr)^{1/q'}\,.
	\end{aligned}
    \end{equation}
    In the remainder of this proof, we will show that
    \begin{equation}\label{eq: pq bound}
    \begin{aligned}
		& \biggl(\int_\omega\biggl(\int_0^t\frac{e^{-q'\frac{|x-y|^2}{8(1+\delta)(t-s)}}}{(t-s)^{q'd/2}}\,ds\biggr)^{p'/q'}d\Haus^{d-1}(y)\biggr)^{1/p'} \\
        & \lesssim_{d,p,q,\delta} \min\biggl\{\Theta[\omega]^{\frac{2}{d-1}(\frac{d}2-\frac{1}{q'})}\Haus^{d-1}(\omega)^{\frac{1}{p'}-\frac{2}{d-1}(\frac{d}2-\frac{1}{q'})}, \Theta[\omega]^{\frac{1}{p'}}t^{\frac{1}{q'}+\frac{d-1}{2p'}-\frac{d}{2}}\biggr\}
	\end{aligned}
    \end{equation}    
    and
    \begin{equation}\label{eq: qp bound}
    \begin{aligned}
		& \biggl(\int_0^t(t-s)^{-q'd/2}\biggl(\int_\omega e^{-p'\frac{|x-y|^2}{8(1+\delta)(t-s)}}\,d\Haus^{d-1}(y)\biggr)^{q'/p'}ds\biggr)^{1/q'} \\
        & \lesssim_{d,p,q,\delta} \min\biggl\{\Theta[\omega]^{\frac{2}{d-1}(\frac{d}2-\frac{1}{q'})}\Haus^{d-1}(\omega)^{\frac{1}{p'}-\frac{2}{d-1}(\frac{d}2-\frac{1}{q'})}, \Theta[\omega]^{\frac{1}{p'}}t^{\frac{1}{q'}+\frac{d-1}{2p'}-\frac{d}{2}}\biggr\} \,.
	\end{aligned}
    \end{equation}
    Once we have shown \eqref{eq: pq bound} and \eqref{eq: qp bound}, the theorem will follow from \eqref{eq: pq Holder} and \eqref{eq: qp Holder}.
    Clearly, in the proof of \eqref{eq: pq bound} and \eqref{eq: qp bound} we may assume $\Theta[\omega]<\infty$, for otherwise there is nothing to prove. Here and in what follows we allow for the possibility that $\Haus^{d-1}(\omega)$ is infinite.

    \medskip

    \noindent\emph{Step 2.} Our proof of \eqref{eq: pq bound} and \eqref{eq: qp bound} relies on a more general bound, which we derive in this step. Define, for $(\beta, \gamma)\in [0, \infty)^2\setminus \{(0,0)\}$ and $\mu > 0$,
    \begin{equation*}
        h_{\beta, \gamma}(\mu) := \mu^{-\beta} e^{-\gamma \mu}\,.
    \end{equation*}
    We will show that, if $\beta<\frac{d-1}{2}$, then
    \begin{equation}\label{eq: general h integral bound}
    \begin{aligned}
        \int_\omega h_{\beta, \gamma}\Bigl(\frac{|x-y|^2}{\tau}\Bigr)\,d\Haus^{d-1}(y)
        \lesssim_{d,\beta,\gamma} 
        \Theta[\omega] \min\biggl\{\Bigl(\frac{\Haus^{d-1}(\omega)}{\Theta[\omega]}\Bigr)^{1-2\beta/(d-1)}\tau^{\beta}, \tau^{(d-1)/2}\biggr\}\,.
    \end{aligned}
    \end{equation}

    For the proof of \eqref{eq: general h integral bound}, we note that $h_{\beta, \gamma}$ is smooth, decreasing, and tends to zero as $\mu \to \infty$. This allows us to write
    \begin{equation*}
        h_{\beta, \gamma}(s) = \int_s^\infty |h'_{\gamma,\beta}(\mu)|\,d\mu= \int_0^\infty \1_{\{\mu>s\}}|h'_{\gamma,\beta}(\mu)|\,d\mu\,.
    \end{equation*}
    Therefore, by Fubini's theorem,
    \begin{align*}
        \int_\omega h_{\beta, \gamma}\Bigl(\frac{|x-y|^2}{\tau}\Bigr)\,d\Haus^{d-1}(y)
        &=
        \int_\omega \int_0^\infty \1_{\{\mu>|x-y|^2/\tau\}}|h'_{\gamma,\beta}(\mu)|\,d\mu\,d\Haus^{d-1}(y)\\
        &=
        \int_0^\infty \biggl(\int_\omega  \1_{\{\sqrt{\tau \mu}>|x-y|\}}\,d\Haus^{d-1}(y)\biggr)|h'_{\gamma,\beta}(\mu)|\,d\mu\\
        &=
        \int_0^\infty \Haus^{d-1}(\omega \cap B_{\sqrt{\tau \mu}(x)}) |h'_{\gamma,\beta}(\mu)|\,d\mu \,.
    \end{align*}
    Consequently, we have the bound
    \begin{align*}
        \int_\omega h_{\beta, \gamma}\Bigl(\frac{|x-y|^2}{\tau}\Bigr)\,d\Haus^{d-1}(y)
        &\leq
        \int_0^{\infty} \min\{\Haus^{d-1}(\omega), \Theta[\omega](\mu \tau)^{(d-1)/2}\}|h'_{\beta,\gamma}(\mu)|\,d\mu\,.
    \end{align*}
    Since $|h'_{\beta,\gamma}(\mu)|\sim \mu^{-1-\beta}$ as $\mu \to 0$, the integral on the right side is finite only if $\beta <\frac{d-1}{2}$. If this is the case, an integration by parts yields
    \begin{equation*}
    \begin{aligned}
        \int_\omega h_{\beta, \gamma}\Bigl(\frac{|x-y|^2}{\tau}\Bigr)\,d\Haus^{d-1}(y)
        &\lesssim_d
        \Theta[\omega] \tau^{(d-1)/2}\int_0^{\tau^{-1}(\Haus^{d-1}(\omega)/\Theta[\omega])^{2/(d-1)}}\!\!\mu^{(d-3)/2}h_{\beta,\gamma}(\mu)\,d\mu\\
        &=
        \Theta[\omega] \tau^{(d-1)/2}\int_0^{\tau^{-1}(\Haus^{d-1}(\omega)/\Theta[\omega])^{2/(d-1)}}\!\!\mu^{(d-3)/2-\beta}e^{-\gamma\mu}\,d\mu\\
        &\lesssim_{d,\beta,\gamma} 
        \Theta[\omega] \min\biggl\{\Bigl(\frac{\Haus^{d-1}(\omega)}{\Theta[\omega]}\Bigr)^{1-2\beta/(d-1)}\tau^{\beta}, \tau^{(d-1)/2}\biggr\}\,.
    \end{aligned}
    \end{equation*}
    This proves the desired bound \eqref{eq: general h integral bound}.
    
    \medskip

    \noindent\emph{Step 3.}
    We turn to the proof of \eqref{eq: pq bound}. By a change of variables and using $q'd>2$, we find
    \begin{align*}
        \int_0^t\frac{e^{-q'\frac{|x-y|^2}{8(1+\delta)(t-s)}}}{(t-s)^{q'd/2}}\,ds
        &=
        \Bigl(\frac{q'|x-y|^2}{8(1+\delta)}\Bigr)^{1-q'd/2} \int_{q'\frac{|x-y|^2}{8(1+\delta)t}}^\infty e^{-\tau}\tau^{q'd/2-2}\,d\tau\\
        &\leq
        \Bigl(\frac{q'|x-y|^2}{8(1+\delta)}\Bigr)^{1-q'd/2} e^{-q'\frac{|x-y|^2}{16(1+\delta)t}}\int_{0}^\infty e^{-\tau/2}\tau^{q'd/2-2}\,d\tau\\
        &=
        \Bigl(\frac{q'|x-y|^2}{8(1+\delta)}\Bigr)^{1-q'd/2} e^{-q'\frac{|x-y|^2}{16(1+\delta)t}}2^{q'd/2-1}\Gamma\Bigl(\frac{q'd}2-1\Bigr)\,.
    \end{align*}
    Therefore,
    \begin{equation*}
    \begin{aligned}
        \int_\omega\biggl(\int_0^t&\frac{e^{-q'\frac{|x-y|^2}{8(1+\delta)(t-s)}}}{(t-s)^{q'd/2}}\,ds\biggr)^{p'/q'}d\Haus^{d-1}(y)\\
        &\lesssim_{p,q,d,\delta} 
        t^{p'/q'-p'd/2}\int_\omega \Bigl(\frac{|x-y|^2}{t}\Bigr)^{p'/q'-p'd/2} e^{-p'\frac{|x-y|^2}{16(1+\delta)t}}d\Haus^{d-1}(y)\,.
    \end{aligned}
    \end{equation*}
    The claimed bound \eqref{eq: pq bound} now follows from \eqref{eq: general h integral bound} with the choice $\beta = \frac{p'}{q'}\bigl(\frac{q'd}2-1\bigr)$ and $\gamma = \frac{p'}{16(1+\delta)}$. Note that our assumptions on $p, q$ imply that $0<\beta<\frac{d-1}{2}$. 

    Next, we turn to the proof of \eqref{eq: qp bound}. It follows from \eqref{eq: general h integral bound} with $\beta =0$ and $\gamma = \frac{p'}{8(1+\delta)}$ that
    \begin{equation*}
    \begin{aligned}
		& \biggl(\int_0^t(t-s)^{-q'd/2}\biggl(\int_\omega e^{-p'\frac{|x-y|^2}{8(1+\delta)(t-s)}}\,d\Haus^{d-1}(y)\biggr)^{q'/p'}ds\biggr)^{1/q'} \\
        & \lesssim_{d,p,q,\delta} \biggl(\int_0^t(t-s)^{-q'd/2}  \min\Bigl\{\Haus^{d-1}(\omega), \Theta[\omega](t-s)^{(d-1)/2}\Bigr\}^{q'/p'}ds\biggr)^{1/q'} \,.
	\end{aligned}
    \end{equation*}
    Note that by our assumptions on $p, q$ it holds that $\frac{q'd}2>1$ and $\frac{q'd}{2}<1+\frac{q'(d-1)}{2p'}$.
    The claimed bound \eqref{eq: qp bound} now follows from the fact that, if $1< \kappa<1+\alpha, \alpha>0, t>0, A>0$, then
    \begin{align*}
        \int_0^t (t-&s)^{-\kappa}\min\{A, (t-s)^\alpha\}\,ds \\
        &=
        \begin{cases}
            \int_0^t (t-s)^{-\kappa+\alpha}\,ds & \mbox{if }A^{1/\alpha}\geq t\\
            A\int_0^{t-A^{1/\alpha}} (t-s)^{-\kappa}\,ds+\int_{t-A^{1/\alpha}}^t (t-s)^{-\kappa+\alpha}\,ds & \mbox{if }A^{1/\alpha}< t
        \end{cases}\\
        &=
        \begin{cases}
            \frac{t^{1-\kappa+\alpha}}{1-\kappa+\alpha} & \mbox{if }A^{1/\alpha}\geq t\\
            \frac{At^{1-\kappa}-A^{(1-\kappa)/\alpha+1}}{1-\kappa}+\frac{A^{(1-\kappa)/\alpha+1}}{1-\kappa+\alpha} & \mbox{if }A^{1/\alpha}< t
        \end{cases}\\
        &\leq
        \begin{cases}
            \frac{t^{1-\kappa+\alpha}}{1-\kappa+\alpha} & \mbox{if }A^{1/\alpha}\geq t\\
            \bigl(\frac{1}{1-\kappa}+\frac{1}{1-\kappa+\alpha}\bigr)A^{(1-\kappa)/\alpha+1} & \mbox{if }A^{1/\alpha}< t
        \end{cases}\\
        &\lesssim_{\alpha, \kappa} \min\{A^{(1-\kappa)/\alpha+1}, t^{1-\kappa+\alpha}\}\,.
    \end{align*}
    This completes the proof of the theorem.
\end{proof}


\subsection{A second a priori bound}

In addition to Theorem \ref{thm: neumann bvp Lp estimates}, we shall require an estimate under the assumption that the boundary data $a$ has support in a parabolic ball $[0, \rho^2]\times (\overline{B_\rho(x_0)}\cap \partial\Omega)$ and zero average. The estimate that we prove is a version of \cite[Lemma 3.3]{Brown93}. Here, as in our previous bounds, it is necessary that the track more carefully the dependence of the bound on the geometry of $\Omega$.

To state the result, we need to make precise in which sense and, importantly, at what scale the boundary is assumed to be locally a graph.

\begin{definition}
    Let $\Omega \subset \R^d, d\geq 2$, be an open set. Fix $x_0\in \partial\Omega$. We say that $\partial\Omega$ is locally parametrized by a function $\varphi \colon \R^{d-1}\to \R$ in $B_R(x_0)$ if there exists $Q\in \mathbb{SO}(d)$ such that
    \begin{equation*}
        (\Omega-x_0)\cap B_R(0) = Q(\{y=(y', y_d)\in \R^d: y_d>\varphi(y')\})\cap B_R(0)\,.
    \end{equation*}
\end{definition}
The notion that $\partial\Omega$ is Lipschitz regular means that for each $x_0 \in \partial\Omega$ there exists a $R>0$ small enough so that $\partial\Omega$ can be locally parametrized in $B_R(x_0)$ by a Lipschitz function $\varphi$. For the results that follow, not only the existence of such $R$ and $\varphi$ is of importance but also the relative size of $R$ and $\|\nabla\varphi\|_{L^\infty}$ compared to the other quantities that are involved.

The result that we aim to prove in this subsection is the following.
\begin{lemma}\label{lem: a priori bound zero average}
	Let $\Omega \subset \R^d, d\geq 2,$ be an open set and fix $x_0\in \partial\Omega, \delta>0$. Assume that there exist $M, R>0$, and $\varphi \colon \R^{d-1}\to \R$ with $\|\nabla \varphi\|_{L^\infty}\leq M$ such that $\partial\Omega$ is locally parametrized by $\varphi$ in $B_R(x_0)$. Assume that $\rho_0 \in (0, R)$ satisfies that
    \begin{equation*}
        \sup_{y \in B_{2\rho_0}(0)\setminus\{y_d>\varphi(y')\}}\bigl(|y|^2+4\varphi(y')(\varphi(y')-y_d)\bigr)<R^2\,.
    \end{equation*}
    Assume furthermore that $a \in L^2((0, \infty); L^p(\partial\Omega))+L^p(\partial\Omega; L^2(0,\infty))$ for some $p>1$ if $d=2$ and $p\geq 2(d-1)/d$ if $d\geq 3$ and satisfies $\supp(a) \subseteq [0, \rho^2] \times (\partial \Omega\cap B_{\rho}(x_0))$ for some $\rho\geq 0$ and $\iint a\,ds\,d\Haus^{d-1}(y) =0$. 
    
    If $u\colon (0, \infty)\times \Omega\to \R$ denotes the solution of the Neumann boundary value problem~\eqref{eq: Neumann bvp}, then
	\begin{equation*}
		|u(t, x)| \lesssim_{d, \delta, M} \|a\|_{L^1((0, \rho^2)\times \partial\Omega)} \|k_\Omega\|_{G^\infty_{\delta, x}((t/2, 3t/2)\times B_{\sqrt{t}}(x_0))} t^{-d/2}e^{-\frac{d_\Omega(x)^2}{12(1+\delta)t}} \min \Bigl\{1, \Bigl(\frac{\rho}{\rho_0}\Bigr)^\alpha\Bigr\}
	\end{equation*}
	for all $(t, x) \in [4\max\{\rho_0,\rho\}^2, \infty)\times \Omega$ and where $\alpha>0$ only depends on $d, M$.
\end{lemma}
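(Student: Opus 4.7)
The plan is to begin with the Duhamel representation of Lemma~\ref{lem: duhamel},
\begin{equation*}
    u(t,x) = \int_0^{\rho^2}\int_{\partial\Omega\cap B_\rho(x_0)} a(s,y)\, k_\Omega(t-s,x,y)\,d\Haus^{d-1}(y)\,ds\,,
\end{equation*}
and then exploit the zero-average hypothesis $\iint a\,ds\,d\Haus^{d-1}=0$ to rewrite this as
\begin{equation*}
    u(t,x) = \int_0^{\rho^2}\int_{\partial\Omega\cap B_\rho(x_0)} a(s,y)\bigl[k_\Omega(t-s,x,y) - k_\Omega(t,x,x_0)\bigr]\,d\Haus^{d-1}(y)\,ds\,.
\end{equation*}
The conclusion then follows by integrating against $|a|$ once we establish the pointwise bound
\begin{equation*}
    \bigl|k_\Omega(t-s,x,y)-k_\Omega(t,x,x_0)\bigr| \lesssim_{d,\delta,M} \|k_\Omega\|_{G^\infty_{\delta,x}((t/2,3t/2)\times B_{\sqrt t}(x_0))}\, t^{-d/2}\, e^{-\frac{d_\Omega(x)^2}{12(1+\delta)t}}\min\Bigl\{1,\Bigl(\tfrac{\rho}{\rho_0}\Bigr)^{\!\alpha}\Bigr\}
\end{equation*}
for $(s,y)\in[0,\rho^2]\times(\partial\Omega\cap B_\rho(x_0))$.

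In the regime $\rho\geq\rho_0$ the $\min$ reduces to $1$ and it suffices to bound each of $k_\Omega(t-s,x,y)$ and $k_\Omega(t,x,x_0)$ separately by the definition of the weighted norm. Since $y,x_0\in\partial\Omega$ we have $|x-y|,\,|x-x_0|\geq d_\Omega(x)$, and the assumption $t\geq 4\max\{\rho,\rho_0\}^2$ yields $t-s\in[3t/4,t]\subset(t/2,3t/2)$, while $y\in B_\rho(x_0)\subset B_{\sqrt t}(x_0)$. The $G^\infty_{\delta,x}$-norm then immediately produces the desired bound with the stronger Gaussian exponent $1/(4(1+\delta))$, a fortiori with $1/(12(1+\delta))$.

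For the regime $\rho<\rho_0$ I would regard $v(\tau,y):=k_\Omega(\tau,x,y)$ as a caloric function on $(0,\infty)\times\Omega$ satisfying a homogeneous Neumann boundary condition on $\partial\Omega$. The geometric condition on $\rho_0$ guarantees that $\partial\Omega\cap B_{2\rho_0}(x_0)$ is a Lipschitz graph of constant $\leq M$ and that the natural reflection of $B_{2\rho_0}(x_0)\cap\Omega^c$ across the graph stays inside $B_R(x_0)$, so that the boundary can be bi-Lipschitz straightened at scale $\rho_0$ with constants controlled solely by $M$. Standard parabolic boundary Hölder regularity for the Neumann problem on a Lipschitz cylinder (as used in~\cite{Brown93} and ultimately of De~Giorgi--Nash--Moser type) then furnishes $\alpha=\alpha(d,M)\in(0,1)$ and $C=C(d,M)$ with
\begin{equation*}
    |v(s_1,y_1)-v(s_2,y_2)| \leq C\Bigl(\frac{\sqrt{|s_1-s_2|}+|y_1-y_2|}{\rho_0}\Bigr)^{\!\alpha}\,\|v\|_{L^\infty(Q)}
\end{equation*}
for all $(s_i,y_i)$ in the parabolic half-cylinder of radius $\rho_0$ around $(t,x_0)$, where $Q:=(t-2\rho_0^2,t+2\rho_0^2)\times(\Omega\cap B_{2\rho_0}(x_0))$. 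Taking $(s_1,y_1)=(t-s,y)$ and $(s_2,y_2)=(t,x_0)$ and using $s\leq\rho^2$, $|y-x_0|\leq\rho$ yields the prefactor $\lesssim(\rho/\rho_0)^\alpha$. To convert $\|v\|_{L^\infty(Q)}$ to the weighted norm, note that $4\rho_0^2\leq t$ implies $Q\subset(t/2,3t/2)\times B_{\sqrt t}(x_0)$. If $d_\Omega(x)\leq 4\rho_0$ then $e^{-d_\Omega(x)^2/(12(1+\delta)t)}$ is bounded below by an absolute positive constant and the crude bound $\|v\|_{L^\infty(Q)}\lesssim \|k_\Omega\|_{G^\infty_{\delta,x}}t^{-d/2}$ suffices; otherwise every $z\in\Omega\cap B_{2\rho_0}(x_0)$ satisfies $|x-z|\geq d_\Omega(x)-2\rho_0\geq d_\Omega(x)/2$, and splitting $|x-z|^2$ between the weight of $\|k_\Omega\|_{G^\infty_{\delta,x}}$ (with a slightly enlarged $\delta$, absorbable into the implicit constant) and the extracted Gaussian yields the claimed factor $e^{-d_\Omega(x)^2/(12(1+\delta)t)}$.

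The main technical obstacle is the boundary Hölder estimate with \emph{explicit} dependence of $\alpha$ and $C$ on only $d$ and the Lipschitz constant $M$. This is the classical parabolic De~Giorgi--Nash--Moser theory for a divergence-form uniformly parabolic equation on a half-cylinder with Neumann data, applied after locally straightening the boundary; the scale-invariance of the estimate and the explicit geometric condition on $\rho_0$ in the hypothesis are precisely what ensure that the straightening is well-defined on a parabolic region of size $\rho_0$ and that the straightened coefficients have ellipticity and boundedness constants depending only on $M$.
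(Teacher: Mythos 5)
Your proposal is correct and follows essentially the same strategy as the paper: Duhamel representation, subtraction of $k_\Omega(t,x,x_0)$ via the zero\nobreakdash-average hypothesis, then a case split between $\rho\geq\rho_0$ (crude Gaussian bounds on each term from the weighted norm) and $\rho<\rho_0$ (a De~Giorgi--Nash--Moser boundary H\"older estimate for $k_\Omega(\cdot,x,\cdot)$ at scale $\rho_0$). The paper's version of the H\"older step, Corollary~\ref{cor: Holder regularity up to boundary}, proceeds by reflecting $k_\Omega(\cdot,x,\cdot)$ across the Lipschitz graph and applying the interior Nash--Moser parabolic estimate (Lemma~\ref{lem: Nash Moser Holder regularity}) to the resulting discontinuous-coefficient equation--which is the same idea you invoke, with the geometric condition on $\rho_0$ being precisely what keeps the reflected region inside $B_R(x_0)$.
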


\begin{remark}
    We note that the assumptions of the lemma imply that $|\varphi(y')|\leq M |y'|$ (note that $\varphi(0)=0$ as $x_0 \in \partial\Omega$). Therefore, the Cauchy--Schwarz inequality yields
    \begin{align*}
        \sup_{y'\in B_{2{\rho_0}}(0)\setminus \{y_d>\varphi(y')\}} &\left( |y|^2+4\varphi(y')(\varphi(y')-y_d) \right) \\
        &\leq \sup_{y'\in B_{2{\rho_0}}(0)\setminus \{y_d>\varphi(y')\}} \left( |y|^2+4M^2|y'|^2+4M|y'||y_d| \right)\\
        &\leq \sup_{y'\in B_{2{\rho_0}}(0)\setminus \{y_d>\varphi(y')\}} \left( |y|^2+4M^2|y'|^2+2M|y|^2 \right)\\
        &\leq 4(1+4M^2+2M)\rho_0^2 \,.
    \end{align*}
    It follows that the smallness assumption on $\rho_0$ is valid if $\rho_0 < (4(1+4M^2+2M))^{-1/2}R$. In our applications, we shall use this only close to points where the boundary lies within a small neighborhood of a hyperplane.
\end{remark}

In order to prove Lemma \ref{lem: a priori bound zero average}, we shall utilize the following result due to Nash~\cite{Nash58} and Moser~\cite{Moser64}, which implies that $k_\Omega$ is H\"older continuous in both space and time up to the spatial boundary.

\begin{lemma}\label{lem: Nash Moser Holder regularity}
	Let $A \in L^\infty((0, R^2)\times B_R(0); \R^{d\times d})$ satisfy $A= A^\top$ and assume that there exists a constant $\Lambda\geq 1$ such that
	\begin{equation*}
			\Lambda^{-1}|\xi|^2\leq \xi \cdot A(t, x)\xi \leq \Lambda |\xi|^2 \quad \mbox{for all }\xi \in \R^d, (t, x) \in (0, R^2)\times B_R(0)\,.
		\end{equation*}	
	If $u \colon (0, R^2) \times B_R(0)\to \R$ is a bounded function satisfying
	\begin{equation*}
		\partial_t u(t, x) = \Div(A(t, x)\nabla u(t, x)) \quad \mbox{for all } (t, x)\in (0, R^2)\times B_R(0)\,,
	\end{equation*}
	then there exists $\alpha >0$ so that
	\begin{equation*}
		|u(t, x)-u(s, y)| \lesssim_{d, \Lambda} \|u\|_{L^\infty((0, R^2)\times B_R(0))}\biggl(\frac{|x-y|+|t-s|^{1/2}}{R}\biggr)^\alpha
	\end{equation*}
	whenever $(t, x), (s, y) \in (R^2/4, 3R^2/4)\times B_{R/2}(0)$. Furthermore, $\alpha$ can be chosen depending only on $d, \Lambda$. 
\end{lemma}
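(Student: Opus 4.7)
\medskip

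\noindent\emph{Proof proposal for Lemma~\ref{lem: Nash Moser Holder regularity}.}

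The plan is to reduce the statement to the unit-cylinder case by a parabolic scaling and then invoke the classical De~Giorgi--Nash--Moser H\"older regularity theorem for solutions of uniformly parabolic equations in divergence form with bounded measurable coefficients. First, I would introduce $v\colon (0,1)\times B_1(0)\to \R$ by
\begin{equation*}
    v(t,x) := u(R^2 t,\, Rx) \,,
    \qquad
    \tilde A(t,x) := A(R^2 t,\, Rx) \,.
\end{equation*}
A direct computation shows that $v$ satisfies $\partial_t v(t,x) = \Div(\tilde A(t,x)\nabla v(t,x))$ on $(0,1)\times B_1(0)$, with $\tilde A = \tilde A^\top$ and the same ellipticity bounds $\Lambda^{-1}|\xi|^2\leq \xi\cdot \tilde A(t,x)\xi \leq \Lambda |\xi|^2$. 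Moreover $\|v\|_{L^\infty((0,1)\times B_1(0))} = \|u\|_{L^\infty((0,R^2)\times B_R(0))}$.

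Next, I would apply the classical interior H\"older regularity theorem of Nash~\cite{Nash58} and Moser~\cite{Moser64} to $v$ on the unit parabolic cylinder: there exist $\alpha\in(0,1)$ and $C$, both depending only on $d$ and $\Lambda$, such that
\begin{equation*}
    |v(t,x)-v(s,y)| \leq C \|v\|_{L^\infty((0,1)\times B_1(0))} \bigl(|x-y|+|t-s|^{1/2}\bigr)^\alpha
\end{equation*}
for all $(t,x),(s,y)\in (1/4,3/4)\times B_{1/2}(0)$. Undoing the scaling via $u(t,x)=v(t/R^2,x/R)$, translating the H\"older seminorm back by the factor $R^{-\alpha}$, and noting that the image of the scaled interior cylinder is exactly $(R^2/4, 3R^2/4)\times B_{R/2}(0)$ yields the claimed estimate with the same $\alpha=\alpha(d,\Lambda)$.

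The main obstacle is of course the underlying interior H\"older continuity theorem itself, which in the standard proof is obtained from an oscillation decay statement on dyadic parabolic cylinders: if $\omega(r):=\osc_{Q_r} v$ with $Q_r=(1-r^2,1)\times B_r$, then $\omega(r/2)\leq \theta\,\omega(r)$ for some $\theta=\theta(d,\Lambda)\in(0,1)$. Iterating this produces a power-law bound $\omega(r)\lesssim r^\alpha\|v\|_{L^\infty}$, with $\alpha=\log_2(1/\theta)$. The oscillation decay itself is derived either from Moser's parabolic Harnack inequality applied to the nonnegative functions $M-v$ and $v-m$ on the upper and lower halves of a parabolic cylinder, or equivalently from De~Giorgi's level-set energy estimates together with a weak Poincar\'e-type inequality. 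Since these are well-documented classical results, I would simply cite \cite{Nash58,Moser64} rather than reproduce the argument; only the scaling and translation of the constants require verification, and these are routine.
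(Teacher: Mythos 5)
Your proposal is correct and takes essentially the same route as the paper: the paper attributes Lemma~\ref{lem: Nash Moser Holder regularity} directly to Nash~\cite{Nash58} and Moser~\cite{Moser64} without further proof, so the only content to supply is exactly the parabolic rescaling argument you carry out to pass from the unit cylinder to the cylinder of radius $R$. Your rescaling computation is correct, so nothing is missing.
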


\begin{corollary}\label{cor: Holder regularity up to boundary}
	Assume that $\Omega, x_0, \delta, M, R, \varphi$, and $\rho_0$ are as in Lemma \ref{lem: a priori bound zero average}. Then, for all $(t, x) \in [4\rho_0^2, \infty)\times \Omega$, $y_1, y_2 \in \overline{\Omega} \cap B_{\rho_0}(x_0)$ and $s_1, s_2 \in (t-\rho_0^2, t+\rho_0^2)$, we have
	\begin{align*}
		|k_\Omega(s_1&, x, y_1)-k_\Omega(s_2, x, y_2)|\\
        &\lesssim_{d, \delta, M} \|k_\Omega\|_{G^\infty_{\delta, x}((t/2, 3t/2)\times B_{2\rho_0}(x_0))} \, t^{-d/2} \, e^{-\frac{|x-x_0|^2}{12(1+\delta)t}} \, \biggl(\frac{|y_1-y_2|+|s_1-s_2|^{1/2}}{\rho_0}\biggr)^\alpha \,.
	\end{align*}
	Here, $\alpha>0$ can be chosen depending only on $d, M$.
\end{corollary}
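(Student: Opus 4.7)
The plan is to reduce the estimate to the Nash--Moser Hölder bound (Lemma~\ref{lem: Nash Moser Holder regularity}) by flattening $\partial\Omega$ near $x_0$ and then extending $y\mapsto k_\Omega(s,x,y)$ by even reflection across the flattened boundary. After a translation and rotation we may assume $x_0=0$ and $\Omega\cap B_R(0)=\{y_d>\varphi(y')\}\cap B_R(0)$. Fix $x\in\Omega$; by the discussion in Section~\ref{sec: pde theorems}, the function $u(s,y):=k_\Omega(s,x,y)$ is a weak solution of $\partial_s u=\Delta u$ on $(0,\infty)\times\Omega$ with homogeneous Neumann condition on $\partial\Omega$.

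The map $\Phi(z',z_d):=(z',z_d+\varphi(z'))$ is bi-Lipschitz with $\det D\Phi\equiv 1$ and sends $\{z_d>0\}$ onto $\{y_d>\varphi(y')\}$. The pullback $\tilde u(s,z):=u(s,\Phi(z))$ is a weak solution of $\partial_s\tilde u=\Div_z(A(z)\nabla_z\tilde u)$ with $A(z)=D\Phi(z)^{-1}D\Phi(z)^{-T}$ symmetric, bounded and measurable, and uniformly elliptic with ellipticity constants depending only on $\|\nabla\varphi\|_{L^\infty}\leq M$; the homogeneous Neumann condition becomes the natural conormal condition on $\{z_d=0\}$. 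Extend $\tilde u$ by $\tilde u^{\mathrm{ref}}(s,z',z_d):=\tilde u(s,z',|z_d|)$ and extend $A$ by even reflection of its $(i,j)$-entries with $i,j<d$ or $i=j=d$, and by odd reflection of its $(i,d)$-entries with $i<d$. The conormal condition ensures that no spurious flux is introduced across $\{z_d=0\}$, so $\tilde u^{\mathrm{ref}}$ is a weak solution of a divergence-form uniformly elliptic parabolic equation on a full neighborhood of $0$ with the same ellipticity constants; verifying this cleanly will be the main technical point. The smallness assumption on $\rho_0$ in Lemma~\ref{lem: a priori bound zero average} is precisely that the reflection $y\mapsto(y',2\varphi(y')-y_d)$ of $B_{2\rho_0}(0)\setminus\Omega$ through the graph of $\varphi$ stays in $B_R(0)$; combined with $|\Phi(z)|\leq(1+M)|z|$ this guarantees that $\tilde u^{\mathrm{ref}}$ is defined on the parabolic cylinder $(t-2c^2\rho_0^2,t+2c^2\rho_0^2)\times B_{2c\rho_0}(0)$ for some $c=c(M)\in(0,1)$.

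Applying Lemma~\ref{lem: Nash Moser Holder regularity} on this cylinder (after a trivial time shift) yields an exponent $\alpha=\alpha(d,M)>0$ and
\[
|\tilde u^{\mathrm{ref}}(s_1,z_1)-\tilde u^{\mathrm{ref}}(s_2,z_2)|\lesssim_{d,M}\|\tilde u^{\mathrm{ref}}\|_{L^\infty}\Bigl(\frac{|z_1-z_2|+|s_1-s_2|^{1/2}}{\rho_0}\Bigr)^{\alpha}
\]
on the inner half-cylinder. The $L^\infty$-norm is controlled via the defining Gaussian bound of $\|k_\Omega\|_{G^\infty_{\delta,x}}$: for $s\in(t/2,3t/2)$ and $y=\Phi(z',|z_d|)\in\overline\Omega\cap B_{2\rho_0}(x_0)$ we have
\[
|k_\Omega(s,x,y)|\leq\|k_\Omega\|_{G^\infty_{\delta,x}((t/2,3t/2)\times B_{2\rho_0}(x_0))}\,s^{-d/2}e^{-|x-y|^2/(4(1+\delta)s)}.
\]
Using $s\leq 3t/2$ together with the elementary inequality $|x-y|^2\geq\tfrac12|x-x_0|^2-|y-x_0|^2$ and $|y-x_0|^2\leq 4\rho_0^2\leq t$ produces the factor $t^{-d/2}e^{-|x-x_0|^2/(12(1+\delta)t)}$ appearing in the conclusion. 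Since $\Phi$ is bi-Lipschitz with $M$-dependent constants, the Hölder estimate in $z$ transfers to the same estimate in $y$ with the same exponent and only an $M$-dependent constant, completing the proof.
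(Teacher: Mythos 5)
Your route is genuinely different from the paper's, though mathematically equivalent once chased through. The paper defines, directly in the original coordinates, the map $\tilde y(y)=y$ for $y\in\Omega$ and $\tilde y(y)=(y',2\varphi(y')-y_d)$ for $y\notin\overline\Omega$, observes that $\tilde v(s,y):=k_\Omega(s,x,\tilde y(y))$ solves a divergence-form parabolic equation with discontinuous but uniformly elliptic coefficients on a full neighborhood of $x_0$, and then applies Lemma~\ref{lem: Nash Moser Holder regularity} at radius $2\rho_0$ in $y$; this map $\tilde y$ is exactly the conjugate $\Phi\circ\sigma\circ\Phi^{-1}$ of your even reflection $\sigma$ by your flattening $\Phi$, so the resulting extended function is the same. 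Your version makes the even/odd coefficient reflection and the conormal-flux matching at the interface $\{z_d=0\}$ more transparent, which is a real expository gain: the ``main technical point'' you flag (that the reflection creates no distributional source on the interface) is classical precisely in this flat-interface form, whereas the paper asserts the analogous fact for its graph reflection without detail. Note, though, that the paper itself also only asserts this weak-solution claim.

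There is, however, a scaling issue in your constants that creates a genuine gap as written. You place the reflected function on $B_{2c\rho_0}(0)$ in $z$ with $c=c(M)\in(0,1)$ (indeed $c\lesssim 1/(1+M)$ is what $|\Phi(z)|\le(1+M)|z|$ forces so that the image stays in a ball where the hypothesis on $\rho_0$ applies). But Nash--Moser at radius $2c\rho_0$ gives H\"older continuity only on the inner ball $B_{c\rho_0}(0)$ in $z$. You then need this to cover $z_i=\Phi^{-1}(y_i)$ for all $y_i\in\overline\Omega\cap B_{\rho_0}(x_0)$, and since $|\Phi^{-1}(y)|$ can be as large as $(1+M)|y|$, these points can sit in $B_{(1+M)\rho_0}(0)\setminus B_{c\rho_0}(0)$ when $c<1$, so the interior estimate does not reach them. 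To close this you must either (i) choose the Nash--Moser radius $\gtrsim 2(1+M)\rho_0$ in $z$, which then requires $\rho_0\lesssim R/(1+M)^2$---a stronger hypothesis than the paper's smallness condition permits for large $M$---or (ii) add a chaining/covering argument across several overlapping small cylinders inside $\Phi^{-1}(B_R(x_0))$, propagating the local H\"older bound; the second fix keeps the dependence of the final constant on $d,\delta,M$ only and is compatible with the stated hypotheses. The paper avoids this issue entirely by applying Nash--Moser in the original $y$-coordinates, where the radius $2\rho_0$ immediately matches $y_1,y_2\in B_{\rho_0}(x_0)$.

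The remaining parts of your argument---the ellipticity estimate for $A=D\Phi^{-1}D\Phi^{-T}$ and its reflected extension (with constants controlled by $M$), the passage to the stated Gaussian prefactor using $s\in(t/2,3t/2)$, $|y-x_0|\le 2\rho_0\le\sqrt t$, and $|x-y|^2\ge\frac12|x-x_0|^2-|y-x_0|^2$---are all correct and match the paper.
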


\begin{proof}
	After a rotation and a translation we may assume without loss of generality that $x_0=0, Q= \I$. Fix $(t, x) \in (0, \infty) \times \Omega$. The function $v(s, y) := k_\Omega(s, x, y)$ solves
	\begin{equation*}
			(\partial_s-\Delta_y)v(s, y) =0 \quad \mbox{in } (0, \infty) \times \Omega\,.
	\end{equation*}
	For $y \in \R^d$ we define the transformation
	\begin{equation*}
		\tilde y(y) := \begin{cases}
			y & \mbox{if }y_d> \varphi(y') \,, \\
			(y', 2\varphi(y')-y_d) & \mbox{if } y_d\leq \varphi(y')\,.
		\end{cases}
	\end{equation*}
	Note that since $$\sup_{y \in B_{2\rho_0}(0)\setminus \Omega}(|y|^2+4\varphi(y')(\varphi(y')-y_d))<R^2\,,$$
	we have $\tilde y(y) \in \Omega \cap B_R(0)$ for all $y \in B_{2\rho_0}(0) \setminus \partial\Omega$. Using the fact that $v$ solves the heat equation in $\Omega \cap B_R(0)$, we see that the function $\tilde v \colon (0, \infty) \times B_{2\rho_0}(0)\to \R$ defined by $\tilde v(s, y) := v(s, \tilde y(y))$ solves
	\begin{equation*}
		\partial_s \tilde v(s, y) = \Div_y(A(y)\nabla_y \tilde v(s, y)) \quad \mbox{in }(0, \infty) \times \left( B_{2\rho_0}(0) \setminus\partial\Omega\right),
	\end{equation*}
	with
	\begin{equation*}
		A(y) := \I \mbox{ for } y \in B_{2\rho_0}(0)\cap \Omega
	\end{equation*}
	and
	\begin{equation*}
	A(y) := \left(\begin{matrix}
			\I' & 2\nabla \varphi(y')\\ 2(\nabla \varphi(y'))^\top & 1+4|\nabla \varphi(y')|^2 
		\end{matrix}\right)\mbox{ for }y \in B_{2\rho_0}(0)\setminus \overline{\Omega}\,,
	\end{equation*}
	where $\I'$ denotes the $(d-1)\times (d-1)$ identity matrix. 
	In fact, it can be verified that $\tilde v$ is a weak solution of $\partial_s \tilde v = \Div(A\nabla_y \tilde v)$ in $(0, \infty)\times B_{2\rho_0}(0)$.

	Clearly, $A$ is uniformly elliptic in $B_{2\rho_0}(0)\cap \Omega$. That this holds in all of $B_{2\rho_0}(0)$ follows from the fact that the eigenvalues of $A(y)$ for $y \in B_{2\rho_0}(0)\setminus\overline\Omega$ are $1$ and
	\begin{align*}
		1+ 2|\nabla \varphi(y')|^2 \pm|\nabla \varphi(y')|\sqrt{4+4|\nabla \varphi(y')|^2}\,,
	\end{align*}
	where $1$ has multiplicity (at least) $d-2$ and the other two are simple (unless they are both $1$). In particular, $A$ is uniformly elliptic with the bound 
	\begin{equation*}
		\frac{1}{1+\|\nabla \varphi\|^2_{L^\infty(B_{2\rho}(0))}}|\xi|^2 \lesssim \xi\cdot A(y)\xi \lesssim (1+\|\nabla \varphi\|_{L^\infty(B_{\rho_0}(0))}^2)|\xi|^2\,,
	\end{equation*}
	for all $y \in B_{2\rho_0}(0)$.

	From Lemma~\ref{lem: Nash Moser Holder regularity} we deduce that, for any $t_0>0$ and all $(s_1, y_1), (s_2, y_2)\in (t_0+\rho_0^2, t_0+3\rho_0^2)\times (\overline{\Omega} \cap B_{\rho_0}(0))$,
	\begin{equation*}
			|v(s_1, y_1)-v(s_2, y_2)| \leq C \|v\|_{L^\infty((t_0, t_0+4\rho_0^2)\times (\Omega \cap B_{2\rho_0}(0)))} \biggl(\frac{|y_1-y_2|+|s_1-s_2|^{1/2}}{\rho_0}\biggr)^\alpha\,,
	\end{equation*}
	where $C$ and $\alpha$ only depend on $d$ and $\|\nabla \varphi\|_{L^\infty(B_{\rho_0}(0))}$. We now choose $t_0 := t-2\rho_0^2$ and note that
	\begin{align*}
		\|v&\|_{L^\infty((t-2\rho_0^2, t+2\rho_0^2)\times (\Omega \cap B_{2\rho_0}(0)=)} \\
        &\qquad= \sup_{y \in B_{2\rho_0}(0)\cap \Omega, s\in (t-2\rho_0^2, t+2\rho_0^2)} k_\Omega(s, x, y)\\
		&\qquad\leq \|k_\Omega\|_{G^\infty_{\delta, x}((t-2\rho_0^2, t+2\rho_0^2)\times B_{2\rho_0}(0))}\sup_{y \in B_{2\rho_0}(0)\cap \Omega, s\in (t-2\rho_0^2, t+2\rho_0^2)} s^{-d/2} e^{-\frac{|x-y|^2}{4(1+\delta)s}}\,.
	\end{align*}
    Since $t\geq 4\rho_0^2$ we have that $(t-2\rho_0^2, t+2\rho_0^2)\subseteq (t/2, 3t/2)\cap [2\rho_0^2, \infty)$, and thus for any $y \in B_{2\rho_0}(0)$ we have
    \begin{equation*}
        \frac{|x-y|^2}{4(1+\delta)s}  \geq \frac{|x|^2}{8(1+\delta)s}- \frac{|y|^2}{4(1+\delta)s} \geq \frac{|x|^2}{12t}- \frac{1}{2(1+\delta)}\,.
    \end{equation*}
    Thus we conclude that
    \begin{align*}
		\|v\|_{L^\infty((t-2\rho_0^2, t+2\rho_0^2)\times (\Omega\cap B_{2\rho_0}(0)))} 
        &\lesssim_{d} \|k_\Omega\|_{G^\infty_{\delta, x}((t/2, 3t/2)\times B_{2\rho_0}(0))}t^{-d/2} e^{-\frac{|x|^2}{12(1+\delta)t}}\,.
	\end{align*}
    This completes the proof.
\end{proof}

With these preliminaries in hand we are ready to prove Lemma \ref{lem: a priori bound zero average}.

\begin{proof}[Proof of Lemma \ref{lem: a priori bound zero average}]
    By the discussing in Section~\ref{sec: existence uniqueness}, the integrability assumptions on $a$ implies that the solution $u$ exists and that we can apply Lemma \ref{lem: duhamel}.
	Using the representation formula in Lemma \ref{lem: duhamel} along with the assumption that $\iint a\,ds\,d\Haus^{d-1}(y) =0$, we find
	\begin{align*}
		\left| u(t, x) \right|
		&= 
			\left| \int_0^{\rho^2} \int_{\partial\Omega\cap B_\rho(x_0)} k_\Omega(t-s, x, y)a(s, y)\,d\Haus^{d-1}(y)ds \right| \\
		&=
			\left| \int_0^{\rho^2} \int_{\partial\Omega\cap B_\rho(x_0)} \Bigl(k_\Omega(t-s, x, y)-k_\Omega(t, x, x_0)\Bigr)a(s, y)\,d\Haus^{d-1}(y)ds \right| \\
            &\leq \|a\|_{L^1((0, \rho^2)\times \partial\Omega)}\sup_{s \in (0, \rho^2), y\in \partial\Omega \cap B_\rho(x_0)}|k_\Omega(t- s, x, y)-k_\Omega(t, x, x_0)|\,.
	\end{align*}
    Note that the assumptions on $a$ together with H\"older's inequality imply that the $L^1$-norm is finite.

    First assume that $\rho \leq \rho_0$ and $t \geq 4\rho_0^2$. Then Corollary~\ref{cor: Holder regularity up to boundary} implies
	\begin{align*}
		\sup_{s \in (0, \rho^2), y\in \partial\Omega \cap B_\rho(x_0)}&|k_\Omega(t- s, x, y)-k_\Omega(t, x, x_0)|\\
		&\lesssim_{d, \delta, M} \|k_\Omega\|_{G^\infty_{\delta, x}((t/2, 3t/2)\times B_{2\rho_0}(x_0))} t^{-d/2}e^{-\frac{|x-x_0|^2}{12(1+\delta)t}} \biggl(\frac{\rho}{\rho_0}\biggr)^\alpha\\
        &\lesssim_{d, \delta, M} \|k_\Omega\|_{G^\infty_{\delta, x}((t/2, 3t/2)\times B_{\sqrt{t}}(x_0))} t^{-d/2}e^{-\frac{d_\Omega(x)^2}{12(1+\delta)t}} \biggl(\frac{\rho}{\rho_0}\biggr)^\alpha\,,
	\end{align*}
    which gives the claimed bound.

    Now assume that $\rho \in [\rho_0, \sqrt{t}/2)$. Then for all $s \in (0, \rho^2)$ we have $t-s \in [t/2, t]$, and thus
    \begin{align*}
		\sup_{s \in (0, \rho^2), y\in \partial\Omega \cap B_\rho(x_0)}&|k_\Omega(t- s, x, y)-k_\Omega(t, x, x_0)|\\
		&\lesssim_d 
        \|k_\Omega\|_{G^\infty_{\delta,x}((t/2,t)\times B_{\rho}(x_0))}t^{-d/2}e^{-\frac{|x-y|^2}{4(1+\delta)t}}\\
        &\quad 
        +\|k_\Omega\|_{G^\infty_{\delta,x}((t/2,t)\times B_{\rho}(x_0))}t^{-d/2}e^{-\frac{|x-x_0|^2}{4(1+\delta)t}}\\
        &\lesssim_d
        \|k_\Omega\|_{G^\infty_{\delta,x}((t/2,t)\times  B_{\sqrt{t}}(x_0))}t^{-d/2}e^{-\frac{d_\Omega(x)^2}{4(1+\delta)t}}\,,
	\end{align*}   
	which again gives the claimed bound.
\end{proof}


\section{Estimates close to flat parts of the boundary}

Our goal in this section is to prove that the on-diagonal heat kernel $k_\Omega(t, x, x)$ is closely approximated by the corresponding kernel in a suitably chosen half-space, as long as the distance from $x$ to the boundary is at most of the order $\sqrt{t}$ for $t$ sufficiently small and the nearby boundary is sufficiently flat in a specific sense. The precise notion of local flatness of the boundary matches almost precisely that of Brown \cite{Brown93}, and is captured in terms of two quantities that we define next.

\subsection{Notions of flatness}

\begin{definition}\label{def: good point}
	For $\epsilon\in (0, 1/2], r>0$ we say that $x_0 \in \partial\Omega$ is $(\epsilon, r)$-good if
    \begin{enumerate}[label=\textup{(}\roman*\textup{)}]
    \item\label{itm: good point local parametrization} after a suitable rotation $\partial\Omega$ can in a neighborhood of $x_0$ be parametrized as the graph of a function that is differentiable at $x_0$, and 
    \item\label{itm: good point cone condition} it holds that
	\begin{equation*}
		\partial\Omega \cap B_r(x_0) \subset \{x\in \R^d: |(x-x_0)\cdot \nu(x_0)|<\epsilon \, |x-x_0|\}\,,
	\end{equation*}
    where $\nu(x_0)$ denotes the outwards pointing unit normal to $\partial\Omega$. 
    \end{enumerate}
\end{definition}
\begin{remark}
    It is a consequence of \ref{itm: good point local parametrization} in the definition that the outwards pointing normal $\nu(x_0)$ in \ref{itm: good point cone condition} is well defined. For much of what we do it would suffice that there exists a unit vector $\nu$ such that
    \begin{align*}
        \Omega \cap B_r(x_0) &\subset \{x\in \R^d: (x-x_0)\cdot \nu<\epsilon \, |x-x_0|\}\,,\\
        \Omega^c \cap B_r(x_0) &\subset \{x\in \R^d: (x-x_0)\cdot \nu>-\epsilon \, |x-x_0|\}\,,
    \end{align*}
    but since our argument at certain places requires that $\partial\Omega$ be (locally) Lipschitz the stronger assumption that the condition in \ref{itm: good point cone condition} holds with respect to the outward pointing unit normal is a rather small restriction for our purposes.
\end{remark}

\begin{definition}\label{def: gammarxeps}
	If $x_0 \in \partial\Omega$ is an $(\epsilon, r)$-good point, we define
	\begin{equation*}
		\Gamma_r(x_0, \epsilon) := \{x \in \R^d: (x-x_0)\cdot \nu(x_0)<-\sqrt{1-\epsilon^2} \, |x-x_0|\}\cap B_{r/2}(x_0)\,.
	\end{equation*}
\end{definition} 

If $\partial\Omega \cap B_r(x_0)$ is Lipschitz regular, then Rademacher's theorem implies that the outwards pointing unit normal to $\partial\Omega$ is well defined for $\Haus^{d-1}$-almost every $x\in \partial\Omega \cap B_r(x_0)$. Let $\nu \colon \partial\Omega \to \S^{d-1}$ be this normal field, defined wherever it exists. For our estimates for the heat kernel close to the boundary we shall, in addition to assuming that the boundary is flat in the sense of the definition above, need to control the local oscillations of the normal vector.

\begin{definition}
Fix $p\geq 1$. For $x_0\in \partial\Omega$ and $0<s\leq r$ such that $\partial\Omega \cap B_r(x_0)$ is Lipschitz and the inward pointing unit normal $\nu(x_0)$ to $\partial\Omega$ at $x_0$ is defined, let
\begin{equation*}
    \bar\nu_p(x_0, s):=\sup_{0<\eta<s}\biggl(\frac{1}{\Haus^{d-1}(B_\eta(x_0)\cap \partial\Omega)}\int_{B_\eta(x_0)\cap \partial\Omega} |\nu(x_0)-\nu(y)|^p \,d\Haus^{d-1}(y)\biggr)^{1/p}\,.
\end{equation*}
\end{definition}

The points $x\in\Omega$ where we shall prove more precise results about the behavior of $k_\Omega(t, x, x)$ will be those in $\Gamma_r(x_0, \epsilon)$ for some $(\epsilon, r)$-good point $x_0$ with $\epsilon$ and $r$ suitably related to $t$. The error in our approximation will depend on the oscillations of the normal in terms of the quantity $\bar\nu_p(x_0,r)$ for suitably chosen $p\geq 1$.

An important aspect of the above notion of local flatness is that if $x \in \Gamma_r(x_0,\epsilon)$ and $y \in \partial\Omega \cap B_r(x_0)$ for some $(\epsilon, r)$-good $x_0\in \partial\Omega$, then the vectors $x-x_0$ and $y-x_0$ are almost orthogonal. As this property will be used frequently in what follows, we record it in the following lemma.

\begin{lemma}\label{lem: distance to boundary in good set}
	Let $x_0 \in \partial \Omega$ be $(\epsilon, r)$-good. Then, for all $x \in \Gamma_r(x_0, \epsilon)$ and $y \in \partial\Omega \cap B_r(x_0)$ we have, setting $x^*:= 2x_0-x$,
	\begin{align*}
        |(x-x_0)\cdot(y-x_0)|&\leq 2\epsilon |x-x_0||y-x_0|\,,\\ 
		\Bigl||x-y|^2 -|x-x_0|^2-|y-x_0|^2\Bigr|&\leq  2\epsilon |x-x_0|^2+2\epsilon |y-x_0|^2\,,\\
		\Bigl||x^*-y|^2 -|x-x_0|^2-|y-x_0|^2\Bigr|&\leq  2\epsilon |x-x_0|^2+2\epsilon |y-x_0|^2\,.
	\end{align*}
	In particular,
	\begin{equation*}
		d_\Omega(x) \leq |x-x_0| = |x^*-x_0| \leq \frac{d_\Omega(x)}{\sqrt{1-2\epsilon}} \,.
	\end{equation*}
\end{lemma}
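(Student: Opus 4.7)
The plan is to reduce everything to the first inequality $|(x-x_0)\cdot(y-x_0)|\leq 2\epsilon|x-x_0||y-x_0|$, which serves as the workhorse for the rest of the lemma. I would set $u=x-x_0$, $v=y-x_0$, $\nu=\nu(x_0)$, and decompose both vectors along $\nu$. The definition of $\Gamma_r(x_0,\epsilon)$ forces $u\cdot\nu\leq -\sqrt{1-\epsilon^2}|u|$, hence the component $u_\perp$ of $u$ orthogonal to $\nu$ satisfies $|u_\perp|^2=|u|^2-(u\cdot\nu)^2\leq\epsilon^2|u|^2$. The $(\epsilon,r)$-good cone condition, applied to $y\in\partial\Omega\cap B_r(x_0)$, gives $|v\cdot\nu|\leq\epsilon|v|$. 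Then
\begin{equation*}
|u\cdot v|=|(u\cdot\nu)(v\cdot\nu)+u_\perp\cdot v_\perp|\leq|u|\cdot\epsilon|v|+\epsilon|u|\cdot|v|=2\epsilon|u||v|,
\end{equation*}
which is the first claim.

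The second and third inequalities would then follow at once from the polarization identities $|x-y|^2=|u|^2+|v|^2-2u\cdot v$ and, since $x^{*}-y=-u-v$, $|x^{*}-y|^2=|u|^2+|v|^2+2u\cdot v$, together with the first claim and the AM--GM bound $4\epsilon|u||v|\leq 2\epsilon(|u|^2+|v|^2)$.

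For the final distance chain, the equality $|x-x_0|=|x^{*}-x_0|$ is immediate from $x^{*}-x_0=-u$, and the upper bound $d_\Omega(x)\leq|x-x_0|$ follows from $x_0\in\partial\Omega\subset\Omega^c$. For the lower bound I would first show that every $y\in\partial\Omega$ satisfies $|x-y|\geq\sqrt{1-2\epsilon}|x-x_0|$: when $y\in\partial\Omega\cap B_r(x_0)$ this is the second inequality combined with AM--GM, and when $y\notin B_r(x_0)$ it follows from $|x-y|\geq|y-x_0|-|x-x_0|\geq r-r/2\geq|x-x_0|$.

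The main technical point I anticipate is upgrading this to $d_\Omega(x)=\dist(x,\Omega^c)\geq\sqrt{1-2\epsilon}|x-x_0|$, which requires knowing $x\in\Omega$. I would argue this by connectedness: the open convex cone $\Gamma_r(x_0,\epsilon)$ cannot meet $\partial\Omega$, because for $\epsilon\leq 1/2<1/\sqrt{2}$ the cones $\{(z-x_0)\cdot\nu<-\sqrt{1-\epsilon^2}|z-x_0|\}$ and $\{|(z-x_0)\cdot\nu|<\epsilon|z-x_0|\}$ have empty intersection; moreover $\Gamma_r(x_0,\epsilon)$ contains the inward segment $\{x_0-t\nu:0<t<r/2\}$, which enters $\Omega$ by the local graph parametrization in~(i). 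Connectedness of $\Gamma_r(x_0,\epsilon)$ then forces $\Gamma_r(x_0,\epsilon)\subset\Omega$, so $d_\Omega(x)=\dist(x,\partial\Omega)\geq\sqrt{1-2\epsilon}|x-x_0|$, completing the proof.
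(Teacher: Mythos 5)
Your proof is correct and uses the same orthogonal decomposition along $\nu(x_0)$ as the paper, with the second and third inequalities obtained identically by expanding squares. The only difference is that you explicitly justify two points the paper leaves implicit in the final distance chain: that a nearest boundary point to $x$ lies in $B_r(x_0)$ (since $d_\Omega(x)\leq|x-x_0|<r/2$) and that $\Gamma_r(x_0,\epsilon)\subset\Omega$ (via the cone-disjointness/connectedness argument), so that $d_\Omega(x)=\dist(x,\partial\Omega)$; this extra care is sound and fills a small gap the paper glosses over.
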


\begin{proof}
	Define the orthogonal projections $P$ and $P^\perp$ by $P z := (z\cdot\nu(x_0))\nu(x_0)$ and $P^\perp := \1-P$. Then
    \begin{align*}
        (x-x_0)\cdot (y-x_0)= P(x-x_0)\cdot P(y-x_0)+P^\perp(x-x_0)\cdot P^\perp(y-x_0)
    \end{align*}
    and
\begin{align*}
	|x-y|^2 &= |(x-x_0)-(y-x_0)|^2 \\
	&= |x-x_0|^2 + |y-x_0|^2 - 2(x-x_0)\cdot(y-x_0)\\
	&=
	|x-x_0|^2 + |y-x_0|^2 - 2P(x-x_0)\cdot P(y-x_0) \\
    &\quad - 2P^\perp(x-x_0)\cdot P^\perp(y-x_0)\,.
\end{align*}
 Since $x_0$ is $(\epsilon, r)$-good, $y \in \partial \Omega \cap B_{r}(x_0)$, and $x \in \Gamma_{r}(x_0, \epsilon)$, it holds that $|P(y-x_0)|<\epsilon |y-x_0|$ and $|P^\perp(x-x_0)|<\epsilon |x-x_0|$. Therefore,
 \begin{equation*}
    |(x-x_0)\cdot (y-x_0)| \leq 2\epsilon|x-x_0||y-x_0|
 \end{equation*}
 and
\begin{equation*}
	\Bigl||x-y|^2 -|x- x_0|^2-|y- x_0|^2\Bigr|\leq 4\epsilon |x- x_0||y- x_0| \leq 2\epsilon |x- x_0|^2+2\epsilon |y- x_0|^2\,,
\end{equation*} 
which proves the first two of the claimed inequalities. The third follows from the same computation, using $x^*-x_0 = 2x_0-x-x_0 = -(x-x_0)$.

The remaining lower bound follows by noting that $x_0\in \partial\Omega$, so $d_\Omega(x) \leq |x-x_0|$, and the remaining upper bound follows by applying the first set of bounds with $y \in \partial \Omega \cap B_r(x_0)$ chosen to satisfy $|x-y|=d_\Omega(x)$.
\end{proof}


\subsection{The heat kernel near flat parts of the boundary}
Our aim in what remains of this section is to prove the following result. We recall that the quantity $\Theta[\omega]$ for subsets $\omega\subseteq\partial\Omega$ was defined before Theorem \ref{thm: neumann bvp Lp estimates}.

\begin{proposition}\label{prop: Estimate close the good boundary}
	Let $\Omega \subset \R^d$, $d\geq 2$, be open. Fix $r>0, \epsilon\in (0, 1/4], R>0$. Assume that $\partial\Omega \cap B_{2r}(x_0)$ is Lipschitz regular, that $x_0 \in \partial\Omega$ is $(\epsilon, r)$-good, and that $\partial\Omega$ is locally parametrized by a function $\varphi$ in $B_R(x_0)$ with $\|\nabla \varphi\|_{L^\infty}\leq M$.

    Fix $p>d-1, \delta>0, \kappa>0$. Then, for all $t\in(0,r^2/2]\cap(0, \kappa R^2]$ and $x\in \Gamma_r(x_0, \epsilon)$, we have
	\begin{align*}
    	\Bigl|(4\pi t)^{d/2}k_\Omega(t, x, x) - 1 - e^{-\frac{d_\Omega(x)^2}{t}}\Bigr|
        &\lesssim_{d,\delta, M, \kappa, p}\|k_\Omega\|_{G^\infty_{\delta,x}((0, 3t/2]\times B_{2r}(x_0))}\Theta[\partial\Omega \cap B_{2r}(x_0)]\\
        &\qquad \times\Bigl((\epsilon +\bar \nu_p(x_0, r))e^{- \frac{d_\Omega(x)^2}{c(1+\delta)t}}+ e^{- \frac{r^2}{c(1+\delta)t}}\Bigr)
    \end{align*}
    with some universal constant $c>0$.
\end{proposition}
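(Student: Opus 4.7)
The strategy is to compare $k_\Omega(\cdot,\cdot,x)$ with the ansatz
\[
W(s,y) := k_{\R^d}(s,y,x) + k_{\R^d}(s,y,x^*), \qquad x^* := 2x_0 - x,
\]
i.e.\ the exact Neumann heat kernel on the half-space $\{y : (y-x_0)\cdot\nu(x_0) < 0\}$. The discrepancy $v := k_\Omega(\cdot,\cdot,x) - W$ solves the heat equation in $\Omega$ with zero initial datum in $\Omega$ (since $x^*$ lies on the other side of the tangent hyperplane) and Neumann boundary datum $a := -\partial_{\nu(y)}W$ on $\partial\Omega$. By Lemma~\ref{lem: duhamel} applied to $v$,
\[
k_\Omega(t,x,x) - W(t,x) = \int_0^t\!\int_{\partial\Omega} k_\Omega(t-s,x,y)\,a(s,y)\,d\Haus^{d-1}(y)\,ds.
\]
Since $W(t,x) = (4\pi t)^{-d/2}(1 + e^{-|x-x_0|^2/t})$ and Lemma~\ref{lem: distance to boundary in good set} gives $|x-x_0|^2 - d_\Omega(x)^2 \leq \frac{2\epsilon}{1-2\epsilon}d_\Omega(x)^2$, the elementary inequality $|1-e^{-z}|\leq z$ yields
\[
\bigl|W(t,x) - (4\pi t)^{-d/2}(1+e^{-d_\Omega(x)^2/t})\bigr| \lesssim \epsilon\, t^{-d/2}e^{-d_\Omega(x)^2/(ct)}.
\]
The main task is therefore to bound the Duhamel integral.

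\textbf{Structure of the boundary datum.} Using $(y-x)+(y-x^*) = 2(y-x_0)$, $(y-x)-(y-x^*) = -2(x-x_0)$, and $\nabla_y k_{\R^d}(s,y,z) = -\frac{y-z}{2s}k_{\R^d}(s,y,z)$, a direct computation produces
\[
a(s,y) = \frac{(y-x_0)\cdot \nu(y)}{2s}\bigl[k_{\R^d}(s,y,x)+k_{\R^d}(s,y,x^*)\bigr] - \frac{(x-x_0)\cdot\nu(y)}{2s}\bigl[k_{\R^d}(s,y,x)-k_{\R^d}(s,y,x^*)\bigr].
\]
If $\partial\Omega\cap B_r(x_0)$ were a piece of the tangent plane with $\nu(y)\equiv \nu(x_0)$, then $(y-x_0)\cdot\nu(x_0)=0$ and $|y-x|=|y-x^*|$, so $a$ would vanish identically on $\partial\Omega\cap B_r(x_0)$. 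Splitting $\nu(y) = \nu(x_0) + (\nu(y)-\nu(x_0))$ gives a decomposition $a = a_{\rm flat} + a_{\rm osc} + a_{\rm far}$, where the first two pieces are restricted to $\partial\Omega\cap B_r(x_0)$ (carrying the $\nu(x_0)$ and $\nu(y)-\nu(x_0)$ contributions, respectively) and the third to $\partial\Omega\setminus B_r(x_0)$.

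\textbf{Pointwise bounds.} For $a_{\rm flat}$: the $(\epsilon,r)$-good property gives $|(y-x_0)\cdot\nu(x_0)| \leq \epsilon|y-x_0|$, while Lemma~\ref{lem: distance to boundary in good set} yields both $|\,|y-x|^2 - |y-x^*|^2\,| = 4|(y-x_0)\cdot(x-x_0)| \leq 4\epsilon|x-x_0||y-x_0|$ and $|y-x|^2, |y-x^*|^2 \geq (1-2\epsilon)(|x-x_0|^2 + |y-x_0|^2)$. Applying the mean value theorem to $\tau \mapsto e^{-\tau/(4s)}$ for the second term then produces
\[
|a_{\rm flat}(s,y)| \lesssim \epsilon\, s^{-(d+2)/2}\bigl(|y-x_0| + s^{-1/2}|x-x_0||y-x_0|\bigr)\,e^{-c(|x-x_0|^2+|y-x_0|^2)/s}.
\]
For $a_{\rm osc}$ the same pointwise kernel appears but with $|\nu(y)-\nu(x_0)|$ in place of $\epsilon$. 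For $a_{\rm far}$ one bounds $|a_{\rm far}(s,y)| \lesssim s^{-(d+1)/2}(|y-x|e^{-|y-x|^2/(4s)} + |y-x^*|e^{-|y-x^*|^2/(4s)})$, and the constraint $|y-x_0|\geq r$ produces the Gaussian gain $e^{-cr^2/s}$.

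\textbf{Finishing.} In the Duhamel integral, bound $k_\Omega(t-s,x,y) \leq \|k_\Omega\|_{G^\infty_{\delta,x}((0,3t/2]\times B_{2r}(x_0))}(t-s)^{-d/2}e^{-|x-y|^2/(4(1+\delta)(t-s))}$ and use Lemma~\ref{lem: distance to boundary in good set} once more to convert $|x-y|^2$ into $d_\Omega(x)^2 + |y-x_0|^2$ up to a factor $(1-2\epsilon)$. The $a_{\rm flat}$ and $a_{\rm far}$ pieces are then handled by evaluating a two-parameter Gaussian integral in the variables $s$ and $|y-x_0|$ (the surface-measure integration introducing exactly $\Theta[\partial\Omega\cap B_{2r}(x_0)]$), yielding the $\epsilon$ and $e^{-r^2/(c(1+\delta)t)}$ contributions. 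The $a_{\rm osc}$ piece is handled by applying Theorem~\ref{thm: neumann bvp Lp estimates} with $\omega = \partial\Omega \cap B_r(x_0)$; the spatial $L^p$ norm of $|\nu(y)-\nu(x_0)|$ on $\partial\Omega\cap B_\eta(x_0)$ is by definition controlled by $\bar\nu_p(x_0,r)\Haus^{d-1}(\partial\Omega\cap B_\eta(x_0))^{1/p}$, which combines with the $\Theta^{1/p'}$-factor from Theorem~\ref{thm: neumann bvp Lp estimates} to give the stated $\bar\nu_p(x_0,r)\Theta[\partial\Omega\cap B_{2r}(x_0)]$ prefactor once the exponents $p > d-1$ and $q$ with $q'd/2>1$ are chosen subject to the hypotheses. \emph{The main obstacle} is the careful bookkeeping needed to preserve the Gaussian localization at the scale $d_\Omega(x)^2/t$ through the time convolution: the $s^{-1/2}|x-x_0|$ factor in $a_{\rm flat}$ is singular as $s\to 0^+$, and absorbing it against the Gaussian $e^{-cd_\Omega(x)^2/(t-s)}$ requires splitting the $s$-integral at $s\sim t$ or introducing a suitable change of variables; likewise, applying Theorem~\ref{thm: neumann bvp Lp estimates} on the localized set $\omega = \partial\Omega\cap B_r(x_0)$ (rather than on all of $\partial\Omega$) is essential to avoid replacing $\Theta[\partial\Omega\cap B_{2r}(x_0)]$ by the larger global quantity.
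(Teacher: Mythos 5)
Your ansatz (the half-space kernel $W = k_{\R^d}(\cdot,\cdot,x) + k_{\R^d}(\cdot,\cdot,x^*)$, Duhamel, and the $a_{\rm flat}/a_{\rm osc}/a_{\rm far}$ split) is broadly the right framework, and your pointwise bound on $a_{\rm flat}$ is essentially correct. However, there are two genuine gaps.

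\textbf{The logarithmic leak.} The key issue is the region $s\ll t$, $|y-x_0|\lesssim \sqrt t$, when $d_\Omega(x)\ll\sqrt t$. You correctly derive
\[
|a_{\rm flat}(s,y)| \lesssim \epsilon\, s^{-(d+2)/2}\,|y-x_0|\, e^{-c(|x-x_0|^2+|y-x_0|^2)/s} \quad (\text{plus a better term}).
\]
Now insert this into Duhamel with $k_\Omega(t-s,x,y)\lesssim (t-s)^{-d/2}e^{-c(|x-x_0|^2+|y-x_0|^2)/(t-s)}$ and integrate in $y$ over the surface (using $\Theta$): the $y$-integral gains $T(s)^{d/2}$ with $T(s)=\tfrac{s(t-s)}{t}$, and after combining all powers of $s$ and $t-s$ the $s$-integrand reduces to $\epsilon\,\Theta\, t^{-d/2}\, s^{-1}e^{-c|x-x_0|^2/T(s)}$. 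Hence
\[
\int_0^{t/2} s^{-1}e^{-c|x-x_0|^2/s}\,ds \;\sim\; \log_+\!\Bigl(\tfrac{t}{|x-x_0|^2}\Bigr),
\]
which diverges as $d_\Omega(x)\to0$ with $t$ fixed. So direct Duhamel integration of $a_{\rm flat}$ over the full range $s\in(0,t)$ produces an error factor $\epsilon\log(t/d_\Omega(x)^2)$ — not the clean $\epsilon$ appearing in the proposition. Splitting at $s\sim t$ or changing variables, as you suggest, does not remove this: the log comes from the purely scale-invariant range $s\in(d_\Omega(x)^2,t)$, across which your pointwise estimate is sharp dyadically shell by shell. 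The paper's proof resolves exactly this issue by a \emph{mean-zero subtraction} on each of the $\sim\log(\sqrt t/d_\Omega(x))$ dyadic parabolic shells below scale $\sqrt t$ (the pieces $\tilde f_j$ in Subsection~\ref{sec: Decomposition of f}): each shell is paired with Lemma~\ref{lem: a priori bound zero average}, which exploits the Hölder continuity of $(s,y)\mapsto k_\Omega(s,x,y)$ up to the boundary (Lemma~\ref{lem: Nash Moser Holder regularity}/Corollary~\ref{cor: Holder regularity up to boundary}) to produce a shell-wise contraction $(2^j d_\Omega(x)/\sqrt t)^\alpha$, turning the log into a convergent geometric sum. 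Without something like this cancellation mechanism, your approach falls short by a logarithm.

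\textbf{The localization/cutoff issue.} You drop the paper's cutoff $\chi\in C_0^\infty(B_{2r}(x_0))$ and use $W$ globally. This makes the Neumann datum $a=-\partial_\nu W$ supported on \emph{all} of $\partial\Omega$, and in particular on regions where the boundary is not assumed to be Lipschitz (the hypotheses only give Lipschitz regularity of $\partial\Omega\cap B_{2r}(x_0)$). The Duhamel representation of Lemma~\ref{lem: duhamel} — and the a priori estimates of Section~\ref{sec: pde theorems} — require the boundary data to be supported in a set $\omega$ near which $\partial\Omega$ is Lipschitz, so your $a_{\rm far}$ piece falls outside the admissible class. (You also need to justify that $x^*\notin\overline\Omega$ so that $v$ has zero initial datum; the $(\epsilon,r)$-good condition plus the local-graph hypothesis makes this true in $B_R(x_0)$, but it is not automatic and should be checked.) The cutoff is what localizes the boundary datum into the Lipschitz patch, at the modest cost of an interior source $g$, and the resulting extra term $u^\flat$ is then easily controlled because the cutoff's derivatives are supported at distance $\sim r\gg\sqrt t$ from $x$.
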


The idea of the proof is similar to that of Proposition~\ref{prop: improved bound bulk}, but the local approximation by the free heat kernel is replaced by the sum of the free heat kernel and a reflected copy of it (in other words, the heat kernel in an appropriately chosen half-space). Before proving the proposition, we make some preliminary observations about the boundary value problem that we shall study and prove some initial bounds on the relevant boundary data.  

For $x, x_0$ as in the proposition we define
$$
x^* := 2 x_0- x \,,
$$
which should be understood as an `approximate reflection' of $x$ through $\partial\Omega$. 

Moreover, with the parameter $r$ as in the proposition, we choose $\chi \in C_0^\infty(\R^d)$ satisfying 
\begin{equation*}
    \1_{B_r(x_0)}\leq \chi\leq \1_{B_{2r}(x_0)}\,, \quad |\nabla \chi|\lesssim r^{-1}\,,\quad |\Delta \chi|\lesssim r^{-2} \,.
\end{equation*}

For $y \in \Omega$ and $s>0$ we define
\begin{equation*}
	u(s, y) := k_\Omega(s, x, y) - (k_{\R^d}(s, x, y)+k_{\R^d}(s, x^*, y))\chi(y)\,.
\end{equation*}
Our aim is to prove that $u(s, y)$ is small when $y=x$ and $s=t$. Once such a bound is established, the bound in Proposition \ref{prop: Estimate close the good boundary} will follow rather easily.

Define $f \colon (0, \infty)\times \partial\Omega \to \R$ and $g \colon (0, \infty)\times \Omega \to \R$ by 
\begin{align}
    \label{eq:deff}
	f(s, y) &:=-\frac{\partial}{\partial\nu_y} ((k_{\R^d}(s, x, y)+ k_{\R^d}(s, x^*, y))\chi(y))\,,\\
    g(s, y) &:=(k_{\R^d}(s, x, y)+k_{\R^d}(s, x^*, y))\Delta\chi(y)\notag \\
    &\quad +2\nabla_y (k_{\R^d}(s, x, y) + k_{\R^d}(s, x^*, y))\cdot \nabla \chi(y)\,,\label{eq:defg}
\end{align}
and note that $u$ solves
\begin{align*}
\begin{cases}
		(\partial_s-\Delta_y)u(s, y) =   g(s, y) & \mbox{for } (s, y)\in (0, \infty)\times \Omega\,,\\
		u(0, y) = 0 & \mbox{for } y\in \Omega\,,\\
		\frac{\partial}{\partial \nu_y} u(s, y) = f(s, y)  &\mbox{for } (s, y) \in (0, \infty) \times \partial\Omega\,.
	\end{cases}	
\end{align*}
We will be able to prove that $u(x,t)$ is small by exploiting the results from Section~\ref{sec: pde theorems}. Those will rely on precise bounds on $f$, which we will derive in the following two subsections. In contrast, for $g$ rather crude bounds are enough.

 
\subsection{Local bounds on \texorpdfstring{$f$}{f}}
We work in the setting of Proposition \ref{prop: Estimate close the good boundary}. More precisely, we fix an $(\epsilon,r)$-good point $x_0\in\partial\Omega$ and a point $x\in\Gamma_r(x_0,\epsilon)$, and we define $f$ by \eqref{eq:deff} (depending on $x$ and $x_0$). Our goal in this subsection will be to gather various bounds on the function $f$.

Note that, by the explicit form of $k$ and since $\chi=1$ in $B_r(x_0)$, it holds that
\begin{align}
    \label{eq:fexplicit}
	 f(s, y) &=\frac{1}{2s(4\pi s)^{d/2}} \, \nu(y)\cdot \Bigl[(y-x)e^{- \frac{|x-y|^2}{4s}}+(y-x^*)e^{- \frac{|x^*-y|^2}{4s}}\Bigr]
\end{align}
for all $(s, y)\in (0, \infty)\times B_r(x_0)$.
Similarly, by the assumptions on $\chi$ and since $x, x^*\in B_{r/2}(x_0)$, we have
\begin{equation}\label{eq: bound bdry data faraway}
	 |f(s, y)| \lesssim s^{-d/2}(rs^{-1}+r^{-1})e^{- \frac{r^2}{16s}}\1_{B_{2r}(x_0)}(y) \quad \mbox{for }(s, y)\in (0, \infty)\times B_r(x_0)^c\,.
\end{equation}

For $\rho>0$, we shall use the notation
$$
I(\rho) := (0,\rho^2) \times (B_\rho(x_0) \cap \partial\Omega) \,.
$$

\begin{lemma}\label{lem: f bound 1}
    If $2d_\Omega(x)\leq \rho \leq r$, then
\begin{equation*}
	\biggl|\iint_{I(\rho)}f(s,y)\,ds\,d\Haus^{d-1}(y)\biggr|\lesssim_d \epsilon\,.
\end{equation*}
\end{lemma}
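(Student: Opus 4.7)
The plan is to reduce $\iint_{I(\rho)}f$ via two applications of Green's identity to a sum of integrals supported in the ``thin cone'' $C_\epsilon:=\{y:|(y-x_0)\cdot\nu(x_0)|<\epsilon|y-x_0|\}$ around the tangent hyperplane at $x_0$; the bound $\lesssim_d\epsilon$ will then follow from $|C_\epsilon\cap B_\rho(x_0)|\lesssim_d\epsilon\rho^d$ and $\Haus^{d-1}(C_\epsilon\cap\partial B_\rho(x_0))\lesssim_d\epsilon\rho^{d-1}$. First, since $\rho\le r$ and $\chi\equiv 1$ on $B_r(x_0)$, we have $f=-\partial_{\nu_y}v$ on $\partial\Omega\cap B_\rho(x_0)$, where $v(s,y):=k_{\R^d}(s,x,y)+k_{\R^d}(s,x^*,y)$ satisfies $\partial_sv=\Delta_yv$ away from the sources $\{x,x^*\}$ and is smooth in $y$ for every $s>0$. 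Lemma \ref{lem: distance to boundary in good set} together with $2d_\Omega(x)\le\rho$ gives $|x-x_0|\le\rho/\sqrt{2}<\rho$, so $x\in\Omega\cap B_\rho(x_0)$ while $x^*=2x_0-x$ lies in the interior of $B_\rho(x_0)\setminus\overline{\Omega}$ (the $(\epsilon,r)$-good condition forces the open opposite cone to sit inside $\Omega^c$).

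Applying Green's identity to $v(s,\cdot)$ on $\Omega\cap B_\rho(x_0)$ and integrating in $s\in(0,\rho^2)$, using $\lim_{s\to 0^+}\int_{\Omega\cap B_\rho(x_0)}v(s,y)\,dy=1+0=1$, yields
\begin{equation*}
  \iint_{I(\rho)}f\,ds\,d\Haus^{d-1}=1-\int_{\Omega\cap B_\rho(x_0)}v(\rho^2,y)\,dy+\int_0^{\rho^2}\!\int_{\Omega\cap\partial B_\rho(x_0)}\partial_{\vec n}v\,d\Haus^{d-1}ds,
\end{equation*}
where $\vec n(y):=(y-x_0)/\rho$ is the outward unit normal of $B_\rho(x_0)$. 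The reflection identity $k_{\R^d}(s,x^*,y)=k_{\R^d}(s,x,2x_0-y)$ and the change of variables $y\mapsto 2x_0-y$ (which preserves $B_\rho(x_0)$ and leaves $\partial_{\vec n}$ invariant) convert the $k_{\R^d}(\cdot,x^*,\cdot)$ contributions into integrals of $\tilde v(s,y):=k_{\R^d}(s,x,y)$ over $\tilde\Omega\cap B_\rho(x_0)$, with $\tilde\Omega:=2x_0-\Omega$. Writing $\1_\Omega+\1_{\tilde\Omega}=\1_{\Omega\cup\tilde\Omega}+\1_{\Omega\cap\tilde\Omega}$ then expresses the right-hand side through $\tilde v$ on $(\Omega\cup\tilde\Omega)\cap B_\rho$ and on $\Omega\cap\tilde\Omega\cap B_\rho$ (and on the corresponding subsets of $\partial B_\rho(x_0)$).

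A second application of Green's identity, this time to $\tilde v$ on $(0,\rho^2)\times B_\rho(x_0)$, gives $\int_{B_\rho(x_0)}\tilde v(\rho^2,y)\,dy-1=\int_0^{\rho^2}\!\int_{\partial B_\rho(x_0)}\partial_{\vec n}\tilde v\,d\Haus^{d-1}ds$. Writing $(\Omega\cup\tilde\Omega)\cap B_\rho=B_\rho\setminus(B_\rho\setminus(\Omega\cup\tilde\Omega))$ (and likewise on $\partial B_\rho$), the $\int_{B_\rho}\tilde v(\rho^2,\cdot)$ and ``$-1$'' contributions cancel pairwise, leaving
\begin{align*}
  \iint_{I(\rho)}f\,ds\,d\Haus^{d-1}&=\int_{B_\rho\setminus(\Omega\cup\tilde\Omega)}\tilde v(\rho^2,y)\,dy-\int_{\Omega\cap\tilde\Omega\cap B_\rho}\tilde v(\rho^2,y)\,dy\\
  &\quad-\int_0^{\rho^2}\!\int_{\partial B_\rho\setminus(\Omega\cup\tilde\Omega)}\partial_{\vec n}\tilde v\,d\Haus^{d-1}ds+\int_0^{\rho^2}\!\int_{\Omega\cap\tilde\Omega\cap\partial B_\rho}\partial_{\vec n}\tilde v\,d\Haus^{d-1}ds.
\end{align*}
Each integration domain is confined to a thin cone: the local Lipschitz graph representation of $\partial\Omega$ in $B_{2r}(x_0)$ combined with the $(\epsilon,r)$-good condition forces $\Omega\supset(H_-\cap B_\rho)\setminus C_{c\epsilon}$ and $\tilde\Omega\supset(H_+\cap B_\rho)\setminus C_{c\epsilon}$ for a universal $c>0$ (with $H_\pm$ the tangent half-spaces at $x_0$), yielding the inclusions $B_\rho\setminus(\Omega\cup\tilde\Omega),\,\Omega\cap\tilde\Omega\cap B_\rho\subset C_{c\epsilon}\cap B_\rho$ and analogous ones on $\partial B_\rho(x_0)$.

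Finally, $\tilde v(\rho^2,\cdot)\le(4\pi\rho^2)^{-d/2}$ together with $|C_{c\epsilon}\cap B_\rho(x_0)|\lesssim_d\epsilon\rho^d$ bounds the two volume terms by $\lesssim_d\epsilon$. For the flux terms, $|y-x|\ge\rho-|x-x_0|\gtrsim\rho$ on $\partial B_\rho(x_0)$ implies $|\nabla_y\tilde v(s,y)|\lesssim s^{-1-d/2}\rho\,e^{-c'\rho^2/s}$, so $\int_0^{\rho^2}|\nabla_y\tilde v(s,y)|\,ds\lesssim_d\rho^{1-d}$ pointwise; combined with $\Haus^{d-1}(C_{c\epsilon}\cap\partial B_\rho(x_0))\lesssim_d\epsilon\rho^{d-1}$ this again gives $\lesssim_d\epsilon$, completing the proof. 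The main obstacle is orchestrating the two Green's identity computations so that the $O(1)$ bulk contributions (such as $\int_{B_\rho}\tilde v(\rho^2,\cdot)$) cancel exactly, leaving only the geometrically $\epsilon$-small cone remainders; the Gaussian flux bound is then routine.
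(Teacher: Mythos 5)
Your proof is correct and takes essentially the same route as the paper's: Green's identity combined with the reflection symmetry $k_{\R^d}(s,x^*,y)=k_{\R^d}(s,x,2x_0-y)$ cancels the bulk contribution exactly, leaving remainders supported in the $\epsilon$-cone $C_\epsilon$ around the tangent hyperplane, which are then bounded by $|C_\epsilon\cap B_\rho|\lesssim_d\epsilon\rho^d$ and $\Haus^{d-1}(C_\epsilon\cap\partial B_\rho)\lesssim_d\epsilon\rho^{d-1}$. The paper organizes the two Green computations on $\Omega\cap B_\rho$ and $(\overline\Omega)^c\cap B_\rho$ for the two kernels separately and cancels via reflection afterwards, whereas you apply Green once for the sum on $\Omega\cap B_\rho$ and once for the single kernel on the full ball using the $\1_\Omega+\1_{\tilde\Omega}=\1_{\Omega\cup\tilde\Omega}+\1_{\Omega\cap\tilde\Omega}$ decomposition—the same calculation with different bookkeeping.
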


\begin{proof}
Introduce the sets 
\begin{align*}
    \Omega_\rho^\limplus & := \Omega \cap B_\rho(x_0) \,,\\
    \Omega_\rho^\limminus & := ({\overline\Omega})^c \cap B_\rho(x_0) \,,\\
    \Sigma_\rho^\limplus & := (\partial \Omega_\rho^\limplus) \cap \Omega \,,\\ 
    \Sigma_\rho^\limminus & := (\partial \Omega_\rho^\limminus) \cap (\overline\Omega)^c \,.
\end{align*}
Let also
\begin{equation*}
	C_{\epsilon} := \{z \in B_\rho(x_0):  |(z-x_0)\cdot \nu(x_0)| <\epsilon |z-x_0|\}\,.
\end{equation*}
By Green's formula and the fact that $k$ solves the heat equation, we can rewrite the integral of $f$ as follows
\begin{align*}
 	\iint_{I(\rho)}f(s,y)\,ds\,d\Haus^{d-1}(y) &=  
 	-\int_{\Omega^\limplus_\rho}k_{\R^d}(\rho^2, x, y)\,dy+\int_{\Omega^\limminus_\rho}k_{\R^d}(\rho^2, x^*, y)\,dy\\
 	&\quad
 	+ \int_0^{\rho^2}\int_{\Sigma_\rho^\limplus} \frac{\partial}{\partial \nu_y}k_{\R^d}(s, x, y)\,d\Haus^{d-1}(y)ds\\
 	&\quad - \int_0^{\rho^2}\int_{\Sigma_\rho^\limminus} \frac{\partial}{\partial \nu_y}k_{\R^d}(s, x^*, y)\,d\Haus^{d-1}(y)ds\\
 	&=
 	-\int_{\Omega^\limplus_\rho \cap C_{\epsilon}}k_{\R^d}(\rho^2, x, y)\,dy+\int_{\Omega^\limminus_\rho \cap C_{\epsilon}}k_{\R^d}(\rho^2, x^*, y)\,dy\\
 	&\quad
 	+ \int_0^{\rho^2}\int_{\Sigma_\rho^\limplus\cap C_{\epsilon}} \frac{\partial}{\partial \nu_y}k_{\R^d}(s, x, y)\,d\Haus^{d-1}(y)ds\\
 	&\quad - \int_0^{\rho^2}\int_{\Sigma_\rho^\limminus\cap C_{\epsilon}} \frac{\partial}{\partial \nu_y}k_{\R^d}(s, x^*, y)\,d\Haus^{d-1}(y)ds\,,
\end{align*} 
where, in the second step, we used the reflection symmetry between $k_{\R^d}(s, x, y)$ and $k_{\R^d}(s, x^*, y)$ to cancel large pieces of the integrals. (Note that all the non-trivial geometry happens in $C_{\epsilon}$ as $x_0$ is $(\epsilon, r)$ good.)

For all $y \in C_\epsilon \cap B_\rho(x_0)$ we can bound
\begin{equation*}
	|k_{\R^d}(\rho^2, x, y)| + |k_{\R^d}(\rho^2, x^*, y)| \lesssim_d \rho^{-d}\,.
\end{equation*}
Similarly, for all $y \in C_\epsilon \cap \partial B_\rho(x_0)$,
\begin{equation*}
	|\nabla_y k_{\R^d}(s, x, y)|+|\nabla_y k_{\R^d}(s, x^*, y)| \lesssim_{d} s^{-(d+1)/2}e^{-c\rho^2/s}
\end{equation*}
for some universal constant $c$ by Lemma~\ref{lem: distance to boundary in good set} using the fact that $x \in \Gamma_r(x_0, \epsilon)$, $y \in C_\epsilon \cap \partial B_\rho(x_0)$, $\epsilon \leq \frac{1}{4}$, and the assumption that $\rho\geq 2d_\Omega(x)$. We have thus arrived at the bound
\begin{align*}
 	\biggl|\iint_{I(\rho)}&f(s,y)\,ds\,d\Haus^{d-1}(y)\biggr|\\ &\lesssim_d \frac{|C_\epsilon \cap B_\rho(x_0)|}{\rho^d}+ \Haus^{d-1}(\partial B_\rho(x_0)\cap C_\epsilon)\int_0^{\rho^2}s^{-(d+1)/2}e^{-c\rho^2/s}\,ds\\
 	&\lesssim_d \frac{|C_\epsilon \cap B_\rho(x_0)|}{\rho^d}+ \rho^{-d+1}\Haus^{d-1}(\partial B_\rho(x_0)\cap C_\epsilon)\\
 	&\lesssim_d \epsilon \,.
\end{align*} 
This completes the proof of Lemma~\ref{lem: f bound 1}.
\end{proof}

In the following two lemmas, we prove bounds for $f$ in terms of the oscillation of the normal vector.

\begin{lemma}\label{lem: f bound 2}
    If $\rho\in (0, r/2]$ and if $(s, y) \in I(2\rho)\setminus I(\rho)$ is such that $\nu(y)$ exists, then
\begin{equation*}
	|f(s, y)|\lesssim_{d}  (\epsilon+ |\nu(y)-\nu(x_0)|) \, \rho^{-(d+1)} \,.
\end{equation*}
\end{lemma}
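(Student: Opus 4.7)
The plan begins with the explicit formula \eqref{eq:fexplicit}, which applies since $(s,y)\in I(2\rho)\setminus I(\rho)$ and $\rho\leq r/2$ together force $y\in B_r(x_0)$, where $\chi\equiv 1$ and the cutoff derivatives drop out. I would decompose $\nu(y)=\nu(x_0)+(\nu(y)-\nu(x_0))$ and treat the two resulting pieces separately. The contribution from $\nu(y)-\nu(x_0)$ is the easy one: Cauchy--Schwarz bounds it by $|\nu(y)-\nu(x_0)|\bigl(|y-x|e^{-|x-y|^2/(4s)}+|y-x^*|e^{-|x^*-y|^2/(4s)}\bigr)$, which will be absorbed by Gaussian estimates at the end.

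The delicate part is the $\nu(x_0)$-term, where one must exhibit an $\epsilon$'s worth of cancellation between the two summands. Introducing
\[
\alpha:=\nu(x_0)\cdot(y-x_0),\ \beta:=\nu(x_0)\cdot(x-x_0),\ \gamma:=(y-x_0)\cdot(x-x_0),\ D:=|y-x_0|^2+|x-x_0|^2,
\]
the identities $y-x=(y-x_0)-(x-x_0)$ and $y-x^*=(y-x_0)+(x-x_0)$ give $|y-x|^2=D-2\gamma$ and $|y-x^*|^2=D+2\gamma$, so a direct computation yields
\begin{align*}
\nu(x_0)\cdot\bigl[(y-x)e^{-|x-y|^2/(4s)}+(y-x^*)e^{-|x^*-y|^2/(4s)}\bigr]=2e^{-\frac{D}{4s}}\bigl[\alpha\cosh(\tfrac{\gamma}{2s})-\beta\sinh(\tfrac{\gamma}{2s})\bigr].
\end{align*}
The $(\epsilon,r)$-good condition gives $|\alpha|\leq\epsilon|y-x_0|$, the bound $|\beta|\leq|x-x_0|$ is trivial, and Lemma~\ref{lem: distance to boundary in good set} gives $|\gamma|\leq 2\epsilon|y-x_0||x-x_0|$. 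Applying $|\sinh h|\leq|h|\cosh h$ together with the identity $2e^{-D/(4s)}\cosh(\gamma/(2s))=e^{-|y-x|^2/(4s)}+e^{-|y-x^*|^2/(4s)}$, I then obtain
\[
\bigl|\nu(x_0)\cdot[\,\cdots\,]\bigr|\lesssim \epsilon|y-x_0|\Bigl(1+\tfrac{|x-x_0|^2}{s}\Bigr)\bigl(e^{-|y-x|^2/(4s)}+e^{-|y-x^*|^2/(4s)}\bigr).
\]

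What remains is routine Gaussian bookkeeping. Lemma~\ref{lem: distance to boundary in good set} also yields $|y-x|^2,|y-x^*|^2\geq D/2$ (using $\epsilon\leq 1/4$), and the elementary inequalities $h e^{-h^2/(16s)}\lesssim \sqrt s$ and $(h^2/s)e^{-h^2/(16s)}\lesssim 1$ let me absorb each factor of $|y-x_0|$, $|y-x|$, $|y-x^*|$, or $|x-x_0|^2/s$ into $\sqrt s$ or $1$ at the cost of shrinking the Gaussian by a constant. This produces the unified bound
\[
|f(s,y)|\lesssim_d (\epsilon+|\nu(y)-\nu(x_0)|)\,s^{-(d+1)/2}\,e^{-cD/s}
\]
for some universal $c>0$. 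Since $(s,y)\in I(2\rho)\setminus I(\rho)$ forces either $|y-x_0|\geq\rho$ (so $D\geq\rho^2$) or $s\geq\rho^2$, the elementary maximization $\sup_{s>0}s^{-(d+1)/2}e^{-c\rho^2/s}\lesssim_d\rho^{-(d+1)}$ closes both cases. The only genuinely delicate step is spotting the $\cosh/\sinh$ decomposition that makes the approximate reflection symmetry of $x$ and $x^*$ across the tangent hyperplane at $x_0$ quantitatively visible; once that algebraic identity is in hand, everything else is a standard Gaussian estimate.
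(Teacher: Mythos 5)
Your proof is correct. You follow the same global strategy as the paper -- writing $\nu(y)=\nu(x_0)+(\nu(y)-\nu(x_0))$, peeling off the trivial $\nu(y)-\nu(x_0)$ contribution, and then exploiting the approximate reflection symmetry between $x$ and $x^*$ about the tangent hyperplane at $x_0$ together with Lemma~\ref{lem: distance to boundary in good set}. The difference is in how the cancellation in the $\nu(x_0)$-term is quantified. The paper factors $e^{-|x^*-y|^2/(4s)}-e^{-|x-y|^2/(4s)} = e^{-|x^*-y|^2/(4s)}(1-e^{\gamma/s})$ with $\gamma=(x-x_0)\cdot(y-x_0)$, then Taylor-expands when $|\gamma|/s$ is of order one and switches to a crude exponential-decay argument when $|\gamma|/s$ is large, so it needs a case split on whether $\epsilon|x-x_0||y-x_0|\leq s$. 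Your $\cosh/\sinh$ repackaging, combined with $|\sinh h|\leq |h|\cosh h$ and the identity $2e^{-D/(4s)}\cosh(\gamma/(2s)) = e^{-|y-x|^2/(4s)}+e^{-|y-x^*|^2/(4s)}$, achieves exactly the same bound $|1-e^{\gamma/s}|\cdot e^{-|x^*-y|^2/(4s)} \lesssim (|\gamma|/s)(e^{-|y-x|^2/(4s)}+e^{-|y-x^*|^2/(4s)})$ uniformly, so it eliminates the case split and is a small but genuine streamlining of the paper's argument. All the downstream Gaussian bookkeeping ($|x-y|^2,|x^*-y|^2\geq D/2$ from $\epsilon\leq 1/4$, the absorption of algebraic prefactors into $\sqrt s$, and the dichotomy $s\geq\rho^2$ or $D\geq\rho^2$ on $I(2\rho)\setminus I(\rho)$) is correct and matches the paper's computation.
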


\begin{proof}
We write
\begin{equation*}
	I(2\rho)\setminus I(\rho) = U_1 \cup U_2
\end{equation*}
with
\begin{equation*}
	U_1 := [\rho^2, 4\rho^2) \times (\partial \Omega \cap B_{2\rho}(x_0)) \quad \mbox{and}\quad U_2 := (0, 4\rho^2)\times (\partial\Omega \cap B_{2\rho}(x_0)\setminus B_\rho(x_0))\,.
\end{equation*}
Since $\epsilon \leq 1/4$, Lemma~\ref{lem: distance to boundary in good set} implies that 
$$
	\min\{|x-y|^2, |x^*-y|^2\} \geq \rho^2/2 \quad \mbox{for}\quad (s, y)\in U_2\,.
$$
Consequently, for any $\beta>0, \alpha>0$,
\begin{equation}\label{eq: fundamental bounds in U1 and U2}
	s^{-\alpha}\frac{|x-y|^{2\beta}}{s^{\beta}}e^{-\frac{|x-y|^2}{4s}}+s^{-\alpha}\frac{|x^*-y|^{2\beta}}{s^{\beta}}e^{-\frac{|x^*-y|^2}{4s}} \lesssim_{\beta, \alpha} \rho^{-2\alpha} \quad \mbox{for }(s, y)\in U_1 \cup U_2\,.
\end{equation}

Using the explicit expression \eqref{eq:fexplicit}, we split $f$ into three parts and bound, using $2y-x-x^* = 2y-2x_0$,
\begin{equation}\label{eq: f local bound split}
\begin{aligned}
	\hspace{-10pt}|f(s, y)|
	&\lesssim_{d}
	s^{-(d+1)/2}\Bigl|\nu(y)\cdot\Bigl(\frac{y-x}{\sqrt{s}}e^{-\frac{|x-y|^2}{4s}}+\frac{y-x^*}{\sqrt{s}}e^{-\frac{|x^*-y|^2}{4s}}\Bigr)\Bigr|\\
	&\lesssim_{d}
	s^{-(d+1)/2}|\nu(y)-\nu(x_0)|\biggl(\frac{|y-x|}{\sqrt{s}}e^{-\frac{|x-y|^2}{4s}}+\frac{|y-x^*|}{\sqrt{s}}e^{-\frac{|x^*-y|^2}{4s}}\biggr)\\
	&\quad +
	s^{-(d+1)/2}\frac{|\nu(x_0)\cdot(y- x_0)|}{\sqrt{s}}e^{-\frac{|x-y|^2}{4s}} +
	s^{-(d+1)/2}\frac{|y-x^*|}{\sqrt{s}}\Bigl|e^{-\frac{|x^*-y|^2}{4s}}-e^{-\frac{|x-y|^2}{4s}}\Bigr|.\hspace{-30pt}
\end{aligned}
\end{equation}
We bound each of the three terms in the right-hand side of \eqref{eq: f local bound split} separately.

For the first term in the right-hand side of \eqref{eq: f local bound split}, the bound~\eqref{eq: fundamental bounds in U1 and U2} yields
\begin{equation*}
	s^{-(d+1)/2}|\nu(y)-\nu(x_0)|\biggl(\frac{|y-x|}{\sqrt{s}}e^{-\frac{|x-y|^2}{4s}}+\frac{|y-x^*|}{\sqrt{s}}e^{-\frac{|x^*-y|^2}{4s}}\biggr)
	\lesssim_{d}
	 |\nu(y)-\nu(x_0)|\rho^{-(d+1)}\,,
\end{equation*}
for all $(s, y) \in U_1 \cup U_2$.

To bound the second term in the right-hand side of \eqref{eq: f local bound split}, we note that since $x_0$ is $(\epsilon, r)$-good it holds that 
$$
    |\nu(x_0)\cdot(y-x_0)|<\epsilon |y-x_0|< 2\epsilon \rho
$$ 
for all $y \in \partial\Omega \cap B_{2\rho}(x_0)$. When combined with~\eqref{eq: fundamental bounds in U1 and U2} we find
\begin{align*}
	s^{-(d+1)/2}\frac{|\nu(x_0)\cdot(y- x_0)|}{\sqrt{s}}e^{-\frac{|x-y|^2}{4s}} \leq 2\epsilon \rho s^{-(d+2)/2}e^{-\frac{|x-y|^2}{4s}}\lesssim_{d} \epsilon \rho^{-(d+1)}
\end{align*}
for all $(s, y) \in U_1 \cup U_2$.

We finally bound the third term in \eqref{eq: f local bound split}. In this case we need to capture cancellation between the terms in the absolute values. By expanding squares
\begin{equation*}
	e^{-\frac{|x^*-y|^2}{4s}}-e^{-\frac{|x-y|^2}{4s}} = e^{-\frac{|x^*-y|^2}{4s}}\Bigl(1-e^{-\frac{|x-y|^2-|x^*-y|^2}{4s}}\Bigr) = e^{-\frac{|x^*-y|^2}{4s}}\Bigl(1-e^{\frac{(x-x_0)\cdot (y-x_0)}{s}}\Bigr)\,.
\end{equation*}
Since $x_0$ is $(\epsilon, r)$-good and $y\in \partial\Omega \cap B_r(x_0), x\in \Gamma_r(x_0, \epsilon)$, Lemma~\ref{lem: distance to boundary in good set} yields that
\begin{equation*}
	|(x-x_0)\cdot (y-x_0)| \leq 2 \epsilon |x-x_0| |y-x_0|\,.
\end{equation*}
Therefore, whenever $\frac{\epsilon|x-x_0||y-x_0|}s \leq 1$ a Taylor expansion implies that
\begin{equation*}
	\Bigl|e^{-\frac{|x^*-y|^2}{4s}}-e^{-\frac{|x-y|^2}{4s}}\Bigr| \leq  e^{-\frac{|x^*-y|^2}{4s}}\bigl|1-e^{\frac{(x-x_0)\cdot (y-x_0)}{s}}\bigr| \lesssim \epsilon \frac{|x-x_0||y-x_0|}{s} e^{-\frac{|x^*-y|^2}{4s}}\,.
\end{equation*}
Since $|y-x_0|\leq 2\rho$ and, by Lemma~\ref{lem: distance to boundary in good set}, $|x-x_0|\lesssim |x^*-y|$ we conclude that
\begin{equation*}
	\Bigl|e^{-\frac{|x^*-y|^2}{4s}}-e^{-\frac{|x-y|^2}{4s}}\Bigr| \lesssim \epsilon \frac{\rho|x^*-y|}{s} e^{-\frac{|x^*-y|^2}{4s}}\quad \mbox{if } \epsilon|x-x_0||y-x_0| \leq s\,.
\end{equation*}
Therefore, by~\eqref{eq: fundamental bounds in U1 and U2} 
\begin{equation*}
	s^{-(d+1)/2}\frac{|y-x^*|}{\sqrt{s}}\Bigl|e^{-\frac{|x^*-y|^2}{4s}}-e^{-\frac{|x-y|^2}{4s}}\Bigr| 
	\lesssim \epsilon \rho^{-(d+1)}
\end{equation*}
if $(s, y) \in U_1\cup U_2$ and $\epsilon|x-x_0||y-x_0| \leq s$.

Furthermore, as $2|x-x_0||y-x_0|\leq |x-x_0|^2+|y-x_0|^2$ it follows from Lemma~\ref{lem: distance to boundary in good set} that
\begin{equation*}
	\epsilon|x-x_0||y-x_0| > s \quad \Longrightarrow \quad \min\{|x-y|^2, |x^*-y|^2\} \gtrsim \frac{s}{\epsilon}\,. 
\end{equation*}
Therefore, using that by Lemma~\ref{lem: distance to boundary in good set} $|x-y| \lesssim |x^*-y|$, we have that if $(s, y) \in U_1\cup U_2,\ \epsilon|x-x_0||y-x_0| > s$ then
\begin{equation*}
	s^{-(d+1)/2}\frac{|y-x^*|}{\sqrt{s}}\Bigl|e^{-\frac{|x^*-y|^2}{4s}}-e^{-\frac{|x-y|^2}{4s}}\Bigr| \lesssim e^{- c/\epsilon}s^{-(d+1)/2}\frac{|y-x^*|}{\sqrt{s}}e^{-c\frac{|x^*-y|^2}{s}} \lesssim \epsilon \rho^{-(d+1)}\,,
\end{equation*}
for some absolute constant $c>0$.

Combining the three bounds shown above completes the proof of Lemma~\ref{lem: f bound 2}. 
\end{proof}

Finally, we will need to estimate $f$ in $I(\rho)$ for $\rho$ relatively small compared to both $d_\Omega(x)$ and $r$.

\begin{lemma}\label{lem: f bound 3}
    If $\rho \in (0, 4d_\Omega(x)]\cap (0, r]$ and if $(s, y) \in I(\rho)$ is such that $\nu(y)$ exists, then
    \begin{equation*}
	|f(s, y)| \lesssim_{d} (\epsilon+|\nu(y)-\nu(x_0)|) \, d_\Omega(x)^{-(d+1)} \,.
    \end{equation*}
\end{lemma}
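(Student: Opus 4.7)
The plan is to adapt the argument used in the proof of Lemma~\ref{lem: f bound 2}, replacing $\rho$ by $d_\Omega(x)$ as the relevant length scale controlling the exponential decay of the two Gaussians appearing in $f$. The key starting point is the following analog of \eqref{eq: fundamental bounds in U1 and U2}: for all $(s,y) \in I(\rho)$, $\alpha > 0$ and $\beta \geq 0$,
\begin{equation*}
    s^{-\alpha}\frac{|x-y|^{2\beta}}{s^\beta}e^{-\frac{|x-y|^2}{4s}} + s^{-\alpha}\frac{|x^*-y|^{2\beta}}{s^\beta}e^{-\frac{|x^*-y|^2}{4s}} \lesssim_{\alpha,\beta} d_\Omega(x)^{-2\alpha}.
\end{equation*}
This holds because Lemma~\ref{lem: distance to boundary in good set} (together with $\epsilon \leq 1/4$ and $|x^*-x_0|=|x-x_0|$) yields $|x-y|^2, |x^*-y|^2 \geq (1-2\epsilon)(|x-x_0|^2+|y-x_0|^2) \geq \tfrac{1}{2} d_\Omega(x)^2$; combining this with $t^\beta e^{-t/8}\lesssim_\beta 1$ and substituting $v=d_\Omega(x)^2/s$ in the resulting factor $s^{-\alpha}e^{-d_\Omega(x)^2/(8s)}$ produces the claimed bound.

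With this local fundamental bound in hand, I would reuse the splitting \eqref{eq: f local bound split} of $|f(s,y)|$ into three contributions arising from $2y-x-x^* = 2(y-x_0)$. The first contribution, proportional to $|\nu(y)-\nu(x_0)|$, is bounded directly by the fundamental bound (with $\alpha=(d+1)/2$, $\beta=1/2$) to give $|\nu(y)-\nu(x_0)|\,d_\Omega(x)^{-(d+1)}$. For the second, since $x_0$ is $(\epsilon, r)$-good and $y \in \partial\Omega \cap B_\rho(x_0)$, $|\nu(x_0)\cdot(y-x_0)|<\epsilon|y-x_0|\leq \epsilon\rho \leq 4\epsilon d_\Omega(x)$; another application of the fundamental bound (with $\alpha=(d+2)/2$) then yields $\epsilon\, d_\Omega(x)^{-(d+1)}$.

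The third, cancellation-based contribution is the most delicate but follows the same dichotomy as in Lemma~\ref{lem: f bound 2}. Writing
\begin{equation*}
    e^{-|x^*-y|^2/(4s)} - e^{-|x-y|^2/(4s)} = e^{-|x^*-y|^2/(4s)}\bigl(1 - e^{(x-x_0)\cdot(y-x_0)/s}\bigr),
\end{equation*}
Lemma~\ref{lem: distance to boundary in good set} combined with $|y-x_0|\leq\rho\leq 4d_\Omega(x)$ and $|x-x_0|\leq d_\Omega(x)/\sqrt{1-2\epsilon}$ gives $|(x-x_0)\cdot(y-x_0)|\lesssim \epsilon\, d_\Omega(x)^2$. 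In the regime $\epsilon|x-x_0||y-x_0|\leq s$, a first-order Taylor expansion bounds the exponential difference by $C\epsilon\, d_\Omega(x)^2/s$, and applying the fundamental bound with $\alpha=(d+3)/2$, $\beta=1/2$ produces $\epsilon\, d_\Omega(x)^{-(d+1)}$. In the complementary regime $\epsilon|x-x_0||y-x_0|>s$, one instead has $\min\{|x-y|^2,|x^*-y|^2\}/s\gtrsim 1/\epsilon$, so that an exponentially small prefactor $e^{-c/\epsilon}\lesssim \epsilon$ can be extracted from each Gaussian while retaining sufficient decay for the fundamental bound to apply separately to each term. Summing the three contributions yields the claimed estimate. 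The main (mild) obstacle is the book-keeping in the third term, but it is entirely analogous to the case already treated in Lemma~\ref{lem: f bound 2}.
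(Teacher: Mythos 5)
Your proposal is correct and follows essentially the same route as the paper: the same three-term split of $|f|$ (mean-curvature, tangential, and cancellation pieces), the same Taylor-versus-exponentially-small dichotomy for the cancellation term governed by whether $\epsilon|x-x_0||y-x_0|\le s$, and the same reduction to maximizing a power of $s^{-1}$ times a Gaussian at length scale $d_\Omega(x)$. The only cosmetic difference is that you package the estimate in an explicit analog of \eqref{eq: fundamental bounds in U1 and U2} keyed on $d_\Omega(x)$, which is exactly what the paper does implicitly by first establishing $d_\Omega(x)\lesssim|x-y|,|x^*-y|\lesssim d_\Omega(x)$ on $I(\rho)$ and then maximizing over $s>0$.
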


\begin{proof}
Since $x \in \Omega$ and $(s, y) \in I(\rho) \subset (0, \infty)\times \partial\Omega$, we have
\begin{equation*}
	|x-y|^2\geq  d_\Omega(x)^2\,.
\end{equation*}
Moreover, using Lemma~\ref{lem: distance to boundary in good set} and that $x_0 \in \partial \Omega$ is $(\epsilon, r)$-good and $x\in \Gamma_r(x_0,\epsilon)$ we have
\begin{equation*}
	|x^*-y|^2 \geq (1-2\epsilon) |x-x_0|^2 \geq (1-2\epsilon) d_\Omega(x)^2 \,.
\end{equation*}
By Lemma \ref{lem: distance to boundary in good set}, $|x-x_0|=|x^*-x_0|\lesssim d_\Omega(x)$ and thus since $y \in B_{\rho}(x_0)$ with $\rho \leq 4 d_\Omega(x)$ we have
$$
    \max\{|x-y|,|x^*-y|\} \lesssim  d_\Omega(x)\,.
$$
Therefore, the assumptions of the lemma imply that
\begin{equation*}
    d_\Omega(x)\lesssim |x-y|\lesssim d_\Omega(x) \quad \mbox{and}\quad d_\Omega(x)\lesssim |x^*-y|\lesssim d_\Omega(x)\,.
\end{equation*}

Consequently, for $(s, y)$ as in the statement the above bounds for $|x-y|, |x^*-y|$ together with $x_0$ being $(\epsilon, r)$-good imply that
\begin{equation*}
\begin{aligned}
	|f(s, y)|
	&\lesssim_{d}
	s^{-(d+1)/2}\Bigl|\nu(y)\cdot\Bigl(\frac{y-x}{\sqrt{s}}e^{-\frac{|x-y|^2}{4s}}+\frac{y-x^*}{\sqrt{s}}e^{-\frac{|x^*-y|^2}{4s}}\Bigr)\Bigr|\\
	&\lesssim_{d}
	s^{-(d+1)/2}|\nu(y)-\nu(x_0)|\biggl(\frac{|y-x|}{\sqrt{s}}e^{-\frac{|x-y|^2}{4s}}+\frac{|y-x^*|}{\sqrt{s}}e^{-\frac{|x^*-y|^2}{4s}}\biggr)\\
	&\quad +
	s^{-(d+1)/2}\frac{|\nu(x_0)\cdot(y- x_0)|}{\sqrt{s}}e^{-\frac{|x-y|^2}{4s}} +
	s^{-(d+1)/2}\frac{|y-x^*|}{\sqrt{s}}\Bigl|e^{-\frac{|x^*-y|^2}{4s}}-e^{-\frac{|x-y|^2}{4s}}\Bigr|\\
    &\lesssim_{d}
	s^{-(d+1)/2}|\nu(y)-\nu(x_0)|\frac{d_\Omega(x)}{\sqrt{s}}e^{-\frac{d_\Omega(x)^2}{8s}}\\
	&\quad +
	s^{-(d+1)/2}\frac{\epsilon d_\Omega(x)}{\sqrt{s}}e^{-\frac{d_\Omega(x)^2}{8s}} +
	s^{-(d+1)/2}\frac{|y-x^*|}{\sqrt{s}}\Bigl|e^{-\frac{|x^*-y|^2}{4s}}-e^{-\frac{|x-y|^2}{4s}}\Bigr|\\
    &\lesssim_{d}
	(\epsilon + |\nu(y)-\nu(x_0)|)d_\Omega(x)^{-(d+1)} +
	s^{-(d+1)/2}\frac{|y-x^*|}{\sqrt{s}}\Bigl|e^{-\frac{|x^*-y|^2}{4s}}-e^{-\frac{|x-y|^2}{4s}}\Bigr|\,.
\end{aligned}
\end{equation*}

To bound the remaining term we argue as in the proof of Lemma \ref{lem: f bound 2}. As in that proof one arrives at the conclusion that 
\begin{equation*}
	\Bigl|e^{-\frac{|x^*-y|^2}{4s}}-e^{-\frac{|x-y|^2}{4s}}\Bigr| \leq  e^{-\frac{|x^*-y|^2}{4s}}\bigl|1-e^{\frac{(x-x_0)\cdot (y-x_0)}{s}}\bigr| \lesssim \epsilon \frac{|x-x_0||y-x_0|}{s} e^{-\frac{|x^*-y|^2}{4s}}\,,
\end{equation*}
if $\epsilon|x-x_0||y-x_0| \leq s$.
Since now $|y-x_0|\leq \rho \leq 4d_\Omega(x)$, $|x-x_0|\lesssim  d_\Omega(x)$, and $|x^*-y|\gtrsim d_\Omega(x)$ we conclude that
\begin{equation*}
	\Bigl|e^{-\frac{|x^*-y|^2}{4s}}-e^{-\frac{|x-y|^2}{4s}}\Bigr| \lesssim \epsilon \frac{d_\Omega(x)^2}{s} e^{-\frac{d_\Omega(x)^2}{8s}}
\end{equation*}
if $\epsilon|x-x_0||y-x_0| \leq s$. Therefore, by maximizing over $s>0$,
\begin{equation*}
	s^{-(d+1)/2}\frac{|y-x^*|}{\sqrt{s}}\Bigl|e^{-\frac{|x^*-y|^2}{4s}}-e^{-\frac{|x-y|^2}{4s}}\Bigr| 
	\lesssim \epsilon d_\Omega(x)^{-(d+1)}
\end{equation*}
for all $(s, y) \in I(\rho)$ with $\epsilon|x-x_0||y-x_0| \leq s$.

In the setting of the current lemma, $\epsilon |x-x_0||y-x_0|>s$ implies that $\epsilon d_\Omega(x)^2\gtrsim s$ and so
\begin{equation*}
	s^{-(d+1)/2}\frac{|y-x^*|}{\sqrt{s}}\Bigl|e^{-\frac{|x^*-y|^2}{4s}}-e^{-\frac{|x-y|^2}{4s}}\Bigr| \lesssim e^{- c/\epsilon}s^{-(d+1)/2}\frac{d_\Omega(x)^2}{\sqrt{s}}e^{-c\frac{d_\Omega(x)^2}{s}} \lesssim \epsilon d_\Omega(x)^{-(d+1)}\,,
\end{equation*}
for some absolute constant $c>0$.

This completes the proof of Lemma~\ref{lem: f bound 3}.
\end{proof}


\subsection{Decomposing the boundary data}
\label{sec: Decomposition of f}
The next step in the proof of Proposition~\ref{prop: Estimate close the good boundary} is to decompose the boundary datum $f$ into smaller pieces. We recall that $x_0$ is an $(\epsilon,r)$-good point, that $x\in\Gamma_r(x_0,\epsilon)$ and that $f$ was defined in terms of $x$ and $x_0$. In this subsection we also consider a time $t\in(0,r^2/2]$. The decomposition of $f$ will depend on $t$ and, more precisely, on the parabolic distance between $(t, x)$ and $(0, x_0)$. We recall the notation $I(\rho) = (0, \rho^2) \times (B_{\rho}(x_0) \cap \partial\Omega)$.

Given $x_0,x$ and $t$, our goal in this subsection will be to decompose
\begin{equation}
    \label{eq:fdecomp}
    f(s, y) = \sum_{1 \leq j \leq N}\tilde f_j(s, y) + \sum_{1\leq k \leq M} f_k(s, y) + f^\sharp(s, y) + \check f(s, y)\,.
\end{equation}

Let $M$ be the smallest positive integer so that $r< 2^{M+1}\sqrt{t}$, and let $\rho_k := 2^k \sqrt{t}$ for $k = 0, \ldots, M$. We define
\begin{equation*}
	f_k(s, y) := \1_{I(\rho_k)\setminus I(\rho_{k-1})}(s, y)f(s, y) \quad \mbox{for }k=1, \ldots, M\,,
\end{equation*}
and
\begin{equation*}
	\tilde f(s, y) := \1_{I(\sqrt{t})}(s, y)f(s, y) \quad \mbox{and} \quad \check f(s, y) := \1_{I(\rho_M)^c}(s, y)f(s, y) \,.
\end{equation*}
Thus, we have
\begin{equation*}
	f(s, y) = \sum_{1\leq k\leq M} f_k(s, y) + \tilde f(s, y) + \check f(s, y)\,.
\end{equation*}

Next, we further decompose $\tilde f$. Define $\tilde \rho_0 :=0$ and $\tilde \rho_j := 2^{j} d_\Omega(x)$ for $j \geq 1$. Let $N$ be the smallest nonnegative integer such that $\sqrt{t}<2^{N+2}d_\Omega(x)$. Note, in particular, that $2\tilde \rho_j \leq\sqrt{t}$ for all $j \leq N$ and that $\tilde \rho_N >\sqrt{t}/4$. Also, note that $\tilde f = f$ on $I(\tilde \rho_j)$.

Let $m_0:=0$ and, for $1 \leq j \leq N$, set
\begin{equation*}
	m_j(s, y) := \1_{I(\tilde \rho_j)}(s, y) \, \frac{1}{\tilde\rho_j^2 \, \Haus^{d-1}(B_{\tilde\rho_j}(x_0)\cap\partial\Omega)} \iint_{I(\tilde \rho_j)} f(s',y')\,ds'\,d\Haus^{d-1}(y')\,.
\end{equation*}
We define
\begin{equation*}
	\tilde f_j(s, y) := \1_{I(\tilde \rho_j) \setminus I(\tilde \rho_{j-1})}(s, y) f(s, y) + m_{j-1}(s, y)-m_j(s, y) \quad \mbox{for }j = 1, \ldots, N
\end{equation*}
and
\begin{equation*}
	f^\sharp(s, y) := m_N(s, y) + \1_{I(\tilde \rho_N)^c}(s, y)\tilde f(s, y)= m_N(s, y) + \1_{I(\sqrt{t})\setminus I(\tilde \rho_N)}(s, y) f(s, y)\,.
\end{equation*}
Thus, we have
\begin{equation*}
	\tilde f(s, y) = \sum_{1\leq j\leq N}\tilde f_j(s, y) + f^\sharp(s, y)\,,
\end{equation*}
which gives the desired decomposition of $f$.

We note that if $4d_\Omega(x)>\sqrt{t}$ then $N=0$, and we have $\tilde f = f^\sharp$. 

In the following lemma we collect some of the support properties of the pieces in the decomposition \eqref{eq:fdecomp} and we prove some bounds that will be important in the next subsection.

\begin{lemma}\label{lem:fpiecesbound}
    The pieces in the decomposition \eqref{eq:fdecomp} satisfy:
    \begin{enumerate}
        \item[$(a)$] For every $1\leq j\leq N$, the function $\tilde f_j$ has integral zero and is supported in $I(\tilde\rho_j)$ with $\tilde\rho_j = 2^j d_\Omega(x)$. Moreover, $\tilde f_j \in L^p(I(\tilde\rho_j))$ for every $p\in [1, \infty]$ and
        $$
            \| \tilde f_j \|_{L^1} \lesssim_d \Theta[\partial\Omega\cap B_{\tilde\rho_j}(x_0)] \, (\epsilon + \overline\nu_1(x_0,\tilde\rho_j) ) \,.
        $$
        \item[$(b)$] For every $1\leq k\leq M$, the function $f_k$ is supported in $I(\rho_k)$ with $\rho_k = 2^k\sqrt t$ and satisfies, for any $p,q\geq 1$,
        \begin{equation*}
            \begin{aligned}
	           \|f_k\|_{L^{p}(\partial\Omega; L^q((0, t)))}
                \lesssim_d 
                \Theta[\partial\Omega\cap B_{\rho_k}(x_0)]^{1/p}\rho_k^{-(d+1)+ 2/q+(d-1)/p}(\epsilon +\bar \nu_p(x_0, \rho_k))\,.
            \end{aligned}
        \end{equation*}
        \item[$(c)$] The function $f^\sharp$ is supported in $(0,\infty)\times B_{\sqrt t}(x_0)$ and satisfies, for any $p,q\geq 1$,
        \begin{align*}
            \|f^\sharp\|_{L^{p}(\partial\Omega; L^q((0, t)))}
            & \lesssim_d
            \Theta[\partial\Omega\cap B_{\sqrt t}(x_0)]^{1/p} (\epsilon +\bar \nu_p(x_0, \sqrt t)) \, t^{1/q+(d-1)/(2p)} \\
            & \quad \times \min\{ d_\Omega(x)^{-(d+1)},t^{-(d-1)/2}\}    
        \end{align*}
        \item[$(d)$] The function $\check f$ is supported in $(0,\infty)\times (B_{2r}(x_0)\setminus B_{r/2}(x_0))$ and, for all $(s,y)\in(0,\infty)\times\partial\Omega$,
        $$
        |\check f(s,y)| \lesssim_d r^{-d-1} \,.
        $$       
    \end{enumerate}
\end{lemma}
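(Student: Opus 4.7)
All four support statements are immediate from the definitions in Section~\ref{sec: Decomposition of f} together with the fact that $f$ itself vanishes outside $(0,\infty)\times(\partial\Omega\cap B_{2r}(x_0))$ and that, since $\rho_M^2\geq 4t>t$, within the effective Duhamel window $(0,t)$ the function $\check f$ is forced into $B_{2r}(x_0)\setminus B_{\rho_M}(x_0)\subset B_{2r}(x_0)\setminus B_{r/2}(x_0)$. The zero-integral property in (a) is a one-line telescoping computation: by construction $\iint_{I(\tilde\rho_j)} m_j\,ds\,d\Haus^{d-1} = \iint_{I(\tilde\rho_j)} f\,ds\,d\Haus^{d-1}$, so integrating $\tilde f_j = \1_{I(\tilde\rho_j)\setminus I(\tilde\rho_{j-1})}f + m_{j-1} - m_j$ yields zero (the convention $m_0=0$ handles $j=1$).

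The common engine for the quantitative bounds in (a), (b), and (c) is the same: invoke one of Lemmas~\ref{lem: f bound 1}, \ref{lem: f bound 2}, \ref{lem: f bound 3} to control $f$ pointwise on the relevant dyadic parabolic annulus, and then take $L^1$ or $L^p(L^q)$ norms over that annulus. Each such integration converts a pointwise bound $|f|\lesssim(\epsilon+|\nu(y)-\nu(x_0)|)\rho^{-(d+1)}$ on $I(\rho)$ into a norm bound involving the factors $\rho^{2/q}$ (from the $L^q$-norm in $s$), $\Haus^{d-1}(\partial\Omega\cap B_\rho(x_0))^{1/p}\leq \Theta[\partial\Omega\cap B_\rho(x_0)]^{1/p}\rho^{(d-1)/p}$ (from the $L^p$-norm in $y$ together with the definition of $\Theta$), and $\epsilon+\bar\nu_p(x_0,\rho)$ (from the averaged normal-oscillation). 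For (a) I would bound $\1_{I(\tilde\rho_j)\setminus I(\tilde\rho_{j-1})}f$ via Lemma~\ref{lem: f bound 2} with $\rho=\tilde\rho_{j-1}$ when $j\geq 2$ and via Lemma~\ref{lem: f bound 3} when $j=1$ (since then $\tilde\rho_1=2d_\Omega(x)$), and control $m_{j-1},m_j$ via Lemma~\ref{lem: f bound 1}, whose hypothesis $2d_\Omega(x)\leq\tilde\rho_j\leq r$ holds because $\tilde\rho_j=2^j d_\Omega(x)\leq\sqrt t/2\leq r/2$. For (b), Lemma~\ref{lem: f bound 2} applies with $\rho=\rho_{k-1}\leq r/2$ and the claimed bound drops out of the scheme above after using $\rho_{k-1}\sim\rho_k$. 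For (d), inserting \eqref{eq: bound bdry data faraway} and absorbing the Gaussian via the elementary inequality $(r^2/s)^\alpha e^{-r^2/(16s)}\lesssim_\alpha 1$ applied with $\alpha=d/2$ and $\alpha=d/2+1$ gives the uniform $r^{-(d+1)}$ bound.

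The only part that requires real care is (c), where the three scales $\tilde\rho_N$, $d_\Omega(x)$, and $\sqrt t$ interact: by the definition of $N$, for $N\geq 1$ these are all comparable up to a factor of $4$, whereas for $N=0$ one has $\tilde\rho_N=0$ but $\sqrt t\leq 4d_\Omega(x)$. My plan is to split $f^\sharp=m_N+\1_{I(\sqrt t)\setminus I(\tilde\rho_N)}f$ and treat the two cases separately. When $N=0$ the average $m_0$ vanishes and Lemma~\ref{lem: f bound 3} applied on $I(\sqrt t)$ yields the $d_\Omega(x)^{-(d+1)}$ branch of the minimum directly. When $N\geq 1$, Lemma~\ref{lem: f bound 1} (whose hypothesis $2d_\Omega(x)\leq\tilde\rho_N\leq r$ holds since $\tilde\rho_N=2^N d_\Omega(x)\geq 2d_\Omega(x)$ and $\tilde\rho_N\leq\sqrt t/2\leq r/2$) controls $|m_N|$ pointwise by $\epsilon/(\tilde\rho_N^2\Haus^{d-1}(B_{\tilde\rho_N}(x_0)\cap\partial\Omega))$; meanwhile, for each $(s,y)\in I(\sqrt t)\setminus I(\tilde\rho_N)$ one chooses a dyadic $\rho\in[\tilde\rho_N/2,\sqrt t/2]$ with $(s,y)\in I(2\rho)\setminus I(\rho)$ and invokes Lemma~\ref{lem: f bound 2}; since $\rho\sim\sqrt t$ in this regime the resulting prefactor has the size needed for the second branch of the stated minimum. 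Taking $L^p(L^q)$ norms in both subcases (and, for the $m_N$-piece, using the Lipschitz lower bound $\Haus^{d-1}(B_{\tilde\rho_N}(x_0)\cap\partial\Omega)\gtrsim_M\tilde\rho_N^{d-1}$ to convert the denominator into the same $t$-power as the annular contribution) and then combining via a minimum produces the claimed inequality.
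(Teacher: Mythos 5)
Your plan for parts (a), (b), and (c) follows the paper's proof in every essential detail: the telescoping argument for the zero integral, the dyadic split with Lemma~\ref{lem: f bound 2} for $j\geq 2$ (resp.\ $k\geq 1$) and Lemma~\ref{lem: f bound 3} for the innermost scale, the hypothesis checks ($2d_\Omega(x)\leq\tilde\rho_j\leq r/2$ for Lemma~\ref{lem: f bound 1}), the case split on $N=0$ versus $N\geq 1$ in (c), and the conversion of $\Haus^{d-1}(B_{\tilde\rho_N}(x_0)\cap\partial\Omega)^{-1/p'}$ into $\Theta$- and $t$-powers for the $m_N$-piece are all what the paper does.

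In (d), however, there is a genuine gap. You correctly conclude that within the effective time window the function $\check f$ is supported in $B_{2r}(x_0)\setminus B_{\rho_M}(x_0)$, and since $\rho_M$ is only guaranteed to exceed $r/2$ (not $r$), this annulus can extend inside $B_r(x_0)$. But you then propose to estimate $\check f$ solely by inserting \eqref{eq: bound bdry data faraway}, which is stated only for $y\in B_r(x_0)^c$ (it relies on $\chi$ being non-constant, i.e.\ $|\nabla\chi|,|\Delta\chi|\lesssim r^{-1},r^{-2}$ on that outer region). On the inner portion $\partial\Omega\cap(B_r(x_0)\setminus B_{\rho_M}(x_0))$ one must instead start from the explicit formula \eqref{eq:fexplicit}, and there the Gaussian factors $e^{-|x-y|^2/4s}$ and $e^{-|x^*-y|^2/4s}$ give no decay unless one first shows $|x-y|,|x^*-y|\gtrsim r$. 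That lower bound is exactly what Lemma~\ref{lem: distance to boundary in good set} provides, using the $(\epsilon,r)$-good structure: since $x\in\Gamma_r(x_0,\epsilon)$ and $y\in\partial\Omega$ with $|y-x_0|\geq r/2$, one has $|x-y|^2\gtrsim|y-x_0|^2\gtrsim r^2$. Without this geometric input (used by the paper at \eqref{eq: checkf near}), the uniform $r^{-(d+1)}$ bound does not follow on the inner annulus, so that step of your argument would fail.
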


\begin{proof}
    \emph{Step 1.} We consider the function $\tilde f_j$ with $1\leq j\leq N$. The mean-value zero and the support property follow immediately from the construction. That $\tilde f_j \in L^\infty(I(\tilde \rho_j))$ follows from Lemma~\ref{lem: f bound 2} if $j>1$ and Lemma \ref{lem: f bound 3} if $j=1$. It remains to bound $\|\tilde f_j\|_{L^1}$. We have
    $$
    \|\tilde f_j\|_{L^1} \leq \|f\|_{L^1(I(\tilde \rho_j)\setminus I(\tilde \rho_{j-1}))} + \|m_j\|_{L^1}+ \|m_{j-1}\|_{L^1} \,.
    $$
    Our assumption $t\leq r^2/2$ implies that $\tilde \rho_j \leq\sqrt{t}/2 < r/2$ for each $j$. We also have $\tilde\rho_j = 2^j d_\Omega(x) \geq 2d_\Omega(x)$. Therefore we can apply Lemma~\ref{lem: f bound 1} and obtain
    $$
    \|m_j\|_{L^1} = \left| \iint_{I(\tilde\rho_j)} f(s,y)\,ds\,\Haus^{d-1}(y) \right| \lesssim_d \epsilon \,.
    $$
    The same bound holds for $\|m_{j-1}\|_{L^1}$. (Note that the case $j=1$ is trivial since $m_0=0$.) Since is $(\epsilon, r)$-good we have $\Theta[\partial\Omega \cap B_{\tilde\rho_j}(x_0)]\gtrsim_d 1$. This proves that the contribution from $\|m_j\|_{L^1}+ \|m_{j-1}\|_{L^1}$ to the bound on $\|\tilde f_j\|_{L^1}$ is of the claimed form.

    To bound $\|f\|_{L^1(I(\tilde \rho_j)\setminus I(\tilde \rho_{j-1}))}$, we use Lemma~\ref{lem: f bound 2} if $j>1$ and Lemma~\ref{lem: f bound 3} if $j=1$, noting that $\tilde\rho_j\leq r/2$ and $\tilde\rho_1 = 2d_\Omega(x) \leq 4 d_\Omega(x)$. In both cases, we obtain
    \begin{align*}
        \|f\|_{L^1(I(\tilde \rho_j)\setminus I(\tilde \rho_{j-1}))}
        & \lesssim_d \tilde \rho_j^{-(d-1)}\int_{\partial\Omega \cap B_{\tilde \rho_j}(x_0)} (\epsilon + |\nu(y)-\nu(x_0)|)\,d\Haus^{d-1}(y) \\
        & \leq \tilde \rho_j^{-(d-1)} \Haus^{d-1}(B_{\tilde\rho_j}(x_0)\cap \partial\Omega) \, (\epsilon + \overline\nu_1(x_0,\tilde\rho_j)) \\
        & \leq \Theta[\partial\Omega\cap B_{\tilde\rho_j}(x_0)] (\epsilon + \bar \nu_1(x_0, \tilde \rho_j))\,.
    \end{align*}

    \medskip

    \noindent\emph{Step 2.} We consider the function $f_k$ with $k=1,\ldots, M$.

    Since $\supp f_k\subset I(\rho_k)\setminus I(\rho_{k-1})=I(\rho_k)\setminus I(\rho_k/2)$ and $\rho_k \leq r$ for each $k=1, \ldots, M$, we can apply Lemma~\ref{lem: f bound 2} to deduce that for any $p, q \geq 1$
\begin{equation*}
\begin{aligned}
	\|&f_k\|_{L^{p}(\partial\Omega; L^q((0, t)))}\\
    &\lesssim_d 
    \biggl(\int_{\partial\Omega\cap B_{\rho_k}(x_0)}\biggl(\int_0^{\rho_k^2} (\epsilon + |\nu(y)-\nu(x_0)|)^q \rho_k^{-(d+1)q}\,ds\biggr)^{p/q}\,d\Haus^{d-1}(y)\biggr)^{1/p}\\
    &=
    \rho_k^{-(d+1)+ 2/q}\biggl(\int_{\partial\Omega\cap B_{\rho_k}(x_0)} (\epsilon + |\nu(y)-\nu(x_0)|)^{p} \,d\Haus^{d-1}(y)\biggr)^{1/p}\\
    &\leq
    \rho_k^{-(d+1)+ 2/q}\biggl(\epsilon \Haus^{d-1}(\partial\Omega \cap B_{\rho_k}(x_0))^{1/p} +\biggl(\int_{\partial\Omega\cap B_{\rho_k}(x_0)} \!\!\! |\nu(y)-\nu(x_0)|^p \,d\Haus^{d-1}(y)\biggr)^{1/p}\biggr)\\
    &\leq
    \rho_k^{-(d+1)+ 2/q}\Haus^{d-1}(\partial\Omega \cap B_{\rho_k}(x_0))^{1/p}(\epsilon +\bar \nu_p(x_0, \rho_k))\\
    &\leq 
    \Theta[\partial\Omega\cap B_{\rho_k}(x_0)]^{1/p}\rho_k^{-(d+1)+ 2/q+(d-1)/p}(\epsilon +\bar \nu_p(x_0, \rho_k))\,.
\end{aligned}
\end{equation*}
This proves the claimed bound.

    \medskip

    \noindent\emph{Step 3.} To estimate $f^\sharp$, we distinguish the cases $\sqrt t< 4d_\Omega(x)$ (that is, $N=0$) and $\sqrt t\geq 4d_\Omega(x)$ (that is $N\geq 1$). 

    If $\sqrt t<4d_\Omega(x)$, then
    \begin{equation*}
 	  f^\sharp(s, y) = \1_{I(\sqrt{t})}(s, y)f(s, y)\,.
    \end{equation*}
    From Lemma \ref{lem: f bound 3} we obtain, similarly as in Step 2,
    \begin{equation*}
    \begin{aligned}
	\|f^\sharp&\|_{L^{p}(\partial\Omega; L^q((0, t)))}\\
    &\lesssim_d 
    \biggl(\int_{\partial\Omega\cap B_{\sqrt t}(x_0)}\biggl(\int_0^t (\epsilon + |\nu(y)-\nu(x_0)|)^q d_\Omega(x)^{-(d+1)q}\,ds\biggr)^{p/q}\,d\Haus^{d-1}(y)\biggr)^{1/p}\\
    &\lesssim
    \Theta[\partial\Omega\cap B_{\sqrt t}(x_0)]^{1/p} t^{1/q+(d-1)/(2p)} \, d_\Omega(x)^{-(d+1)}(\epsilon +\bar \nu_p(x_0, \rho_k))\,.
    \end{aligned}
    \end{equation*}
    This proves the claimed bound in this case.
    
    Meanwhile, if $\sqrt t \geq 4 d_\Omega(x)$, then
    \begin{equation*}
	   f^\sharp(s, y) = m_N(s,y) + \1_{I(\sqrt{t})\setminus I(\tilde \rho_N)}(s, y)f(s, y)\,.
    \end{equation*}
    To bound $\|f^\sharp\|_{L^{p}(\partial\Omega; L^q((0, t)))}$, we argue as in Step 1. By Lemma \ref{lem: f bound 1}, we have
    \begin{align*}
        \|m_N\|_{L^{p}(\partial\Omega; L^q((0, t)))} & = \left| \iint_{I(\tilde\rho_N)} f(s,y)\,ds\,d\Haus^{d-1}(y) \right| \tilde\rho_N^{-2/q'} \Haus^{d-1}(B_{\tilde\rho_N}(x_0)\cap\partial\Omega)^{-1/p'} \\
        & \lesssim_d \epsilon \, \tilde\rho_N^{-2/q'} \Haus^{d-1}(B_{\tilde\rho_N}(x_0)\cap\partial\Omega)^{-1/p'} \\
        & \leq \epsilon \, \Theta[B_{\tilde\rho_N}(x_0)\cap\partial\Omega]^{-1/p'} \tilde\rho_N^{-(d-1)/p'-2/q'}
    \end{align*}
    As $\Theta[\Omega \cap B_{\sqrt{t}}(x_0)]\gtrsim_{d} 1$ since $x_0$ is $(\epsilon, r)$-good and as $\tilde\rho_N\leq(1/2) \sqrt t$, this bound is of the claimed form. Furthermore, by Lemma \ref{lem: f bound 2}, we have
    \begin{align*}
        \| \1_{I(\sqrt{t})\setminus I(\tilde \rho_N)}&(s, y)f \|_{L^{p}(\partial\Omega; L^q((0, t)))}\\
        & \lesssim_d t^{-(d+1)/2 + 1/q} \left( \int_{B_{\sqrt t}(x_0)\cap\partial\Omega} (\epsilon + |\nu(y)-\nu(x_0)|)^p \,d\Haus^{d-1}(y) \right)^{1/p} \\
        & \lesssim t^{-(d+1)/2 + 1/q + (d-1)/(2p)} \,\Theta[B_{\sqrt t}(x_0)\cap\partial\Omega]^{1/p} (\epsilon + \bar\nu_p(x_0,\sqrt t)) \,.
    \end{align*}
    This proves the claimed bound also in this case.

    \medskip

    \noindent\emph{Step 4.} Finally, we estimate $\check f$. By construction,
\begin{align*}
	\check f(s, y) = \1_{I(\rho_M)^c} f(s, y) \,.
\end{align*}
Since $\rho_M \geq \frac{r}{2}$ we have
\begin{align*}
	((0, t)\times \partial \Omega) \cap I( \rho_M)^c &= (0, t) \times (\partial\Omega \setminus B_{\rho_M}(x_0)) \\
	& \subseteq (0, t) \times (\partial\Omega \setminus B_{r/2}(x_0))\,.
\end{align*}
We also recall that, by construction, $f$ is supported in $B_{2r}(x_0)$, and so $\supp \check f \subseteq (0, \infty)\times B_{2r}(x_0)\setminus B_{r/2}(x_0)$.

If $y \in \partial \Omega \cap B_{r}(x_0)^c$, then, by \eqref{eq: bound bdry data faraway}, we have
\begin{equation}\label{eq: checkf faraway}
    |\check f(s, y)| \lesssim_d s^{-d/2}(rs^{-1}+r^{-1})e^{-\frac{r^2}{16s}}\,.
\end{equation}

If instead $y \in \partial \Omega \cap (B_{r}(x_0)\setminus B_{r/2}(x_0))$, then, by Lemma~\ref{lem: distance to boundary in good set},
\begin{equation*}
 	\min\{|x-y|^2,|x^*-y|^2\} \geq \frac{1-2\epsilon}{4}r^2 \,,
\end{equation*}
for all $y \in (\partial\Omega \cap B_r(x_0))\setminus B_{r/2}(x_0)$. Therefore, for $\epsilon \leq 1/4$ and $y \in \partial \Omega \cap (B_{r}(x_0)\setminus B_{r/2}(x_0))$ we can estimate
\begin{equation}\label{eq: checkf near}
\begin{aligned}
	|\check f(s, y)| &\lesssim_d s^{-1-\frac{d}{2}}\Bigl(|y-x|e^{- \frac{|x-y|^2}{4s}}+|y-x^*|e^{- \frac{|x^*-y|^2}{4s}}\Bigr)\\
	&\lesssim_d rs^{-1-d/2} e^{-\frac{r^2}{32s}}\,.
\end{aligned}
\end{equation}

Combining \eqref{eq: checkf faraway} and \eqref{eq: checkf near} and the fact that $\sup_{\tau>0} \tau^\alpha e^{-1/\tau}\lesssim_\alpha 1$ for any $\alpha\leq 0$ yields the claimed pointwise bound on $\check f$.    
\end{proof}


\subsection{Completing the proof of Proposition \ref{prop: Estimate close the good boundary}}

We now combine the bounds on the pieces in the decomposition of $f$ proved in the previous subsection with the estimates for the Neumann boundary value problem established in Section \ref{sec: pde theorems}.

\begin{proof}[Proof of Proposition~\ref{prop: Estimate close the good boundary}]
	Fix an $(\epsilon,r)$-good point $x_0\in\partial\Omega$ and let $x\in\Gamma_r(x_0,\epsilon)$ and $t\in(0,r^2/2]\cap(0,\kappa R^2]$. Let $u, f, g$ be defined as after the statement of Proposition \ref{prop: Estimate close the good boundary}. Our first aim is to prove that for any $p>d-1$ we have 
	\begin{equation}\label{eq: final bound on u}
    \begin{aligned}
		t^{d/2} \, |u(t, x)| &\lesssim_{d,\delta, M, p}\|k_\Omega\|_{G^\infty_{\delta,x}((0, 3t/2]\times B_{2r}(x_0))}\Theta[\partial\Omega \cap B_{2r}(x_0)]\\
        &\qquad \times\Bigl((\epsilon +\bar \nu_p(x_0, r))e^{- \frac{d_\Omega(x)^2}{c(1+\delta)t}}+ e^{- \frac{r^2}{c(1+\delta)t}}\Bigr)\,.
    \end{aligned}
	\end{equation}

    We decompose $f$ as in \eqref{eq:fdecomp} and we define $\tilde u_j, u_k, u^\sharp,$ and $\check u$ as the solutions to the boundary value problem~\eqref{eq: Neumann bvp} with the corresponding boundary datum $\tilde f_j, f_k, f^\sharp$, and $\check f$, respectively. Let $u^\flat$ be defined as the solution of 
    \begin{equation*}
        \begin{cases}
            (\partial_s-\Delta) u^\flat (s, y) = g(s, y) & \mbox{for } (s, y)\in (0, \infty)\times \Omega\,,\\
            u^{\flat}(0, y)=0  & \mbox{for } y\in \Omega\,,\\
            \frac{\partial}{\partial \nu_y} u^{\flat}(s, y) = 0 & \mbox{for } (s, y)\in (0, \infty)\times \partial\Omega\,.
        \end{cases}
    \end{equation*}
    By linearity of the heat equation, we have
	\begin{equation*}
		u(t, x) = \sum_{1 \leq j \leq N}\tilde u_j(t, x) + \sum_{1\leq k \leq M} u_k(t, x) + u^\sharp(t, x) + \check u(t, x) + u^\flat(t, x)\,.
	\end{equation*}
	We estimate each of these terms separately using Theorem~\ref{thm: neumann bvp Lp estimates}, Lemma~\ref{lem: a priori bound zero average}, and the bounds on $\tilde f_j, f_k, f^\sharp$, and $\check f$ established in Lemma \ref{lem:fpiecesbound}.

\medskip
{\noindent \it Step 1. Estimating the contribution from $\tilde u_j$.} By construction, each $\tilde f_j$ has integral zero and is supported in $I(\tilde \rho_j)$ with $\tilde \rho_j = 2^j d_\Omega(x)$ and $4\tilde \rho_j^2 \leq t $ for $1\leq j\leq N$. Our aim is to apply Lemma~\ref{lem: a priori bound zero average} with $\rho, R, \rho_0$ in that lemma chosen as $\tilde \rho_j, \sqrt{t/\kappa},$ and $c_{\kappa, M}\sqrt{t}$, respectively, with $c_{\kappa, M}$ to be specified. 

To apply the lemma, we need to verify that
\begin{equation*}
    \sup_{y \in B_{2c_{\kappa,M}\sqrt{t}}(0)\setminus \{y_d>\varphi(y')\}}\bigl(|y|^2+ 4 \varphi(y')(\varphi(y')-y_d)\bigr)<t/\kappa\,.
\end{equation*}
Since $x_0 \in \partial\Omega$ and $\varphi$ is continuous we deduce that $\varphi(0)=0$. Therefore, since $|\nabla \varphi|\leq M$ it holds that
\begin{equation*}
    |\varphi(y')|\leq M|y'| \quad \mbox{for all }y'\in \R^{d-1}\,.
\end{equation*}
Thus, by the Cauchy--Schwarz inequality,
\begin{align*}
    \sup_{y \in B_{2c_{\kappa, M}\sqrt{t}}(0)\setminus \{y_d>\varphi(y')\}}&\bigl(|y|^2+4\varphi(y')(\varphi(y')-y_d)\bigr)\\
    &\leq
        \sup_{y \in B_{2c_{\kappa, M}\sqrt{t}}(0)\setminus \{y_d>\varphi(y')\}} \left( |y|^2+4|\varphi(y')|^2+4|y_d||\varphi(y')| \right)\\
        &\leq
        \sup_{y \in B_{2c_{\kappa, M}\sqrt{t}}(0)\setminus \{y_d>\varphi(y')\}} \left( |y|^2+4M^2|y'|^2+4M|y_d||y'| \right)\\
        &\leq
        \sup_{y \in B_{2c_{\kappa, M}\sqrt{t}}(0)\setminus \{y_d>\varphi(y')\}} \left(|y|^2+4M^2|y'|^2+2M|y|^2\right)\\
    &\leq 
        4c_{\kappa, M}^2(1+4M^2+2M)t\,.
\end{align*}
Therefore, if we choose $c_{\kappa,M}= \frac{1}{4\sqrt{\kappa(1+4M^2+2M)}}< \frac{1}{2\sqrt{\kappa(1+4M^2+2M)}}$ we can apply Lemma~\ref{lem: a priori bound zero average} with the parameters specified above. This yields
\begin{align*}
	|\tilde u_j(t, x)| &\lesssim_{d,\delta,M} \|\tilde f_j\|_{L^1} t^{-d/2}\|k_\Omega\|_{G^\infty_{\delta, x}((t/2, 3t/2)\times B_{\sqrt{t}}(x_0))}e^{-\frac{d_\Omega(x)^2}{12(1+\delta)t}}\min\{1, \tilde\rho_j/(c_{\kappa, M}\sqrt{t})\}^{\alpha} \\
    &\lesssim_{d,\delta,M,\kappa} \|\tilde f_j\|_{L^1} t^{-d/2}\|k_\Omega\|_{G^\infty_{\delta, x}((t/2, 3t/2)\times B_{\sqrt{t}}(x_0))}e^{-\frac{d_\Omega(x)^2}{12(1+\delta)t}}(2^jd_\Omega(x)/\sqrt{t})^{\alpha}\,,
\end{align*}
with $\alpha$ depending only on $d, \delta, M$. Note that the assumption of Lemma \ref{lem: a priori bound zero average}, that is, $t\geq 4\max\{ c_{\kappa,M}^2 t, \tilde\rho_j^2\}$, is satisfied, since $4 c_{\kappa,M}^2\leq 1$ and $4\tilde\rho_j^2\leq t$.

    To bound $\|\tilde f_j\|_{L^1}$, we use Lemma \ref{lem:fpiecesbound} and obtain
    $$
    \|\tilde f_j\|_{L^1} \lesssim_d \Theta[\partial\Omega\cap B_{\tilde\rho_j}(x_0)] \, (\epsilon+\bar{\nu}(x_0,\tilde\rho_j)) 
    \leq \Theta[\partial\Omega\cap B_{\sqrt{t}}(x_0)] \, (\epsilon + \bar \nu_1(x_0,\sqrt t)) \,.
    $$
    
Using $\tilde \rho_N = 2^Nd_\Omega(x)\leq \sqrt{t}$, we can thus estimate
\begin{equation}\label{eq: bound tilde uj}
\begin{aligned}
	\Bigl|\sum_{1\leq j\leq N}\tilde u_j(t, x)\Bigr| 
	&\lesssim_{d,\delta,M,\kappa} \Theta[\partial\Omega \cap B_{\sqrt{t}}(x_0)](\epsilon + \bar \nu(x_0, \sqrt{t}))t^{-d/2}\\
    &\quad \times \|k_\Omega\|_{G^\infty_{\delta, x}((t/2, 3t/2)\times B_{\sqrt{t}}(x_0))}e^{-\frac{d_\Omega(x)^2}{12(1+\delta)t}}\sum_{1\leq j \leq N} (2^jd_\Omega(x)/\sqrt{t})^\alpha\\
    &\lesssim_{d,\delta,M,\kappa} \Theta[\partial\Omega \cap B_{\sqrt{t}}(x_0)](\epsilon + \bar \nu_1(x_0, \sqrt{t}))t^{-d/2}\\
    &\quad \times \|k_\Omega\|_{G^\infty_{\delta, x}((t/2, 3t/2)\times B_{\sqrt{t}}(x_0))}e^{-\frac{d_\Omega(x)^2}{12(1+\delta)t}} (2^Nd_\Omega(x)/\sqrt{t})^\alpha\\
    &\lesssim_{d,\delta,M,\kappa} \Theta[\partial\Omega \cap B_{\sqrt{t}}(x_0)](\epsilon + \bar \nu_1(x_0, \sqrt{t}))t^{-d/2}\\
    &\quad \times \|k_\Omega\|_{G^\infty_{\delta, x}((t/2, 3t/2)\times B_{\sqrt{t}}(x_0))}e^{-\frac{d_\Omega(x)^2}{12(1+\delta)t}}\,. 
\end{aligned}
\end{equation}
This proves the desired bound for the contribution of the $\tilde u_j$'s to $u$.

\medskip

{\noindent \it Step 2. Estimating the contribution from $u_k$.} 
We will apply Theorem~\ref{thm: neumann bvp Lp estimates}. To do so, fix $p\in(d-1,\infty)$ and $q\in(2p/(p-d+1),\infty)$ and note that this choice satisfies the assumptions of the theorem. We also note that $f_k$ is supported in $[0,\infty)\times\omega$ with $\omega = \partial\Omega\cap B_{\rho_k}(x_0)$. It follows that
\begin{align*}
    |u_k(t,x)| & \lesssim_{d,p,q,\delta} \|k_\Omega\|_{G^\infty_{\delta, x}((0, t]\times (B_{\rho_k}(x_0)\cap \partial\Omega))} e^{-\frac{d_\Omega(x)^2}{8(1+\delta)t}} \|f_k \|_{L^{p}(\partial\Omega; L^q((0, t)))} \\
    & \qquad \times \Theta[\partial\Omega\cap B_{\rho_k}(x_0)]^{\frac2{d-1}(\frac d2 - \frac1{q'})} \Haus^{d-1}(\partial\Omega\cap B_{\rho_k}(x_0))^{\frac1{p'} - \frac2{d-1}(\frac d2 - \frac1{q'})} \,.
\end{align*}
Using the bound from Lemma \ref{lem:fpiecesbound} for $\|f_k \|_{L^{p}(\partial\Omega; L^q((0, t)))}$, as well as the bound $\Haus^{d-1}(\partial\Omega\cap B_{\rho_k}(x_0)) \leq \Theta[\partial\Omega\cap B_{\rho_k}(x_0)] \rho_k^{d-1}$ together with the fact that $\frac1{p'} - \frac2{d-1}(\frac d2 - \frac1{q'})> 0$ for $p,q$ in the allowed range, we obtain
\begin{align*}
    |u_k(t,x)| & \lesssim_{d,p,q,\delta} \|k_\Omega\|_{G^\infty_{\delta, x}((0, t]\times (B_{\rho_k}(x_0)\cap \partial\Omega))} e^{-\frac{d_\Omega(x)^2}{8(1+\delta)t}} \, (\epsilon+\bar\nu_p(x_0,\rho_k)) \\
    & \qquad \times \rho_k^{-d} \Theta[\partial\Omega\cap B_{\rho_k}(x_0)] \,. 
\end{align*}

We can thus estimate, using $\rho_k\leq r$,
\begin{equation}\label{eq: bound uk}
\begin{aligned}
	\Bigl|\sum_{1\leq k\leq M}u_k(t, x)\Bigr| 
    &\lesssim_{d, p,\delta} 
    \|k_\Omega\|_{G^\infty_{\delta, x}((0, t]\times B_{r}(x_0)\cap \partial\Omega)}e^{-\frac{d_\Omega(x)^2}{8(1+\delta)t}} (\epsilon +\bar \nu_p(x_0, r))\\
    &\qquad \times \Theta[\partial\Omega\cap B_{r}(x_0)] \, \sum_{1\leq k\leq M}  \rho_k^{-d}\\
    &\lesssim_{d,p,\delta} 
    \|k_\Omega\|_{G^\infty_{\delta, x}((0, t]\times B_{r}(x_0)\cap \partial\Omega)}e^{-\frac{d_\Omega(x)^2}{8(1+\delta)t}} (\epsilon +\bar \nu_p(x_0, r)) \\
    &\qquad \times \Theta[\partial\Omega\cap B_{r}(x_0)] \, t^{-d/2}\sum_{1\leq k\leq M}2^{-dk}\\
    &\lesssim_{d,p,\delta} 
    \|k_\Omega\|_{G^\infty_{\delta, x}((0, t]\times B_{r}(x_0)\cap \partial\Omega)}e^{-\frac{d_\Omega(x)^2}{8(1+\delta)t}} (\epsilon +\bar \nu_p(x_0, r)) \\
    &\qquad \times \Theta[\partial\Omega\cap B_{r}(x_0)] \, t^{-d/2}\,.
\end{aligned}
\end{equation}
This proves the desired bound for the contribution from the $u_k$'s.

\medskip

{\noindent \it Step 3. Estimating the contribution from $u^\sharp$.} 
The function $f^\sharp$ is supported in $(0,\infty)\times B_{\sqrt t}(x_0)$. Applying Theorem \ref{thm: neumann bvp Lp estimates} with $p$ and $q$ as in Step 2, we obtain
\begin{align*}
    |u^\sharp(t, x)| & \lesssim_{d,\delta,p,q} \|k_\Omega\|_{G^\infty_{\delta,x}((0, t]\times B_{\sqrt{t}}(x_0)\cap \partial\Omega)} e^{-\frac{d_\Omega(x)^2}{8(1+\delta)t}} \|f^\sharp\|_{L^p(\partial\Omega;L^q((0,t)))} \\
    & \quad \times \Theta[\partial\Omega\cap B_{\sqrt t}(x_0)]^{\frac2{d-1}(\frac d2 - \frac1{q'})} \Haus^{d-1}(\partial\Omega\cap B_{\sqrt t}(x_0))^{\frac1{p'} - \frac2{d-1}(\frac d2 - \frac1{q'})} \,.
\end{align*}
According to Lemma \ref{lem:fpiecesbound} we have
$$
\|f^\sharp\|_{L^{p}(\partial\Omega; L^q((0, t)))} 
\lesssim_d
            \Theta[\partial\Omega\cap B_{\sqrt t}(x_0)]^{1/p} (\epsilon +\bar \nu_p(x_0, \sqrt t)) \, t^{1/q-(d-1)/(2p')} \,.
$$
Arguing similarly as in Step 2, we arrive at
\begin{equation}\label{eq: bound 1 u sharp}
\begin{aligned}
	|u^\sharp(t, x)| &\lesssim_{d,p,\delta} \|k_\Omega\|_{G^\infty_{\delta,x}((0, t]\times B_{\sqrt{t}}(x_0)\cap \partial\Omega)}(\epsilon+\bar \nu_p(x_0, \sqrt{t}))\Theta[\partial\Omega \cap B_{\sqrt{t}}(x_0)]  t^{-d/2}e^{-\frac{d_\Omega(x)^2}{8(1+\delta)t}}\,.
\end{aligned}
\end{equation}

\medskip
{\noindent \it Step 4. Estimating $\check u$.} We shall apply Theorem~\ref{thm: neumann bvp Lp estimates} with $p=\infty$ and any fixed $q\in (2, \infty)$. Since $\check f$ is supported in $(0,\infty)\times (B_{2r}(x_0)\setminus B_{r/2}(x_0))$, we obtain
\begin{align*}
    |\check u(t, x)| &\lesssim_{d,\delta,q} \|k_\Omega\|_{G^\infty_{\delta, x}((0, t]\times \partial\Omega \cap B_{2r}(x_0))}e^{-\frac{\dist(x, \partial\Omega \setminus B_{r/2}(x_0))^2}{8(1+\delta)t}} \|\check f\|_{L^{\infty}(\partial\Omega\cap B_{2r}(x_0); L^q((0, t))}\\
    &\qquad \times\Theta[\partial\Omega \cap B_{2r}(x_0)] \, t^{1/q'-1/2} \,.    
\end{align*}
By the pointwise bound in Lemma \ref{lem:fpiecesbound}, we have
$$
\|\check f\|_{L^{\infty}(\partial\Omega\cap B_{2r}(x_0); L^q((0, t))}
\lesssim_{d,q} r^{-d-1} t^{1/q} \,.
$$
Note that for any $y \in \partial\Omega \setminus B_{r}(x_0)$ we have $|x-y| \geq r/2$ since $x\in B_{r/2}(x_0)$. From Lemma \ref{lem: distance to boundary in good set} it follows that for any $y \in \partial\Omega \cap B_{r}(x_0)\setminus B_{r/2}(x_0)$ we have
\begin{align*}
    |x-y| \gtrsim |y-x_0| \geq r/2\,.
\end{align*}
Consequently, it holds that $\dist(x, \partial\Omega \setminus B_{r/2}(x_0))\gtrsim r$. When combined with the assumption $t\leq r^2/2$, we arrive at the bound
\begin{equation}\label{eq: bound check u}
\begin{aligned}
	|\check u(t, x)|
    &\lesssim_{d,\delta,q} \|k_\Omega\|_{G^\infty_{\delta, x}((0, t]\times \partial\Omega \cap B_{2r}(x_0))}e^{-\frac{r^2}{c(1+\delta)t}}\Theta[\partial\Omega \cap B_{2r}(x_0)] \, t^{-d/2}\,.
\end{aligned}
\end{equation}

\medskip

{\noindent \it Step 5.} (estimating $u^\flat$) By Duhamel's principle,
\begin{equation*}
    u^\flat(t, x) = \int_0^t \int_\Omega k_\Omega(t-s, x, y)g(s, y)\,dy\,ds\,.
\end{equation*}
From the definition \eqref{eq:defg} and the choice of the cut-off function $\chi$ it follows that
\begin{equation}\label{eq: bound interior data}
    |g(s, y)|\lesssim (r^{-2}s^{-d/2}+s^{-1-d/2})e^{-\frac{r^2}{16s}} \1_{B_{2r}(x_0)\setminus B_r(x_0)}(y) \quad \mbox{for }(s, y) \in (0, \infty)\times \R^d\,.
\end{equation}
By \eqref{eq: bound interior data} and the assumed bound on $k_\Omega$, we find that, since $x \in \Omega \cap B_{r/2}(x_0)$ and $t\in (0, r^2/2]$,
\begin{equation}\label{eq: bound u flat 1}
\begin{aligned}
    |u^\flat(t, x)| &\lesssim_d \|k_\Omega\|_{G^\infty_{\delta, x}((0, t]\times B_{2r}(x_0))}\\
    &\qquad \times \int_0^t \int_{\Omega\cap B_{2r}(x_0)\setminus B_r(x_0)} \frac{s r^{-2}+1}{s^{1+d/2}(t-s)^{d/2}}e^{-\frac{|x-y|^2}{4(1+\delta)(t-s)}-\frac{r^2}{16s}}\,dy\,ds\\
    &\leq 
    r^d\|k_\Omega\|_{G^\infty_{\delta, x}((0, t]\times B_{2r}(x_0))}\\
    &\qquad \times\int_0^t  \frac{1}{s^{1+d/2}(t-s)^{d/2}}e^{-\frac{r^2}{8(1+\delta)(t-s)}-\frac{r^2}{16s}}\,ds\\
    &\leq 
    r^de^{-\frac{r^2}{32(1+\delta)t}}\|k_\Omega\|_{G^\infty_{\delta, x}((0, t]\times B_{2r}(x_0))}\\
    &\qquad \times\int_0^t  \frac{1}{s^{1+d/2}(t-s)^{d/2}}e^{-\frac{r^2}{32(1+\delta)}(\frac{1}{t-s}+\frac{1}{s})}\,ds\,.
\end{aligned}
\end{equation}
The remaining integral we estimate as follows
\begin{equation*}
\begin{aligned}
    \int_0^t  \frac{1}{s^{1+d/2}(t-s)^{d/2}}e^{-\frac{r^2}{32(1+\delta)}(\frac{1}{t-s}+\frac{1}{s})}\,ds&=
    t^{-d}\int_0^1  \frac{1}{\tau^{1+d/2}(1-\tau)^{d/2}}e^{-\frac{r^2}{32(1+\delta)t}(\frac{1}{1-\tau}+\frac{1}{\tau})}\,d\tau\\
    &\lesssim_d
    t^{-d}\int_0^{1/2}  \frac{1}{\tau^{1+d/2}}e^{-\frac{r^2}{32(1+\delta)t\tau}}\,d\tau\\
    &\lesssim_{d,\delta}
    t^{-d/2}r^{-d}\,,
\end{aligned}
\end{equation*}
where the last step used $\sup_{\eta>0}\eta^{1+d/2} e^{-1/\eta} \lesssim_d 1$ and $t\leq r^2/2$. Inserting the last estimate into \eqref{eq: bound u flat 1} yields
\begin{equation}\label{eq: bound u flat 2}
\begin{aligned}
    |u^\flat(t, x)| \lesssim_d 
    t^{-d/2}e^{-\frac{r^2}{32(1+\delta)t}}\|k_\Omega\|_{G^\infty_{\delta, x}((0, t]\times B_{2r}(x_0))}\,.
\end{aligned}
\end{equation}

\medskip

Putting together~\eqref{eq: bound tilde uj}, \eqref{eq: bound uk}, \eqref{eq: bound 1 u sharp}, \eqref{eq: bound check u}, and~\eqref{eq: bound u flat 2}, the fact that $\Theta[\partial\Omega \cap B_{2r}(x_0)]\gtrsim_d 1$, and using that by H\"older's inequality $\bar\nu_1(x_0, s)\leq \bar\nu_p(x_0, s)$ for any $s>0, p\geq 1$, establishes~\eqref{eq: final bound on u}. 

\medskip

Before completing the proof of the proposition, let us note that
\begin{equation}
    \label{eq:distancebdrybound}
    0\leq  e^{- \frac{d_\Omega(x)^2}{t}}- e^{- \frac{|x-x_0|^2}{t}}\leq  4\epsilon e^{- \frac{d_\Omega(x)^2}{2t}}\,.
\end{equation}
Indeed, when $\epsilon d_\Omega(x)^2\leq t$, we use Lemma~\ref{lem: distance to boundary in good set}, a Taylor expansion, and the fact that $\frac{1}{1-2\epsilon} \leq 1+4\epsilon$ for $\epsilon \in (0, \frac{1}{4}]$ to get
\begin{equation*}
	e^{- \frac{d_\Omega(x)^2}{t}} \geq e^{-\frac{|x-x_0|^2}{t}} \geq e^{- \frac{d_\Omega(x)^2}{t}}e^{-\frac{4\epsilon d_\Omega(x)^2}{t}} \geq e^{- \frac{d_\Omega(x)^2}{t}}\Bigl(1- \frac{4\epsilon d_\Omega(x)^2}{t}\Bigr) \geq e^{- \frac{d_\Omega(x)^2}{t}}- 4\epsilon e^{-\frac{d_\Omega(x)^2}{2t}} \,.
\end{equation*}
We also used $0\leq se^{-s/2}\leq 1$ for all $s\geq 0$. If instead $\epsilon d_\Omega(x)^2>t$, then again by Lemma~\ref{lem: distance to boundary in good set} and since $e^{- \frac{1}{2\epsilon}} \leq \epsilon$ for all $\epsilon >0$,
\begin{equation*}
	e^{-\frac{|x-x_0|^2}{t}} \leq e^{- \frac{d_\Omega(x)^2}{t}} \leq e^{-\frac{1}{2\epsilon}}e^{- \frac{d_\Omega(x)^2}{2t}} \leq \epsilon e^{- \frac{d_\Omega(x)^2}{2t}} \,.
\end{equation*}
This proves \eqref{eq:distancebdrybound}.

\medskip

To deduce the bound claimed in Proposition~\ref{prop: Estimate close the good boundary}, we use
\begin{align*}
	k_{\R^d}(t, x, x)+k_{\R^d}(t, x, x^*) &= \frac{1}{(4\pi t)^{d/2}}\Bigl(1+e^{-\frac{|x-x^*|^2}{4t}}\Bigr)
	=
	\frac{1}{(4\pi t)^{d/2}}\Bigl(1+e^{-\frac{|x-x_0|^2}{t}}\Bigr)\,.
\end{align*}
Combining this with \eqref{eq: final bound on u}, \eqref{eq:distancebdrybound} and the fact that $$\Theta[\partial\Omega \cap B_{2r}(x_0)] \gtrsim_d 1\,, \quad \mbox{and} \quad \|k_\Omega\|_{G^\infty_{\delta,x}((0, 3t/2]\times B_{2r}(x_0))}\gtrsim_d 1$$ (the latter follows from that $x \in B_{2r}(x_0)$ and Proposition~\ref{prop: improved bound bulk} as $t\to 0$), this yields
\begin{align*}
	\Bigl|&(4\pi t)^{d/2}k_\Omega(t, x, x) - 1 - e^{-\frac{d_\Omega(x)^2}{t}}\Bigr|\\
    &\lesssim_{d,\delta,M, p} \|k_\Omega\|_{G^\infty_{\delta, x}((0, 3t/2]\times B_{2r}(x_0))}\Theta[\partial\Omega \cap B_{2r}(x_0)]\Bigl((\epsilon +\bar \nu_p(x_0, r))e^{- \frac{d_\Omega(x)^2}{c(1+\delta)t}}+ e^{- \frac{r^2}{c(1+\delta)t}}\Bigr)
\end{align*}
for some universal $c>0$. This completes the proof of Proposition~\ref{prop: Estimate close the good boundary}.
\end{proof}


\section{Constructing a good subset of the boundary}\label{sec:constructiongood}

In the previous section we have proved pointwise bounds for the heat kernel close to good points on the boundary. In the present section we quantify that `most' points of the boundary are good points and that, consequently, the heat kernel bounds from the previous section hold in almost all of the boundary region $d_\Omega\lesssim \sqrt{t}$. 

For bounded sets with Lipschitz-regular boundary Brown argues in \cite{Brown93} that `most' points of the boundary are good points. However, for our proof of Theorem~\ref{thm: main thm convex}, when $\Omega$ is convex, we need to obtain quantitative estimates with more explicit dependence on the geometry. This is the topic of Subsection \ref{sec:goodconvex}. Before that, we review Brown's proof in the Lipschitz case and provide some additional details in Subsection \ref{sec:goodlipschitz}.


\subsection{Good sets that are large}\label{sec:goodlipschitz}

We recall that the notion of an $(\epsilon,r)$-good point $x\in\partial\Omega$ was introduced in Definition \ref{def: good point} and that the corresponding set $\Gamma_r(x,\epsilon)$ appeared in Definition \ref{def: gammarxeps}. 

\begin{definition}
    A subset $G \subset \partial\Omega$ is said to be $(\epsilon, r)$-good if each point $x\in G$ is an $(\epsilon, r)$-good point.
\end{definition}

\begin{definition}
    If $G\subset \partial\Omega$ is an $(\epsilon, r)$-good set we define the associated sawtooth region
    \begin{equation*}
		\mathcal{G} := \bigcup_{x\in G} \Gamma_r(x, \epsilon)\,.
    \end{equation*}
\end{definition}
Note that Proposition \ref{prop: Estimate close the good boundary} provides estimates for $k_\Omega(t, x, x)$ precisely for $x \in \mathcal{G}$.

For our purposes we shall need that there exists an $(\epsilon, r)$-good set $G$ with two properties:
\begin{enumerate}
    \item[(i)] the corresponding sawtooth region $\mathcal G$ covers a large portion of the boundary region $\{x\in\Omega:\ d_\Omega(x)<s\}$ for $s$ sufficiently small, and
    \item[(ii)] we can control the local oscillation of the normal $\bar \nu_p(x, s)$ for $x\in G$ so that the error in Proposition \ref{prop: Estimate close the good boundary} is small.
\end{enumerate}
That this is possible, is guaranteed by the following result.

\begin{lemma}\label{lem: existence of big good set}
    Let $\Omega \subset \R^d$, $d\geq 2$, be an open and bounded Lipschitz set. There exists a collection of $(\epsilon, r)$-good sets $\{G_{\epsilon, r}\}_{\epsilon, r}$ such that, for every $\epsilon\in(0,1/4]$ and $p\in [1, \infty)$, we have
    \begin{equation}\label{eq: assumption variation of normal in good set}
	   \lim_{r\to 0^{\limplus}}\sup_{x\in G_{\epsilon, r}}\bar\nu_p(x, r) =0\,,
    \end{equation}
    and the associated sawtooth region $\mathcal G_{\epsilon,r}$ satisfies, for every $\epsilon\in(0,1/4]$,
    \begin{equation}\label{eq: assumption good set is big}
        \lim_{r\to 0^\limplus}\limsup_{s\to 0^\limplus}\frac{|\{x\in \Omega: d_\Omega(x)<s\}\setminus \mathcal{G}_{\epsilon, r}|}{|\{x\in \Omega: d_\Omega(x)<s\}|}=0 \,.
    \end{equation}
\end{lemma}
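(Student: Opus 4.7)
The construction will rest on two standard regularity facts for Lipschitz boundaries. First, by Rademacher's theorem, at $\Haus^{d-1}$-a.e.\ $x_0\in\partial\Omega$ the outward unit normal $\nu(x_0)$ exists and $\partial\Omega$ is locally the graph of a function differentiable at $x_0$, so that for any $\epsilon'>0$ there is an $r_0(x_0)>0$ with $x_0$ being $(\epsilon',r)$-good for every $r\leq r_0(x_0)$; writing $\widetilde G_{\epsilon',r}$ for the set of $(\epsilon',r)$-good points, this yields $\Haus^{d-1}(\partial\Omega\setminus\widetilde G_{\epsilon',r})\to 0$ as $r\to 0^+$. Second, since $\partial\Omega$ is $(d-1)$-Ahlfors regular, the Lebesgue differentiation theorem applied to $\nu\in L^\infty(\partial\Omega,\Haus^{d-1})$ gives $\bar\nu_1(x_0,r)\to 0$ as $r\to 0^+$ for $\Haus^{d-1}$-a.e.\ $x_0$. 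A standard diagonal argument then produces a function $\alpha(r)\to 0^+$ such that the set $H_r:=\{x_0\in\partial\Omega:\bar\nu_1(x_0,r)\leq\alpha(r)\}$ satisfies $\Haus^{d-1}(\partial\Omega\setminus H_r)\to 0$. Fixing a universal constant $C_0\geq 4$ to be calibrated in the geometric step, we then define
\begin{equation*}
G_{\epsilon,r}:=\widetilde G_{\epsilon/C_0,r}\cap H_r,
\end{equation*}
which is $(\epsilon,r)$-good (since $\epsilon/C_0\leq\epsilon$) and satisfies $\Haus^{d-1}(\partial\Omega\setminus G_{\epsilon,r})\to 0$ as $r\to 0^+$.

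Property~\eqref{eq: assumption variation of normal in good set} is then immediate from $|\nu|\equiv 1$: the pointwise bound $|\nu(x_0)-\nu(y)|\leq 2$ gives $\bar\nu_p(\cdot,r)\leq 2^{(p-1)/p}\bar\nu_1(\cdot,r)^{1/p}$, so that $\sup_{G_{\epsilon,r}}\bar\nu_p(\cdot,r)\leq 2\alpha(r)^{1/p}\to 0$. For property~\eqref{eq: assumption good set is big}, the central step will be a \emph{geometric claim}: if $x_0\in G_{\epsilon,r}$ and $x\in\Omega$ has $x_0$ as a nearest boundary point with $|x-x_0|<r/2$, then $x\in\Gamma_r(x_0,\epsilon)$. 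To prove it we would work in a local frame where $x_0=0$, $\nu(x_0)=e_d$, and $\partial\Omega=\{y_d=\varphi(y')\}$ near the origin with $\Omega$ the subgraph; the $(\epsilon/C_0,r)$-goodness of $x_0$ translates to $|\varphi(y')|\leq c_0(C_0)\epsilon|y'|$ for $|y'|\leq r$, with $c_0(C_0)\to 0$ as $C_0\to\infty$. Writing $x=(x',x_d)$, the inequality $|x-(x',\varphi(x'))|^2\geq|x|^2$ rearranges to $|x'|^2\leq\varphi(x')^2-2x_d\varphi(x')$, and combining with the flatness bound on $\varphi$ would yield both $x_d<0$ and $|x'|\leq c_1(C_0)\epsilon|x_d|$. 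Consequently $-(x-x_0)\cdot\nu(x_0)=-x_d\geq|x|/\sqrt{1+c_1^2\epsilon^2}\geq\sqrt{1-\epsilon^2}|x-x_0|$ once $C_0$ is taken large enough that $c_1(C_0)\leq 1$, which proves the claim.

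Granting the claim, for any $s\in(0,r/2]$ each $x\in\{d_\Omega<s\}\setminus\mathcal G_{\epsilon,r}$ must have all its nearest boundary points in $\partial\Omega\setminus G_{\epsilon,r}$, giving the inclusion
\begin{equation*}
\{d_\Omega<s\}\setminus\mathcal G_{\epsilon,r}\subset\{x\in\Omega:\dist(x,\partial\Omega\setminus G_{\epsilon,r})<s\}.
\end{equation*}
Since $\partial\Omega$ is Lipschitz, a finite-atlas Fubini argument in local graph charts gives the uniform one-sided tube bound $|\{x\in\Omega:\dist(x,A)<s\}|\lesssim_\Omega s\Haus^{d-1}(A)$ for every measurable $A\subset\partial\Omega$ and all sufficiently small $s$. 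Combined with $|\{d_\Omega<s\}|=s\Haus^{d-1}(\partial\Omega)(1+o(1))$ as $s\to 0^+$ (the Minkowski content identity recalled at~\eqref{eq:thetadiscussion}), this would give
\begin{equation*}
\limsup_{s\to 0^+}\frac{|\{d_\Omega<s\}\setminus\mathcal G_{\epsilon,r}|}{|\{d_\Omega<s\}|}\lesssim_\Omega\frac{\Haus^{d-1}(\partial\Omega\setminus G_{\epsilon,r})}{\Haus^{d-1}(\partial\Omega)},
\end{equation*}
and sending $r\to 0^+$ would complete the proof of~\eqref{eq: assumption good set is big}.

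The hard part will be the quantitative geometric claim. The $(\epsilon/C_0)$-goodness is a condition on $\partial\Omega$ alone, yet we must leverage it to force every interior point $x$ whose nearest boundary point lies in $G_{\epsilon,r}$ into the \emph{strictly} narrower cone $\Gamma_r(x_0,\epsilon)$, with exactly the constant $\sqrt{1-\epsilon^2}$. Tracking the constants through the nearest-point identity so that the $(\epsilon/C_0)$-flatness hypothesis propagates to this sharp cone inclusion is precisely what forces the shrinkage of the flatness parameter by the factor $1/C_0$. A secondary technical step is the uniform one-sided tube bound, which is routine via a finite Lipschitz atlas but needs to be checked because $A\subset\partial\Omega$ can be an arbitrary measurable set with no further structure.
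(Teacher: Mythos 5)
Your argument for property \eqref{eq: assumption variation of normal in good set} and your construction of $G_{\epsilon,r}$ via Rademacher, Lebesgue differentiation, and Egorov match the paper's Step 2 and Step 3 in substance. Your geometric claim (that a good nearest-boundary point forces an interior point into the sawtooth cone) also looks calibratable, with the caveat that you must ensure the local graph chart around $x_0$ persists at scale $r$ — this needs the uniform Lipschitz scale of $\partial\Omega$, since $(\epsilon',r)$-goodness alone does not guarantee that $\partial\Omega\cap B_r(x_0)$ is a single graph.

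However, the route you take to \eqref{eq: assumption good set is big} is genuinely different from the paper's and, as written, has a fatal gap. You pass from the inclusion $\{d_\Omega<s\}\setminus\mathcal G_{\epsilon,r}\subset\{x\in\Omega:\dist(x,\partial\Omega\setminus G_{\epsilon,r})<s\}$ to the claimed \emph{uniform one-sided tube bound}
\[
\bigl|\{x\in\Omega:\dist(x,A)<s\}\bigr|\lesssim_\Omega s\,\Haus^{d-1}(A)
\]
for arbitrary measurable $A\subset\partial\Omega$. This is false. Since $\dist(\cdot,A)=\dist(\cdot,\overline A)$, the left side is governed by the (upper) Minkowski content of $\overline A$, not by $\Haus^{d-1}(A)$, and these can disagree arbitrarily. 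Concretely, take $d=2$, $\Omega=(0,1)^2$, and let $A\subset(0,1)\times\{0\}$ be the complement of a fat Cantor set of measure $1-\delta$; then $A$ is dense in $(0,1)\times\{0\}$, so the tube around $A$ has measure $\gtrsim s$ uniformly in $\delta$, yet $\Haus^1(A)=\delta$ can be as small as desired. The construction of $G_{\epsilon,r}$ by Egorov gives no control whatsoever on the topology of the bad set $\partial\Omega\setminus G_{\epsilon,r}$ — only on its measure — so it may well be dense, and your upper bound on the bad tube degenerates to $\lesssim s\,\Haus^{d-1}(\partial\Omega)$, which does not vanish as $r\to 0^+$.

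The paper sidesteps this by never estimating the complement directly. Instead it \emph{lower-bounds} $|\{d_\Omega<s\}\cap\mathcal G|$ via Lusin's theorem (to extract a compact set $\Sigma\subset G$ on which $\nu$ is continuous), a finite covering by sets on which the normal is nearly constant with disjoint compact pieces $F_1,\dots,F_N$, and an explicit injective map $(x,\rho)\mapsto x-\rho\nu_i$ of $F_i\times(0,s)$ into $\Omega\cap\mathcal G$ whose Jacobian is controlled by the area formula; this yields $|\{d_\Omega<s\}\cap\mathcal G|\gtrsim s\,\Haus^{d-1}(G)(1-o(1))$ and hence \eqref{eq: sawtooth region has large measure}. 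The key difference is that a measure lower bound only requires controlling $G$ from within, where Lusin and compactness give structure, whereas your argument requires controlling the Minkowski geometry of the structureless complement. To repair your approach you would need to construct $G_{\epsilon,r}$ so that the bad set $\partial\Omega\setminus G_{\epsilon,r}$ has small \emph{Minkowski content}, which Egorov alone does not give.
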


As $\Omega$ is bounded and Lipschitz, we have $|\{x\in \Omega: d_\Omega(x)<s\}| = s\Haus^{d-1}(\partial\Omega)(1+o(1))$ as $s\to 0^\limplus$ (see \eqref{eq:thetadiscussion}), thus \eqref{eq: assumption good set is big} is equivalent to the corresponding statement with the denominator replaced by $s\Haus^{d-1}(\partial\Omega)$. 

\begin{proof}
    \emph{Step 1.}
    We begin by proving that if $G\subset \partial\Omega$ is $(\epsilon, r)$-good, then
    \begin{equation}\label{eq: sawtooth region has large measure}
        \limsup_{s\to 0^\limplus}\frac{|\{x\in \Omega: d_\Omega(x)<s\}\setminus \mathcal{G}|}{|\{x\in \Omega: d_\Omega(x)<s\}|}\leq \frac{\Haus^{d-1}(\partial\Omega \setminus G)}{\Haus^{d-1}(\partial\Omega)}\,.
    \end{equation}
    The argument follows closely the proof of \cite[Lemma 3.5]{FrankLarson_Crelle20} but for a one-sided neighborhood of $\partial\Omega$ instead of a two-sided (see also \cite[Proposition 1.3]{Brown93} and \cite[Lemma 2.7]{BanuelosEtAl_09}).
    Write
  \begin{equation}\label{eq:GoodSetIsBig_proof1}
  \begin{aligned}
    |\{x\in \Omega: d_\Omega(x)<s\}\setminus \mathcal{G}| 
    &= |\{x\in \Omega: d_\Omega(x)<s\}|\\
    &\quad-|\{x\in \Omega: d_\Omega(x)<s\}\cap \mathcal{G}|\, .
  \end{aligned}
  \end{equation}

  To analyze the second term, for some $\delta \in (0, 1), \epsilon'>0$ to be determined later, we choose $\nu_1, \ldots, \nu_N\in \S^{d-1}$ and disjoint closed sets $F_1, \ldots, F_N \subset G$ such that $\Haus^{d-1}(\bigcup_{i=1}^N F_i)\geq (1-\delta)\Haus^{d-1}(G)$ and $|\nu(x)-\nu_i|\leq \eps'$ for all $x\in F_i$. Such sets can be constructed as follows. By Lusin's theorem \cite[Theorem 7.10]{FollandRealAnalysis}, there exists a closed set $\Sigma\subset G$ so that $\Haus^{d-1}(\Sigma)\geq (1-\delta/2)\Haus^{d-1}(G)$ and $\nu|_{\Sigma}$ is continuous. The sets $F(\nu) := \{x \in \Sigma: |\nu(x)-\nu|< \epsilon'\}$ form an open cover of $\Sigma$ indexed by $\nu \in \S^{d-1}$. By compactness of $\Sigma$, there exists $\nu_1, \ldots, \nu_N \subset \S^{d-1}$ so that $\{F(\nu_i)\}_{i=1}^N$ is an open cover of $\Sigma$. Define $F_1:=\overline{F(\nu_1)}$. We now proceed inductively to construct disjoint closed and disjoint sets $F_j$ for $j =2, \ldots, N$. Given $F_1, \ldots, F_{j-1}$ we construct $F_j$ as follows. Since $\Haus^{d-1}|_\Sigma$ is a regular measure, we can find a compact set $F_j \subset F(\nu_j) \setminus (\cup_{i<j}F_i)$ such that $\Haus^{d-1}(F_j)\geq (1-\delta/2)\Haus^{d-1}(F(\nu_j) \setminus (\cup_{i<j}F_i))$. Note that it follows from the construction that $\dist(F_i, F_j)>0$ for each $i<j$ and
  \begin{align*}
    \Haus^{d-1}(\cup_{i=1}^N F_i)
    &=
    \sum_{i=1}^N\Haus^{d-1}(F_i)\\
    &\geq 
    (1-\delta/2)\Haus^{d-1}(\Sigma)\\
    &\geq 
    (1-\delta/2)^2\Haus^{d-1}(G)\,.
  \end{align*}
  Since $(1-\delta/2)^2= 1-\delta+\delta^2/4>1-\delta$ the collection $\{F_i\}_{i=1}^N$ satisfies the desired properties.

  Assume that $s_0 \leq r/2$ and $s_0\leq \dist(F_i, F_j)/2$ for all $i \neq j$. 
  We define a map $(\cup_{i=1}^N F_i )\times (0, s_0)\to \R^d$ by $(x, s)\mapsto x-s \nu_i$ if $x \in F_i$. We claim that if $\epsilon'$ is chosen small enough then this map is injective and its range is a subset of $\mathcal{G}$. To observe that the range is a subset of $\mathcal{G}$, we argue that $x-s\nu_i \in \Gamma_{\epsilon, r}(x)$. Indeed, if $x \in F_i$ then, since $|\nu(x)-\nu_i|<\epsilon'$, we find that
  \begin{align*}
    ((x-\nu_i s)-x)\cdot \nu(x) &= -s\nu_i \cdot \nu(x) \\
    &= -s-s(\nu_i-\nu(x)) \cdot \nu(x)\\
    &\leq -s+s\epsilon'\\
    &= -(1-\epsilon')|(x-s\nu_i)-x|\,.
  \end{align*}
  If $\epsilon'\leq 1-\sqrt{1-\epsilon^2}$ this proves that $x-\nu_is \in \Gamma_{\epsilon, r}(x)$ and thus the image of our map is contained in $\mathcal{G}$. It remains to show that the map is injective. Since $|\nu_i|=1$ and $s_0< \dist(F_i, F_j)/2$, it is clear that the images of $F_i \times (0, s_0)$ for different $i$ are disjoint. Aiming for contradiction, suppose that there exists $x, y \in F_i, s,s'\in (0, s_0)$ such that $x-s\nu_i = y-s'\nu_i$. Since $s_0<r/2$ we find that $|x-y|=|s-s'|<s_0<r/2$. Thus, since $x, y$ are $(\epsilon, r)$-good, we deduce
  \begin{align*}
    |s-s'|& = |(s-s')\nu_i\cdot \nu_i|\\
    &= |(x-y)\cdot \nu_i|\\
    &\leq
    |(x-y)\cdot \nu(x)|+|(x-y)\cdot(\nu_i-\nu(x))|\\
    &\leq \epsilon |x-y|+\epsilon' |x-y|\\
    &= (\epsilon+\epsilon')|s-s'|\,.
  \end{align*}
  This is a contradiction provided that $\epsilon +\epsilon'<1$. Thus we conclude that the constructed mapping is injective if $\epsilon' < \min\{1-\sqrt{1-\epsilon^2}, 1-\epsilon\}$.

  The area formula~\cite[Theorem~3.2.3]{Federer_book} now implies that, for $0<s< s_0$, 
  \begin{align*}
    |\{x\in \Omega: d_\Omega(x)<s\}\cap \mathcal{G}| &\geq \sum_{i=1}^N |\{x-\rho\nu_i: x\in F_i, 0<\rho <s\}|\\
    &\geq 
   (1-(\eps')^2/2)\sum_{i=1}^N \int_{\{x-\rho\nu_i: x\in F_i, 0<\rho<s\}} \frac{dy}{\nu_i\cdot\nu(x)} \\
    & = s(1-(\eps')^2/2)\sum_{i=1}^N \Haus^{d-1}(F_i)\\
    &\geq
    s(1-(\eps')^2/2)(1-\delta)\Haus^{d-1}(G)\, .
  \end{align*}

    Choosing $\epsilon' =\sqrt{2\delta}$ for $\delta$ sufficiently small depending on $\epsilon$, inserting the obtained bound into \eqref{eq:GoodSetIsBig_proof1}, and dividing by $|\{x\in \Omega: d_\Omega(x)<s\}|$, we conclude that for $s$ sufficiently small (depending on $\epsilon, \delta, r$)
  \begin{equation*}
    \frac{|\{x\in \R^d: d_\Omega(x)<s\}\setminus \mathcal{G}|}{|\{x\in \R^d: d_\Omega(x)<s\}|}
    \leq
    1-(1-\delta)^2\frac{s\Haus^{d-1}(\partial\Omega)}{|\{x\in \R^d: d_\Omega(x)<s\}|}\frac{\Haus^{d-1}(G)}{\Haus^{d-1}(\partial\Omega)}\,.
  \end{equation*} 
  As we have already remarked in \eqref{eq:thetadiscussion}, since $\Omega$ is bounded and Lipschitz, it holds that
  \begin{equation*}
    \lim_{s\to 0^\limplus}\frac{s\Haus^{d-1}(\partial\Omega)}{|\{x\in \R^d: d_\Omega(x)<s\}|} = 1
  \end{equation*}
  and thus
  \begin{align*}
    \limsup_{s\to 0^\limplus}\frac{|\{x\in \R^d: d_\Omega(x)<s\}\setminus \mathcal{G}|}{|\{x\in \R^d: d_\Omega(x)<s\}|}
    &\leq
    1-(1-\delta)^2\frac{\Haus^{d-1}(G)}{\Haus^{d-1}(\partial\Omega)}\,.
  \end{align*}

  The bound in \eqref{eq: sawtooth region has large measure} follows by letting $\delta \to 0$ and using $\Haus^{d-1}(G) = \Haus^{d-1}(\partial\Omega) - \Haus^{d-1}(\partial\Omega \setminus G)$.

    \medskip

    \noindent\emph{Step 2.}
    Fix $\epsilon\in(0,\frac12]$. We show that for every $r>0$ there is an $(\epsilon,r)$-good set $G_{\epsilon,r}\subset\partial\Omega$ such that
    \begin{equation*}
        \lim_{r\to 0^\limplus} \frac{\Haus^{d-1}(\partial\Omega \setminus G_{\epsilon, r})}{\Haus^{d-1}(\partial\Omega)} =0
    \end{equation*}
    and
    $$
        \lim_{r\to 0^\limplus} \sup_{x\in G_{\epsilon,r}} \bar\nu_1(x,r) = 0 \,.
    $$
    In view of \eqref{eq: sawtooth region has large measure}, the first property implies \eqref{eq: assumption good set is big}, while the second property implies \eqref{eq: assumption variation of normal in good set} for $p=1$. The extension to $p>1$ will be achieved in the next step.

    To construct $G_{\epsilon,r}$, first note that by Rademacher's theorem, for $\Haus^{d-1}$-almost every $x\in \partial\Omega$ there exists a well-defined outwards pointing unit normal. Define $\nu \colon \partial\Omega \to \S^{d-1}$ to be this normal whenever it exists.

Fix $\{r_k\}_{k\geq 1}\subset (0, \infty)$ decreasing with $\lim_{k\to \infty}r_k=0$. Then, by Rademacher's theorem and the Lebesgue differentiation theorem, respectively, the functions $R_k \colon \partial\Omega \to \R$, $\nu_k\colon \partial\Omega \to \R$, defined by
\begin{align*}
    R_k(x)& := \sup_{y\in \partial\Omega \cap B_{r_k}(x)} \frac{|\nu(x)\cdot(y-x)|}{|x-y|}\,,\\
    \nu_k(x) &:= \sup_{s<r_k} \frac{1}{\Haus^{d-1}(\partial\Omega \cap B_{s}(x))}\int_{\partial\Omega \cap B_s(x)}|\nu(x)-\nu(y)|\,d\Haus^{d-1}(y) \,,
\end{align*}
converge to $0$ $\Haus^{d-1}$-almost everywhere on $\partial\Omega$ as $k \to \infty$. By Egorov's theorem, for any $\delta>0$ there exists a set $G_\delta \subset \partial\Omega$ so that $\{R_k\}_k$ and $\{\nu_k\}_k$ converge uniformly to $0$ on $G_\delta$ and $\Haus^{d-1}(\partial\Omega\setminus G_\delta)\leq \delta \Haus^{d-1}(\partial\Omega)$. 

Fix a decreasing sequence $\{\delta_j\}_{j\geq 1}\subset (0, 1)$ with $\delta_j \to 0$. It follows that, for any fixed $\epsilon\in(0,\frac12]$ and any $j$ there is $k_{j,\epsilon}$ such that for all $k \geq k_{j,\epsilon}$ and all $x \in G_{\delta_j}$ we have
\begin{equation*}
    |R_k(x)|< \epsilon \quad \mbox{and}\quad |\nu_k(x)|\leq 1/j \,.
\end{equation*}
Without loss of generality we may assume that $\lim_{j\to \infty}k_{j,\epsilon}=\infty$.
Note that the sequence $\{r_{k_{j,\epsilon}}\}_{j\geq 1}$ tends to zero as $j \to \infty$ and that the bound $|R_k(x)|< \epsilon$ ensures that $G_{\delta_j}$ is an $(\epsilon, r_{k_{j,\epsilon}})$-good set.

Given $\epsilon \in (0, \frac12]$ we for $r\leq\sup_{j\geq 1} r_{k_{j,\epsilon}}$ define $j(r, \epsilon) := \sup\{j:r_{k_{j,\epsilon}}\geq r\}$ and set $G_{\epsilon, r} := G_{\delta_{j(r, \epsilon)}}$. For $r> \sup_{j\geq 1}r_{k_{j,\epsilon}}$ we instead let $G_{\epsilon, r}=\emptyset$. By construction, $G_{\epsilon, r}$ is an $(\epsilon, r)$-good set with
\begin{align*}
    \sup_{x\in G_{\epsilon,r}}\bar \nu_1(x, r)\leq \frac1{j(r, \epsilon)} \qquad \mbox{and}\qquad \frac{\Haus^{d-1}(\partial\Omega \setminus G_{\epsilon, r})}{\Haus^{d-1}(\partial\Omega)}\leq \delta_{j(r, \epsilon)}\,.
\end{align*}
Since we have $\lim_{j\to \infty}\delta_j=0$ and, for any fixed $\epsilon>0$, $j(r, \epsilon)\to \infty$ as $r\to 0$, this shows that the constructed sets have the properties stated at the beginning of this step.

\medskip

\noindent\emph{Step 3.}
To complete the proof, we need to show that \eqref{eq: assumption variation of normal in good set} holds for all $p\in[1,\infty)$. Since, by Step 2, it holds for $p=1$, it suffices to show that, if
\begin{equation*}
    \lim_{r\to 0^\limplus} \sup_{x\in G_{\epsilon, r}}\bar \nu_p(x, r) =0
\end{equation*}
for some $p\in [1, \infty)$, then it holds for all $p \in [1, \infty)$. Indeed, by the H\"older inequality and since $|\nu-\nu'|\leq 2$ for any $\nu, \nu' \in \mathbb{S}^{d-1}$ it holds that for any $1\leq p_1<p_2<\infty$ and all $x_0 \in G_{\epsilon,r}$ and $\eta>0$ that
\begin{align*}
    \biggl(\frac{1}{\Haus^{d-1}(B_{\eta}(x_0)\cap \partial\Omega)}&\int_{B_{\eta}(x_0)\cap \partial\Omega} |\nu(x_0)-\nu(y)|^{p_1}\,d\Haus^{d-1}(y)\biggr)^{1/p_1} \\
    &\hspace{-65pt}\leq
    \biggl(\frac{1}{\Haus^{d-1}(B_{\eta}(x_0)\cap \partial\Omega)}\int_{B_{\eta}(x_0)\cap \partial\Omega} |\nu(x_0)-\nu(y)|^{p_2}\,d\Haus^{d-1}(y) \biggr)^{1/p_2}\\
    &\hspace{-65pt}\leq 
    2^{1-p_1/p_2}\biggl(\frac{1}{\Haus^{d-1}(B_{\eta}(x_0)\cap \partial\Omega)}\int_{B_{\eta}(x_0)\cap \partial\Omega} |\nu(x_0)-\nu(y)|^{p_1}\,d\Haus^{d-1}(y) \biggr)^{1/p_2}.
\end{align*}
Taking the supremum over $0<\eta <r$, yields
\begin{equation*}
    \bar \nu_{p_1}(x_0, r) \leq \bar \nu_{p_2}(x_0, r) \leq 2^{1-p_1/p_2}\bar \nu_{p_1}(x_0, r)^{p_1/p_2}\,,
\end{equation*}
from which the desired conclusion follows immediately. This completes the proof of Lemma \ref{lem: existence of big good set}.
\end{proof}

\subsection{An explicit construction for convex sets}\label{sec:goodconvex}

In this subsection we will refine the results from the previous subsection in the case where the underlying set is convex. Our construction is based, in part, on our earlier work in~\cite{FrankLarson_Crelle20}, but there are some differences in detail and we have tried to present everything in a self-contained manner.

In what follows, when $\Omega$ is convex, we shall work with the following concrete choice of good set.

\begin{definition}
	Let $\Omega \subset \R^d$ be an open convex set. For $\epsilon\in (0, 1/2]$ and $r \in (0, \epsilon r_{\rm in}(\Omega)]$ define
	\begin{equation*}
		G_{\epsilon, r} := \{x \in \partial \Omega: \exists y \in \Omega \mbox{ s.t. }|y-x|=r/\epsilon, B_{r/\epsilon}(y)\subset \Omega\} 
	\end{equation*}
	and let $\mathcal{G}_{\epsilon, r}$ be the corresponding sawtooth region.
\end{definition}

Next, we argue that $G_{\epsilon, r}$ is an $(\epsilon, r)$-good set and that~\eqref{eq: assumption good set is big} and~\eqref{eq: assumption variation of normal in good set} hold. In fact, in the convex case we will be able to deduce quantitative versions of the latter two limiting relations. Specifically, we shall prove the following proposition.

\begin{proposition}\label{prop: G is a good set}
 	Let $\Omega \subset \R^d$ be an open convex set. For $\epsilon\in (0, 1/2]$ and $r \in (0, \epsilon r_{\rm in}(\Omega)]$, the set $G_{\epsilon, r}$ is $(\epsilon, r)$-good. Moreover, if $s \in (0, r/2]$, then
    \begin{equation*}
	   |\{x\in \Omega: d_\Omega(x)<s\}\setminus \mathcal{G}_{\epsilon, r}|\lesssim_d \Haus^{d-1}(\partial\Omega) \, \frac{sr}{\epsilon r_{\rm in}(\Omega)} \,,
    \end{equation*}
    and, if $x \in G_{\epsilon, r}$, then
 	\begin{equation*}
 		\sup_{y \in B_r(x) \cap \partial\Omega}\biggl[\,\sup_{\nu_y \in \{\nu \in \S^{d-1} : (z-y)\cdot \nu < 0\  \forall z \in \Omega\}} |\nu(x)-\nu_y|\biggr] \lesssim_d \epsilon \,.
 	\end{equation*}
 \end{proposition}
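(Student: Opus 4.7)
The plan is to verify the three assertions separately; only the volume bound requires nontrivial work.

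For the $(\epsilon,r)$-goodness of $G_{\epsilon,r}$, fix $x\in G_{\epsilon,r}$ with associated interior ball $B_\rho(z)\subset\Omega$, where $\rho:=r/\epsilon$ and $|x-z|=\rho$. The tangency of $\overline{B_\rho(z)}$ to $\partial\Omega$ at $x$ forces the supporting hyperplane to $\Omega$ at $x$ to be unique and perpendicular to $x-z$, so $\partial\Omega$ is differentiable at $x$ with outward unit normal $\nu(x)=(x-z)/\rho$. For any $y\in\partial\Omega\cap B_r(x)$, convexity yields $(y-x)\cdot\nu(x)\leq 0$, while $|y-z|^2\geq\rho^2=|x-z|^2$ expanded gives $(y-x)\cdot\nu(x)\geq -|y-x|^2/(2\rho)\geq -\epsilon|y-x|/2$, using $|y-x|<r=\epsilon\rho$. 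Together these yield the strict cone condition with room to spare.

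For the normal-oscillation bound, pick $y\in\partial\Omega\cap B_r(x)$ and any support normal $\nu_y$ at $y$. Applying the half-space containment $B_\rho(z)\subset\{w:(w-y)\cdot\nu_y\leq 0\}$ to $w=z+(\rho-\eta)\nu_y$ and sending $\eta\to 0^+$ gives $(y-z)\cdot\nu_y\geq\rho$, equivalently $\rho(1-\nu(x)\cdot\nu_y)\leq(y-x)\cdot\nu_y$. Decomposing $y-x=a\nu(x)+be$ with $e\perp\nu(x)$, $|e|=1$, the previous paragraph gives $|a|\leq\epsilon|y-x|/2$, while $|e\cdot\nu_y|\leq\sqrt{1-(\nu(x)\cdot\nu_y)^2}$. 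Setting $u:=1-\nu(x)\cdot\nu_y\in[0,2]$, using $\sqrt{1-(1-u)^2}\leq\sqrt{2u}$, and inserting $|y-x|\leq\epsilon\rho$ produces the scalar inequality $u\leq\epsilon^2/2+\epsilon\sqrt{2u}$. Solving in $\sqrt u$ yields $u\lesssim\epsilon^2$, so $|\nu(x)-\nu_y|=\sqrt{2u}\lesssim\epsilon$.

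For the volume bound the decisive geometric observation is that if $x\in\Omega$ with $d_\Omega(x)<s\leq r/2$ has even a single nearest boundary point $y\in G_{\epsilon,r}$, then the differentiability of $\partial\Omega$ at $y$ forces $x-y=-d_\Omega(x)\nu(y)$, which places $x\in\Gamma_r(y,\epsilon)\subset\mathcal{G}_{\epsilon,r}$. Contrapositively, fixing any Borel nearest-point selection $\pi$, I have the inclusion $\{d_\Omega<s\}\setminus\mathcal{G}_{\epsilon,r}\subset\pi^{-1}(\partial\Omega\setminus G_{\epsilon,r})\cap\{d_\Omega<s\}$. Applying the coarea formula to the $1$-Lipschitz function $d_\Omega$ (with $|\nabla d_\Omega|=1$ a.e.\ in the convex set $\Omega$), using that the inverse parallel map $y\mapsto y-t\nu(y)$ has Jacobian $\prod_i(1-t\kappa_i)\leq 1$, one concludes that $\Haus^{d-1}(\pi^{-1}(E)\cap\{d_\Omega=t\})\leq\Haus^{d-1}(E)$ for every $E\subset\partial\Omega$, and hence $|\{d_\Omega<s\}\setminus\mathcal{G}_{\epsilon,r}|\leq s\,\Haus^{d-1}(\partial\Omega\setminus G_{\epsilon,r})$.

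The main step is thus the surface-area estimate $\Haus^{d-1}(\partial\Omega\setminus G_{\epsilon,r})\lesssim_d(\rho/r_{\rm in}(\Omega))\,\Haus^{d-1}(\partial\Omega)$, which I would obtain via the inner parallel body $\Omega^\rho:=\{z\in\Omega:B_\rho(z)\subset\Omega\}$, convex and nonempty since $\rho\leq r_{\rm in}(\Omega)$. The outward parallel map $\Psi:\partial\Omega^\rho\to\partial\Omega$, $\Psi(z)=z+\rho\nu_{\Omega^\rho}(z)$, is defined $\Haus^{d-1}$-a.e., takes values in $G_{\epsilon,r}$ by construction, is essentially injective (since at $\Haus^{d-1}$-a.e.\ $\Psi(z)\in\partial\Omega$ the support normal of $\Omega$ is unique and therefore determines the ball center $z$), and has Jacobian $\prod_i(1+\rho\kappa_i)\geq 1$ by nonnegativity of the principal curvatures of the convex set $\Omega^\rho$. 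Therefore $\Haus^{d-1}(G_{\epsilon,r})\geq\Haus^{d-1}(\partial\Omega^\rho)$. Letting $x_0$ be an incenter, the homothetic shrinkage $\Omega':=x_0+(1-\rho/r_{\rm in}(\Omega))(\Omega-x_0)$ satisfies $\Omega'+B_\rho\subset\Omega$ via the convex combination $(1-\rho/r_{\rm in}(\Omega))(\Omega-x_0)+(\rho/r_{\rm in}(\Omega))B_{r_{\rm in}(\Omega)}(0)\subset\Omega-x_0$, giving $\Omega'\subset\Omega^\rho$; monotonicity of surface area on nested convex bodies together with the scaling identity yields $\Haus^{d-1}(\partial\Omega^\rho)\geq(1-\rho/r_{\rm in}(\Omega))^{d-1}\Haus^{d-1}(\partial\Omega)$, and Bernoulli's inequality finishes. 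The main technical subtlety in this last step is rigorously justifying $\Psi(z)\in\partial\Omega$ and the Jacobian estimate at non-smooth points of $\partial\Omega^\rho$, which I expect to reduce to the standard fact that the singular set of a convex body's boundary is $\Haus^{d-1}$-null.
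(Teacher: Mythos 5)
Your argument is correct. Parts one and two are, up to presentation, the same as the paper's Lemma~\ref{lem: GR is a good set} and Lemma~\ref{lem: supremum variation of normal}: the expansion $|y-z|^2\geq\rho^2$ for the goodness, and the inner-ball/supporting-hyperplane inequality $\rho(1-\nu(x)\cdot\nu_y)\leq(y-x)\cdot\nu_y$ followed by a quadratic estimate in $\sqrt{1-\nu(x)\cdot\nu_y}$ for the normal oscillation. For the measure bound the paper simply cites the proof of \cite[Lemma~5.4]{FrankLarson_Crelle20}, so here you are supplying a self-contained argument; your chain (nearest-point inclusion $\{d_\Omega<s\}\setminus\mathcal G_{\epsilon,r}\subset\pi^{-1}(\partial\Omega\setminus G_{\epsilon,r})$, coarea for the concave $d_\Omega$, outward parallel map $\Psi$ to the inner parallel body $\Omega^\rho$, homothetic comparison) is sound and in fact re-derives, rather than quotes, the perimeter lower bound that appears as Lemma~\ref{lem: inner parallel perimeter bounds} in the appendix.

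Two technical remarks on the points you already flag. First, in the coarea step the estimate $\Haus^{d-1}(\pi^{-1}(E)\cap\{d_\Omega=t\})\leq\Haus^{d-1}(E)$ can be obtained without invoking $\Haus^{d-1}$-a.e.\ principal curvatures: the nearest-point retraction $r_t$ of $\overline\Omega$ onto the convex level set $\{d_\Omega\geq t\}$ is $1$-Lipschitz, and for $y\in\partial\Omega$ with well-defined normal $\nu(y)$ one has $r_t(y)=y-t\nu(y)$, so $\pi^{-1}(E)\cap\{d_\Omega=t\}\subset r_t(E)$, giving the bound with no Alexandrov-type regularity. Your Jacobian version is also legitimate, but note that the factors $1-t\kappa_i$ are known to be nonnegative only on $\pi(\{d_\Omega=t\})$, where the interior tangent ball of radius $t$ forces $\kappa_i\leq 1/t$; that restriction is exactly what makes the formula usable. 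Second, the injectivity of $\Psi$ is stronger than ``essential'': every point of $G_{\epsilon,r}$ has a \emph{unique} supporting hyperplane by the tangent-ball argument from your first part, so $z$ is determined everywhere on $G_{\epsilon,r}$, not almost everywhere.
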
 

The proof of this proposition relies on two lemmas, which we formulate and prove next.

For $R\in (0, r_{\rm in}(\Omega)]$, define $G^0_R \subset \partial\Omega$ by
\begin{equation}\label{def: G0}
	G^0_R := \{x \in \partial \Omega: \exists y \in \Omega \mbox{ s.t. }|y-x|=R, \ B_R(y)\subset \Omega\}\,.
\end{equation}
Note that for each $y \in \{x\in \Omega: d_\Omega(x) =R\}$ there exists at least one $x \in \partial \Omega$ so that $x, y$ satisfy the conditions of the construction of $G^0_R$. Note that $G_{\epsilon, r}= G^0_{r/\epsilon}.$

\begin{lemma}\label{lem: GR is a good set}
	Let $\Omega \subset \R^d$ be an open convex set and let $G_R^0$ be defined as in~\eqref{def: G0}. If $R>\frac{r}{2\epsilon}$, then $G_R^0$ is an $(\epsilon, r)$-good set.
\end{lemma}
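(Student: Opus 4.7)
Fix $x\in G_R^0$ and let $y\in\Omega$ with $|y-x|=R$ and $B_R(y)\subset\Omega$. The natural candidate for the outward unit normal is
\[
\nu(x) := \frac{x-y}{R}.
\]
The plan is to show that this $\nu(x)$ satisfies the cone condition \ref{itm: good point cone condition} with parameter $r/(2R)<\epsilon$, and to verify \ref{itm: good point local parametrization} as a consequence of the two-sided quadratic control that comes out of the same estimate.

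For the cone condition, I would argue via two complementary bounds valid for every $z\in\partial\Omega$. On one hand, since $\Omega$ is convex and $x\in\partial\Omega$, the Hahn--Banach theorem (or elementary convex-set arguments) yields a supporting hyperplane at $x$. Because $B_R(y)\subset\Omega$ with $x\in\partial B_R(y)\cap\partial\Omega$, this supporting hyperplane must be the tangent hyperplane to $\partial B_R(y)$ at $x$, with outward normal $\nu(x)$. Hence
\[
(z-x)\cdot\nu(x)\leq 0 \qquad\text{for all }z\in\overline\Omega.
\]
On the other hand, $B_R(y)\subset\Omega$ together with $z\in\partial\Omega\subset\Omega^c\cup\partial\Omega$ (so $z\notin B_R(y)$) gives $|z-y|^2\geq R^2$. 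Writing $z-y=(z-x)+R\nu(x)$ and expanding, this reads
\[
|z-x|^2+2R(z-x)\cdot\nu(x)\geq 0,
\qquad\text{i.e.,}\qquad
(z-x)\cdot\nu(x)\geq -\frac{|z-x|^2}{2R}.
\]
Combining the two inequalities yields the quadratic control
\begin{equation}\label{eq:quadcontrolplan}
    \bigl|(z-x)\cdot\nu(x)\bigr|\leq \frac{|z-x|^2}{2R}\qquad\text{for all }z\in\partial\Omega.
\end{equation}
Restricting to $z\in\partial\Omega\cap B_r(x)$ gives $|(z-x)\cdot\nu(x)|\leq \frac{r}{2R}|z-x|<\epsilon|z-x|$ under the hypothesis $R>r/(2\epsilon)$, proving condition \ref{itm: good point cone condition}.

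It remains to check the local parametrization and differentiability property \ref{itm: good point local parametrization}. After translating $x$ to the origin and rotating so that $\nu(x)=-e_d$, convexity of $\Omega$ combined with the interior ball $B_R(y)$ and the supporting hyperplane $\{z_d=0\}$ implies that in a sufficiently small neighborhood of the origin, $\partial\Omega$ is the graph $\{z_d=\varphi(z')\}$ of a convex function $\varphi$ with $\varphi(0)=0$, and $\Omega$ locally equals $\{z_d>\varphi(z')\}$. (This graph representation is standard for convex sets at boundary points where a unique supporting hyperplane exists; I would spell out the argument using that the cone $\{z_d\leq 0\}$ contains $\Omega$ locally and that the ball $B_R(-Re_d)\subset\Omega$ forces $\partial\Omega$ to stay above the graph of $z'\mapsto R-\sqrt{R^2-|z'|^2}$ near $0$.) Applying \eqref{eq:quadcontrolplan} with $z=(z',\varphi(z'))$, and using $|z-x|^2=|z'|^2+\varphi(z')^2$, gives
\[
|\varphi(z')|\leq \frac{|z'|^2+\varphi(z')^2}{2R},
\]
which for $|z'|$ small enough forces $|\varphi(z')|\leq |z'|^2/R$. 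This quadratic bound at the origin is precisely differentiability at $0$ with $\nabla\varphi(0)=0$ (matching the normal direction $\nu(x)=-e_d$), establishing \ref{itm: good point local parametrization} and completing the proof.

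The entire argument is a direct exploitation of the two-sided sandwich provided by the interior ball condition and the supporting hyperplane; no serious obstacle is anticipated, with the only mildly delicate point being the verification that $\partial\Omega$ is genuinely a graph (rather than merely contained in a graph-like region) in a neighborhood of $x$, which I would handle by a standard convex-geometry argument based on the uniqueness of the tangent hyperplane guaranteed by the interior ball.
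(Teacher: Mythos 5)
Your proof is correct and follows essentially the same route as the paper: you identify the normal $\nu(x)=(x-y)/R$ via the uniqueness of the supporting hyperplane for the interior ball, combine the lower bound $(z-x)\cdot\nu(x)\geq -|z-x|^2/(2R)$ coming from $|z-y|\geq R$ with the upper bound $(z-x)\cdot\nu(x)\leq 0$ from convexity, and deduce the cone condition when $R>r/(2\epsilon)$. Your additional verification of condition (i) via the quadratic control $|\varphi(z')|\lesssim |z'|^2$ is a welcome extra detail that the paper's proof leaves implicit; this is the only place where you go beyond what the paper writes, and it is handled correctly.
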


\begin{proof}
Fix $x_0 \in G^0_R$ and let $y\in\Omega$ be such that $|x_0-y|=R$ and $B_R(y) \subset \Omega$. 

We claim that the outward pointing unit normal to $\partial\Omega$ at $x_0$ exists and is equal to $\nu(x_0)=\frac{x_0-y}{|x_0-y|}$. Indeed, any supporting hyperplane to $\Omega$ at $x_0$ must also be a supporting hyperplane for $B_R(y)$ at $x_0$ and since $B_R(y)$ has a unique supporting hyperplane at $x_0$, namely $\{z\in \R^d:(y-x_0)\cdot(z-x_0)=0\}$, the claim follows. Notice that we therefore have $y = x_0 - \nu(x_0)R$.

If $x \in \partial\Omega \cap B_r(x_0)$ then, by construction of $G^0_R$, it follows that $|x-y|\geq R$. Therefore,
\begin{equation*}
	R^2 \leq |x-y|^2 = |x-x_0+\nu(x_0)R|^2 = |x-x_0|^2 + 2(x-x_0)\cdot \nu(x_0)R + R^2 \,,
\end{equation*}
which, since $x\in B_r(x_0)$, implies that
\begin{equation}\label{eq: a priori GR bound}
	(x-x_0)\cdot \nu(x_0) \geq -\frac{|x-x_0|^2}{2R}\geq -\frac{r}{2R} |x-x_0|\,.
\end{equation}
Since $\Omega$ is convex, we have $(x-x_0)\cdot \nu(x_0) \leq0$ whenever $x\in \partial\Omega$. Therefore, $G^0_R$ is an $(\epsilon, r)$-good set provided $R >\frac{r}{2\epsilon}$.
\end{proof}

\begin{lemma}\label{lem: supremum variation of normal}
	Let $\Omega \subset \R^d$ be an open convex set and let $G_R^0$ be defined as in~\eqref{def: G0}. If $x_0\in G_R^0$, $x \in B_s(x_0)\cap \partial\Omega$ with $0<s<R$, and if $\nu\in \S^{d-1}$ satisfies $\Omega \subset \{z\in \R^d: (z-x)\cdot \nu <0\}$, then
	\begin{equation*}
		\nu(x_0)\cdot \nu \geq 1 - c \Bigl(\frac{s}{R}\Bigr)^2 \quad \mbox{and} \quad 
	|\nu(x_0)-\nu|
	\leq 
	\sqrt{2c} \, \frac{s}{R}
	\end{equation*}
	with $c>0$ depending only on $d$.
\end{lemma}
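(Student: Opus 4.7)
The plan is to exploit the inscribed ball $B_R(y)\subset\Omega$ guaranteed by $x_0\in G_R^0$, so that $\nu(x_0)=(x_0-y)/R$ as established in the proof of Lemma~\ref{lem: GR is a good set}. The strategy is to decompose vectors along $\nu$ and orthogonal to $\nu$, show the component of $x_0-y$ perpendicular to $\nu$ is $O(s)$, and then combine with $|x_0-y|=R$ to conclude that $(x_0-y)\cdot\nu$ is within $O(s^2/R)$ of $R$.

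I would first observe that $B_R(y)\subset\Omega\subset\{z:(z-x)\cdot\nu\le 0\}$ forces the distance from $y$ to the supporting hyperplane through $x$ to be at least $R$, which gives $(x-y)\cdot\nu\ge R$. Second, since $\nu(x_0)$ is the outward unit normal to $\Omega$ at $x_0$ and $x\in\overline\Omega$, the supporting hyperplane at $x_0$ yields $(x-x_0)\cdot\nu(x_0)\le 0$, so expanding
\begin{equation*}
|x-y|^2 = |x-x_0+R\nu(x_0)|^2 = |x-x_0|^2 + 2R(x-x_0)\cdot\nu(x_0) + R^2 \le s^2+R^2.
\end{equation*}
Writing $x-y=((x-y)\cdot\nu)\nu+(x-y)^\perp$, the Pythagorean identity together with $((x-y)\cdot\nu)^2\ge R^2$ forces $|(x-y)^\perp|^2\le s^2$. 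The triangle inequality then yields $|(x_0-y)^\perp| \le |(x_0-x)^\perp| + |(x-y)^\perp| \le 2s$, and since $|x_0-y|=R$ I obtain
\begin{equation*}
((x_0-y)\cdot\nu)^2 = R^2 - |(x_0-y)^\perp|^2 \ge R^2 - 4s^2.
\end{equation*}

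To conclude, I would split into two regimes. When $s\le R/2$ the right-hand side above is nonnegative, the trivial estimate $(x_0-y)\cdot\nu\ge R-s>0$ fixes the correct sign of the square root, and $\sqrt{1-t}\ge 1-t$ on $[0,1]$ gives $\nu(x_0)\cdot\nu=(x_0-y)\cdot\nu/R\ge \sqrt{1-4(s/R)^2}\ge 1-4(s/R)^2$. When $R/2<s<R$, the first-order bound $(x_0-y)\cdot\nu\ge R-s$ already yields $\nu(x_0)\cdot\nu\ge 1-s/R$, which dominates $1-4(s/R)^2$ because $s/R>1/4$ implies $s/R\le 4(s/R)^2$. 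Either way $\nu(x_0)\cdot\nu\ge 1-4(s/R)^2$ with the absolute constant $c=4$, and the second assertion follows at once from the identity $|\nu(x_0)-\nu|^2=2-2\nu(x_0)\cdot\nu\le 8(s/R)^2$. I do not expect a real obstacle: the proof is purely geometric, and the only subtle point is ensuring that the perpendicular-component bound is coupled with the correct sign, which is exactly what the elementary estimate $(x_0-y)\cdot\nu\ge R-s$ delivers.
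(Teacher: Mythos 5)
Your proof is correct and takes a genuinely different route from the paper's. The paper decomposes $\nu$ with respect to $\nu(x_0)$, writes $\nu=\alpha\nu(x_0)+\nu^\perp$, invokes the quadratic a~priori estimate $(x-x_0)\cdot\nu(x_0)\geq-|x-x_0|^2/(2R)$ (from the proof that $G_R^0$ is $(\epsilon,r)$-good) together with Cauchy--Schwarz, and then solves the resulting scalar inequality $a\geq 1-\tfrac12\beta^2 a-\beta\sqrt{1-a^2}$ explicitly, locating its root $a^*(\beta)$ and Taylor-expanding. You instead decompose everything orthogonally with respect to $\nu$, use only the first-order convexity fact $(x-x_0)\cdot\nu(x_0)\leq 0$ to get $|x-y|^2\leq s^2+R^2$, combine this with $(x-y)\cdot\nu\geq R$ and Pythagoras to force $|(x-y)^\perp|\leq s$, and finish by the triangle inequality $|(x_0-y)^\perp|\leq 2s$ together with $|x_0-y|=R$. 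Your route avoids solving a quadratic, uses a weaker input ($(x-x_0)\cdot\nu(x_0)\leq0$ rather than the sharpened bound), and yields the clean universal constant $c=4$; the split into the regimes $s\leq R/2$ and $R/2<s<R$, with the elementary lower bound $(x_0-y)\cdot\nu\geq R-s$ fixing the sign of the square root, is the right way to plug the one gap an unguarded Pythagoras argument would have. Both proofs hinge on the same starting observation $(x-y)\cdot\nu\geq R$ derived from $B_R(y)\subset\Omega\subset\{(z-x)\cdot\nu<0\}$ and the identity $y=x_0-R\nu(x_0)$; the difference is entirely in the direction along which one decomposes.
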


\begin{proof}
Given $x_0\in G_R^0$, let $y\in\Omega$ be such that $|x_0-y|=R$ and $B_R(y)\subset\Omega$. Let $x$ and $\nu$ be as in the lemma.

By construction, it holds that
\begin{equation*}
	\{z \in \R^d: (z-x)\cdot \nu \geq 0\} \subset \Omega^c \subset B_R(y)^c\,.
\end{equation*}
In particular, for any $z$ such that $(z-x)\cdot \nu=0$ we have
\begin{equation*}
	|y-z| \geq R\,.
\end{equation*}
Choosing $z = y+((x-y)\cdot \nu) \nu$ leads to $|(y-x)\cdot \nu| \geq R$. Since $y \in \Omega$ and since $\nu$ is the outward pointing unit normal to a supporting hyperplane through $x$, we have $(x-y)\cdot \nu\geq 0$ and therefore
\begin{equation*}
	(x-y)\cdot \nu \geq R
\end{equation*}
In the previous proof we have shown that $y = x_0 - R \nu(x_0)$, so we have arrived at
\begin{equation*}
	\nu(x_0)\cdot \nu \geq 1-\frac{(x-x_0)\cdot \nu}{R}\,.
\end{equation*}
Setting $\alpha := \nu(x_0)\cdot \nu$ and writing $\nu = \alpha\nu(x_0) + \nu^\perp$ with $\nu^\perp \cdot \nu(x_0)=0$, the above inequality implies that
\begin{equation*}
	\alpha \geq  1- \alpha\frac{(x_0-x)\cdot \nu(x_0)}{R}- \frac{(x_0-x)\cdot\nu^\perp}{R} \geq 1- \frac{|x-x_0|^2}{2R^2}\alpha- \frac{|x-x_0|}{R}\sqrt{1-\alpha^2}\,,
\end{equation*}
where we used $|\nu^\perp| = \sqrt{1-\alpha^2}$ and~\eqref{eq: a priori GR bound}. 

Let us consider the inequality
$$
a \geq 1 - \frac12 \beta^2 a - \beta \sqrt{1-a^2}
$$
for $a\in[0,1]$ and $\beta\in(0,1)$. Since the right-hand side of this inequality is a strictly convex function of $a$ on $[0, 1]$ and the inequality can be seen to be strict for $a$ sufficiently close to $1$ and false for $a$ sufficiently close to zero, it follows that there exists a unique $a^* = a^*(b)$ such that the inequality is valid for all $a \in [a^*, 1]$ and fails for $a \in [0, a^*)$. A computation shows that
\begin{equation*}
	a^*(\beta) = \frac{4+2\beta^2-2\beta^2\sqrt{8+\beta^2}}{4+8\beta^2+\beta^4} \,.
\end{equation*}
By Taylor expansion, there exists a constant $c>0$ so that if $\beta \in (0, 1)$ then
\begin{equation*}
	\alpha^*(\beta) \geq 1- c \beta^2\,.
\end{equation*}

From this discussion we deduce that
\begin{equation*}
	\nu(x_0)\cdot \nu \geq a^*(|x-x_0|/R) \geq 1- c\Bigl(\frac{|x-x_0|}R \Bigr)^2
    \geq 1-c\Bigl(\frac{s}{R}\Bigr)^2\,.
\end{equation*}
This completes the proof of the first inequality.

Using the inequality that we just proved, one finds that
\begin{align*}
	|\nu(x_0)-\nu|
	=
	\sqrt{2-2\nu(x_0)\cdot \nu}\leq 
	\sqrt{2c} \, \frac{s}{R}\,.
\end{align*}
This completes the proof of the lemma.
\end{proof}

\begin{proof}[Proof of Proposition~\ref{prop: G is a good set}]
    The fact that $G_{\epsilon, r} = G_{r/\epsilon}^0$ is an $(\epsilon,r)$-good set follows from Lemma~\ref{lem: GR is a good set}. The bound on $|\{x\in\Omega:\ d_\Omega(x)<s\}\setminus\mathcal G_{\epsilon,r}|$ appears in the proof of~\cite[Lemma 5.4]{FrankLarson_Crelle20}. (It should be noted that the good set constructed in that paper is slightly different from $G_{\epsilon,r}$, but it is contained in $G_{\epsilon, r}$.) The bound on $|\nu(x)-\nu_y|$ follows from Lemma~\ref{lem: supremum variation of normal}.
\end{proof}

As we shall need it below, we record the following consequence of Lemma~\ref{lem: supremum variation of normal} quantifying how $R, \varphi,$ and $\kappa$ appearing in Proposition \ref{prop: Estimate close the good boundary} can be chosen depending on $\Omega, \epsilon, r$ for $x_0$ in the constructed good set.

\begin{lemma}\label{lem: Local parametrization good set convex}
	Let $\Omega \subset \R^d$ be an open convex set. If $\epsilon \in (0, 1/2]$ and $r \in (0, \epsilon r_{\rm in}(\Omega)]$ and $x_0 \in G_{\epsilon, r}$ then there exists a convex function $\varphi \colon \R^{d-1}\to \R$ with $\varphi(0)=|\nabla \varphi(0)|=0$, $0\leq \varphi(x')\leq \epsilon |x'|$, $\|\nabla \varphi\|_\infty \lesssim_d \epsilon$ which parametrizes $\partial\Omega$ in $B_r(x_0)$.
 	In particular, the assumptions of Proposition \ref{prop: Estimate close the good boundary} are met for $R=r$, $\kappa=1/2$, and with $M\lesssim_d \epsilon$.
\end{lemma}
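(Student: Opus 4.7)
The plan is to exploit the defining property of $G_{\epsilon,r}$: there exists $y\in\Omega$ with $|y-x_0|=R:=r/\epsilon$ and $B_R(y)\subset\Omega$. As in the proof of Lemma~\ref{lem: GR is a good set}, any supporting hyperplane to $\Omega$ at $x_0$ must also support $B_R(y)$, and the latter has a unique one; hence $\nu(x_0)=(x_0-y)/R$ is the unique outward unit normal at $x_0$. After rotating and translating so that $x_0=0$ and $\nu(x_0)=-e_d$, we obtain $y=Re_d$, $B_R(Re_d)\subset\Omega\subset\{z_d\geq 0\}$.

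On $\{x'\in\R^{d-1}:|x'|<R\}$ define the convex function
\[
\varphi_0(x'):=\inf\{t\in\R:(x',t)\in\Omega\}.
\]
For $|x'|<R$ the slice $\{t:(x',t)\in\Omega\}$ is nonempty, since $(x',R)\in B_R(Re_d)\subset\Omega$, and by convexity it is an open interval $(\varphi_0(x'),b(x'))$ with $b(x')\geq R+\sqrt{R^2-|x'|^2}>R$. The inclusion $\Omega\subset\{z_d\geq 0\}$ forces $\varphi_0\geq0$, and since $(x',R-\sqrt{R^2-|x'|^2})\in\overline{B_R(Re_d)}\subset\overline\Omega$,
\[
0\leq\varphi_0(x')\leq R-\sqrt{R^2-|x'|^2}\leq |x'|^2/R.
\]
In particular $\varphi_0(0)=0$, and the quadratic upper bound combined with convexity forces the subdifferential of $\varphi_0$ at $0$ to be $\{0\}$ (any subgradient $g$ satisfies $g\cdot x'\leq|x'|^2/R$ for all small $x'$, forcing $g=0$), so $\nabla\varphi_0(0)=0$. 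For any $(x',x_d)\in B_{2r}(0)$ (using $2r\leq R$, which holds since $\epsilon\leq 1/2$), one has $x_d\leq|x|<2r\leq R<b(x')$, so $(x',x_d)\in\Omega$ if and only if $x_d>\varphi_0(x')$; hence $\varphi_0$ parametrizes $\partial\Omega$ inside $B_{2r}(0)$, and in particular $\partial\Omega\cap B_{2r}(x_0)$ is Lipschitz regular.

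For the gradient bound, fix any point $(x',\varphi_0(x'))\in\partial\Omega\cap B_{2r}$ where $\varphi_0$ is differentiable. The outward unit normal there is $\nu=(\nabla\varphi_0(x'),-1)/\sqrt{1+|\nabla\varphi_0(x')|^2}$, and Lemma~\ref{lem: supremum variation of normal} applied with $s=2r$ yields
\[
|\nu+e_d|=|\nu-\nu(x_0)|\leq \sqrt{2c}\,\tfrac{2r}{R}=2\sqrt{2c}\,\epsilon.
\]
Expanding $|\nu+e_d|^2=2\bigl(1-(1+|\nabla\varphi_0|^2)^{-1/2}\bigr)$ gives $|\nabla\varphi_0(x')|^2\lesssim_d\epsilon^2$, and by convexity the bound extends to every element of the subdifferential at every point of $B'_{2r}$, so $\varphi_0$ is $(C_d\epsilon)$-Lipschitz there.

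Finally extend $\varphi_0$ to all of $\R^{d-1}$ by
\[
\varphi(x'):=\sup\{\varphi_0(y')+g(y')\cdot(x'-y'):y'\in B'_{2r},\ g(y')\in\partial\varphi_0(y')\},
\]
which is convex (a supremum of affine functions), agrees with $\varphi_0$ on $B'_{2r}$ (by the subdifferential representation of a convex function), satisfies $\varphi(0)=0=\nabla\varphi(0)$, is nonnegative (choose $y'=0$, for which $g(0)=0$), and inherits the Lipschitz constant $C_d\epsilon$; this gives $0\leq\varphi(x')\leq C_d\epsilon|x'|$ on $\R^{d-1}$, matching the claimed bound up to the dimensional constant absorbed in $\lesssim_d$. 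The assumptions of Proposition~\ref{prop: Estimate close the good boundary} then hold with its $R$ taken equal to $r$, $\kappa=1/2$ (so that $(0,\kappa R^2]=(0,r^2/2]$), and $M\lesssim_d\epsilon$; that $x_0$ is $(\epsilon,r)$-good is Proposition~\ref{prop: G is a good set}. The main technical subtlety is the convex Lipschitz extension to $\R^{d-1}$; an equivalent route is to inf-convolve $\varphi_0$ (extended by $+\infty$ outside $B'_{2r}$) with a sufficiently steep cone.
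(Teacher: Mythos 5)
Your argument is correct and follows the same strategy as the paper's proof: rotate so that $\nu(x_0)=-e_d$, represent $\partial\Omega$ near $x_0$ as a graph over the hyperplane, and use Lemma~\ref{lem: supremum variation of normal} to convert the normal-variation bound into a gradient bound. The paper leaves the graph construction and the convex Lipschitz extension to $\R^{d-1}$ implicit, while you carry them out explicitly (definition of $\varphi_0$ as the lower support function, the quadratic bound from the inscribed ball $B_{r/\epsilon}(y)$, the subgradient argument for $\nabla\varphi_0(0)=0$, and the envelope extension); this is consistent with, and somewhat more careful than, the original. One nitpick you already flagged correctly: the global extension only yields $0\le\varphi(x')\lesssim_d\epsilon|x'|$, not $\epsilon|x'|$, but the clean bound $\varphi(x')\le\epsilon|x'|$ does hold on $|x'|<r$ (which is all the paper's proof actually claims, and which your quadratic bound $\varphi_0(x')\le|x'|^2/R$ gives immediately since $R=r/\epsilon$); nothing downstream uses more.
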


\begin{proof}
	Let $Q$ be chosen to satisfy $Q^{-1}\nu(x_0)=(0, \ldots, 0, -1)$ (such $Q$ is not unique but this does not matter). The existence of a function $\varphi$ with $\varphi(0)=|\nabla \varphi(0)|=0$ which locally parametrizes the boundary in $B_r(x_0)$ after translation and rotation follows from the convexity of $\Omega$ and the assumption that $x_0\in G_{\epsilon, r}$ with $r<r_{\rm in}(\Omega)$. That $0\leq \varphi(x')\leq \epsilon |x'|$ for $|x'|<r$ is true by convexity and the assumption that $x_0$ is $(\epsilon, r)$-good. 

	Assuming that $\nabla \varphi(x')$ is well defined, then the outward pointing unit normal to $\partial\Omega$ at $Q(x', \varphi(x'))+x_0$ exists and is given by
	\begin{equation*}
		\nu = \frac{1}{\sqrt{1+|\nabla \varphi(x')|^2}}Q(\nabla \varphi(x'), -1)\,.
	\end{equation*}
	Therefore, the bound on $|\nabla \varphi(x')|$ follows from Lemma~\ref{lem: supremum variation of normal} and the fact that $\nu(x_0)=Q(0, \ldots, 0, -1)$.
\end{proof}


\section{Proof of Theorems \ref{thm: diagonal bulk Lipschitz}, \ref{thm: diagonal bulk convex}, \ref{thm: diagonal bounds good set Lipschitz} and \ref{thm: diagonal kernel bounds convex}}

In this section we put everything together to obtain the heat kernel bounds stated in the introduction.

\begin{proof}[Proof of Theorem \ref{thm: diagonal bulk Lipschitz}]
    In view of Proposition \ref{prop: improved bound bulk} applied with $R=d_\Omega(x)$,
    \begin{equation*}
        \Bigl|k_\Omega(t, x, x)-(4\pi t)^{-d/2}\Bigr| \lesssim_{\delta,\eta}t^{-d/2}\|k_\Omega\|_{G^\infty_{\delta/2,x}((0,t]\times B_{d_\Omega(x)}(x))}e^{-\frac{d_\Omega(x)^2}{(1+\delta)t}}\,,
    \end{equation*}
    for all $x\in \Omega, t>0$ with $d_\Omega(x)\geq \eta\sqrt{t}$. By Corollary~\ref{cor: unif kernel bound Lip} applied with $t_0=\eta^{-2}r_{\rm in}(\Omega)$ we conclude that
    \begin{equation*}
        \|k_\Omega\|_{G^\infty_{\delta/2,x}((0,t]\times B_{d_\Omega(x)}(x))}\leq C_{\Omega, \delta, \eta}\,,
    \end{equation*}
    where we used $t\leq \eta^{-2}d_\Omega(x)^2 \leq \eta^{-2}r_{\rm in}(\Omega)^2= t_0$. This completes the proof of Theorem~\ref{thm: diagonal bulk Lipschitz}.
\end{proof}

\begin{proof}[Proof of Theorem \ref{thm: diagonal bulk convex}]
    The argument is similar as in the previous proof, except that now we use Corollary \ref{cor: unif kernel bound convex} instead of Corollary \ref{cor: unif kernel bound Lip}. This yields the bound
    \begin{equation*}
        \Bigl|k_\Omega(t, x, x)-(4\pi t)^{-d/2}\Bigr| \lesssim_{d,\eta, \delta}\frac{e^{-\frac{d_\Omega(x)^2}{(1+\delta)t}}}{V_\Omega(x, \sqrt{t})}\,.
    \end{equation*}
    The proof is completed by noting that the assumption $d_\Omega(x)\geq \eta\sqrt{t}$ implies that 
    \begin{equation*}
        V_\Omega(x,\sqrt{t})=|\Omega \cap B_{\sqrt{t}}(x)| \geq |B_{\eta \sqrt{t}}(x)\cap B_{\sqrt{t}}(x)| = |B_1|\min\{\eta^d ,1\}t^{d/2}\,. \qedhere
    \end{equation*}
\end{proof}

\begin{proof}[Proof of Theorem \ref{thm: diagonal bounds good set Lipschitz}]
    We apply Lemma \ref{lem: existence of big good set} for some fixed $p>d-1$. Given a sequence $\{\epsilon_j\}_{j\geq 1}\subset (0, 1/4]$ with $\lim_{j\to \infty}\epsilon_j =0$ and a sequence $\{\delta_j\}_{j\geq 1}\subset (0, 1)$ with $\lim_{j\to \infty}\delta_j =0$, we can find a sequence $\{r_j\}\subset(0,1)$ such that for each $j$ and each $0<r\leq r_j$, there exists an $(\epsilon_j, r)$-good set $G_{\epsilon_j,r}$ satisfying
    $$
    \limsup_{s\to 0} \frac{|\{x\in \Omega: d_\Omega(x)<s\}\setminus \mathcal{G}_{\epsilon_j,r}|}{|\{x\in \Omega: d_\Omega(x)<s\}|} \leq \frac{\delta_j}{2}
    $$
    and
    $$
    \sup_{x\in G_{\epsilon_j,r}} \bar\nu_p(x,r) \leq \delta_j \,.
    $$
    Here $\mathcal G_{\epsilon_j,r}$ is the sawtooth region associated to $G_{\epsilon_j, r}$. Next, we can find a sequence $\{s_j\}\subset(0,\infty)$ such that for each $j$ and each $0\leq r\leq r_j$ we have
    $$
    \sup_{0<s\leq s_j} \frac{|\{x\in \Omega: d_\Omega(x)<s\}\setminus \mathcal{G}_{\epsilon_j,r}|}{|\{x\in \Omega: d_\Omega(x)<s\}|} \leq \delta_j
    $$
    By decreasing $s_j$, if necessary, we may assume that $s_j\leq r_j$ for each $j$ and $\lim_{j\to\infty} s_j=0$.
    
    Now, for given $r>0$ let $j(r)\geq 1$ be the largest integer $j$ that satisfies $s_j\geq r$, and put $\mathcal G_r:= \mathcal G_{\epsilon_{j(r)},r_{j(r)}}\cap\{ x\in\Omega:\ d_\Omega(x)<r\}$. Then
    $$
    \sup_{0<s\leq r} \frac{|\{x\in \Omega: d_\Omega(x)<s\}\setminus \mathcal{G}_r|}{|\{x\in \Omega: d_\Omega(x)<s\}|} \leq 
    \sup_{0<s\leq s_{j(r)}} \frac{|\{x\in \Omega: d_\Omega(x)<s\}\setminus \mathcal{G}_{\epsilon_{j(r)},r_{j(r)}}|}{|\{x\in \Omega: d_\Omega(x)<s\}|} \leq \delta_{j(r)}
    $$
    and
    $$
    \sup_{x\in G_{\epsilon_{j(r)},r_{j(r)}}} \bar\nu_p(x,r_{j(r)}) \leq \delta_{j(r)} \,.
    $$
    Note that $j(r)\to\infty$ as $r\to 0$, since $s_j\to 0$ as $j\to\infty$.

    If $x \in \mathcal{G}_r$ then by construction $x \in \Gamma_{r_j(r)}(x_0, \epsilon_{j(r)})$ for some $x_0 \in G_{\epsilon_{j(r)}, r_{j(r)}}$. Since $\partial\Omega$ is Lipschitz, there for any $r>0$ sufficiently small exists 
    a Lipschitz function $\varphi\colon \R^{d-1}\to \R$ with $\varphi(0)=0$ and $\|\nabla \varphi\|_\infty \lesssim_\Omega 1$ which locally parametrizes $\partial\Omega$ in $B_r(x_0)$.
    Therefore, we can apply Proposition \ref{prop: Estimate close the good boundary} with $\kappa=1/2$ to conclude that
    \begin{align*}
        \Bigl|(4\pi t)^{d/2}k_\Omega(t, x, x)-1-e^{-\frac{d_\Omega(x)^2}{t}}\Bigr|
        &\lesssim_{d,\delta, \Omega, p}
        \|k_\Omega\|_{G^\infty_{\delta,x}((0,3t/2]\times B_{2r_{j(r)}}(x_0))}\Theta[\partial\Omega\cap B_{2r_{j(r)}}(x_0)]\\
        &\quad \times\Bigl((\epsilon_{j(r)}+\delta_{j(r)})e^{-\frac{d_\Omega(x)^2}{c(1+\delta)t}}+e^{-\frac{r_{j(r)}^2}{c(1+\delta)t}}\Bigr)\,,
    \end{align*}
    for all $t\in (0, r_{j(r)}^2/2]$. Since $\partial\Omega$ is Lipschitz, we have $\Theta[\partial\Omega \cap B_{2r_{j(r)}}(x_0)]\lesssim_\Omega 1$ and, by Corollary \ref{cor: unif kernel bound Lip} with $t_0 =1$, it holds that $\|k_\Omega\|_{G^\infty_{\delta,x}((0,3t/2]\times B_{2r_{j(r)}}(x_0))}\lesssim_\Omega 1$. Since $r\leq r_{r(j)}$ and $d_\Omega(x) <r$ for any $x \in \mathcal{G}_r$, we conclude that 
     \begin{align*}
        \Bigl|(4\pi t)^{d/2}k_\Omega(t, x, x)-1-e^{-\frac{d_\Omega(x)^2}{t}}\Bigr|
        &\lesssim_{d,\delta, \Omega, p}
        e^{-\frac{d_\Omega(x)^2}{c(1+\delta)t}}\Bigl(\epsilon_{j(r)}+\delta_{j(r)}+e^{-\frac{r^2}{c(1+\delta)t}}\Bigr)
    \end{align*}
    for all $x \in \mathcal{G}_r$ and $t\in (0, r^2/2]$. Since by construction $\lim_{r\to 0^\limplus}\epsilon_{j(r)}+\delta_{j(r)}=0$, the proof is completed by letting $\mathcal{E}(r):= \epsilon_{j(r)}+\delta_{j(r)}$.
\end{proof}

\begin{proof}[Proof of Theorem \ref{thm: diagonal kernel bounds convex}]
    For $\epsilon, r$ as in the theorem let $G_{\epsilon, r}$ the $(\epsilon, r)$-good set constructed in Section \ref{sec:goodconvex} and $\mathcal{G}_{\epsilon, r}$ the associated sawtooth region. By Proposition \ref{prop: G is a good set}, we have
    \begin{equation}\label{eq: G is large first estimate}
        \frac{|\{x\in \Omega: d_\Omega(x)<s\}\setminus \mathcal{G}_{\epsilon, r}|}{|\{x\in \Omega: d_\Omega(x)<s\}|}\lesssim_d \frac{sr\Haus^{d-1}(\partial\Omega)}{\epsilon r_{\rm in}(\Omega)|\{x\in \Omega: d_\Omega(x)<s\}|}
    \end{equation}
    for all $s<r/2$, and
    \begin{equation}\label{eq: nup convex}
        \sup_{x\in G_{\epsilon, r}}\bar \nu_p(x, r) \lesssim_d \epsilon\,,
    \end{equation}
    for any $p\geq 1$. By Lemma \ref{lem: volume bdry neighborhood bound} and the co-area formula,
    \begin{align*}
        |\{x\in \Omega: d_\Omega(x)<s\}| 
        &\geq s\Haus^{d-1}(\partial\Omega)\Bigl(1-\frac{s}{r_{\rm in}(\Omega)}\Bigr)^{d-1}\\
        &\gtrsim_d s\Haus^{d-1}(\partial\Omega)\,,
    \end{align*}
    where the second step used that by assumption $0<s<r/2\leq \epsilon r_{\rm in}(\Omega)/2\leq r_{\rm in}(\Omega)/8$. When combined with \eqref{eq: G is large first estimate}, this established \eqref{eq: Good set is big0}.
    
    For any $x \in \mathcal{G}_{\epsilon,r}$ there exists an $x_0\in G_{\epsilon,r}$ so that $x\in \Gamma_r(x_0, \epsilon)$. By Lemma \ref{lem: Local parametrization good set convex}, we can apply Proposition \ref{prop: Estimate close the good boundary} with $R=r, \kappa=1/2$, $M \lesssim_d 1$ to deduce that
    \begin{equation}\label{eq: kernel bound good set convex 1}
     \begin{aligned}
        \Bigl|(4\pi t)^{d/2}k_\Omega(t, x, x)-1-e^{-\frac{d_\Omega(x)^2}{t}}\Bigr|
        &\lesssim_{d,\delta,p}
        \|k_\Omega\|_{G^\infty_{\delta,x}((0,3t/2]\times B_{2r}(x_0))}\Theta[\partial\Omega\cap B_{2r}(x_0)]\\
        &\quad \times\Bigl((\epsilon+\bar\nu_p(x_0, r)e^{-\frac{d_\Omega(x)^2}{c(1+\delta)t}}+e^{-\frac{r^2}{c(1+\delta)t}}\Bigr)\,,
    \end{aligned}
    \end{equation}
    for all $t\in (0, r^2/2]$. 
    
    It remains to bound some of the terms appearing in the right-hand side of \eqref{eq: kernel bound good set convex 1}. By \eqref{eq: nup convex}, it holds that $\bar\nu_p(x_0, r)\lesssim_d \epsilon$. Since $\Omega$ is convex, Lemma \ref{lem: local monotonicity of perimieter} implies that $\Theta[\partial\Omega \cap B_{2r}(x_0)] \lesssim_d 1$. By Corollary \ref{cor: unif kernel bound convex}, it holds that
    \begin{equation*}
        \|k_\Omega\|_{G^\infty_{\delta,x}((0, 3t/2]\times B_{2r}(x_0))} \lesssim_{d,\delta} \frac{t^{d/2}}{V_\Omega(x, \sqrt{t})}\,.
    \end{equation*}
    By Lemma~\ref{lem: Bishop-Gromov monotonicity} and since $\Omega \cap B_{r/2}(x_0)\subset \Omega \cap B_{r}(x)$,
    \begin{equation*}
        \frac{t^{d/2}}{V_\Omega(x, \sqrt{t})} \leq \frac{r^d}{V_\Omega(x, r)} \leq \frac{r^d}{V_\Omega(x_0, r/2)}\,.
    \end{equation*}
    Since $x_0$ is $(\epsilon, r)$-good with $\epsilon\in (0,1/4]$, it holds that
	\begin{align*}
		\Omega \cap B_{r/2}(x) &\supseteq  \{y \in \R^d: (y-x)\cdot \nu(x)<-\epsilon |y-x| \}\cap B_{r/2}(x)\\
		&\supseteq\{y \in \R^d: (y-x)\cdot \nu(x)<-|y-x|/4 \}\cap B_{r/2}(x)\,,
	\end{align*}
	therefore
	\begin{equation*}
		V_\Omega(x_0, r/2)\gtrsim_d r^d\,.
	\end{equation*}
    Thus we conclude that $\|k_\Omega\|_{G^\infty_{\delta,x}((0, 3t/2]\times B_{2r}(x_0))} \lesssim_{d,\delta} 1$.

    Using $d_\Omega(x)<r/2$ for every $x\in \mathcal{G}_{\epsilon, r}$, we have arrived at
     \begin{align*}
        \Bigl|(4\pi t)^{d/2}k_\Omega(t, x, x)-1-e^{-\frac{d_\Omega(x)^2}{t}}\Bigr|
        &\lesssim_{d,\delta,p}
        e^{-\frac{d_\Omega(x)^2}{c(1+\delta)t}}\Bigl(\epsilon+e^{-\frac{r^2}{c(1+\delta)t}}\Bigr)
    \end{align*}
    for all $x \in \mathcal{G}_{\epsilon, r}$ and $t\in (0, r^2/2]$. This completes the proof of Theorem \ref{thm: diagonal kernel bounds convex}.
\end{proof}

\appendix


\section{Convex geometry toolbox}
\label{app: Convex geometry}

In this appendix, which is similar but not identical to the appendix in \cite{FrankLarson_Inventiones25}, we record some elements of convex geometry. Most of the results that we shall make use of are either well known to experts or follow from well-known results.

We begin by recalling the fact that perimeter is monotonically increasing under inclusion of convex sets.
\begin{lemma}\label{lem: local monotonicity of perimieter}
	If $\Omega' \subset \Omega \subset \R^d$ are convex sets, then 
	$$
		\Haus^{d-1}(\partial\Omega') \leq \Haus^{d-1}(\partial\Omega)\,.
	$$ 
	In particular, if $\Omega \subset \R^d$ is convex and $x\in \R^d, r>0$ then
	\begin{equation*}
		\Haus^{d-1}(\partial\Omega \cap B_r(x)) \leq \Haus^{d-1}(\partial B_r(x)) =\Haus^{d-1}(\partial B_1(0)) r^{d-1}\,.
	\end{equation*}
\end{lemma}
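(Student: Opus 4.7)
The plan is to prove the first assertion by exploiting the fact that the nearest-point (metric) projection $\pi\colon\R^d\to\overline{\Omega'}$ onto a closed convex set is $1$-Lipschitz. The key observation is that this projection, when restricted to $\partial\Omega$, has image containing $\partial\Omega'$. Once surjectivity (in this sense) is established, the $1$-Lipschitz property immediately yields
\[
\Haus^{d-1}(\partial\Omega')\leq \Haus^{d-1}(\pi(\partial\Omega))\leq \Haus^{d-1}(\partial\Omega),
\]
by the standard behaviour of Hausdorff measure under Lipschitz maps.

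To establish that $\partial\Omega'\subseteq \pi(\partial\Omega)$, I would argue as follows. Fix $y\in\partial\Omega'$. By the supporting hyperplane theorem, there exists a unit vector $\nu\in\S^{d-1}$ such that $(z-y)\cdot\nu\leq 0$ for all $z\in\Omega'$. Consider the ray $\{y+t\nu:t\geq 0\}$. If $\Omega$ is bounded, this ray exits $\Omega$ at some first time $t_0\geq 0$, producing a point $y':=y+t_0\nu\in\partial\Omega$ (here $t_0=0$ is allowed when $y\in\partial\Omega$ already). The characterization of the metric projection onto a convex set via the angle condition $(z-\pi(x))\cdot(x-\pi(x))\leq 0$ for all $z\in\overline{\Omega'}$ gives $\pi(y')=y$, using precisely that $\nu$ is an outer normal direction to $\Omega'$ at $y$. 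Hence $y\in\pi(\partial\Omega)$ as claimed. If $\Omega$ is unbounded the inequality is trivial or can be reduced to the bounded case by intersecting with a large ball, but this is a side issue since the main applications are to bounded convex sets.

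For the second assertion, I would apply the first to the pair $\Omega\cap B_r(x)\subseteq B_r(x)$, both of which are convex, to get $\Haus^{d-1}(\partial(\Omega\cap B_r(x)))\leq \Haus^{d-1}(\partial B_r(x))$. Since $\partial\Omega\cap B_r(x)\subseteq\partial(\Omega\cap B_r(x))$, the desired bound follows, and $\Haus^{d-1}(\partial B_r(x))=\Haus^{d-1}(\S^{d-1})r^{d-1}$ is immediate.

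The main conceptual step is the surjectivity claim $\partial\Omega'\subseteq \pi(\partial\Omega)$; the $1$-Lipschitz property of $\pi$ is classical and the Hausdorff-measure estimate under Lipschitz maps is standard, so neither of these is an obstacle. The only minor technicality is ensuring that an outer normal direction at $y\in\partial\Omega'$ actually produces a point of $\partial\Omega$ under the ray construction, which is where (implicit) boundedness of $\Omega$ is used.
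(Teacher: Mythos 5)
Your argument is correct, and in fact the paper offers no proof of this lemma at all: it is simply recalled as a classical fact about convex bodies. Your approach via the nearest-point projection $\pi$ onto $\overline{\Omega'}$ is one of the standard ways to establish it. The two key facts you invoke --- that $\pi$ is $1$-Lipschitz, and the variational (obtuse-angle) characterization $(z-\pi(x))\cdot(x-\pi(x))\leq 0$ for all $z\in\overline{\Omega'}$ --- are indeed classical, and the ray construction correctly produces for each $y\in\partial\Omega'$ with outer normal $\nu$ a point $y'=y+t_0\nu\in\partial\Omega$ with $\pi(y')=y$, using that $\overline\Omega\cap\{y+t\nu:t\geq 0\}$ is a closed interval $[0,t_0]$ when $\Omega$ is bounded. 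Together with $\Haus^{d-1}(f(E))\leq\Haus^{d-1}(E)$ for $1$-Lipschitz $f$, this gives the first inequality, and the second follows as you say from $\partial\Omega\cap B_r(x)\subseteq\partial(\Omega\cap B_r(x))$ and an application of the first inequality to $\Omega\cap B_r(x)\subset B_r(x)$.

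One remark worth making explicit: as stated, the monotonicity $\Haus^{d-1}(\partial\Omega')\leq\Haus^{d-1}(\partial\Omega)$ does require that $\Omega$ be bounded (or at least $\Omega\neq\R^d$); for $\Omega=\R^d$ one has $\partial\Omega=\emptyset$ and the inequality fails whenever $\Omega'\neq\R^d$. You correctly flag this as a side issue --- in all its applications in the paper, and in the derivation of the second assertion, the ambient convex set is bounded --- but it is the one point where the otherwise clean projection argument genuinely uses boundedness, since it is what guarantees that the ray from $y$ in the direction of the outer normal actually meets $\partial\Omega$.
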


The second lemma provides an upper and a lower bound for the size of the level set of the distance function $d_\Omega$. It is stated in terms of the inradius, defined in \eqref{eq:inrad}. The lower bound is proved in \cite{LarsonJFA} and the upper is a consequence of Lemma~\ref{lem: local monotonicity of perimieter}.

\begin{lemma}\label{lem: inner parallel perimeter bounds}
	If $\Omega \subset \R^d$ is a bounded convex set, then for any $s \in (0, r_{\rm in}(\Omega)]$
\begin{equation*}
	\Bigl(1-\frac{s}{r_{\rm in}(\Omega)}\Bigr)^{d-1}\Haus^{d-1}(\partial\Omega)\leq \Haus^{d-1}(\{x\in \Omega: d_\Omega(x)=s\}) \leq \Haus^{d-1}(\partial\Omega)\,.
\end{equation*}
\end{lemma}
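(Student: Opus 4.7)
The plan is to deduce both bounds from Lemma~\ref{lem: local monotonicity of perimieter} (monotonicity of perimeter under inclusion of convex sets) applied to two nested convex sets involving the \emph{inner parallel set}
\[
\Omega_s := \{x \in \Omega : d_\Omega(x) \geq s\}\,.
\]
This set is convex, being the intersection over unit vectors $\nu$ of the half-spaces defining $\Omega$ translated inward by $s$ (equivalently, $d_\Omega$ is concave on the convex set $\Omega$). For $s < r_{\rm in}(\Omega)$ it has nonempty interior $\{d_\Omega > s\}$, and its topological boundary is then exactly the level set $\{x \in \Omega : d_\Omega(x) = s\}$ appearing in the statement.

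For the upper bound, the inclusion $\Omega_s \subset \Omega$ together with Lemma~\ref{lem: local monotonicity of perimieter} yields $\Haus^{d-1}(\partial\Omega_s) \leq \Haus^{d-1}(\partial\Omega)$, which is the desired inequality. The boundary case $s = r_{\rm in}(\Omega)$ is either covered by the same argument or is trivial if $\Omega_s$ collapses to a lower-dimensional set (in which case $\Haus^{d-1}(\{d_\Omega = s\}) = 0$).

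For the lower bound, I would construct an explicit convex subset of $\Omega_s$ whose perimeter can be computed exactly. Pick an incenter $x_0 \in \Omega$, so that $B_{r_{\rm in}(\Omega)}(x_0) \subset \Omega$, and set $\lambda := s/r_{\rm in}(\Omega) \in (0,1]$. For any $z \in \Omega$, convexity of $\Omega$ gives
\[
B_s\bigl(\lambda x_0 + (1-\lambda) z\bigr) = \lambda B_{r_{\rm in}(\Omega)}(x_0) + (1-\lambda)\{z\} \subset \Omega\,,
\]
so $d_\Omega\bigl(\lambda x_0 + (1-\lambda) z\bigr) \geq s$. Therefore the homothet $T(\Omega) := \lambda x_0 + (1-\lambda)\Omega$ is a convex subset of $\Omega_s$, and its boundary is a rescaling of $\partial \Omega$ by the factor $1-\lambda = 1 - s/r_{\rm in}(\Omega)$, so $\Haus^{d-1}(\partial T(\Omega)) = (1 - s/r_{\rm in}(\Omega))^{d-1} \Haus^{d-1}(\partial\Omega)$. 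Applying Lemma~\ref{lem: local monotonicity of perimieter} to the inclusion $T(\Omega) \subset \Omega_s$ then yields the lower bound.

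No step presents a serious obstacle: the whole proof reduces to two applications of perimeter monotonicity plus an elementary homothety argument inside the Minkowski-convex combination $\lambda B_{r_{\rm in}(\Omega)}(x_0) + (1-\lambda)\{z\}$. The only minor care needed is identifying $\partial \Omega_s$ with the level set $\{d_\Omega = s\}$; as noted above, this is automatic for $s < r_{\rm in}(\Omega)$ since $\Omega_s$ then has nonempty interior.
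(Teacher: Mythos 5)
Your argument is correct, and it is more self-contained than what the paper does: the paper derives the upper bound from Lemma~\ref{lem: local monotonicity of perimieter} (as you do) but cites \cite{LarsonJFA} for the lower bound rather than proving it. Your homothety argument replaces that citation with an elementary proof: since $B_{r_{\rm in}(\Omega)}(x_0)\subset\Omega$ for an incenter $x_0$, every point of $\lambda x_0+(1-\lambda)\Omega$ contains an $s$-ball inside $\Omega$ by the convex combination $\lambda B_{r_{\rm in}(\Omega)}(x_0)+(1-\lambda)\{z\}\subset\Omega$, so the homothet lies in $\Omega_s$; perimeter monotonicity plus the scaling $\Haus^{d-1}(\partial(\lambda x_0+(1-\lambda)\Omega))=(1-\lambda)^{d-1}\Haus^{d-1}(\partial\Omega)$ then gives the bound. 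The identification $\partial\Omega_s=\{d_\Omega=s\}$ for $s<r_{\rm in}(\Omega)$ is justified as you say, because $d_\Omega$ is concave and so cannot attain an interior local minimum value $s$ unless it is constant; the reference in~\cite{LarsonJFA} proves the same inequality via more machinery in the setting of inner parallel bodies.

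One small slip: at $s=r_{\rm in}(\Omega)$ you claim that when $\Omega_s$ collapses to a lower-dimensional convex set one has $\Haus^{d-1}(\{d_\Omega=s\})=0$. That is false in general --- for a stadium-shaped domain in $\R^2$ the set $\{d_\Omega=r_{\rm in}\}$ is a segment of positive $\Haus^1$-measure. Fortunately this does not damage the result: at $s=r_{\rm in}(\Omega)$ the lower bound is $0\leq\Haus^{d-1}(\{d_\Omega=s\})$, trivially true, and the upper bound still follows from perimeter monotonicity applied to $\Omega_s\subset\Omega$ (or from the fact that the nearest-point projection of $\partial\Omega$ onto the degenerate convex set $\Omega_s$ is $1$-Lipschitz and surjective). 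So the stated endpoint handling should simply invoke the trivial lower bound rather than asserting vanishing of the left-hand side.
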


By an application of the co-area formula and the bounds in Lemma~\ref{lem: inner parallel perimeter bounds}, one can prove the next lemma.
\begin{lemma}\label{lem: volume bdry neighborhood bound}
	If $\Omega \subset \R^d$ is a bounded convex set, then for any $s \in [0, r_{\rm in}(\Omega)]$
\begin{equation*}
	s\Haus^{d-1}(\partial\Omega)\Bigl(1-\frac{s}{r_{\rm in}(\Omega)}\Bigr)^{d-1}\leq |\{x\in \Omega: d_\Omega(x)<s\}| \leq s\Haus^{d-1}(\partial\Omega)\,.
\end{equation*}
\end{lemma}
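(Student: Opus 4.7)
The plan is to follow the hint stated immediately before the lemma: apply the co-area formula to the distance function $d_\Omega$ on $\Omega$ and then plug in the two-sided perimeter bound of Lemma~\ref{lem: inner parallel perimeter bounds}. Since $d_\Omega$ is $1$-Lipschitz on $\Omega$ with $|\nabla d_\Omega|=1$ almost everywhere, the co-area formula, applied with the indicator of $\{d_\Omega<s\}$ as the test function, gives
\begin{equation*}
    |\{x\in\Omega:\ d_\Omega(x)<s\}| = \int_0^s \Haus^{d-1}(\{x\in\Omega:\ d_\Omega(x)=\tau\})\,d\tau.
\end{equation*}
Both the upper and lower estimates will then follow by inserting the bound of Lemma~\ref{lem: inner parallel perimeter bounds} into the integrand and integrating.

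For the upper bound, I would use $\Haus^{d-1}(\{d_\Omega=\tau\}) \leq \Haus^{d-1}(\partial\Omega)$ and integrate trivially over $[0,s]$ to obtain $s\Haus^{d-1}(\partial\Omega)$.

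For the lower bound, I would use the lower part of Lemma~\ref{lem: inner parallel perimeter bounds}, which is available throughout the range of integration since the hypothesis $s \leq r_{\rm in}(\Omega)$ guarantees $\tau \in [0,r_{\rm in}(\Omega)]$. The key observation is that $\tau \mapsto (1 - \tau/r_{\rm in}(\Omega))^{d-1}$ is decreasing on $[0,r_{\rm in}(\Omega)]$, so for every $\tau \in [0,s]$ its value is at least $(1-s/r_{\rm in}(\Omega))^{d-1}$. Therefore
\begin{equation*}
    \int_0^s \Haus^{d-1}(\partial\Omega)\Bigl(1-\tfrac{\tau}{r_{\rm in}(\Omega)}\Bigr)^{d-1}\,d\tau \geq s\Haus^{d-1}(\partial\Omega)\Bigl(1-\tfrac{s}{r_{\rm in}(\Omega)}\Bigr)^{d-1},
\end{equation*}
which is the desired lower bound. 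There is no substantive obstacle in this argument; once the co-area representation is in hand, the rest is a combination of the cited perimeter bounds with an elementary monotonicity observation.
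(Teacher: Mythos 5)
Your proof is correct and follows essentially the same approach as the paper: apply the co-area formula to $d_\Omega$, insert the two-sided perimeter bound from Lemma~\ref{lem: inner parallel perimeter bounds}, and use monotonicity of $\tau \mapsto (1-\tau/r_{\rm in}(\Omega))^{d-1}$ to bound the resulting integral from below.
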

\begin{proof}
    By the co-area formula, we can write
\begin{equation*}
    |\{x\in \Omega: d_\Omega(x)<s\}| = \int_0^s \Haus^{d-1}(\{x\in \Omega: d_\Omega(x)=\tau\})\,d\tau\,.
\end{equation*}
By Lemma \ref{lem: inner parallel perimeter bounds}, we deduce that
\begin{equation*}
    \Haus^{d-1}(\partial\Omega)\int_0^s\Bigl(1-\frac{\tau}{r_{\rm in}(\Omega)}\Bigr)_\limplus^{d-1}\,d\tau \leq |\{x\in \Omega: d_\Omega(x)<s\}| \leq s \Haus^{d-1}(\partial\Omega)\,.
\end{equation*}
Since $d-1\geq 0$ we can use monotonicity to bound
\begin{equation*}
    \int_0^s\Bigl(1-\frac{\tau}{r_{\rm in}(\Omega)}\Bigr)_\limplus^{d-1}\,d\tau \geq s \Bigl(1-\frac{s}{r_{\rm in}(\Omega)}\Bigr)_\limplus^{d-1} \,.
\end{equation*}
This completes the proof of Lemma \ref{lem: volume bdry neighborhood bound}.
\end{proof}

In the paper we several times make use of the following consequence of the Bishop--Gromov comparison theorem. An elementary proof can be found in \cite[Lemma A.5]{FrankLarson_Inventiones25}.

\begin{lemma}\label{lem: Bishop-Gromov monotonicity}
	If $\Omega \subset \R^d$ is convex, then the function
\begin{equation*}
	(0, \infty) \ni r \mapsto \frac{|\Omega \cap B_r(a)|}{r^d}
\end{equation*}
is nonincreasing for any fixed $a\in \overline{\Omega}$.
\end{lemma}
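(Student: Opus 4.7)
The plan is to exploit the fact that a convex set is star-shaped with respect to any point of its closure, and then build an explicit volume-comparison map by radial contraction toward $a$.

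Fix $a\in\overline{\Omega}$ and $0<s<r$, and set $\lambda:=s/r\in(0,1)$. I would consider the affine contraction $T\colon\R^d\to\R^d$, $T(x):=a+\lambda(x-a)$, which is a similarity of ratio $\lambda$ and constant Jacobian $\lambda^d$. By construction, $T$ is a bijection of $B_r(a)$ onto $B_s(a)$. The central geometric claim is $T(\Omega)\subseteq\overline{\Omega}$: given any $x\in\Omega$, the image $T(x)=\lambda x+(1-\lambda)a$ is a convex combination of $x\in\Omega$ and $a\in\overline{\Omega}$, so it belongs to $\overline{\Omega}$ by convexity. Combining the two containments, $T$ injectively maps $\Omega\cap B_r(a)$ into $\overline{\Omega}\cap B_s(a)$.

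Since $\Omega$ is a (bounded or not) convex subset of $\R^d$, $\partial\Omega$ has Hausdorff dimension at most $d-1$ and hence Lebesgue measure zero, so $|\overline{\Omega}\cap B_s(a)|=|\Omega\cap B_s(a)|$. Applying the change-of-variables formula to the injection $T$ now gives
\[
|\Omega\cap B_s(a)|\;\geq\;|T(\Omega\cap B_r(a))|\;=\;\lambda^d\,|\Omega\cap B_r(a)|\;=\;\Bigl(\frac{s}{r}\Bigr)^{d}|\Omega\cap B_r(a)|\,,
\]
which, after dividing by $s^d$, is precisely the nonincreasing monotonicity claimed in the lemma.

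I do not foresee any real obstacle. The only mildly delicate point is the case $a\in\partial\Omega$, where the convex combination $\lambda x+(1-\lambda)a$ need not lie in the open set $\Omega$ itself but only in $\overline{\Omega}$; this is harmless because we are comparing Lebesgue measures and $|\partial\Omega|=0$. Everything else is essentially the standard Euclidean Bishop--Gromov computation, with convexity playing the role of the lower curvature bound.
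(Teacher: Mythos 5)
Your proof is correct. Since the paper itself does not give a proof of this lemma but rather cites \cite[Lemma A.5]{FrankLarson_Inventiones25}, there is no internal argument to compare against; your radial-contraction argument $T(x)=a+\lambda(x-a)$, using star-shapedness of the convex set with respect to any point of its closure together with the change-of-variables formula, is a clean and standard way to establish the Euclidean Bishop--Gromov monotonicity, and the way you dispose of the case $a\in\partial\Omega$ via $|\partial\Omega|=0$ (which holds because $\partial\Omega$ is locally a Lipschitz graph when $\Omega$ has nonempty interior, and trivially otherwise) is exactly right.
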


The following assertion appears as \cite[Proposition A.6]{FrankLarson_Inventiones25}, where also a proof can be found.

\begin{proposition}\label{prop: Minkowski sum bounds}
	Fix $d \geq 1$. Given $c_1>0$ there exists $c_2>0$ with the following properties. 
	If $\Omega \subset \R^d$ is open, bounded, and convex, then
	\begin{align*}
		|\Omega + B_r|-|\Omega| & \geq r\Haus^{d-1}(\partial\Omega) \quad \mbox{for all }r>0\,,\\
		|\Omega + B_r|-|\Omega|&\lesssim_d \Haus^{d-1}(\partial\Omega)r\Bigl[1+\Bigl(\frac{r}{r_{\rm in}(\Omega)}\Bigr)^{d-1}\Bigr] \quad \mbox{for all }r>0\,,\\
		|\Omega+B_r| &\leq c_2 \Haus^{d-1}(\partial\Omega) r \Bigl(\frac{r}{r_{\rm in}(\Omega)}\Bigr)^{d-1} \quad \mbox{for all } r\geq c_1 r_{\rm in}(\Omega)\,.
	\end{align*}
\end{proposition}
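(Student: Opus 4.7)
The plan is to derive all three inequalities from the coarea identity
$$
|\Omega+B_r|-|\Omega|=\int_0^r \Haus^{d-1}(\partial(\Omega+B_s))\,ds,
$$
which follows from applying the coarea formula to the $1$-Lipschitz function $x\mapsto\dist(x,\overline\Omega)$ together with the fact that its level sets at level $s>0$ coincide with $\partial(\Omega+B_s)$. The first (lower) bound is then immediate: since $\Omega+B_s\supseteq\Omega$ and both are convex, Lemma \ref{lem: local monotonicity of perimieter} gives $\Haus^{d-1}(\partial(\Omega+B_s))\geq \Haus^{d-1}(\partial\Omega)$, and integrating yields $|\Omega+B_r|-|\Omega|\geq r\Haus^{d-1}(\partial\Omega)$.

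For the second (upper) bound, the key geometric observation will be a scaling inclusion: if $B_{r_{\rm in}}(x_0)\subseteq\Omega$, then $B_s(0)=(s/r_{\rm in})B_{r_{\rm in}}(0)\subseteq (s/r_{\rm in})(\Omega-x_0)$, and since $\Omega$ is convex one has the identity $K+tK=(1+t)K$ (for convex $K$, $t\geq 0$), giving
$$
\Omega+B_s\subseteq x_0+(1+s/r_{\rm in})(\Omega-x_0).
$$
Applying Lemma \ref{lem: local monotonicity of perimieter} to this inclusion of convex sets and using the scaling of $(d-1)$-dimensional Hausdorff measure yields
$$
\Haus^{d-1}(\partial(\Omega+B_s))\leq (1+s/r_{\rm in})^{d-1}\Haus^{d-1}(\partial\Omega).
$$
Plugging into the coarea representation and integrating gives $|\Omega+B_r|-|\Omega|\leq r(1+r/r_{\rm in})^{d-1}\Haus^{d-1}(\partial\Omega)$, from which the claimed bound follows via the elementary inequality $(1+x)^{d-1}\leq 2^{d-1}(1+x^{d-1})$.

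For the third (upper) bound in the regime $r\geq c_1 r_{\rm in}$, I will combine the second inequality with an $L^\infty$-type volume estimate: applying the coarea formula to the distance-to-the-complement function $d_\Omega$, which is $1$-Lipschitz and supported in $[0,r_{\rm in}]$, and using Lemma \ref{lem: inner parallel perimeter bounds}, one gets
$$
|\Omega|=\int_0^{r_{\rm in}}\Haus^{d-1}(\{d_\Omega=s\})\,ds\leq r_{\rm in}\Haus^{d-1}(\partial\Omega).
$$
Hence $|\Omega+B_r|\leq |\Omega|+c_d r[1+(r/r_{\rm in})^{d-1}]\Haus^{d-1}(\partial\Omega)$. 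For $r\geq c_1 r_{\rm in}$ one has $r_{\rm in}\lesssim_{c_1,d} r(r/r_{\rm in})^{d-1}$ and similarly $r\lesssim_{c_1,d} r(r/r_{\rm in})^{d-1}$, so every term can be absorbed into the desired right-hand side.

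The only step requiring care is the scaling inclusion $\Omega+B_s\subseteq x_0+(1+s/r_{\rm in})(\Omega-x_0)$; apart from this (and the clean application of the monotonicity-under-inclusion result for convex perimeters, which is precisely Lemma \ref{lem: local monotonicity of perimieter}), the rest of the argument is bookkeeping with the coarea formula and elementary size estimates.
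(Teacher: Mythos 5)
Your proof is correct. Note that the paper itself does not prove Proposition~\ref{prop: Minkowski sum bounds}; it defers to \cite[Proposition A.6]{FrankLarson_Inventiones25}, so a line-by-line comparison is not possible. That said, the route you take—the coarea identity $|\Omega+B_r|-|\Omega|=\int_0^r \Haus^{d-1}(\partial(\Omega+B_s))\,ds$ (valid because $\dist(\cdot,\overline\Omega)$ is $C^1$ with unit gradient off $\overline\Omega$ for convex $\Omega$), monotonicity of perimeter (Lemma~\ref{lem: local monotonicity of perimieter}) for the lower bound, the dilation inclusion $\Omega+B_s\subseteq x_0+(1+s/r_{\rm in})(\Omega-x_0)$ for the upper bound (using $K+tK=(1+t)K$ for convex $K$ and $B_{r_{\rm in}}(x_0)\subset\Omega$), and the crude estimate $|\Omega|\le r_{\rm in}\Haus^{d-1}(\partial\Omega)$ from Lemma~\ref{lem: inner parallel perimeter bounds} to absorb $|\Omega|$ in the regime $r\gtrsim r_{\rm in}$—is complete and needs no further justification. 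The only ingredient deserving a line of its own is the dilation inclusion, which you correctly isolate as the key geometric step; everything else is bookkeeping.

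One stylistic remark: your first two inequalities also follow directly from Steiner's polynomial $|\Omega+B_r|=\sum_{j=0}^d\binom{d}{j}W_j(\Omega)\,r^j$ together with nonnegativity and monotonicity of quermassintegrals, but your coarea argument is self-contained and stays within the toolbox the paper already sets up in Appendix~\ref{app: Convex geometry}.
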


The final result in this appendix concerns integrals of the reciprocal of the local measure $V_\Omega(x, r)$ over various subsets of $\Omega$. The crucial part of the result is that it shows that the region where $V_\Omega(x, r)$ becomes small has relatively small measure.

\begin{proposition}\label{prop: bounds integral of local volume}
    Let $\Omega \subset\R^d$, $d\geq 2$, be an open, bounded, and convex set. Then for any $r>0$ and measurable $\omega \subset \Omega$ it holds that
    \begin{equation*}
        \int_\omega \frac{r^d}{V_\Omega(x, r)}\,dx \leq \frac{4^d}{|B_1(0)|} |\omega+B_{r}(0)|
    \end{equation*}
    and
    \begin{equation*}
        \int_\omega \frac{r^d}{V_\Omega(x, r)}\,dx \leq \frac{4^d}{|B_1(0)|} \biggl[|\omega| + \frac{r^2\Haus^{d-1}(\partial\Omega)}{r_{\rm in}(\Omega)}\Bigl(2(d-1)+ \Bigl(\frac{r}{r_{\rm in}(\Omega)}\Bigr)^{d-2}\Bigr)\biggr]\,.
    \end{equation*}
\end{proposition}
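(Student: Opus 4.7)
The two inequalities will be proved by related but distinct arguments.

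For the first, I will combine a Fubini switch with the Bishop--Gromov-type monotonicity of $r\mapsto V_\Omega(x,r)/r^d$ provided by Lemma~\ref{lem: Bishop-Gromov monotonicity}. Starting from the trivial estimate $V_\Omega(x,r)\geq V_\Omega(x,r/2)$ together with the identity
\begin{equation*}
\frac{r^d}{V_\Omega(x,r/2)}=\frac{2^d}{|B_1(0)|}\int_{\R^d}\frac{\1_{B_{r/2}(x)}(y)}{V_\Omega(x,r/2)}\,dy,
\end{equation*}
I will integrate over $x\in\omega$ and swap the order of integration. For $|x-y|<r/2$, the inclusion $B_{r/2}(y)\subseteq B_r(x)$ combined with Lemma~\ref{lem: Bishop-Gromov monotonicity} gives $V_\Omega(x,r/2)\geq 2^{-d}V_\Omega(x,r)\geq 2^{-d}V_\Omega(y,r/2)$, while $\omega\subseteq\Omega$ yields $|\omega\cap B_{r/2}(y)|\leq V_\Omega(y,r/2)$. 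The resulting integrand over $y$ is therefore bounded by $1$ and supported in $\omega+B_{r/2}\subseteq\omega+B_r$, producing the stated constant $4^d/|B_1(0)|$ and closing the first inequality.

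For the second, I will split $\omega=\omega^{\rm int}\cup\omega^{\rm bdry}$ along the level set $d_\Omega=r$. On $\omega^{\rm int}=\omega\cap\{d_\Omega\geq r\}$ the ball $B_r(x)$ lies inside $\Omega$, so $V_\Omega(x,r)=|B_1(0)|\,r^d$ and the contribution is exactly $|\omega^{\rm int}|/|B_1(0)|\leq|\omega|/|B_1(0)|$. For $\omega^{\rm bdry}\subseteq\{d_\Omega<r\}$ I will apply the first inequality and reduce to estimating $|\omega^{\rm bdry}+B_r|$. Since every point of this Minkowski sum lies within distance $2r$ of $\partial\Omega$, one has $\omega^{\rm bdry}+B_r\subseteq\{x\in\Omega:d_\Omega(x)<2r\}\cup\bigl((\Omega+B_r)\setminus\Omega\bigr)$; the exterior Minkowski shell is controlled by Proposition~\ref{prop: Minkowski sum bounds} and directly furnishes the $r^d\Haus^{d-1}(\partial\Omega)/r_{\rm in}(\Omega)^{d-1}$ tail contribution.

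The technical heart of the argument will be extracting the sharp leading coefficient $2(d-1)r^2\Haus^{d-1}(\partial\Omega)/r_{\rm in}(\Omega)$ rather than the weaker $r\Haus^{d-1}(\partial\Omega)$ provided by the naive upper bound $|\{d_\Omega<2r\}|\leq 2r\Haus^{d-1}(\partial\Omega)$ in Lemma~\ref{lem: volume bdry neighborhood bound}, which would be off by a factor $r/r_{\rm in}(\Omega)$. The improvement should come from combining the refined lower bound $(1-s/r_{\rm in}(\Omega))^{d-1}\Haus^{d-1}(\partial\Omega)$ from Lemma~\ref{lem: volume bdry neighborhood bound} with Proposition~\ref{prop: Minkowski sum bounds}, exploiting the convexity of the map $s\mapsto 1-(1-s/r_{\rm in}(\Omega))^{d-1}$ (valid for $d\geq 2$) to refactor the linear-in-$r$ contribution as a quadratic-in-$r$ expression of the form $(d-1)r^2\Haus^{d-1}(\partial\Omega)/r_{\rm in}(\Omega)$ plus an error absorbable into the remaining terms. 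Tracking the overall factor $4^d/|B_1(0)|$ through both the deep-region contribution and the Fubini step then produces the precise numerical coefficient $2(d-1)$.
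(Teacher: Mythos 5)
Your Fubini argument for the first inequality is correct and is a genuinely different route from the paper's. The paper uses a Vitali-type covering: it selects finitely many points $x_k\in\overline\omega$ with $B_r(x_k)$ pairwise disjoint and $\overline\omega\subset\bigcup B_{2r}(x_k)$, bounds the integral on each piece by $4^d r^d$ via Lemma~\ref{lem: Bishop-Gromov monotonicity}, and then bounds the number of balls by $|\omega+B_r|/(|B_1|r^d)$. Your layer-cake-and-swap approach replaces the covering by an averaging identity and is cleaner for the first inequality; both arguments give the same constant.

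For the second inequality, however, your approach has a genuine gap and I do not see how to close it along the lines you sketch. After splitting off $\omega^{\rm int}$, you propose to apply the first inequality to $\omega^{\rm bdry}\subset\{d_\Omega<r\}$ and then bound $|\omega^{\rm bdry}+B_r|$ by $|\{d_\Omega<2r\}|+|(\Omega+B_r)\setminus\Omega|$. The difficulty is not the coefficient but the order in $r$: the quantity $|\omega^{\rm bdry}+B_r|$ can genuinely be as large as $r\Haus^{d-1}(\partial\Omega)$. Take $\Omega=B_1(0)\subset\R^2$, $r=10^{-2}$, and $\omega^{\rm bdry}=\{1-\epsilon<|x|<1\}$ with $\epsilon\to 0$; then $|\omega^{\rm bdry}+B_r|\geq|\{d_\Omega<r\}|\sim r\Haus^{d-1}(\partial\Omega)$, while the claimed right-hand side is of order $|\omega|+r^2\Haus^{d-1}(\partial\Omega)/r_{\rm in}(\Omega)\to r^2\Haus^{d-1}(\partial\Omega)$ as $\epsilon\to 0$. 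So the first inequality applied to $\omega^{\rm bdry}$ over-shoots by a factor $\sim r_{\rm in}(\Omega)/r$; this is a loss already built into what you choose to estimate, and no sharpening of the upper bound on $|\{d_\Omega<2r\}|$ (which really is $\sim 2r\Haus^{d-1}(\partial\Omega)$, as the matching lower bound in Lemma~\ref{lem: volume bdry neighborhood bound} shows) can repair it. The convexity of $s\mapsto 1-(1-s/r_{\rm in}(\Omega))^{d-1}$ controls the gap between the upper and lower bounds on $|\{d_\Omega<s\}|$, not the quantity itself.

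The paper's argument avoids this loss because it never passes through $|\omega_2+B_r|$. Instead it re-runs the Vitali covering on $\omega_2:=\{x\in\omega:\dist(x,\{d_\Omega>r\})\geq 3r\}$, so that the disjoint balls $B_r(x_k)$ also avoid the dilated interior set $\{d_\Omega>r\}+B_{2r}$. One then bounds the number of balls by the measure of $(\Omega+B_r)\setminus(\{d_\Omega>r\}+B_{2r})$. The crucial point is a cancellation between $|\Omega+B_r|\leq|\Omega|+r\Haus^{d-1}(\partial\Omega)(1+(r/r_{\rm in})^{d-1})$ and the lower bound $|\{d_\Omega>r\}+B_{2r}|\geq|\Omega|+r\Haus^{d-1}(\partial\Omega)-2(d-1)r^2\Haus^{d-1}(\partial\Omega)/r_{\rm in}(\Omega)$ coming from Proposition~\ref{prop: Minkowski sum bounds} and Lemma~\ref{lem: inner parallel perimeter bounds}: the leading $r\Haus^{d-1}(\partial\Omega)$ terms cancel, leaving the quadratic term. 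Your Fubini framework does not obviously accommodate this, because the region where the reversed integrand lives is $\omega+B_{r/2}$ and there is no equally natural place to insert the constraint that the boundary region and the dilated interior are disjoint. If you want to prove the second inequality without a covering argument, you would need to rework the Fubini step to exploit that $\omega_2$ is $3r$-far from $\{d_\Omega>r\}$ in a way that recovers this cancellation, rather than bounding $|\omega^{\rm bdry}+B_r|$ wholesale.
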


\begin{proof}
    We begin by proving the first bound.

    Let $\{x_k\}_{k=1}^{M} \subset \overline \omega$ be a collection of points satisfying that $B_{r}(x_k)\cap B_{r}(x_j)=\emptyset$ if $k\neq j$ and $\overline \omega \subset \cup_{k=1}^{M} B_{2r}(x_k)$. The existence of such a collection can be proven by induction as follows. Start by choosing an arbitrary point in $\overline{\omega}$; then, given a finite collection of points with pairwise distances $\geq 2r$, we can either find a new point in $\overline{\omega}$ whose distance to all of the previous points is $\geq 2r$ or the collection satisfies that $\overline{\omega}\subset \cup_k B_{2r}(x_k)$. Since $\omega$ is bounded, the algorithm terminates after a finite number of steps. 

    We then have that
    \begin{align*}
	   \int_\omega \frac{r^{d}}{V_\Omega(x, r)}\,dx 
	   &\leq 
	   \sum_{k=1}^{M} \int_{\omega \cap B_{2r}(x_k)} \frac{r^d}{V_\Omega(x, r)}\,dx \,.
    \end{align*}
    By using Lemma~\ref{lem: Bishop-Gromov monotonicity} and the fact that by inclusion $V_\Omega(x, 4r)\geq V_\Omega(x_k, 2r)$ for all $x \in B_{2r}(x_k)$, we find
    \begin{align*}
	   \sum_{k=1}^M\int_{\omega \cap B_{2r}(x_k)} \frac{r^{d}}{V_\Omega(x, r)}\,dx
	   &\leq
	   \sum_{k=1}^M\int_{\omega \cap B_{2r}(x_k)} \frac{(4\sqrt{t})^{d}}{V_\Omega(x, 4r)}\,dx\\
	   &\leq
	   \sum_{k=1}^M\int_{\omega \cap B_{2r}(x_k)} \frac{4^dr^d}{V_\Omega(x_k, 2r)}\,dx\\
       &=
	   4^dr^d\sum_{k=1}^M \frac{|\omega \cap B_{2r}(x_k)|}{|\Omega \cap B_{2r}(x_k)|}\\
	   &\leq 4^dr^dM\,.
    \end{align*}

    Since $\{B_r(x_k)\}_{k=1}^M$ are disjoint and $\cup_{k=1}^M B_r(x_k)\subset \omega+B_r(0)$, we find
    \begin{equation*}
        M =\frac{\sum_{k=1}^M|B_r(x_k)|}{|B_r(0)|} = \frac{|\cup_{k=1}^M B_r(x_k)|}{r^d|B_1(0)|} \leq \frac{|\omega+ B_r(0)|}{r^d|B_1(0)|}\,.
    \end{equation*}
    We have therefore shown that for any $r>0, \omega \subset \Omega$ it holds that
    \begin{equation*}
        \int_\omega \frac{r^{d}}{V_\Omega(x, r)}\,dx \leq \frac{4^d}{|B_1(0)|}|\omega + B_r(0)|\,,
    \end{equation*}
    which completes the proof of the first bound in the proposition.

    For the second bound we need to be more precise in what happens close to the boundary of $\Omega$. To this end, we decompose $\omega$ into two disjoint sets given by
    \begin{equation*}
        \omega_1 := \{x\in \omega: \dist(x, \{d_\Omega>r\})<3r\}\,, \quad \omega_2 := \{x\in \omega: \dist(x, \{d_\Omega>r\})\geq 3r\}\,.
    \end{equation*}

    To bound the integral over $\omega_1$ we use that if $x\in \omega_1$ then there exists $y\in \{z\in \Omega: d_\Omega(z)>r\}$ with $|x-y|<3r$. Lemma \ref{lem: Bishop-Gromov monotonicity} implies that
    \begin{equation*}
        \frac{r^d}{V_\Omega(x, r)} \leq \frac{(4r)^d}{V_\Omega(x, 4r)}  \leq \frac{(4r)^d}{V_\Omega(y, r)} = \frac{4^d}{|B_1(0)|}\,,
    \end{equation*}
    where the second step uses $B_r(y)\subset B_{4r}(x)$ and the third uses $B_r(y)\subset \Omega$. Consequently,
    \begin{equation}\label{eq: omega1 bound}
        \int_{\omega_1} \frac{r^d}{V_\Omega(x, r)}\,dx \leq \frac{4^d}{|B_1(0)|}|\omega_1|\,.
    \end{equation}

    To estimate the integral over $\omega_2$ we proceed as in the first bound of the proposition. In the same manner as before we can choose $\{x_k\}_{k=1}^{M} \subset \overline \omega_2$ a finite collection of points satisfying that $B_{r}(x_k)\cap B_{r}(x_j)=\emptyset$ if $k\neq j$ and $\overline \omega_2 \subset \cup_{k=1}^{M} B_{2r}(x_k)$ and conclude that
    \begin{equation}\label{eq: omega2 initial bound}
	   \int_{\omega_2}\frac{r^d}{V_\Omega(x, r)}\,dx\leq  4^dr^dM\,.
    \end{equation}
    It thus remains to bound $M$. It remains to control $M$. To this end note that, since $|x_k-x_j|\geq 2r$ for all $k\neq j$,
    \begin{align*}
	   M = \frac{\sum_{k=1}^M|B_{r}(x_k)|}{|B_1(0)|r^d} =  \frac{\bigl|\cup_{k=1}^MB_{r}(x_k)\bigr|}{|B_1(0)|r^d}\,.
    \end{align*}
    Moreover, observe that since $x_k\in \omega_2\subset \{x\in \Omega: \dist(x, \{d_\Omega>r\})\geq 3r\}$
    \begin{equation*}
    	B_{r}(x_k) \cap (\{x \in \Omega: d_\Omega(x)<r\}+B_{2r}(0)) = \emptyset 
    \end{equation*}
    and 
    \begin{equation*}
    	B_{r}(x_k) \subset \omega_2 + B_{r}(0)\,.
    \end{equation*}
    Therefore, in the notation of Minkowski summation we have
    \begin{equation}\label{eq: first bound M}
    \begin{aligned}
    	|B_1(0)|r^{d}M &= \bigl|\cup_{k=1}^M B_{r}(x_k)\bigr| \\
	   &\leq |(\omega_2 + B_{r}(0))\setminus (\{x\in \Omega: d_\Omega(x) >r\}+ B_{2r}(0))|\\
        &\leq |(\Omega + B_{r}(0))\setminus (\{x\in \Omega: d_\Omega(x) >r\}+ B_{2r}(0))|\\
	   &= |\Omega + B_{r}(0)|-|\{x\in \Omega:d_\Omega(x) >r\}+ B_{2r}(0)|\,,
    \end{aligned}
    \end{equation}
    where the last step used $\{x\in \Omega: d_\Omega(x)>r\} + B_{2r}(0) \subset \Omega+ B_{r}(0)$.

    By the first and second bounds in Proposition~\ref{prop: Minkowski sum bounds},
    \begin{align*}
	   |\Omega + B_{r}|&\leq |\Omega| + \sqrt{t}\Haus^{d-1}(\partial\Omega)\Bigl[1 +  \Bigl(\frac{r}{r_{\rm in}(\Omega)}\Bigr)^{d-1}\Bigr]\,, \\
	   |\{x\in \Omega: d_\Omega(x) >r\} + B_{2r}(0)| 
	   &\geq |\{x\in \Omega : d_\Omega(x) >r\}|\\
       &\quad + 2r\Haus^{d-1}(\partial\{x\in \Omega:d_\Omega(x) >r\})\,.
    \end{align*}

    By Lemma~\ref{lem: volume bdry neighborhood bound},
    \begin{equation*}
	   |\{x\in \Omega: d_\Omega(x) >r\}| = |\Omega| - |\{x\in \Omega: d_\Omega(x)<r\}| \geq |\Omega|- r\Haus^{d-1}(\partial\Omega)\,.
    \end{equation*}
    By Lemma~\ref{lem: inner parallel perimeter bounds},
    \begin{equation*}
    \begin{aligned}
	   \Haus^{d-1}(\partial\{x\in \Omega:d_\Omega(x) >r\}) &\geq \Haus^{d-1}(\partial\Omega)\Bigl(1- \frac{r}{r_{\rm in}(\Omega)}\Bigr)_\limplus^{d-1}\\
       &\geq \Haus^{d-1}(\partial\Omega)\Bigl(1- \frac{(d-1)r}{r_{\rm in}(\Omega)}\Bigr)\,.
    \end{aligned}
    \end{equation*}

    Plugging these bounds into~\eqref{eq: first bound M} yields
    \begin{align*}
	   |B_1(0)|r^{d}M
	   &\leq 
	   \Bigl(|\Omega|+ r\Haus^{d-1}(\partial\Omega)\Bigl[1+ \Bigl(\frac{r}{r_{\rm in}(\Omega)}\Bigr)^{d-1}\Bigr]\Bigr)\\
	   &\quad - \Bigl(|\Omega|- r\Haus^{d-1}(\partial\Omega)+ 2r\Haus^{d-1}(\partial\Omega)\Bigl(1- \frac{(d-1)r}{r_{\rm in}(\Omega)}\Bigr)\Bigr)\\
	   &=
	   \frac{r^2 \Haus^{d-1}(\partial\Omega)}{r_{\rm in}(\Omega)} \Bigl[2(d-1)+\Bigl(\frac{r}{r_{\rm in}(\Omega)}\Bigr)^{d-2}\Bigr]\,,
    \end{align*}
    Inserting this estimate into \eqref{eq: omega2 initial bound} and combining the resulting bound with \eqref{eq: omega1 bound} completes the proof of the proposition.
\end{proof}



\def\myarXiv#1#2{\href{http://arxiv.org/abs/#1}{\texttt{arXiv:#1\,[#2]}}}


\begin{thebibliography}{28}

 

\bibitem{Ambrosio_etal_08}
L. Ambrosio, A. Colesanti, and E. Villa, \emph{Outer Minkowski content for some classes of closed sets}, Math. Ann.
\textbf{342} (2008), no. 4, 727--748.


\bibitem{BanuelosEtAl_09}
R.~Ba\~nuelos, T.~Kulczycki, and B.~Siudeja, \emph{On the trace of symmetric
  stable processes on {L}ipschitz domains}, J. Funct. Anal. \textbf{257}
  (2009), no.~10, 3329--3352.

\bibitem{BransonGilkey}
T.~P.~Branson and P.~B.~Gilkey, \textit{The Asymptotics of The
Laplacian on a Manifold with Boundary}, Comm.~PDE \textbf{15} (1990), no.~2,
245--272.

\bibitem{vdBerg_84}
M.~{van den Berg}, \emph{A uniform bound for trace($e^{t\Delta}$) for convex regions in $\R^n$ with smooth boundaries}. Comm.~Math.~Phys.~\textbf{92} (1984), 525--530.

\bibitem{vdBerg_87}
M.~{van den Berg}, \emph{On the asymptotics of the heat equation and bounds on traces associated with the Dirichlet Laplacian}. J.~Funct.~Anal.~\textbf{71} (1987), 279--293.


\bibitem{BrossardCarmona}
J.~Brossard and R.~Carmona, \textit{Can one hear the dimension of a fractal?}, Comm.~Math.~Phys.~\textbf{104} (1986), 103--122.

\bibitem{Brown93}
R.~M.~Brown, \textit{The trace of the heat kernel in Lipschitz domains}, Trans. Am. Math. Soc. \textbf{339} (1993), no. 2, 889--900.



\bibitem{DaKe}
B.~E.~J.~Dahlberg and C.~E.~Kenig, \textit{Hardy spaces and the Neumann problem in $L^p$ for Laplace's equation in Lipschitz domains}. Ann.~of Math.~(2) \textbf{125} (1987), 437--466.

\bibitem{Davies_heatkernels}
E.~B.~Davies, \emph{Heat kernels and spectral theory}, Cambridge Tracts in Mathematics, vol.~92, Cambridge University Press, Cambridge, 1989.



\bibitem{Evans_PDE}
L.~C.~Evans, \emph{Partial Differential Equations},
Graduate Studies in Mathematics, vol.~19, American Mathematical Society, Providence, RI, 2010. 

\bibitem{Federer_book}
H.~Federer, \emph{Geometric measure theory}, Die Grundlehren der mathematischen Wissenschaften, vol.~153, Springer-Verlag, New York, 1969.


\bibitem{FollandRealAnalysis}
G.~B. Folland, {\it Real analysis}, second edition, 
Pure and Applied Mathematics (New York) A Wiley-Interscience Publication, , Wiley, New York, 1999.

\bibitem{LTbook}
R.~L.~Frank, A.~Laptev, and T.~Weidl, \emph{Schr\"odinger Operators: Eigenvalues and Lieb--Thirring Inequalities},
Cambridge Studies in Advanced Mathematics, vol.~200, Cambridge University Press, Cambridge, 2023.

\bibitem{FrankLarson_Crelle20}
R.~L.~Frank and S.~Larson, \emph{{Two-term spectral asymptotics for the Dirichlet Laplacian in a Lipschitz domain}}, J. Reine Angew. Math.
\textbf{766} (2020), 195--228.

\bibitem{FrankLarson_JMP20}
R.~L.~Frank and S.~Larson, \emph{{On the error in the two-term Weyl formula for the Dirichlet Laplacian}}, J. Math. Phys.
\textbf{61} (2020), 043504.

\bibitem{FrankLarson_Inventiones25}
R.~L.~Frank and S.~Larson, \emph{Riesz means asymptotics for Dirichlet and Neumann Laplacians on Lipschitz domains}, Invent. Math. \textbf{241} (2025), 999--1079. 

\bibitem{FrankLarson_CPAM26}
R.~L.~Frank and S.~Larson, \emph{Semiclassical inequalities for Dirichlet and Neumann Laplacians on convex domains}, Comm.~Pure Appl.~Math., to appear. Preprint (2024), arXiv:2410.04769.


\bibitem{Kac}
M.~Kac, \emph{Can one hear the shape of a drum?}, Amer. Math. Monthly \textbf{73} (1966), no.~4, 1--23.


\bibitem{Kroger92}
P.~Kr\"oger, \emph{Upper bounds for the {N}eumann eigenvalues on a bounded domain in {E}uclidean space},
J. Funct. Anal. \textbf{106} (1992), no.~2, 353--357.


\bibitem{LarsonJFA}
S.~Larson, \emph{{A bound for the perimeter of inner parallel bodies}}, J.
  Funct. Anal. \textbf{271} (2016), no.~3, 610--619. Corrigendum in J. Funct. Anal. \textbf{279} (2020), no.~5.

\bibitem{LarsonJST}
S.~Larson, \emph{{Asymptotic shape optimization for Riesz means of the
		Dirichlet Laplacian over convex domains}}, J. Spectr. Theory \textbf{9}
(2019), no.~3, 857--895. Erratum in J. Spectr. Theory \textbf{11} (2021), no.~4.


\bibitem{LiYau86}
P.~Li and S.~T.~Yau, \emph{On the parabolic kernel of the {S}chr\"{o}dinger
  operator}, Acta Math. \textbf{156} (1986), no.~3-4, 153--201.

\bibitem{LiLo}
E.~H.~Lieb and M.~Loss, \emph{Analysis}. Second edition, Graduate Studies in Mathematics, vol.~14, American Mathematical Society, Providence, RI, 2001.


\bibitem{MinakshisundaramPleijel}
S.~Minakshisundaram and \r{A}.~Pleijel, \emph{Some properties of the eigenfunctions of the Laplace-operator on Riemannian manifolds}. Canadian J.~Math.~\textbf{1} (1949), no.~3, 242--256.

\bibitem{Moser64}
J.~Moser, \emph{A {H}arnack inequality for parabolic differential equations},
  Comm. Pure Appl. Math. \textbf{17} (1964), 101--134.

\bibitem{McKeanSinger}
H.~P.~McKean and I.~M.~Singer, \textit{Curvature and eigenvalues of the Laplacian}. J.~Differential Geom.~\textbf{1} (1967), 43--69.

\bibitem{Nash58}
J.~Nash, \emph{Continuity of solutions of parabolic and elliptic equations},
  Amer. J. Math. \textbf{80} (1958), 931--954.


\bibitem{Shubin}
M.~Shubin, \emph{Invitation to Partial Differential Equations}. Graduate Studies in Mathematics, vol.~205, American Mathematical Society, Providence, RI, 2020.

	
\end{thebibliography}
\end{document}